\documentclass[10pt]{article}
\usepackage[a4paper,margin=1in]{geometry}
\usepackage{setspace}
\usepackage[T1]{fontenc}
\usepackage[utf8]{inputenc}
\usepackage{amsmath,amssymb,mathrsfs,bm}
\usepackage{mathtools} % Extends amsmath, load before amsthm if possible
\usepackage{amsthm}
\usepackage{dsfont}
\usepackage{stmaryrd} % llbracket & rrbracket
\usepackage[mathscr]{eucal}

\usepackage{graphicx}
\usepackage[table,xcdraw]{xcolor}
\definecolor{lightblue}{RGB}{70,130,180}
\usepackage{booktabs}
\usepackage{multirow,pbox}
\usepackage{caption,subcaption}
\usepackage{float}
\usepackage{algorithm}
\usepackage{algpseudocode}
\usepackage{diagbox}

\usepackage{enumitem}
\setlist{leftmargin=9mm}

\usepackage[square,authoryear]{natbib}
\usepackage[colorlinks,linkcolor=blue,citecolor=blue,urlcolor=blue,pagebackref]{hyperref}

\renewcommand*{\backrefalt}[4]{%
    \ifcase #1 \footnotesize{(not cited)}%
    \or \footnotesize{(cited on page~#2)}%
    \else \footnotesize{(cited on pages~#2)}%
    \fi}

\allowdisplaybreaks[4]
\numberwithin{equation}{section}
\usepackage[toc,page]{appendix}

\theoremstyle{plain}
\newtheorem{assumption}{Assumption}

\newlist{assumptionparts}{enumerate}{1}
\setlist[assumptionparts,1]{
  label=(\Alph{assumption}\arabic*),
  ref=(\Alph{assumption}\arabic*),
  font=\normalfont,
  leftmargin=*
}

\theoremstyle{plain}
\newtheorem{theorem}{Theorem}[section] % Main counter, resets every section
\newtheorem{lemma}[theorem]{Lemma}
\newtheorem{proposition}[theorem]{Proposition}

\theoremstyle{definition}
\newtheorem{definition}[theorem]{Definition}

\newtheorem{remark}[theorem]{Remark}

\usepackage{cancel}
\usepackage{soul}

\usepackage{dsfont}
\newcommand{\indicator}{\mathds{1}}

\def\Tr{\mathrm{Tr}}

\renewcommand{\Re}{\mathrm{Re}\,}
\renewcommand{\Im}{\mathrm{Im}\,}
\renewcommand{\Pr}{\mathbb{P}\,}
\newcommand{\E}{{\mathbb E}}

\newcommand{\R}{{\mathbb R}}

\newcommand{\C}{{\mathbb C}}
\newcommand*{\dif}{\mathop{}\!\mathrm{d}}
\newcommand{\deq}{\coloneqq}
\newcommand{\eqd}{\eqqcolon}
\renewcommand{\leq}{\leqslant}
\renewcommand{\le}{\leqslant}
\renewcommand{\geq}{\geqslant}
\renewcommand{\ge}{\geqslant}
\DeclareMathOperator{\supp}{supp}
\DeclareMathOperator{\dist}{dist}
\DeclareMathOperator{\spec}{spec}

\newcommand{\ii}{\mathrm{i}} % imaginary unit
\DeclarePairedDelimiter{\intset}{\llbracket}{\rrbracket}
\DeclarePairedDelimiter{\abs}{\lvert}{\rvert}
\DeclarePairedDelimiter{\norm}{\lVert}{\rVert}
\usepackage{bm}
\newcommand{\one}{\boldsymbol{1}}
\newcommand{\diag}{\mathrm{diag}}
\newcommand{\smallop}{\mathfrak{o}_{\mathbb{P}}}
\newcommand{\bigop}{\mathcal{O}_{\mathbb{P}}}
\newcommand{\smallo}{\mathfrak{o}}
\newcommand{\bigo}{\mathcal{O}}

\makeatletter
\renewcommand*{\top}{{\mathpalette\@transpose{}}}
\newcommand*{\@transpose}[2]{\raisebox{\depth}{$\m@th#1\intercal$}}
\makeatother
\newcommand{\widesim}[2][1.5]{
	\mathrel{\overset{#2}{\scalebox{#1}[1]{$\sim$}}}
}
\newcommand{\iidsim}{\widesim[2.33]{\mathrm{i.i.d.}}} % iid

\title{\textbf{Spike Estimation from Heteroscedastic Noise via Random Splitting}}
\author{Zhigang Bao\thanks{University of Hong Kong, \texttt{zgbao@hku.hk}},\;
Kha Man Cheong\thanks{Hong Kong University of Science and Technology, \texttt{kmcheong@connect.ust.hk}},\;
Yuji Li\thanks{University of Hong Kong, \texttt{u3011732@connect.hku.hk}},\;
Jiaxin Qiu\thanks{University of Hong Kong, \texttt{jxqiu@hku.hk}}
}

\begin{document}

\maketitle

\begin{abstract}
In this paper, we consider a spiked Wigner type matrix with a heteroscedastic and unknown variance profile. It is well known that in the supercritical regime of the BBP transition, strong spikes can create outliers in the spectrum. Unfortunately, in the heteroscedastic case, in general it is not possible to estimate the spike strength from these observed outlier consistently, as the latter is a solution to a Dyson equation with unknown parameters from the variance profile. In this paper, inspired by the work on sparse matrix completion \citep{BordenaveCosteNadakuditi2023}, we introduce an asymmetrized model by randomly splitting the spiked matrix into two parts, which transforms the noisy Wigner type matrix into a non Hermitian random matrix, while preserving the Hermitian spikes at the cost of a dilution. We establish a BBP type transition for the asymmetrized model, from which we can estimate the strength of the spikes precisely, even without knowing the variance profile of the noise part. We then further apply our approach to study the correlation between two correlated spiked models, where the spike/signal parts of the two models are correlated, and the noise parts are independent but may both be heteroscedastic. By applying our asymmetrization approach to the two models separately and also jointly, we are able to obtain a precise estimate of the correlation between the signal parts of the two models. 

\end{abstract}

%\tableofcontents

\section{Introduction}
\label{sec:introduction}

Low rank signal plus noise models provide a basic framework for high dimensional inference. 
Estimating the low rank signal from a single noisy observation is a fundamental problem in statistics, machine learning, and signal processing.
In this paper, we consider a data matrix $Y$ modelled as
\begin{equation}
\label{eq:unified-signal plus noise-model}
        Y=X+S\in\R^{p\times n},
        \qquad \operatorname{rank}(S)=r,
\end{equation}
where $X$ is a centered noise matrix and $S$ is a deterministic signal of fixed rank.
The inferential goals are to estimate the singular values of $S$, and recover information about the associated signal subspaces from the observed data matrix $Y$.
In this paper, the noise matrix $X$ is allowed to be heteroscedastic with a unknown variance profile
\[
        T=(t_{ij}),
        \qquad
        t_{ij}=\E X_{ij}^2\asymp n^{-1}.
\]
Heteroscedastic noise arises naturally in several matrix valued data types, such as the adjacency matrix of a network with heterogeneous expected degrees \citep{KarrerNewman2011,GKKZ2025lowrank}, count matrices from single cell and spatial genomics, Hi-C, and document term data
\citep{LandaZhangKluger2022,LandaKluger2025}, and photon limited imaging with Poisson distributed pixel counts \citep{LiuDobribanSinger2018}.  
These examples include both symmetric and rectangular data matrices, and the variance profile may be heterogeneous across both rows and columns.

We study two settings of \eqref{eq:unified-signal plus noise-model}.  In
the symmetric setting, $p=n$, the matrices $X$ and $S$ are symmetric, and
\[
        S=UDU^\top,
        \qquad
        D=\diag(d_1,\ldots,d_r),
\]
where $U$ contains orthonormal columns.  
In the rectangular setting, $p/n\to\phi\in(0,\infty)$ and
\[
        S=UDV^\top,
\]
where both $U$ and $V$ contain orthonormal columns. 
Our first goal is to estimate the spike strengths $d_k$ from a single noisy observation $Y$. 
The second goal is to estimate the correlation between two signal matrices $S_1$ and $S_2$ from two given noisy data sets $Y_1$ and $Y_2$.

The spectral properties of signal plus noise models have been extensively studied in Random Matrix Theory, High-dimensional Statistics and their applications; see, for example, \citet{BBP2006,FeralPeche2007,BaiYao2008,CapitaineDonatiMartinFeral2009,BenaychGeorgesGuionnetMaida2011,BenaychGeorgesNadakuditi2011,CapitaineDonatiMartinFeralFevrier2011,LoubatonVallet2011,BaiYao2012,BenaychGeorgesNadakuditi2012,HachemLoubatonMestreNajimVallet2012,BloemendalVirag2013,KnowlesYin2013,PizzoRenfrewSoshnikov2013,KnowlesYin2014,Capitaine2014,BloemendalVirag2016,BaoDingWang2021,GengLiuZou2024,BykhovskayaGorinSodin2025}, etc. But nearly all these works assumed that the noise entries have the same and known variance.
Our main focus of this work  is on estimating spike strengths from the observed spectrum without knowing the noise variance profile.

Even in the homogeneous Wigner model, a sample outlier does not directly estimate its underlying spike.
For example, if $t_{ij}=\sigma^2/n$, then a supercritical rank one signal of strength $d$ produces a sample outlier near $d+\sigma^2/d$ \citep{CapitaineDonatiMartinFeral2009,BenaychGeorgesNadakuditi2011}, rather than near
$d$ itself.  
Recovering $d$ therefore requires a noise dependent correction or transformation.
Under a general profile, the correction even depends on the signal directions.
Indeed, the noise resolvent $G_X(z) \deq (X-zI_n)^{-1}$ is approximated by
the diagonal matrix $M_X(z)=\diag(m_1(z),\ldots,m_n(z))$, where $m_i(z)$ solves the vector Dyson equation \citep{AjankiErdosKruger2017,AjankiErdosKruger2019QVE}
\begin{equation*}
        -\frac{1}{m_i(z)}
        =
        z+\sum_{j=1}^n t_{ij}m_j(z),
        \qquad i\in\intset{n} \deq \{1,2,\ldots,n\},
\end{equation*}
and the finite rank perturbation argument gives the outlier equation
\begin{equation*}
        \det\!\left(I_r+D\,U^\top M_X(z)U\right)=0.
\end{equation*}
Hence the map from an observed sample outlier to $d_k$ depends on both the unknown profile and the unknown signal directions.  
The same issue persists in the rectangular model if we aim to estimate the signal strength via the singular values of the data matrix. 
For rectangular models with Gaussian noise, \citet{BigotMale2021} describe singular value outliers through deterministic equations involving both the variance profile and the signal singular vectors.
Consequently, the direct spectral analysis of $Y$ does not yield a noise profile free and signal direction free estimation from observed outliers to spike strengths. This has been a fundamental difficulty on the heteroscedastic spike models. 

There exist several methods in the literature that can estimate the spike strengths from the observed data, but they all require either additional assumptions on the noise or additional observations.
\citet{ShabalinNobel2013} invert the rectangular outlier map and estimates the noise variance, but assumes i.i.d. Gaussian noise.  
\citep{Nadakuditi2014OptShrink} assumes bi-unitarily/bi-orthogonally invariant noise, and proposes the \texttt{OptShrink} estimator by inverting a noise $D$-transform \citep{BenaychGeorgesNadakuditi2012}.
\citet{SuWu2025} extend this approach to noise with separable covariance structure. 
\citet{ChenChengFan2021} estimate the spike strength in a rank one model under heteroscedastic noise using the leading eigenvalue of an asymmetric/non-Hermitian data matrix. Along this line, 
\citet{BaoCheongLeeLi2025} propose an asymmetrization based detection method to detect signal even when the noise itself produces large spikes, and the method requires two independent noisy observations.
\citet{GavishLeebRomanov2023}  propose a pseudo whitening based singular value shrinkage procedure by leveraging auxiliary pure noise samples and the outlier asymptotics of spiked $F$ matrices.
To our knowledge, there is no existing method that can estimate the spike strengths from a single noisy observation with heteroscedastic noise and an unknown variance profile for spiked symmetric matrices.

The use of a non-Hermitian matrix instead of a Hermitian one is a key input to our work.  This idea originates in Tao's observation that, for a bounded rank deformation of an i.i.d. non-Hermitian matrix, an outlier outside the circular law support converges to the corresponding signal eigenvalue, without an order one bias which exists in the spiked Hermitian models \citep{Tao2013}.  In \citet{ChenChengFan2021}, the authors extended this idea to the low rank estimation problem  with independent, possibly heteroscedastic, asymmetric noise, and further in \citet{BaoCheongLeeLi2025} the authors used the asymmetrization idea for signal detection under heteroscedastic and heavy-tailed noise, when the noise itself could create large singular values.  However, the former work assumes direct access to an asymmetric perturbation, whereas the latter requires two independent samples. Consequently, these constructions cannot in general be applied when only one symmetric observation is available, or an asymmtric rectangular matrix is observed but one wants to estimate the leading singular values in the signal which again boils down to the eigenvalue problem of a symmetric matrix.  Several other methods based on one sample  which bear an asymmetrization natural are also related to various extent.  For very sparse matrix completion, in \citet{BordenaveCosteNadakuditi2023}, the authors randomly split the observed entries into two disjoint matrices and analyze their asymmetric cross-product.  This construction is distinct from, although closely related to, the non-backtracking operator studied in the same paper, which can be regarded as another type of asymmetrization approach. However, the approach developed in the paper applies to very sparse random observations, and does not seem to apply to the dense matrix case. In \citet{DingHopkinsSteurer2020}, the authors instead use self-avoiding walks to estimate a rank one spike under independent heavy-tailed noise with only finite second moments. But the variances of the noise matrix are required to be the same and known, and the focus is the eigenvector.  Related non-backtracking and self-avoiding constructions also appear in other works such as  
\citet{Massoulie2014, BordenaveLelargeMassoulie2018, StephanZhu2024}.  These works motivate constructing asymmetry from a single sample by preventing the repeated use of the same noisy observation.

Our starting point of this paper is inspired by the work of \citet{BordenaveCosteNadakuditi2023}, who randomly split the observed matrix into complementary parts and analyze the resulting non-Hermitian cross-product.
Their setting is sparse matrix completion rather than a dense additive signal plus noise model \eqref{eq:unified-signal plus noise-model}, but their random splitting idea motivates the construction below. 
Let $P\in\{0,1\}^{p\times n}$ be independent of the observation and have $\mathrm{Bernoulli}(1/2)$ entries.  
In the rectangular model these entries are i.i.d.; in the symmetric model, $p=n$ and $P$ is generated symmetrically from independent upper triangular entries.  
Define
\begin{equation}\label{eq:split-matrix-def}
\begin{aligned}
        \mathcal Y
        & \deq
        \begin{pmatrix}
        0&P\circ Y\\
        \bigl((\one_p\one_n^\top-P)\circ Y\bigr)^\top&0
        \end{pmatrix}\\
        & = \begin{pmatrix}
        0&P\circ X\\
        \bigl((\one_p\one_n^\top-P)\circ X\bigr)^\top&0
        \end{pmatrix} + \begin{pmatrix}
        0&P\circ S\\
        \bigl((\one_p\one_n^\top-P)\circ S\bigr)^\top&0
        \end{pmatrix}
\end{aligned}
\end{equation}
The two off-diagonal blocks contain complementary subsets of the entries of $Y$.
The signal part of $Y$ admits the decomposition
\begin{equation*}
\begin{pmatrix}
0&P\circ S\\
\bigl((\one_p\one_n^\top-P)\circ S\bigr)^\top&0
\end{pmatrix} 
 =
\frac12
\begin{pmatrix}0&S\\S^\top&0\end{pmatrix}
+
\begin{pmatrix}0&\mathcal R\\-\mathcal R^\top&0\end{pmatrix},
\qquad 
\mathcal R
\deq
\left(P-\frac12\one_p\one_n^\top\right)\circ S.
\end{equation*}
The first matrix on the right hand side has nonzero eigenvalues
$\{\pm d_k/2:k\in\intset{r}\}$, in both the symmetric and rectangular models.
A rather weak incoherence condition on the signal directions  ensures that $\mathcal R$ is negligible in operator norm.  
Thus a positive outlier $\lambda_{k,+}(\mathcal Y)$ leads to the direct estimator
\begin{equation}
\label{eq:unified-signal-strength-estimator}
        \widehat d_k
        \deq
        2\Re\lambda_{k,+}(\mathcal Y).
\end{equation}

We will derive two main results based on the asymmetrized model via random splitting in \eqref{eq:split-matrix-def}. 

Our first result establishes a BBP type transition for our estimator.
The asymmetrized noise matrix, i.e., the first term in the RHS of \eqref{eq:split-matrix-def},  has asymptotic outer radius
$
        b_{\star,n}=\sqrt{\|T\|_2/2}.
$
For fixed rank positive spikes whose directions are sufficiently incoherent in the sense that $\max_{k\in\intset{r}}\|u_k\|_\infty=\smallo(1)$, every spike satisfying $d_k/2>b_{\star,n}$ by a fixed margin generates two outliers.
They can be labeled so that
\[
        \lambda_{k,\pm}(\mathcal Y)
        =
        \pm\frac{d_k}{2}
        +
        \smallop(1).
\]
Conversely, spikes that remain a fixed distance below $b_{\star,n}$ produce no eigenvalues outside the noise bulk.
It follows that, when the rank is known and the spikes are supercritical, $2\Re\lambda_{k,+}(\mathcal Y)$ consistently estimates $d_k$ without estimating $T$ or applying a profile dependent bias correction.

The second result concerns the estimation of correlation between two correlated spiked models.
We consider two noisy observations
\[
    Y_a = X_a + U_a D_a V_a^{\top}, \qquad a = 1,2,
\]
where the spike parts are correlated and the noise parts are independent but may both be heteroscedastic.
The statistical target is the cross sample overlap matrixs $U_1^\top U_2$ and $V_1^{\top}V_2$, which is identifiable only up to sign changes.
We use samplewise random splitting and the resulting eigenvector projections to construct estimates of the entrywise overlap magnitudes. 
This problem connects to recent work on signal recovery and prediction from correlated spiked models.
\citet{KeupZdeborova2025,NandyMa2024} develop approximate message passing (AMP) methods for signal recovery.
\citet{MergnyZdeborova2025} analyze outliers and signal direction overlaps of singular vectors of $Y_1^\top Y_2$.
\citet{Li2025CorrelatedSpikes,DuHuLepsveridze2026,YangSenLu2026} study detection and recovery thresholds using combinatorial and spectral methods.
However, the above references mainly assume that the spikes are generated from a joint probability distribution and the goal is to detect the presence of correlated spikes or to recover the individual spike vectors. Our problem instead treats the spikes and strengths as deterministic  and seeks to estimate their unknown overlap/correlation directly from two independent spiked Wigner observations. Hence, our contribution is a consistent estimation of signal overlaps under unknown and possibly different variance profiles.

Further, although our construction uses a similar random splitting idea as \citet{BordenaveCosteNadakuditi2023}, the resulting spectral problem is
technically rather different.  In \citet{BordenaveCosteNadakuditi2023}, entries are
initially observed with probability of order \(n^{-1}\), so the split matrices
have bounded expected degree.  Their analysis therefore relies on sparse graph
geometry: logarithmic powers are expanded as weighted path sums, local
neighborhoods are coupled to marked Galton--Watson trees, and tangle free
decompositions together with high trace estimates are used to construct
approximate eigenvectors and control the remaining spectrum.  By contrast, our random matrix models
are dense,  with entry correlations
inherited from the Wigner symmetry and the complementary splitting of a single
sample.  Consequently, no locally tree like approximation is available.
Following the dense random matrix strategy of \citet{BaoCheongLeeLi2025}, we instead use linearization and Hermitization,
analyze the corresponding Dyson equation, and establish isotropic
Green function estimates; the signal outliers are
then characterized by a finite dimensional determinant equation.  The main
additional difficulty relative to \citet{BaoCheongLeeLi2025} is that the
linearized noise has a nontrivial entry correlation structure, rather than independent
off diagonal blocks, and this dependence must be retained throughout the
resolvent analysis. In particular, we will apply the approach in \citet{AjankiErdosKruger2019} for random matrix with correlated entries. We also emphasize that although the previous work \citet{BaoCheongLeeLi2025} requires two samples, it has a different focus as it is mainly for the scenario when the noise itself could create much larger singular values than the signal part, and the asymmtrization is used to distinguish the signal from the noise spikes. It is not mainly aim for dealing with a heteroscedasitic noise with mild variance profile only. It is not clear if the one sample approach applied here can be extended to deal with the scenario when a big spike presents in the noise part as we previously discussed in \citet{BaoCheongLeeLi2025}. It will be left a a furture research direction.

Throughout this paper, we will use the following notations. 
We use $c, C$ for positive constants that may change from line to line.
The symbol $X \deq Y$ (or $Y \eqd X$) indicates that $X$ is defined as $Y$. 
For a positive integer $n$, we write $\intset{n}\deq\{1,\ldots,n\}$.
For any matrix $A$, we denote its $(i,j)$-th entry by $A_{ij}$, its transpose by $A^{\top}$, its Hermitian transpose by $A^*$, and its spectral norm by $\|A\|_2$.
For $A\in\mathbb C^{m\times n}$, we write
$\sigma_1(A)\geq\cdots\geq\sigma_{\min\{m,n\}}(A)\geq0$ for its singular
values and set $\sigma_{\min}(A)\deq\sigma_{\min\{m,n\}}(A)$.
If $A$ is square, then $\Tr(A)$ and $\spec(A)$ denote its trace and spectrum, respectively, and $\varrho(A)\deq\max\{|\lambda|:\lambda\in\spec(A)\}$ denotes its spectral radius.
Let $\lambda_1(A),\ldots,\lambda_m(A)$ denote the eigenvalues of $A \in \mathbb C^{m\times m}$, ordered by decreasing real parts.
We write $a_n\gtrsim b_n$ when $a_n\geq cb_n$ for some absolute constant $c>0$, and $a_n\lesssim b_n$ when $b_n\gtrsim a_n$. 
We write $a_n\asymp b_n$ if $c_0\leq a_n/b_n \leq C_0$ for some absolute constants $c_0, C_0 >0$. 
For random variables $\{X_n\}_{n=1}^{\infty}$ and positive real numbers $\{a_n\}_{n=1}^{\infty}$, we write $X_n=\bigop(a_n)$ if $X_n/a_n$ is bounded in probability, and $X_n=\smallop(a_n)$ if $X_n/a_n$ converges to zero in probability.

The rest of the paper is organized as follows. 
Section~\ref{sec:spiked-wigner-model} presents the random-splitting estimators for spike strengths, overlaps, and signal correlation in the symmetric Wigner model, together with their theoretical guarantees.
In Section~\ref{sec:rect-extension}, we extend these estimators and results to rectangular matrices.
Section~\ref{sec:numerical-simulations} reports numerical simulations for the Wigner model.
We present the proof strategy and proofs of the main Wigner results in Section~\ref{sec:proof-strategy}.
The supplementary material contains additional numerical results and proofs of the auxiliary lemmas.

\section{Symmetric Wigner Model}\label{sec:spiked-wigner-model}

\subsection{Random-Splitting Estimator}

Recall the signal plus noise model defined in \eqref{eq:unified-signal plus noise-model}. 
We first introduce the following rather general assumptions on the noise and signal.

\begin{assumption}\label{asm:wigner-assumptions}
\leavevmode
\begin{assumptionparts}
\item (Noise moments) \label{asm:moment}
The matrix $X=(X_{ij})_{i,j=1}^n$ is real symmetric, and the variables $\{X_{ij}:1\leq i\leq j\leq n\}$ are independent.
For constants $0<C_*\leq C^*<\infty$,
\[
        \E X_{ij}=0,
        \qquad
        \E X_{ij}^2=t_{ij},
        \qquad
        \frac{C_*}{n}\leq t_{ij}\leq\frac{C^*}{n}.
\]
Set
\[
        T\deq(t_{ij})_{i,j=1}^n,
        \qquad
        \mathbb T\deq\frac{T}{2}.
\]
For every fixed integer $k\geq1$, there is a constant $C_k<\infty$ such that
\[
        \sup_n\sup_{i,j\in\intset{n}}
        \E|\sqrt n\,X_{ij}|^k
        \leq C_k.
\]

\item (Incoherent signal directions) \label{asm:delocalization}
The deterministic unit vectors $u_1,\ldots,u_r$ are orthonormal and satisfy
\[
        \mu_n
        \deq
        \max_{k\in\intset{r}}\|u_k\|_\infty
        =
        \smallo(1).
\]

\item (Population spikes)
\label{asm:signal}
The rank $r$ is fixed. 
The spike strengths $d_1\geq\cdots\geq d_r>0$ are fixed deterministic numbers, independent of $n$, and $d_{\max}\deq d_1\leq C$ for a fixed constant $C$.
There exists a fixed integer $r_+\in\intset{r}$ such that, for some fixed $\delta>0$ and all sufficiently large $n$,
\begin{equation*}
\min_{1 \leq k \leq r_+}\frac{d_k}{2} \geq b_{\star,n}+\delta,
\qquad
\max_{r_+ < k \leq r}\frac{d_k}{2} \leq b_{\star,n}-\delta,
\qquad
b_{\star,n}\deq\sqrt{\|\mathbb T\|_2}.
\end{equation*}
The second condition is omitted when $r_+=r$.
\end{assumptionparts}
\end{assumption}

To estimate the spike strengths, we construct an asymmetric matrix $\mathcal Y$ defined in \eqref{eq:split-matrix-def}.
Let $\{\lambda_i(\mathcal Y)\}_{i=1}^{2n}$ be the eigenvalues of $\mathcal Y$, ordered by decreasing real parts. 
For $J=\diag(I_n,-I_n)$, the block form of $\mathcal Y$ gives $\mathcal Y J = -J \mathcal Y$.
This implies that the eigenvalues of $\mathcal Y$ are symmetric about the origin, and we can label them as
\[
        \lambda_{i,+}(\mathcal Y)\deq \lambda_i(\mathcal Y),
        \qquad
        \lambda_{i,-}(\mathcal Y)\deq -\lambda_i(\mathcal Y),
        \qquad 
        i\in\intset{n}.
\]
Writing
\begin{equation}\label{eq:split-matrix-decomposition}
        \mathsf A=P\circ X,
        \qquad
        \mathsf B=(\one_n\one_n^\top-P)\circ X,
        \qquad
        \mathcal X=\begin{pmatrix}0&\; \mathsf A\\ \mathsf B&\;0\end{pmatrix}.
\end{equation}
Here, $P$ is a symmetric matrix with independent upper triangular $\mathrm{Bernoulli}(1/2)$ entries.
For $\mathsf s\in\{+,-\}$, define
\begin{equation}
\label{eq:normalized-spike-basis}
        \mathfrak u_{k,\mathsf s}
        \deq  \frac{1}{\sqrt{2}} (u_k^{\top}, \; \mathsf s u_k^{\top})^{\top},
        \qquad
        \alpha_{k,\mathsf s}
        \deq
        \mathsf s\frac{d_k}{2}.
\end{equation}
Let $\mathcal U$ collect these $2r$ vectors and let $\mathcal D$ be the diagonal matrix containing the corresponding values $\alpha_{k,\mathsf s}$.
We have the following decomposition
\begin{equation}
\label{eq:split-signal-decomposition}
        \mathcal Y=\mathcal X+\mathcal E+\mathcal U\mathcal D\mathcal U^\top,
        \qquad
        \mathcal E \deq
        \begin{pmatrix}
        0&\mathcal R\\
        -\mathcal R&0
        \end{pmatrix},
        \qquad
        \mathcal R \deq
        \left(P-\frac12\one_n\one_n^\top\right)\circ S.
\end{equation}
Thus, the leading signal has eigenvalues $\pm d_k/2$, while the remainder $\mathcal R$ is confined to $\mathcal E$. 
The incoherence condition \ref{asm:delocalization} makes $\|\mathcal E\|_2=\smallop(1)$ (see Lemma~\ref{lem:mask-remainder-bound}).

\begin{algorithm}[htbp]
\caption{(Wigner case) Spike estimation by random splitting}
\label{alg:spike-estimation}
\begin{algorithmic}[1]
\Require A symmetric observation $Y\in\mathbb R^{n\times n}$ and a known $r_+$.
\Ensure Estimates $\widehat d_1,\ldots,\widehat d_{r_+}$.
\State Draw an independent symmetric mask $P$ with upper triangular entries distributed as $\mathrm{Bernoulli}(1/2)$.
\State Form $\mathcal Y$ as in \eqref{eq:split-matrix-def}.
\State Compute the eigenvalues of $\mathcal Y$ and set
\[
        \widehat d_k\deq2\Re\lambda_{k,+}(\mathcal Y),
        \qquad
        k\in\intset{r_+}.
\]
\State \textbf{Return} $\widehat d_1,\ldots,\widehat d_{r_+}$.
\end{algorithmic}
\end{algorithm}

\begin{remark}\label{rmk:unknown-rank}
Our Algorithm~\ref{alg:spike-estimation} assumes that $r_+$ is known.
When $r_+$ is unknown, one may estimate it by first estimating the bulk edge from eigenvalues whose arguments stay away from the real axis, and then count positive real axis outliers beyond that estimate.
Following \citet[Section~2]{BaoCheongLeeLi2025}, we define the data-based estimator and the corresponding candidate signal region
\begin{equation}\label{eq:sector-edge-estimator}
\begin{aligned}
        \lambda_{\max}^{\mathrm s}
        &\deq
        \max_{j \in \intset{2n}}
        |\lambda_j(\mathcal Y)|
        \indicator\bigg\{\arg\lambda_j(\mathcal Y)\in\bigg[\frac{\pi}{\log(2n)},\frac{\pi}{2}\bigg]\bigg\},\\
        \mathscr D_n
        & \deq
        \big\{
        z\in\mathbb C:
        \Re z\geq\lambda_{\max}^{\mathrm s}+(2n)^{-1/2}
        \big\}.
\end{aligned}
\end{equation}
The sector in \eqref{eq:sector-edge-estimator} is designed to avoid the signal outliers, so
that $\lambda_{\max}^{\mathrm s}$ serves as a data-based estimator for the bulk radius.
We identify $\lambda_j(\mathcal Y)$ as a signal eigenvalue whenever $\lambda_j(\mathcal Y)\in\mathscr D_n$. 
More specifically, we estimate $r_+$ by 
\begin{equation*}
        \widehat r_+ \deq \sum_{j=1}^{2n} \indicator \{\lambda_j(\mathcal Y) \in \mathscr D_n\}.
\end{equation*}
\end{remark}

Fix $\vartheta\in(0,1/2)$ and define the error scale
\begin{equation}
\label{eq:outlier-error-scale}
        \eta_n
        \deq
        n^{-1/2+\vartheta}+d_{\max}\mu_n.
\end{equation}
Assumptions~\ref{asm:delocalization} and \ref{asm:signal} imply $\eta_n=\smallo(1)$.
The following theorem establishes the consistency of the spike estimator in Algorithm~\ref{alg:spike-estimation}.

\begin{theorem}[Consistency of Algorithm~\ref{alg:spike-estimation}]
\label{thm:spike-estimation-consistency}
Under Assumption~\ref{asm:wigner-assumptions}, Algorithm~\ref{alg:spike-estimation} satisfies
\[
        \max_{k\in\intset{r_+}}
        |\widehat d_k-d_k|
        = 
        \bigop(\eta_n)
        =
        \smallop(1).
\]
\end{theorem}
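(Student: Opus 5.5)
We prove the theorem by a finite-rank perturbation argument for the non-Hermitian matrix $\mathcal Y=\mathcal X+\mathcal E+\mathcal U\mathcal D\mathcal U^\top$ from \eqref{eq:split-signal-decomposition}, which reduces the location of outliers to a $2r\times 2r$ determinant equation. We first discard $\mathcal E$: by Lemma~\ref{lem:mask-remainder-bound}, $\|\mathcal E\|_2=\bigop(d_{\max}\mu_n)$, and we absorb it into the error scale $\eta_n$ at the end. For $z$ outside the noise bulk, i.e.\ $|z|\geq b_{\star,n}+\delta/2$, write
\[
\det(\mathcal Y-z)=\det(\mathcal X+\mathcal E-z)\,\det\bigl(I_{2r}+\mathcal D\,\mathcal U^\top(\mathcal X+\mathcal E-z)^{-1}\mathcal U\bigr).
\]
Two noise inputs are needed. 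First, $\mathcal X+\mathcal E$ has no eigenvalues in $\{|z|\geq b_{\star,n}+\delta/2\}$ with high probability. Second, there is an isotropic local law
\[
\mathfrak u^\top(\mathcal X-z)^{-1}\mathfrak v=-z^{-1}\mathfrak u^\top\mathfrak v+\bigop(n^{-1/2+\vartheta}),
\]
uniformly in such $z$ and in deterministic unit vectors. The leading term $-z^{-1}$ holds because outside the spectral radius the deterministic equivalent of the non-Hermitian resolvent is the free one: all moments $\E\,\mathfrak u^\top\mathcal X^k\mathfrak v$ vanish to leading order for $k\geq1$. This reflects the fact that $\mathsf A$ and $\mathsf B$ are supported on complementary entries, so the $(i,j)$ entry of $\mathsf A$ is never paired with its own copy. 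Both inputs would be obtained through Hermitization, as in the strategy of \citet{BaoCheongLeeLi2025}. One studies the $4n\times4n$ Hermitization of $\mathcal X-z$, solves its matrix Dyson equation, and shows that for $|z|^2>\|\mathbb T\|_2$ the solution has vanishing imaginary part at $\eta\downarrow0$. This yields a lower bound on $\sigma_{\min}(\mathcal X-z)$, and hence the isotropic resolvent bound, by the approach of \citet{AjankiErdosKruger2019} for correlated entries.

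Given these inputs, the determinant equation becomes $\det\bigl(I_{2r}-z^{-1}\mathcal D+\bigop(\eta_n)\bigr)=0$, using $\mathcal U^\top\mathcal U=I_{2r}$, which holds because the vectors $\mathfrak u_{k,\pm}$ are orthonormal. The unperturbed function $\det(I-z^{-1}\mathcal D)=\prod_{k,\mathsf s}(1-\alpha_{k,\mathsf s}/z)$ has zeros exactly at $\pm d_k/2$. In the region $|z|\geq b_{\star,n}+\delta/2$, only the supercritical ones $k\leq r_+$ lie there, and they lie a distance at least $\delta/2$ from its boundary. We then apply Rouché's theorem on small circles of radius $C\eta_n$ around each $\pm d_k/2$ with $k\leq r_+$, and on the complement of these circles within the region. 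Multiple spikes need care: if the $d_k$ coincide or are closer than $C\eta_n$, we group them and count zeros in the union of discs, which still matches the labeling up to $\bigop(\eta_n)$. The conclusion is that, with high probability, $\mathcal Y$ has exactly $2r_+$ eigenvalues in the region, located within $\bigop(\eta_n)$ of $\pm d_k/2$. Because the $\lambda_{i,+}$ are ordered by decreasing real part and every remaining eigenvalue has real part at most $b_{\star,n}+\delta/2<d_{r_+}/2-\delta/2$, the $r_+$ largest-real-part eigenvalues are precisely those near $d_1/2,\dots,d_{r_+}/2$. It follows that $|2\Re\lambda_{k,+}(\mathcal Y)-d_k|=\bigop(\eta_n)$.

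To handle $\mathcal E$, we compare resolvents through the identity
\[
(\mathcal X+\mathcal E-z)^{-1}=(\mathcal X-z)^{-1}-(\mathcal X-z)^{-1}\mathcal E(\mathcal X+\mathcal E-z)^{-1},
\]
together with the operator-norm bound on $(\mathcal X-z)^{-1}$ outside the bulk, which is also supplied by the Hermitization estimate. This adds $\bigop(d_{\max}\mu_n)$ to the determinant matrix, and the same bound shows that $\mathcal E$ cannot create spurious eigenvalues outside the bulk.

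The main obstacle is the noise input: proving the spectral radius bound $\varrho(\mathcal X)\leq b_{\star,n}+\smallop(1)$ and the isotropic law, because $\mathsf A$ and $\mathsf B$ are correlated through $X$ and the shared mask $P$. Since $P$ is symmetric, $\mathsf B=\mathsf A^{\top\prime}$-type complements, so the blocks are not independent. I would first condition on $P$. Given $P$, each pair $\{X_{ij},X_{ji}\}$ lands in exactly one of $\mathsf A_{ij}$ or $\mathsf B_{ij}$, and in the mirrored position of the other block. The self-energy operator of the linearization therefore has an explicit correlation term that the Dyson equation must track. One then checks that the stability operator is invertible for $|z|^2>\|\mathbb T\|_2$, with the averaged variance $\E_P$ giving $\mathbb T=T/2$, before running the standard cumulant-expansion proof of the local law.
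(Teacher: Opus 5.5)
Your architecture matches the paper's. It uses the factorization $\det(z-\mathcal Y)=\det(z-\mathcal X-\mathcal E)\det F_n(z)$ and a Hermitization/MDE analysis that gives a gap of $\supp\nu_z$ at the origin for $|z|\geq b_{\star,n}+\varepsilon$. An outside-spectrum local law for correlated entries then yields both a uniform lower bound on $\sigma_{\min}(z-\mathcal X)$ and the isotropic limit $-z^{-1}$, which is the off-diagonal block of $M_z(0)$. Finally, the resolvent identity absorbs $\mathcal E$, and Rouch\'e is applied on discs of radius $C\eta_n$.

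Your check that the $r_+$ eigenvalues of largest real part are exactly those near $d_1/2,\dots,d_{r_+}/2$ is useful. So is your check that ordering by real part keeps the same $\bigop(\eta_n)$ error. Together they supply a step the paper leaves implicit when it calls the theorem a direct consequence of Proposition~\ref{prop:outlier-location}.

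Two omissions are routine but needed:
\begin{itemize}
\item The exterior region must be made compact by a norm cutoff $\Pr(\|\mathcal Y\|_2\geq\mathsf C)\leq n^{-D}$, since the local law is only available for $|z|\leq C_z$.
\item ``Vanishing imaginary part as $\eta\downarrow0$'' along $\zeta=\ii\eta$ must be upgraded before the local law can exclude eigenvalues of $H_z$ near $0$. You need a uniform gap $\dist(0,\supp\nu_z)\geq\delta'(\varepsilon)$ (Lemma~\ref{lem:support}) plus a net argument in $z$.
\end{itemize}

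The substantive error is in your last paragraph's description of the dependence. Because $P$ is symmetric, $\mathsf A=P\circ X$ and $\mathsf B=(\one_n\one_n^\top-P)\circ X$ are each symmetric with disjoint supports. If $P_{ij}=1$, the common value $X_{ij}=X_{ji}$ occupies $\mathsf A_{ij}$ and $\mathsf A_{ji}$, the mirrored position of the \emph{same} block, while $\mathsf B_{ij}=\mathsf B_{ji}=0$.

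Suppose instead it sat at the mirrored position of the other block, $\mathsf B_{ji}=\mathsf A_{ij}$, as you wrote. Then $\mathcal X_{i,n+j}$ and $\mathcal X_{n+j,i}$ would be the same variable, and $\E[\mathsf A\mathsf B]$ would have diagonal entries $\asymp1$. The self-energy block $\E[\mathcal X M_{21}\mathcal X]$ would then not vanish at $M_{21}=-z^{-1}I_{2n}$. You would lose exactly the $-z^{-1}$ leading term you correctly invoked earlier.

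The correct second-order picture is as follows. Entries of $\mathsf A$ and of $\mathsf B$ are mutually uncorrelated, because $\E[P_{ij}(1-P_{ij})X_{ij}^2]=0$. Apart from variances, the only covariances are the within-block symmetries $\mathsf A_{ij}=\mathsf A_{ji}$ and $\mathsf B_{ij}=\mathsf B_{ji}$. These drop out of the reduced diagonal equations of Lemma~\ref{lem:block-structure}.

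Conditioning on $P$ is also a detour. It makes the blocks conditionally independent, but with random profiles $P\circ T$ and $(\one_n\one_n^\top-P)\circ T$, about half of whose entries vanish. The support analysis relies on $\mathbb T_{ij}\geq C_*/(2n)$, so it would then need an extra comparison step. The paper instead keeps $(X_{ij},P_{ij})$ jointly random, so both blocks carry the flat profile $\mathbb T=T/2$. It then verifies the cumulant-decay hypotheses of the correlated local law using companion classes of at most $8$ entries (proof of Lemma~\ref{lem:local-law}).
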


Theorem~\ref{thm:spike-estimation-consistency} is a direct consequence of the following proposition, which establishes a phase transition for the asymmetrized model $\mathcal Y$.

\begin{proposition}[Phase transition]
\label{prop:outlier-location}
Suppose that Assumption~\ref{asm:wigner-assumptions} hold.
For every fixed $\kappa\in(0,\delta)$, with probability tending to one, the eigenvalues of $\mathcal Y$ in $\{z\in\mathbb C:|z|\geq b_{\star,n}+\kappa\}$ consist of exactly $2r_+$ eigenvalues, counted with algebraic multiplicity, and
\begin{equation}
\label{eq:outlier-location-rate}
        \max_{k\in\intset{r_+},\;\mathsf s\in\{+,-\}}
        \left|
        \lambda_{k,\mathsf s}(\mathcal Y) - \alpha_{k, \mathsf s}
        \right|
        =
        \bigop(\eta_n),
\end{equation}
where $\alpha_{k,\mathsf s}$ is defined in \eqref{eq:normalized-spike-basis}.
There are no other eigenvalues of $\mathcal Y$ in $\{z\in\mathbb C:|z|\geq b_{\star,n}+\kappa\}$.
\end{proposition}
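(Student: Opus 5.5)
We follow the linearization--Hermitization route announced in the introduction and reduce the outlier problem to a $2r\times 2r$ determinant equation. Write $\mathcal Y=\mathcal X+\mathcal E+\mathcal U\mathcal D\mathcal U^\top$ as in \eqref{eq:split-signal-decomposition}. The first ingredient is a spectral radius bound for the noise: with probability tending to one, $\mathcal X$ has no eigenvalues in $\{|z|\geq b_{\star,n}+\kappa/2\}$. We would prove this by Hermitization. For $|z|\geq b_{\star,n}+\kappa/4$, consider the $4n\times 4n$ Hermitian matrix $H_z=\begin{pmatrix}0&\mathcal X-z\\(\mathcal X-z)^*&0\end{pmatrix}$ and its resolvent at $\ii\eta$. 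The entries of $\mathcal X$ are correlated: $\mathsf A_{ij}$ and $\mathsf B_{ji}$ are built from the same $X_{ij}$ with complementary masks, so $\E\mathsf A_{ij}\mathsf B_{ji}=0$ but $\mathsf A_{ij}\mathsf B_{ji}=0$ identically, and $\mathsf A_{ij}$, $\mathsf A_{ji}$ coincide. Hence the self-energy operator is a block operator built from $\mathbb T=T/2$, and we would solve the corresponding matrix Dyson equation following \citet{AjankiErdosKruger2019}. Outside the disk of radius $\sqrt{\|\mathbb T\|_2}$, the solution at $\eta\downarrow0$ has vanishing imaginary part, and $\sigma_{\min}(\mathcal X-z)$ is bounded below by a constant.

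Next we establish isotropic local laws for $G_z(\ii\eta)$ down to $\eta=n^{-1+\epsilon}$, uniformly in $z$ on the region via a net argument, with error $n^{-1/2+\vartheta}$. This gives $\sigma_{\min}(\mathcal X-z)\geq c$ with high probability, and moreover
\begin{equation*}
\mathcal U^\top(\mathcal X-z)^{-1}\mathcal U=-z^{-1}I_{2r}+\bigo(n^{-1/2+\vartheta}).
\end{equation*}
This approximation holds because the deterministic equivalent of $(\mathcal X-z)^{-1}$ outside the spectrum is $-z^{-1}$: the mean is zero, and the off-diagonal-block structure makes all cross terms of the Neumann series average out. This is the asymmetric analogue of Tao's observation. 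An alternative for the deterministic equivalent is a moment/Neumann expansion $\sum_k z^{-k-1}\mathcal X^k$, controlling $\langle \mathfrak u,\mathcal X^k\mathfrak u\rangle$ by $\|\mathbb T\|_2^{k/2}$-type bounds. We expect this step, namely the Dyson equation analysis with correlated blocks together with the uniform isotropic law, to be the main obstacle.

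Given these bounds, on $|z|\geq b_{\star,n}+\kappa$ we use Lemma~\ref{lem:mask-remainder-bound} ($\|\mathcal E\|_2=\bigop(d_{\max}\mu_n)$) and a resolvent expansion to absorb $\mathcal E$. Thus $\mathcal X+\mathcal E-z$ is invertible, and
\begin{equation*}
\mathcal U^\top(\mathcal X+\mathcal E-z)^{-1}\mathcal U=-z^{-1}I_{2r}+\bigop(\eta_n).
\end{equation*}
By the determinant identity, eigenvalues of $\mathcal Y$ in this region are exactly the zeros of
\begin{equation*}
f(z)=\det\bigl(I_{2r}+\mathcal D\,\mathcal U^\top(\mathcal X+\mathcal E-z)^{-1}\mathcal U\bigr),
\end{equation*}
with multiplicities. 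Compare $f$ with $g(z)=\det(I-z^{-1}\mathcal D)=\prod_{k,\mathsf s}(1-\alpha_{k,\mathsf s}/z)$. On the region, $|f-g|=\bigop(\eta_n)$ uniformly, and this bound extends to the analytic functions by holomorphy. The function $g$ has zeros exactly at $\alpha_{k,\mathsf s}$ with $k\leq r_+$. Subcritical spikes $|\alpha_{k,\mathsf s}|\leq b_{\star,n}-\delta$ give factors bounded away from zero, and $g\to1$ at infinity.

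Finally we apply Rouché's theorem on small circles of radius $C\eta_n$ around each cluster of the $\alpha_{k,\mathsf s}$, and on the complement within $|z|\geq b_{\star,n}+\kappa$, where $|g|\geq c$. This shows that $f$ has exactly $2r_+$ zeros there, each within $\bigo(\eta_n)$ of the corresponding $\alpha_{k,\mathsf s}$. Clusters of coinciding spikes are handled by matching multiplicities, using that $|g|\gtrsim\eta_n^{m}$ on the circle. The symmetry $\mathcal YJ=-J\mathcal Y$ allows the $\pm$ labeling. Large $|z|$ is trivial since $\|\mathcal Y\|_2=\bigop(1)$.
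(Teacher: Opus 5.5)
Your architecture matches the paper's: Hermitization of $\mathcal X$, the matrix Dyson equation with the correlated block structure, an exterior singular-value gap, an isotropic law giving $\mathcal U^\top(\mathcal X+\mathcal E-z)^{-1}\mathcal U=-z^{-1}I_{2r}+\bigop(\eta_n)$ uniformly on the annulus, the determinant lemma, and Rouch\'e. The gap is in the last step.

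\textbf{The problem with the scalar Rouch\'e step.} You compare the scalar determinants $f$ and $g$ using only $\sup_z|f(z)-g(z)|=\bigop(\eta_n)$. Assumption~\ref{asm:signal} allows ties $d_k=d_{k+1}$; the paper's own simulations use $\boldsymbol\gamma=(2.00,1.50,1.50)$. So $g$ can have a zero $\alpha$ of order $m\geq2$. On the circle $|z-\alpha|=C\eta_n$ one then has $|g(z)|\asymp(C\eta_n)^m$. This is much smaller than the only available bound $\bigop(\eta_n)$ on $|f-g|$, so the Rouch\'e hypothesis $|f-g|<|g|$ fails. The lower bound $|g|\gtrsim\eta_n^m$ that you invoke is too weak for this purpose, since it must dominate an error of size $\eta_n$.

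With a scalar bound alone, the best you can do is take circles of radius $\gg\eta_n^{1/m}$. That localizes a cluster only to $\bigop(\eta_n^{1/m})$, which is weaker than \eqref{eq:outlier-location-rate}. The same defect affects your exclusion of zeros off the disks. It is not true that $|g|\geq c$ on the complement of the small disks: near a cluster, $|g(z)|$ is of order $\dist(z,\alpha)^m$. For simple zeros, $|g(z)|\gtrsim\min_\alpha|z-\alpha|\geq C\eta_n$ does suffice, so your argument is fine when all $d_k$ are distinct.

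\textbf{The fix.} Perturb at the matrix level before taking determinants, as the paper does. Write $f=\det F_n$ with $F_n(z)\deq I_{2r}+\mathcal D\,\mathcal U^\top(\mathcal X+\mathcal E-z)^{-1}\mathcal U$, and $g=\det F_0$ with $F_0(z)=I_{2r}-z^{-1}\mathcal D$. Since $F_0$ is diagonal,
\[
\|F_0(z)^{-1}\|_2=\max_{k,\mathsf s}\frac{|z|}{|z-\alpha_{k,\mathsf s}|}\leq\frac{\mathsf C}{C\eta_n}
\]
off the disks of radius $C\eta_n$, independently of multiplicities. Suppose $\sup_z\|F_n-F_0\|_2\leq M\eta_n$, which holds with probability at least $1-\zeta$. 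Then $K_n\deq F_0^{-1}(F_n-F_0)$ satisfies $\|K_n\|_2\leq\mathsf CM/C<2^{1/(2r)}-1$ for $C$ large. Hence
\[
|\det(I_{2r}+K_n)-1|\leq(1+\|K_n\|_2)^{2r}-1<1 .
\]
Since $F_n=F_0(I_{2r}+K_n)$, this gives two things at once:
\begin{itemize}
\item $\det F_n\neq0$ off the disks;
\item $|\det F_n-\det F_0|<|\det F_0|$ on each circle, so each disk of radius $C\eta_n$ contains exactly $m_\alpha$ zeros.
\end{itemize}
This yields the $\bigop(\eta_n)$ rate for repeated spikes as well. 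Equivalently, a Schur-complement expansion of the determinant gives the refined bound $|f-g|\lesssim\sum_{j=1}^{m}|z-\alpha|^{m-j}\eta_n^{j}$ near a cluster. That bound does beat $|g|\asymp|z-\alpha|^m$ once $|z-\alpha|\geq C\eta_n$ with $C$ large.
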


\subsection{Correlated Spiked Model and Identifiability}

For $a\in\{1,2\}$, observe
\begin{equation}
\label{eq:two-sample-model}
        Y_a
        =
        X_a+U_aD_aU_a^\top,
        \qquad
        D_a=\diag(d_{a1},\ldots,d_{ar}).
\end{equation}
The noise matrices $X_1$ and $X_2$ are independent, may have different variance profiles $T_1$ and $T_2$, and each satisfies Assumption~\ref{asm:moment}.
For $a\in\{1,2\}$, set $\mathbb T_a\deq T_a/2$.
For each $a\in\{1,2\}$, suppose that Assumption~\ref{asm:wigner-assumptions} holds with $(X,U,D,T,r_+)$ replaced by $(X_a,U_a,D_a,T_a,r_{a,+})$ and $\operatorname{rank}(D_1) = \operatorname{rank}(D_2) = r$.
For notational simplicity, we denote $r_a \deq r_{a,+}$.
The quantities $r_1, r_2 \in \intset{r}$ are assumed known and need not be equal. 
Write $S_a = U_aD_aU_a^{\top}$ and define the supercritical part by
\[
S_{a,+} \deq \sum_{i\in\intset{r_a}} d_{ai}u_{a,i}u_{a,i}^{\top}, 
\qquad
U_{a,+} \deq (u_{a,1}, \ldots, u_{a,r_a}).
\]
We aim to estimate the overlap matrix between $U_{1,+}$ and $U_{2,+}$:
\[
        R\deq U_{1,+}^\top U_{2,+}
        =(u_{1,k}^\top u_{2,\ell})_{k\in\intset{r_1},\ell\in\intset{r_2}}
        \in \mathbb R^{r_1\times r_2}.
\]

Since both $U_{1,+}$ and $U_{2,+}$ have orthonormal columns, the singular values of $R$ are the canonical correlations between $\operatorname{span}(U_{1,+})$ and $\operatorname{span}(U_{2,+})$.
Writing the principal angles as $0\leq\theta_{n,1}\leq\cdots\leq\theta_{n,\min\{r_1,r_2\}}\leq\pi/2$, we have $\sigma_k(R)=\cos\theta_{n,k}$ 
for $k\in\intset{\min\{r_1,r_2\}}$.
Hence, the entries of $R$ measure the overlap between the two spike subspaces $\operatorname{span}(U_{1,+})$ and $\operatorname{span}(U_{2,+})$.

The sign of entries of $R$ is not identifiable from $Y_1$ and $Y_2$ alone. 
For a positive integer $m$, define 
\[
        \mathscr S_m
        \deq
        \left\{
        \diag(\varepsilon_1,\ldots,\varepsilon_m):\varepsilon_i\in\{-1,1\}
        \right\}.
\]
For $\Xi_1,\Xi_2 \in \mathscr S_r$, we have $U_a D_a U_a^\top=(U_a \Xi_a)D_a(U_a\Xi_a)^\top$ for $a=1,2$, but the overlap matrix $R$ changes to $\Xi_1R\Xi_2$, while the distribution of the observed data is unchanged.
Therefore the identifiable overlap parameters are the entrywise magnitude $|R_{ij}|$.

The projector-based estimator below uses samplewise random splitting to recover a representative of the identifiable sign-equivalence class.
Define the equivalence class
\begin{equation}
\label{eq:overlap-sign-equivalence}
        [R]_{\pm}
        \deq
        \{\Xi_1R\Xi_2:\Xi_1 \in \mathscr S_{r_1},\Xi_2\in\mathscr S_{r_2}\}.
\end{equation}
Since $[\Xi_1 R \Xi_2]_{\pm} = [R]_{\pm}$ for any $\Xi_1\in\mathscr S_{r_1}, \Xi_2\in\mathscr S_{r_2}$, estimating one representative of $[R]_{\pm}$ is already sufficient to recover the entire equivalence class.
Moreover, each representative of $[R]_{\pm}$ preserves some quantities of interest, such as $|R_{ij}|$, $R^{\circ 2}$, $\operatorname{rank}(R)$, $\|R\|_F$, and $\|R\|_2$.
Also,
$
        (\Xi_1 R \Xi_2)(\Xi_1 R \Xi_2)^{\top}
        =\Xi_1RR^{\top}\Xi_1^{\top},
$
so $RR^{\top}$ and the transformed version have identical eigenvalues.
Hence, all principal angle information between $\operatorname{span}(U_{1,+})$ and $\operatorname{span}(U_{2,+})$ is preserved.
We aim to estimate a representative of $[R]_{\pm}$ consistently. 

Assume that the rank $r$ is known and fixed.
For $a=1,2$, define
\[
        b_{\star,a,n}
        \deq
        \sqrt{\|\mathbb T_a\|_2},
        \qquad
        \mu_{a,n}
        \deq
        \max_{i\in\intset{r}}\|u_{a,i}\|_\infty,
        \qquad
        \eta_{a,n}
        \deq
        n^{-1/2+\vartheta}
        +d_{a,\max}\mu_{a,n},
\]
where $d_{a,\max} \deq d_{a1}=\bigo(1)$ and $\mu_{a,n}=\smallo(1)$ by Assumption~\ref{asm:delocalization}.
To guarantee the identifiability of the overlap matrix, we impose the following seperation condition.
\begin{assumption}\label{asm:overlap-entry-assumptions}
For some fixed $\tau>0$, assume
\[
        \min_{a\in\intset{2}}
        \min_{i,k\in\intset{r_a}, \; i\neq k}
        |d_{ai}-d_{ak}|
        \geq\tau.
\]
For each $a\in\intset{2}$, the second minimum is interpreted as $+\infty$ when $r_a=1$.
\end{assumption}
\begin{remark}
Assumption~\ref{asm:overlap-entry-assumptions} requires all supercritical spikes to be distinct.
To see why this matters for the overlap estimation, suppose that $\mathcal K\subseteq\intset{r_a}$ indexes a block of equal spikes, with $d_{ai}=d$ for each $i\in\mathcal K$.
Let $m\deq|\mathcal K|\geq2$, and write $\widetilde U=(u_{a,i})_{i\in\mathcal K}$.
For every $m\times m$ orthogonal matrix $Q$,
$
        (\widetilde UQ)(\widetilde UQ)^\top
        =
        \widetilde U\widetilde U^\top.
$
Thus, replacing $\widetilde U$ by $\widetilde UQ$ leaves the signal matrix unchanged, but he corresponding overlap block becomes $Q^\top R_{\mathcal K,\cdot}$ or $R_{\cdot,\mathcal K}Q$.
These rotations can change the individual overlap magnitudes $|R_{ij}|$ while leaving the distribution of the observed data unchanged.
Consequently, these magnitudes are not identifiable when there are multiple spiked eigenvalues.
\end{remark}

Finally, we introduce a normalized correlation between the supercritical parts of two signal matrices.
Denote $\langle A, B \rangle_F \deq \Tr(A^{\top}B)$. 
Define
\[
        \rho_{\mathrm{sig}}
        \deq 
        \frac{\langle S_{1,+}, S_{2,+}\rangle_F}{\|S_{1,+}\|_F\|S_{2,+}\|_F} 
        = \frac{\sum_{k=1}^{r_1} \sum_{\ell=1}^{r_2} d_{1k} d_{2\ell} |u_{1,k}^{\top}u_{2,\ell}|^2 }{\sqrt{\sum_{k=1}^{r_1} d_{1k}^2} \sqrt{\sum_{k=1}^{r_2} d_{2k}^2}}
\]

be the normalized correlation between supercritical parts of $S_1$ and $S_2$. 
It measures the alignment between the two signal matrices, and is invariant under sign changes of the spike directions. 
When $r_1=r_2=1$, the normalized correlation reduces to the squared overlap between the two spike directions, i.e., 
\[
        \rho_{\mathrm{sig}} = |u_{1,1}^{\top}u_{2,1}|^2 = \cos^2 \angle(u_{1,1}, u_{2,1}).
\]

\subsection{A Projector-Based Overlap Estimator}

In this section, we construct estimators for a representative of the sign-equivalence class in \eqref{eq:overlap-sign-equivalence} and $\rho_{\mathrm{sig}}$.

Draw independent masks $P_1$ and $P_2$ and form $\mathcal Y_1$ and $\mathcal Y_2$ by applying \eqref{eq:split-matrix-def} to $Y_1$ and $Y_2$.
For each sample, order the eigenvalues by decreasing real part as $\lambda_{a,(1)},\ldots,\lambda_{a,(2n)}$.
For $i\in\intset{r_a}$, select
\[
        \widehat\lambda_{a,i,+}
        \deq
        \lambda_{a,(i)},
        \qquad
        \widehat\lambda_{a,i,-}
        \deq
        \lambda_{a,(2n-i+1)}.
\]
For each selected eigenvalue, choose the normalized right and left eigenvectors $\widetilde r_{a,i,\mathsf s}$ and $\widetilde\ell_{a,i,\mathsf s}$ satisfying
\[
        \|\widetilde r_{a,i,\mathsf s}\|_2=1,
        \qquad
        \widetilde\ell_{a,i,\mathsf s}^\top
        \widetilde r_{a,i,\mathsf s}
        =1,
\]
and set $\widetilde{\mathcal P}_{a,i,\mathsf s}=\widetilde r_{a,i,\mathsf s}\widetilde\ell_{a,i,\mathsf s}^\top$.
Define the estimated direction projector
\begin{equation}
\label{eq:one-sample-direction-projector}
        \widehat\Pi_{a,i}
        \deq
        \left[
        \widetilde{\mathcal P}_{a,i,+}
        +
        \widetilde{\mathcal P}_{a,i,-}
        \right]_{11},
\end{equation}
where $[A]_{11}$ is the upper-left $n\times n$ block.
At the population level,
\[
        \left[
        \mathfrak u_{a,i,+}\mathfrak u_{a,i,+}^\top
        +
        \mathfrak u_{a,i,-}\mathfrak u_{a,i,-}^\top
        \right]_{11}
        =
        u_{a,i}u_{a,i}^\top.
\]
For $i\in\intset{r_1}$ and $j\in\intset{r_2}$, define
\begin{equation}
\label{eq:overlap-magnitude-estimators}
        \widehat q_{ij}
        \deq
        \Re\Tr(\widehat\Pi_{1,i}\widehat\Pi_{2,j}),
        \qquad
        \widehat r_{ij}^{\,\mathrm{abs}}
        \deq
        \sqrt{\min\{1,\max\{0,\widehat q_{ij}\}\}}.
\end{equation}
Abbreviate $\widetilde r_{a,i}=\widetilde r_{a,i,+}$ and $\widetilde\ell_{a,i}=\widetilde\ell_{a,i,+}$.
Choose the sign of each  right eigenvector associated with a positive outlier so that its first nonzero coordinate is positive, and write
\begin{equation}\label{eq:oriented-eigenvectors}
        \widetilde r_{a,i}
        =
        \begin{pmatrix}x_{a,i}\\x'_{a,i}\end{pmatrix},
        \qquad
        \widetilde\ell_{a,i}
        =
        \begin{pmatrix}y_{a,i}\\y'_{a,i}\end{pmatrix}.
\end{equation}
With the convention $\operatorname{sgn}(0)=1$, define
\begin{equation}
\label{eq:overlap-signed-estimator}
        \widehat R^{\,\mathrm{eqv}}_{n,ij}
        \deq
        \operatorname{sgn}(y_{1,i}^\top x_{2,j})
        \widehat r_{ij}^{\,\mathrm{abs}}.
\end{equation}

Moreover, set $\widehat d_{ai}\deq2\Re\widehat\lambda_{a,i,+}$, use the samplewise estimator from Algorithm~\ref{alg:spike-estimation}, and define the  signal correlation estimator as
\begin{equation}\label{eq:wigner-overlap-signal-correlation-estimator}
\widehat{\rho}_{\mathrm{sig}}
        \deq
        \frac{\sum_{i=1}^{r_1}\sum_{j=1}^{r_2} \widehat d_{1i} \widehat d_{2j}(\widehat R^{\,\mathrm{eqv}}_{n,ij})^2}{\sqrt{\sum_{i=1}^{r_1} \widehat d_{1i}^2}\sqrt{\sum_{j=1}^{r_2} \widehat d_{2j}^2}}.
\end{equation}

Algorithm~\ref{alg:overlap-matrix-estimation} summarizes the overlap and signal correlation estimators. 

\begin{algorithm}[htbp]
\caption{(Wigner case) Overlap and signal correlation estimation}
\label{alg:overlap-matrix-estimation}
\begin{algorithmic}[1]
\Require Observed matrices $Y_1,Y_2\in\mathbb R^{n\times n}$ and known rank $r_1, r_2$.
\Ensure Overlap estimate $\widehat R_n^{\,\mathrm{eqv}} \in \mathbb R^{r_1 \times r_2}$ and signal correlation estimate $\widehat{\rho}_{\mathrm{sig}}$.
\For{$a=1,2$}
\State Draw one independent symmetric Bernoulli mask $P_a$ and form one matrix $\mathcal Y_a$ as in \eqref{eq:split-matrix-def}.
\State Compute one eigendecomposition of this same $\mathcal Y_a$ and set $\widehat d_{ai}=2\Re\widehat\lambda_{a,i,+}$ for $i\in\intset{r_a}$.
\State From these same outliers, form the normalized and oriented eigenvectors and $\widehat\Pi_{a,i}$ as in \eqref{eq:one-sample-direction-projector}.
\EndFor
\State Compute the magnitudes in \eqref{eq:overlap-magnitude-estimators} and signed representatives in \eqref{eq:oriented-eigenvectors}--\eqref{eq:overlap-signed-estimator}.
\State Compute $\widehat{\rho}_{\mathrm{sig}}$ from \eqref{eq:wigner-overlap-signal-correlation-estimator}.
\State \textbf{Return} $\widehat R_n^{\,\mathrm{eqv}} \deq (\widehat R^{\,\mathrm{eqv}}_{n,ij})_{i\in\intset{r_1}, j\in\intset{r_2}}$ and $\widehat{\rho}_{\mathrm{sig}}$.
\end{algorithmic}
\end{algorithm}

\begin{theorem}[Consistency of Algorithm~\ref{alg:overlap-matrix-estimation}]
\label{thm:entrywise-overlap}
Consider the  matrix model \eqref{eq:two-sample-model}. 
\hypersetup{allcolors=blue}For each $a\in\{1,2\}$, suppose that Assumption~\ref{asm:wigner-assumptions} holds with $(X,U,D,T,r_+)$ replaced by $(X_a,U_a,D_a,T_a,r_a)$, and suppose that Assumption~\ref{asm:overlap-entry-assumptions} holds.
The overlap estimator satisfies
\[
        \min_{\Xi_1\in\mathscr S_{r_1},\, \Xi_2\in\mathscr S_{r_2}}
        \left\|
        \widehat R_n^{\,\mathrm{eqv}}
        -
        \Xi_1R\Xi_2
        \right\|_{\max}
        =
        \smallop(1),
\]
where $\|\cdot\|_{\max}$ denotes the entrywise maximum norm. Moreover, the signal correlation estimator satisfies
\[
        |\widehat{\rho}_{\mathrm{sig}}-\rho_{\mathrm{sig}}|
        =
        \smallop(1).
\]
\end{theorem}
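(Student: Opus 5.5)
The plan has three stages: reduce the theorem to a statement about eigenprojectors of each $\mathcal Y_a$ separately, prove that statement by an isotropic resolvent argument, and then recover the signs and $\rho_{\mathrm{sig}}$ by continuity.

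\textbf{Reduction to a single sample.} The key intermediate claim is a projector consistency statement for one sample $a$. For each $i\in\intset{r_a}$ and $\mathsf s\in\{+,-\}$, the spectral projector $\widetilde{\mathcal P}_{a,i,\mathsf s}$ of $\mathcal Y_a$ at $\widehat\lambda_{a,i,\mathsf s}$ should satisfy
\[
\big\|\widetilde{\mathcal P}_{a,i,\mathsf s}-\mathfrak u_{a,i,\mathsf s}\mathfrak u_{a,i,\mathsf s}^\top\big\|_2=\smallop(1).
\]
Given this, the identity $[\mathfrak u_{+}\mathfrak u_+^\top+\mathfrak u_-\mathfrak u_-^\top]_{11}=u_{a,i}u_{a,i}^\top$ yields $\|\widehat\Pi_{a,i}-u_{a,i}u_{a,i}^\top\|_2=\smallop(1)$. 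All matrices involved have bounded rank and bounded norm, so taking traces gives
\[
\widehat q_{ij}=\Tr\big(u_{1,i}u_{1,i}^\top u_{2,j}u_{2,j}^\top\big)+\smallop(1)=R_{ij}^2+\smallop(1).
\]
Truncating to $[0,1]$ and applying the continuity of the square root then gives $\widehat r^{\,\mathrm{abs}}_{ij}=|R_{ij}|+\smallop(1)$.

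\textbf{Proof of the projector claim.} Proposition~\ref{prop:outlier-location} together with Assumption~\ref{asm:overlap-entry-assumptions} shows that, with high probability, the $2r_a$ outliers are simple and separated from each other and from the bulk by order-one gaps. This justifies the labeling $\widehat\lambda_{a,i,\pm}=\lambda_{i,\pm}$. It also lets us write each projector as a contour integral
\[
\widetilde{\mathcal P}=-\frac{1}{2\pi\ii}\oint_\gamma(\mathcal Y-z)^{-1}\dif z
\]
over a small circle $\gamma$ around $\alpha_{i,\mathsf s}$. Write $\widetilde{\mathcal X}=\mathcal X+\mathcal E$ and $G(z)=(\widetilde{\mathcal X}-z)^{-1}$. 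The Woodbury identity gives
\[
(\mathcal Y-z)^{-1}=G-G\,\mathcal U\big(\mathcal D^{-1}+\mathcal U^\top G\,\mathcal U\big)^{-1}\mathcal U^\top G .
\]
The main input is the isotropic law for $\mathcal X$ outside the spectral radius, used in deriving Proposition~\ref{prop:outlier-location}: uniformly on $\gamma$ and for deterministic unit vectors $x,y$,
\[
x^\top G(z)y=-z^{-1}x^\top y+\smallop(1).
\]
The factor $\|\mathcal E\|_2=\smallop(1)$ from Lemma~\ref{lem:mask-remainder-bound} is absorbed by a resolvent expansion. The test vectors $x,y$ need not be deterministic, and $G$ applied to a general vector is not small; this is not a problem, because the projector only has to be tested in operator norm against its bounded-rank structure.

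Computing residues, the leading term of $\widetilde{\mathcal P}$ becomes $\mathfrak u\mathfrak u^\top$. Two pieces require care. The first is $G\,\mathcal U$, which is not deterministic as a vector. One handles it by showing $\|G\mathfrak u+z^{-1}\mathfrak u\|_2=\smallop(1)$, which follows from the isotropic bound on $\mathfrak u^\top G^*G\,\mathfrak u$ and Hermitization. The second is the contribution of $\oint G\dif z$, which vanishes because $G$ is analytic inside $\gamma$. I expect the main obstacle to be this step: obtaining isotropic control of $\mathfrak u^\top G^*G\,\mathfrak u$ for the correlated block structure of $\mathcal X$, where $\mathsf A$ and $\mathsf B$ are complementary masks of the same $X$. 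This would be handled by the correlated Dyson equation analysis of \citet{AjankiErdosKruger2019} on the Hermitization.

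\textbf{Signs and the correlation estimator.} The projector claim also shows that the oriented right and left eigenvectors align with $\mathfrak u_{a,i,+}$ up to a sign $\epsilon_{a,i}$. Since $\widetilde\ell^\top\widetilde r=1$, the left eigenvector inherits the same sign. Hence
\[
y_{1,i}^\top x_{2,j}=\epsilon_{1,i}\epsilon_{2,j}\tfrac12 R_{ij}+\smallop(1).
\]
Set $\Xi_a=\diag(\epsilon_{a,i})$. For entries with $|R_{ij}|$ bounded away from $0$, the sign estimate is correct with high probability. For small $|R_{ij}|$, a wrong sign costs at most $2\widehat r^{\,\mathrm{abs}}_{ij}=\smallop(1)$ in the affected entry. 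Doing this entrywise, for instance by splitting into entries with $|R_{ij}|>\epsilon$ and $|R_{ij}|\le\epsilon$ with $\epsilon\downarrow0$ slowly, gives the max-norm claim. Finally, Theorem~\ref{thm:spike-estimation-consistency} gives $\widehat d_{ai}\to d_{ai}$, the squares $(\widehat R^{\,\mathrm{eqv}}_{ij})^2$ converge to $R_{ij}^2$, and the denominators are bounded below because $d_{a1}\ge2\delta$. The continuous mapping theorem then yields $\widehat\rho_{\mathrm{sig}}-\rho_{\mathrm{sig}}=\smallop(1)$.
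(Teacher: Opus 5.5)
\textbf{Gap: the operator-norm projector claim is false.} Your central intermediate claim, $\|\widetilde{\mathcal P}_{a,i,\mathsf s}-\mathfrak u_{a,i,\mathsf s}\mathfrak u_{a,i,\mathsf s}^\top\|_2=\smallop(1)$, does not hold. The step you use to justify it, $\|G\mathfrak u+z^{-1}\mathfrak u\|_2=\smallop(1)$, fails too.

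Fix one spike of one sample and write $\mathfrak u=(u^\top,\mathsf s u^\top)^\top/\sqrt2$. A deterministic unit vector is not annihilated by the dense noise: $\|\mathcal X\mathfrak u\|_2^2\approx u^\top\diag(\mathbb T\one_n)u\asymp1$.

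Quantitatively, your own isotropic law $\mathfrak u^*G\mathfrak u=-z^{-1}+\smallop(1)$ gives $\|G\mathfrak u+z^{-1}\mathfrak u\|_2^2=\mathfrak u^*G^*G\mathfrak u-|z|^{-2}+\smallop(1)$. The Hermitization computation you propose then shows the failure. From the MDE one gets $\partial_\zeta m_z(0)=(|z|^2I_n-\mathbb T)^{-1}\one_n$, so $\mathfrak u^*G^*G\mathfrak u\approx u^\top\diag\bigl((|z|^2I_n-\mathbb T)^{-1}\one_n\bigr)u$. This exceeds $|z|^{-2}$ by at least $|z|^{-4}u^\top\diag(\mathbb T\one_n)u\gtrsim1$.

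This is the usual non-Hermitian outlier phenomenon; for a flat profile, $|\mathfrak u^\top\widetilde r|^2\to1-b_{\star,n}^2/|z|^2$ with $|z|=d/2$. The outlier eigenvectors keep an order-one component outside $\operatorname{span}(\mathcal U)$. As a result, $\|\widetilde{\mathcal P}_{a,i,\mathsf s}\|_2$ stays strictly above $1$, and neither $\widetilde{\mathcal P}_{a,i,\mathsf s}$ nor $\widehat\Pi_{a,i}$ converges in operator norm. Bounded rank does not rescue this, because the random range of $\widetilde{\mathcal P}_{a,i,\mathsf s}$ is not close to $\operatorname{span}(\mathcal U)$.

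What is true is only the weak statement $x^*(\widetilde{\mathcal P}_{a,i,\mathsf s}-\mathcal P_{a,i,\mathsf s})y=\smallop(1)$ for deterministic (or independent) $x,y$, which is Proposition~\ref{prop:split-evec-projection}. The same false alignment breaks your sign step: $\widetilde r_{a,i,+}$ is not $\pm\mathfrak u_{a,i,+}+\smallop(1)$, and $y_{1,i}^\top x_{2,j}$ is not $\tfrac12\epsilon_{1,i}\epsilon_{2,j}R_{ij}+\smallop(1)$.

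\textbf{Missing ingredient: independence of the two samples.} Your argument never uses the independence of $(X_1,P_1)$ and $(X_2,P_2)$. Let $\mathscr E_{a,i}=\widehat\Pi_{a,i}-u_{a,i}u_{a,i}^\top$; it is only weakly small, with rank at most $3$ and norm $\bigop(1)$. The terms $\Tr(\mathscr E_{1,i}u_{2,j}u_{2,j}^\top)$ and $\Tr(u_{1,i}u_{1,i}^\top\mathscr E_{2,j})$ are $\smallop(1)$. But $\Tr(\mathscr E_{1,i}\mathscr E_{2,j})$ pairs two order-one error matrices, and bounded rank and norm do not control it.

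The paper conditions on sample~1 and writes $\mathscr E_{1,i}=\sum_{k\le3}\sigma_ka_kb_k^*$. The vectors $a_k,b_k$ are independent of sample~2, so $b_k^*\mathscr E_{2,j}a_k=\smallop(1)$ by the isotropic bound for independent random test vectors (Lemma~\ref{lem:samplewise-projector-approximation}).

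For the signs, set $c_{a,i}=\mathfrak u_{a,i,+}^\top\widetilde r_{a,i,+}$; it is bounded away from $0$ in probability but is not $\pm1$. For deterministic or independent $v$, the weak projector bound gives $\widetilde\ell_{a,i,+}^\top v=c_{a,i}^{-1}\mathfrak u_{a,i,+}^\top v+\smallop(1)$ and $v^\top\widetilde r_{a,i,+}=c_{a,i}\,v^\top\mathfrak u_{a,i,+}+\smallop(1)$. Applying these successively across the two independent samples yields $y_{1,i}^\top x_{2,j}=\tfrac12c_{1,i}^{-1}c_{2,j}R_{ij}+\smallop(1)$.

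Only the sign of this quantity is used. So your $\epsilon$-splitting argument and the continuity argument for $\widehat\rho_{\mathrm{sig}}$ go through unchanged with $\Xi_a=\diag(\operatorname{sgn}c_{a,1},\ldots,\operatorname{sgn}c_{a,r_a})$.
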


The proof of Theorem~\ref{thm:entrywise-overlap} relies on the following proposition, which establishes the consistency of the one-sample empirical projections $\widetilde{\mathcal P}_{k,\mathsf s}$ toward the population projections $\mathcal P_{k,\mathsf s}$ in the sense of bilinear forms.

Suppose that $d_1>\cdots>d_{r_+}>0$.
For $k\in\intset{r_+}$ and $\mathsf s\in\{+,-\}$, define the population projection
\[
        \mathcal P_{k,\mathsf s}
        \deq
        \mathfrak u_{k,\mathsf s}\mathfrak u_{k,\mathsf s}^*.
\]
From the proof of Proposition~\ref{prop:split-evec-projection}, the spiked eigenvalues $\lambda_{k,\mathsf s}(\mathcal Y)$ are algebraically simple and real with probability tending to one. 
Choose corresponding right and left eigenvectors $\widetilde r_{k,\mathsf s}$ and $\widetilde\ell_{k,\mathsf s}$ normalized so that
\[
        \mathcal Y\widetilde r_{k,\mathsf s}
        =
        \lambda_{k,\mathsf s}(\mathcal Y)\widetilde r_{k,\mathsf s},
        \qquad
        \mathcal Y^*\widetilde\ell_{k,\mathsf s}
        =
        \lambda_{k,\mathsf s}(\mathcal Y)\widetilde\ell_{k,\mathsf s},
        \qquad
        \widetilde\ell_{k,\mathsf s}^*\widetilde r_{k,\mathsf s}
        =
        1.
\]
Define the empirical projection corresponding to $\lambda_{k,\mathsf s}(\mathcal Y)$ by
\[
        \widetilde{\mathcal P}_{k,\mathsf s}
        \deq
        \widetilde r_{k,\mathsf s}\widetilde\ell_{k,\mathsf s}^*.
\]

\begin{proposition}
\label{prop:split-evec-projection}
Suppose that Assumption~\ref{asm:wigner-assumptions} hold
and that $d_1>\cdots>d_{r_+}>0$.
For every $k\in\intset{r_+}$, $\mathsf s\in\{+,-\}$, and deterministic vectors $a_n,b_n\in\mathbb C^{2n}$ with $\|a_n\|_2\vee\|b_n\|_2\leq1$,
\begin{equation}
\label{eq:evec-proj-main-bilinear}
        a_n^*
        (\widetilde{\mathcal P}_{k,\mathsf s}-\mathcal P_{k,\mathsf s})
        b_n
        = 
        \smallop(1).
\end{equation}
\end{proposition}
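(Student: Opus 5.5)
The plan is a contour-integral (Riesz projection) argument built on the isotropic resolvent control of the noise part used for Proposition~\ref{prop:outlier-location}. Write $\mathcal X_0\deq\mathcal X+\mathcal E$ and $G(z)\deq(\mathcal X_0-z)^{-1}$. The first input, from the linearization/Hermitization analysis, is an isotropic local law: uniformly in $z$ with $|z|\geq b_{\star,n}+\kappa/2$ and for deterministic unit vectors $x,y$,
\[
x^*G(z)y=-z^{-1}x^*y+\smallop(1).
\]
I would derive this as follows. For $|z|$ outside the outer radius, the $2\times2$ block Dyson equation for $\mathcal X$ has the trivial solution $M(z)=-z^{-1}I$, which is the deterministic resolvent of a non-Hermitian noise outside its spectrum. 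The perturbation $\mathcal E$ has $\|\mathcal E\|_2=\smallop(1)$ by Lemma~\ref{lem:mask-remainder-bound}, and $\|G\|_2$ is bounded on this region, so a resolvent expansion absorbs it. Then, with probability tending to one, $\mathcal Y-z$ is invertible off the outliers. The Woodbury identity gives
\[
(\mathcal Y-z)^{-1}=G-G\,\mathcal U\,(\mathcal D^{-1}+\mathcal U^\top G\,\mathcal U)^{-1}\mathcal U^\top G .
\]

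Next I take a small circle $\Gamma_{k,\mathsf s}$ of fixed radius $\epsilon<\min(\tau',\kappa)/4$ around $\alpha_{k,\mathsf s}$. Here $\tau'>0$ is the minimal gap among the distinct values $\{\pm d_j/2\}_{j\leq r_+}$, which is fixed since $d_1>\cdots>d_{r_+}>0$ are fixed. By Proposition~\ref{prop:outlier-location}, with probability tending to one, $\Gamma_{k,\mathsf s}$ encloses exactly one eigenvalue of $\mathcal Y$ counted with algebraic multiplicity. That eigenvalue is therefore simple, and it is real because $\mathcal Y$ is real and complex eigenvalues come in conjugate pairs. Its Riesz projection is
\[
\widetilde{\mathcal P}_{k,\mathsf s}=-\frac1{2\pi\ii}\oint_{\Gamma_{k,\mathsf s}}(\mathcal Y-z)^{-1}\dif z .
\]
The $G$ term contributes nothing because $G$ is analytic inside $\Gamma_{k,\mathsf s}$ (no noise eigenvalues there). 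So
\[
a^*\widetilde{\mathcal P}_{k,\mathsf s}b=\frac1{2\pi\ii}\oint_{\Gamma_{k,\mathsf s}} a^*G\,\mathcal U\,(\mathcal D^{-1}+\mathcal U^\top G\,\mathcal U)^{-1}\mathcal U^\top G\, b\,\dif z .
\]

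I then replace every isotropic quantity by its deterministic limit. This uses $a^*G\,\mathfrak u=-z^{-1}a^*\mathfrak u+\smallop(1)$, the analogous statement for $\mathfrak u^\top G b$, and $\mathcal U^\top G\,\mathcal U=-z^{-1}I_{2r}+\smallop(1)$ (the columns of $\mathcal U$ are orthonormal). The limiting inner matrix $\mathcal D^{-1}-z^{-1}I$ is diagonal with entries $(z-\alpha_{j,\mathsf t})/(z\alpha_{j,\mathsf t})$. On $\Gamma_{k,\mathsf s}$ these entries are bounded below by $c\epsilon$, so its inverse is bounded and the perturbed inverse is within $\smallop(1)$ of it, uniformly on the contour. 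The integrand thus converges to
\[
\sum_{j,\mathsf t}\frac{\alpha_{j,\mathsf t}}{z(z-\alpha_{j,\mathsf t})}\,a^*\mathfrak u_{j,\mathsf t}\mathfrak u_{j,\mathsf t}^*b .
\]
By residues, its contour integral over $\Gamma_{k,\mathsf s}$ is $a^*\mathcal P_{k,\mathsf s}b$, since only the pole at $\alpha_{k,\mathsf s}$ lies inside (note $0\notin$ interior, as $|\alpha_{k,\mathsf s}|>b_{\star,n}+\delta$). The contour has fixed length, so this gives \eqref{eq:evec-proj-main-bilinear}. One caveat is the normalization: the Riesz projection equals $\widetilde r\widetilde\ell^*$ exactly under $\widetilde\ell^*\widetilde r=1$, which matches the stated convention.

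The main obstacle is uniformity of the isotropic law over the contour. Pointwise $\smallop(1)$ bounds must be upgraded to a supremum over $z\in\Gamma_{k,\mathsf s}$. I would handle this with a net of $n^{C}$ points and the Lipschitz bound $\|\partial_z G\|\leq\|G\|^2=\bigo(1)$, valid on the event $\|G\|_2\leq C$ that follows from the absence of noise eigenvalues in $|z|\geq b_{\star,n}+\kappa/2$ together with the controlled smallest singular value of $\mathcal X_0-z$ from the Hermitization. The isotropic estimate itself is taken as the input underlying Proposition~\ref{prop:outlier-location}. The correlated structure of $\mathsf A,\mathsf B$ only enters through the Dyson equation, whose solution outside the spectrum is still $-z^{-1}I$.
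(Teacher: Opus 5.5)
Your proposal is correct and follows essentially the same route as the paper: the Riesz projection over a small circle around $\alpha_{k,\mathsf s}$, and the Woodbury identity, in which the analytic term $(zI_{2n}-\mathcal X-\mathcal E)^{-1}$ integrates to zero. Then come the replacement of the projected resolvents and of the inner $2r\times 2r$ matrix by their deterministic limits, via the uniform isotropic estimate (the paper's Lemma~\ref{lem:resolvent-est}, built from the Hermitized local law, the exterior singular-value gap and the resolvent identity for $\mathcal E$, much as you sketch), and the residue computation for $\alpha_\nu/\{z(z-\alpha_\nu)\}$.

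The only bookkeeping to make explicit is that your fixed radius should also satisfy $\epsilon<\delta-\kappa$ (e.g.\ take $\kappa=\delta/2$), so that the disk lies where Proposition~\ref{prop:outlier-location} counts eigenvalues. You should also note that the subcritical locations $\alpha_{j,\mathsf t}$, $j>r_+$, stay at distance at least $2\delta-\epsilon$ from the contour; this is what the paper's $\Delta_{k,\mathsf s}$ encodes.
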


\section{Rectangular Model}
\label{sec:rect-extension}

In this section, we extend the random-splitting method to the rectangular model. 
We estimate the supercritical spikes, the overlaps between the corresponding left and right signal directions, and correlation between the supercritical signal parts.
To simplify the notations, we reuse some notation from Section~\ref{sec:spiked-wigner-model} if there is no  confusion.

We consider the rectangular signal plus noise model
\[
        Y=X+S\in\R^{p\times n},
        \qquad
        S=UDV^\top
        =\sum_{k = 1}^r d_ku_kv_k^\top,
\]
where $U\in\R^{p\times r}$ and $V\in\R^{n\times r}$ have orthonormal columns.
Let $P$ be a $p\times n$ matrix with i.i.d. $\mathrm{Bernoulli}(1/2)$ entries and be independent of $X$.
We construct the asymmetrized matrix
\begin{equation}
\label{eq:rect-Y-def}
        \mathcal Y\deq
        \begin{pmatrix}
        0&P\circ Y\\
        ((\one_p\one_n^\top-P)\circ Y)^\top&0
        \end{pmatrix}.
\end{equation}
For $k\in\intset{r}$ and $\mathsf s\in\{+,-\}$, define
\begin{equation}
\label{eq:rect-signal-vectors}
        \mathfrak u_{k,\mathsf s}
        \deq\frac1{\sqrt2}(u_k^{\top},\, \mathsf s v_k^{\top})^{\top},
        \qquad
        \alpha_{k,\mathsf s}\deq\mathsf s\frac{d_k}{2}.
\end{equation}
Let $\mathcal U$ collect these $2r$ orthonormal vectors and let $\mathcal D$ be diagonal with entries $\alpha_{k,\mathsf s}$.  
Define $\mathcal X$ and $\mathcal E$ similarly to \eqref{eq:split-matrix-decomposition}.
We have the decomposition
\[
        \mathcal Y = \mathcal X+\mathcal E+\mathcal U\mathcal D\mathcal U^\top.
\]
Let $T=(t_{ij})_{p\times n}$ be the variance profile and set
\begin{equation*}
        \mathbb T\deq\frac12
        \begin{pmatrix}0&T\\T^\top&0\end{pmatrix},
        \qquad
        b_{\star,n}\deq\sqrt{\norm{\mathbb T}_2}
        =\sqrt{\frac{\norm{T}_2}{2}}.
\end{equation*}

For the rectangular model, we impose the following assumptions on the noise and the signals. 
These assumptions are analogues of Assumption~\ref{asm:wigner-assumptions} in Section~\ref{sec:spiked-wigner-model}. 

\begin{assumption}\label{asm:rectangular-assumptions}
\leavevmode
\begin{assumptionparts}
\item (Noise moments)
\label{asm:rect-noise}
As $p,n\to\infty$, $p/n\to\phi\in(0,\infty)$.  The variables
$\{X_{ij}:i\in\intset{p},j\in\intset{n}\}$ are mutually independent and
satisfy
\[
        \E X_{ij}=0,
        \qquad \E X_{ij}^2=t_{ij},
        \qquad \frac{C_*}{n}\leq t_{ij}\leq\frac{C^*}{n}
\]
for fixed $0<C_*\leq C^*<\infty$.  For every fixed integer $k\geq1$, there is a constant $C_k<\infty$ such that
\[
        \sup_n\max_{i\in\intset{p},\,j\in\intset{n}}
        \E\abs{\sqrt n X_{ij}}^k \leq C_k.
\]

\item (Incoherent signal directions)
\label{asm:rect-deloc}
The deterministic unit vectors
$u_1,\ldots,u_r\in\R^p$ and $v_1,\ldots,v_r\in\R^n$ are orthonormal and satisfy
\[
        \mu_{U,n} \deq \max_{k\in\intset{r}}\norm{u_k}_\infty=\smallo(1),
        \qquad
        \mu_{V,n} \deq \max_{k\in\intset{r}}\norm{v_k}_\infty = \smallo(1).
\]

\item (Population spikes)
\label{asm:rect-spikes}
The rank $r$ is fixed.
The singular spikes $d_1\geq\cdots\geq d_r>0$ are fixed deterministic
constants, independent of $n$, and $d_{\max}\deq d_1\leq C$ for a fixed constant $C$.
There exists a fixed integer $r_+\in\intset{r}$ such that, for some fixed $\delta>0$ and all sufficiently large $n$,
\begin{equation*}
        \min_{k\in\intset{r_+}}\frac{d_k}{2}
        \geq b_{\star,n}+\delta,
        \qquad
        \max_{r_+<k\leq r}\frac{d_k}{2}
        \leq b_{\star,n}-\delta.
\end{equation*}
The second condition is omitted when $r_+=r$.
\end{assumptionparts}
\end{assumption}

Fix $\vartheta\in(0,1/2)$ and define the rectangular error scale
\begin{equation*}
        \eta_n\deq n^{-1/2+\vartheta}
        +d_{\max}(\mu_{U,n}+\mu_{V,n}).
\end{equation*}
Under Assumptions~\ref{asm:rect-deloc}--\ref{asm:rect-spikes},
$\eta_n=\smallo(1)$.
The following proposition establishes the rectangular analogue of the phase transition in Proposition~\ref{prop:outlier-location}.
\begin{proposition}
\label{prop:rect-phase}
Suppose Assumption~\ref{asm:rectangular-assumptions} hold.
For every fixed $\kappa\in(0,\delta)$, with probability tending to one, $\mathcal Y$ has exactly $2r_+$ eigenvalues, counted with algebraic multiplicity, in $\{z\in\C:\abs z\geq b_{\star,n}+\kappa\}$, and
\begin{equation*}
        \max_{k\in\intset{r_+}, \; \mathsf s\in\{+,-\}}
        \big|\lambda_{k,\mathsf s}(\mathcal Y) - \alpha_{k,\mathsf s}\big|
        =\bigop(\eta_n),
\end{equation*}
where $\alpha_{k,\mathsf s} = \mathsf s d_k / 2$.
There are no other eigenvalues of $\mathcal Y$ in $\{z\in\mathbb C : |z| \geq b_{\star,n} + \kappa\}$.
\end{proposition}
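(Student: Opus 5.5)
The plan is to reproduce the Wigner argument of Proposition~\ref{prop:outlier-location} in the rectangular setting. The key simplification is that here the noise entries of the two off-diagonal blocks are built from independent $X_{ij}$ and independent masks, with no Wigner symmetry, so the correlation structure is weaker. Write $\mathcal Y=\mathcal X+\mathcal E+\mathcal U\mathcal D\mathcal U^\top$ with $\mathcal X=\begin{pmatrix}0&\mathsf A\\ \mathsf B^\top&0\end{pmatrix}$, where $\mathsf A=P\circ X$ and $\mathsf B=(\one_p\one_n^\top-P)\circ X$. The only dependence is that $\mathsf A_{ij}$ and $\mathsf B_{ij}$ share $X_{ij}$ and are complementary, so $\E \mathsf A_{ij}\mathsf B_{ij}=0$ and $\E\mathsf A_{ij}^2=\E\mathsf B_{ij}^2=t_{ij}/2$.

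\textbf{Step 1: the remainder $\mathcal E$.} First I would show $\|\mathcal E\|_2=\bigop(d_{\max}(\mu_{U,n}+\mu_{V,n}))+\smallop(n^{-1/2+\vartheta})$, the rectangular analogue of Lemma~\ref{lem:mask-remainder-bound}. The block $\mathcal R=(P-\frac12\one\one^\top)\circ\sum_k d_ku_kv_k^\top$ is a centered random matrix with independent entries of size at most $d_{\max}|u_k(i)||v_k(j)|$. A matrix Bernstein inequality, or a moment method, bounds its norm by $\max_i\|\mathcal R_{i\cdot}\|_2+\max_j\|\mathcal R_{\cdot j}\|_2$ up to logarithms. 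These row and column norms are at most $d_{\max}\mu_{U,n}$ and $d_{\max}\mu_{V,n}$ respectively. This step uses only the independence of the mask.

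\textbf{Step 2: the noise.} Next, for $|z|\geq b_{\star,n}+\kappa/2$, I would establish that $\mathcal X-z$ is invertible with high probability and satisfies an isotropic law
\[
\bigl|a^*(\mathcal X-z)^{-1}b+z^{-1}a^*b\bigr|\prec n^{-1/2}.
\]
To do this I would Hermitize $\mathcal X-z$ into a $4(p+n)$-dimensional matrix $H^z(\eta)$ and write down its matrix Dyson equation. The self-energy operator $\mathcal S[W]=\E[\mathcal H W\mathcal H]$ involves only the variance matrix $\mathbb T$; the cross terms vanish because $\E\mathsf A_{ij}\mathsf B_{ij}=0$. Working outside the spectral radius, the solution at $\eta=0$ is the trivial one, $M=-z^{-1}$ on the $\mathcal X-z$ block. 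The stability of this equation is governed by $\|\mathbb T\|_2/|z|^2<1$, which is exactly where the radius $b_{\star,n}=\sqrt{\|\mathbb T\|_2}$ comes from. I would then invoke the correlated-entry local law machinery of \citet{AjankiErdosKruger2019}. Since each $X_{ij}$ couples only two entries, the dependence has finite range, so that machinery applies. I would also bound $\|(\mathcal X-z)^{-1}\|$ uniformly on a net via $\sigma_{\min}(\mathcal X-z)\gtrsim1$, which follows from the Hermitized local law showing no eigenvalues of $H^z$ near $0$.

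\textbf{Step 3: determinant reduction.} With $\mathcal X_1\deq\mathcal X+\mathcal E$, a resolvent expansion gives $\|(\mathcal X_1-z)^{-1}\|\lesssim1$ and
\[
\mathcal U^\top(\mathcal X_1-z)^{-1}\mathcal U=-z^{-1}I_{2r}+\bigop(\eta_n)
\]
uniformly on the region. Eigenvalues of $\mathcal Y$ there are zeros of $\det\bigl(I_{2r}+\mathcal D\,\mathcal U^\top(\mathcal X_1-z)^{-1}\mathcal U\bigr)$. This determinant is uniformly close to $\prod_{k,\mathsf s}(1-\alpha_{k,\mathsf s}/z)$. Rouché's theorem on small discs around each $\alpha_{k,\mathsf s}$ with $k\le r_+$, together with the annulus argument on $\{|z|\geq b_{\star,n}+\kappa\}$, then gives exactly $2r_+$ zeros with the stated $\bigop(\eta_n)$ localization. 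No zeros appear elsewhere, because for $k>r_+$ we have $|\alpha_{k,\mathsf s}|\le b_{\star,n}-\delta$, so those factors stay bounded away from zero.

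The main obstacle is Step 2, namely the isotropic law uniformly down to $|z|=b_{\star,n}+\kappa/2$. This requires a stability bound for the Dyson equation with the rectangular profile $\mathbb T$ and handling of the $P$-induced pairing. I would try first to reduce it to the independent-block case of \citet{BaoCheongLeeLi2025} by conditioning on $P$. Given $P$, the entries of $\mathsf A$ and $\mathsf B$ are independent with disjoint supports, and the conditional variance profile concentrates at $T/2$. The correlation then enters only through these disjoint supports, which the cumulant expansion handles.
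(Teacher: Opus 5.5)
Your architecture is the one the paper uses. The paper gives no separate rectangular proof and simply transfers the Wigner argument of Proposition~\ref{prop:outlier-location}, and your outline follows it: the decomposition $\mathcal Y=\mathcal X+\mathcal E+\mathcal U\mathcal D\mathcal U^\top$, a bound on $\mathcal E$, Hermitization with an outside-spectrum isotropic law from the correlated-entry theory, a support gap for $\nu_z$ driven by $(I-\mathbb T/|z|^2)^{-1}$, a net argument for $\sigma_{\min}(zI-\mathcal X)$, and Rouch\'e on $\det(I_{2r}+\mathcal D\,\mathcal U^\top(\mathcal X_1-z)^{-1}\mathcal U)$ against $\prod_{k,\mathsf s}(1-\alpha_{k,\mathsf s}/z)$. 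Step 1, however, does not deliver what you need as written.

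\textbf{The logarithm in Step 1 is fatal.} Matrix Bernstein applied to $\mathcal R=\frac12(2P-\one_p\one_n^\top)\circ S$ gives $\E\|\mathcal R\|_2\lesssim\sigma\sqrt{\log(p+n)}+L\log(p+n)$. Here $\sigma\lesssim d_{\max}(\mu_{U,n}+\mu_{V,n})$ and $L\le r\,d_{\max}\mu_{U,n}\mu_{V,n}$. So you only get $\|\mathcal E\|_2=\bigop\bigl(d_{\max}(\mu_{U,n}+\mu_{V,n})\sqrt{\log n}\bigr)$.

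Assumption~\ref{asm:rect-deloc} imposes no rate on the incoherence; for example, $\mu_{U,n}=\mu_{V,n}=(\log n)^{-1/4}$ is allowed. In that case your bound is neither $\bigop(\eta_n)$ nor even $\smallop(1)$. Then the comparison $\sup_z\|\mathcal U^\top(\mathcal X_1-z)^{-1}\mathcal U+z^{-1}I_{2r}\|_2=\bigop(\eta_n)$ in Step 3 fails. With it go the stated localization rate and, in general, the Rouch\'e comparison itself.

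You need a dimension-free norm bound. The entries of $\mathcal R$ are independent and centered, so Lata{\l}a's inequality gives $\E\|\mathcal R\|_2\lesssim\max_i\|S_{i\cdot}\|_2+\max_j\|S_{\cdot j}\|_2+(\sum_{i,j}S_{ij}^4)^{1/4}$. The last term is at most $(\max_{i,j}S_{ij}^2\,\|S\|_F^2)^{1/4}\lesssim d_{\max}\sqrt{\mu_{U,n}\mu_{V,n}}$. This is exactly the argument of Lemma~\ref{lem:mask-remainder-bound}, and it needs no symmetrization trick in the rectangular case. A bound with a $\sqrt{\log n}\max_{i,j}|S_{ij}|$ term would not suffice either, because $\mu_{U,n}\mu_{V,n}\sqrt{\log n}$ need not be $\bigo(\mu_{U,n}+\mu_{V,n})$.

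Two further cautions on Step 2.

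First, conditioning on $P$, which you say you would try first, is not a clean reduction to the two-sample independent-block setting. Given $P$, the variance profile of $\mathcal X$ is $P\circ T$ in one block and $(\one_p\one_n^\top-P)\circ T$ in the other, so half the entries have variance exactly zero. This profile is close to $T/2$ only in an averaged sense. The flatness hypotheses behind independent-entry local laws therefore fail, and you would need an extra stability argument comparing the random-profile Dyson equation with the one for $\mathbb T$.

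The unconditional route you also mention is the one that works, and it mirrors the paper's local-law section:
\begin{itemize}
\item verify the cumulant conditions of \citet{Erdos2019random} for the Hermitization;
\item each pair $(X_{ij},P_{ij})$ drives a companion class of four entries, so all cumulant norms are $\bigo(1)$ and Assumption~(D) holds with vanishing cumulants;
\item $\E\mathsf A_{ij}\mathsf B_{ij}=0$ shows that the self-energy is governed by $\mathbb T$.
\end{itemize}

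Second, two minor points. The Hermitization of the $(p+n)\times(p+n)$ matrix $\mathcal X-z$ is $2(p+n)$-dimensional, not $4(p+n)$. You should also add a high-probability bound $\|\mathcal Y\|_2\le\mathsf C$ (for instance from $\|\mathsf A\|_2,\|\mathsf B\|_2=\bigop(1)$). This reduces $\{|z|\ge b_{\star,n}+\kappa\}$ to the compact annulus on which the uniform local law and Rouch\'e are applied.
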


Based on Proposition~\ref{prop:rect-phase}, we design Algorithm~\ref{alg:rect-spike} to estimate the singular spikes by applying the random splitting procedure.
The consistency is established in Theorem~\ref{thm:rect-spike-consistency}.

\begin{algorithm}[H]
\caption{(Rectangular case) Spike estimation by random splitting}
\label{alg:rect-spike}
\begin{algorithmic}[1]
\Require A rectangular observation $Y\in\R^{p\times n}$ and a known rank $r_+$.
\Ensure Estimates $\widehat d_1,\ldots,\widehat d_{r_+}$.
\State Draw one independent mask $P$ with i.i.d. $\mathrm{Bernoulli}(1/2)$ entries.
\State Form $\mathcal Y$ as in \eqref{eq:rect-Y-def}.
\State Compute the eigenvalues of $\mathcal Y$ and set
\[
       \widehat d_k\deq2\Re\widehat\lambda_{k,+}, \qquad k\in\intset{r_+}.
\]
\State \textbf{Return} $\widehat d_1,\ldots,\widehat d_r$.
\end{algorithmic}
\end{algorithm}

\begin{theorem}[Consistency of Algorithm~\ref{alg:rect-spike}]
\label{thm:rect-spike-consistency}
Under Assumption~\ref{asm:rectangular-assumptions}, Algorithm~\ref{alg:rect-spike} produces consistent estimates of the singular spikes:
\begin{equation*}
        \max_{k\in\intset{r_+}}\abs{\widehat d_k-d_k}
        =\bigop(\eta_n)=\smallop(1).
\end{equation*}
\end{theorem}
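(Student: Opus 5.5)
The plan is to derive Theorem~\ref{thm:rect-spike-consistency} from Proposition~\ref{prop:rect-phase}, exactly as Theorem~\ref{thm:spike-estimation-consistency} follows from Proposition~\ref{prop:outlier-location}. The main work is a careful labeling argument. We fix $\kappa\in(0,\delta)$, say $\kappa=\delta/2$, and work on the event $\Omega_n$ of Proposition~\ref{prop:rect-phase}, which has probability tending to one. On $\Omega_n$, the spectrum of $\mathcal Y$ outside the disc $\{|z|<b_{\star,n}+\kappa\}$ consists of exactly $2r_+$ eigenvalues. These are $\lambda_{k,\mathsf s}(\mathcal Y)$, and they satisfy
\[
\max_{k\in\intset{r_+},\,\mathsf s}|\lambda_{k,\mathsf s}(\mathcal Y)-\mathsf s d_k/2|\le K\eta_n
\]
with probability at least $1-\epsilon$ for a suitable constant $K=K(\epsilon)$.

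The key step is to check that the eigenvalues selected by Algorithm~\ref{alg:rect-spike} are exactly the positive outliers $\lambda_{k,+}(\mathcal Y)$, $k\in\intset{r_+}$, and that they come in decreasing order of $d_k$.

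First, every non-outlier eigenvalue satisfies $\Re\lambda\le|\lambda|<b_{\star,n}+\kappa$. Each positive outlier satisfies
\[
\Re\lambda_{k,+}\ge d_k/2-K\eta_n\ge b_{\star,n}+\delta-K\eta_n,
\]
and this exceeds $b_{\star,n}+\kappa$ once $\eta_n$ is small, since $\eta_n=\smallo(1)$. Each negative outlier has real part near $-d_k/2<0$. Hence, ordering by decreasing real part, the first $r_+$ eigenvalues of $\mathcal Y$ are precisely the $r_+$ outliers near $d_1/2,\dots,d_{r_+}/2$.

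The spikes need not be distinct, so matching indices needs care. Let $\lambda_{(1)},\dots,\lambda_{(r_+)}$ be these eigenvalues sorted by decreasing real part. The multiset $\{\Re\lambda_{(j)}\}_{j\le r_+}$ lies within $K\eta_n$ of the multiset $\{d_k/2\}_{k\le r_+}$ under some bijection. A standard rearrangement fact then applies: if two finite multisets of reals can be matched within $\epsilon$, their sorted versions match entrywise within $\epsilon$. So $|\Re\lambda_{(k)}-d_k/2|\le K\eta_n$ for every $k$, where the $d_k$ are already sorted decreasingly.

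It follows that
\[
|\widehat d_k-d_k|=2|\Re\widehat\lambda_{k,+}-d_k/2|\le 2K\eta_n
\]
on an event of probability at least $1-\epsilon-\mathbb P(\Omega_n^c)$, which gives the $\bigop(\eta_n)$ bound. Finally, $\eta_n=\smallo(1)$ follows from Assumption~\ref{asm:rect-deloc}, since $\mu_{U,n},\mu_{V,n}\to0$ and $d_{\max}\le C$, and from $n^{-1/2+\vartheta}\to0$.

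The only delicate point is this labeling and sorting step, since Proposition~\ref{prop:rect-phase} labels outliers by spike index rather than by real-part order. The rearrangement inequality resolves it. Everything else is a direct consequence of Proposition~\ref{prop:rect-phase}, which we take as given.
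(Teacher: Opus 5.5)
Your proposal is correct and follows the same route as the paper, which records Theorem~\ref{thm:rect-spike-consistency} as a direct consequence of Proposition~\ref{prop:rect-phase}. Two of your steps fill in labeling details that the paper leaves implicit: you check that the $r_+$ eigenvalues of largest real part are exactly the positive outliers, and you use a sorted-matching argument so that possibly repeated spikes are handled correctly.
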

Theorem~\ref{thm:rect-spike-consistency} is a direct consequence of Proposition~\ref{prop:rect-phase}.

We next consider the rectangular model for two observed matrices.
For $a\in\{1,2\}$, one observes
\begin{equation}
\label{eq:rect-two-sample-model}
        Y_a=X_a+S_a,
        \qquad S_a=U_aD_aV_a^\top,
        \qquad D_a=\diag(d_{a1},\ldots,d_{ar}),
\end{equation}
where $U_a,V_a$ are deterministic and the $X_1, X_2$ are independent.
Their variance profiles may differ.  
For each noise matrix, define 
\begin{equation*}
        T_a\deq(\E X_{a,ij}^2)_{i,j},
        \qquad
        \mathbb T_a\deq\frac12
        \begin{pmatrix}0&T_a\\T_a^\top&0\end{pmatrix},
        \qquad
        b_{\star,a,n}\deq\sqrt{\norm{\mathbb T_a}_2}
        =\sqrt{\frac{\norm{T_a}_2}{2}}.
\end{equation*}
\hypersetup{allcolors=blue}
For each $a\in\{1,2\}$, suppose that $(X_a,U_a,V_a,D_a,T_a)$ satisfies Assumption~\ref{asm:rectangular-assumptions} with supercritical count $r_{a,+}$.
The two signal matrices have the same fixed rank $r$.
For notational simplicity, write $r_a\deq r_{a,+}$.
The quantities $r_1,r_2\in\intset{r}$ are assumed known and need not be equal.
Define the supercritical signal parts and their left and right singular vectors by
\[
        S_{a,+}\deq\sum_{i\in\intset{r_a}}d_{ai}u_{a,i}v_{a,i}^\top,
        \qquad
        U_{a,+}\deq(u_{a,1},\ldots,u_{a,r_a}),
        \qquad
        V_{a,+}\deq(v_{a,1},\ldots,v_{a,r_a}).
\]
The left and right overlap matrices are
\begin{equation*}
        R_U\deq U_{1,+}^\top U_{2,+},
        \qquad R_V\deq V_{1,+}^\top V_{2,+}.
\end{equation*}
Define the equivalence classes $[R_U]_{+}$ and $[R_V]_{+}$ similarly to \eqref{eq:overlap-sign-equivalence}.

Moreover, let the normalized signal correlation between the $S_1$ and $S_2$ be
\begin{equation*}
        \rho_{\mathrm{sig}}
        \deq
        \frac{\langle S_{1,+},S_{2,+}\rangle_F}{\|S_{1,+}\|_F\|S_{2,+}\|_F}
        =
        \frac{\sum_{i=1}^{r_1}\sum_{j=1}^{r_2}d_{1i}d_{2j}(R_U)_{ij}(R_V)_{ij}}
        {(\sum_{i=1}^{r_1} d_{1i}^2)^{1/2}(\sum_{j=1}^{r_2} d_{2j}^2)^{1/2}}.
\end{equation*}
Our goal is to estimate this correlation and representatives of $[R_U]_{+}$ and $[R_V]_{+}$.

Similar to Assumption~\ref{asm:overlap-entry-assumptions} , we also require the following separation condition on the singular spikes. 
\begin{assumption}
\label{asm:rect-two-separation}
For some fixed $\tau>0$,
\[
        \min_{a\in\intset{2}}\min_{i,j\in\intset{r_a},\,i\neq j}\abs{d_{ai}-d_{aj}}\geq\tau.
\]
For each $a$, the second minimum is interpreted as $+\infty$ when $r_a=1$.
\end{assumption}
\hypersetup{allcolors=blue}
Draw independent masks $P_1,P_2$ with i.i.d. $\mathrm{Bernoulli}(1/2)$ entries, independently of $X_1,X_2$.
For each $a\in\{1,2\}$, use $P_a$ to form $\mathcal Y_a$ as in \eqref{eq:rect-Y-def} and write its decomposition as
\begin{equation*}
        \mathcal Y_a=\mathcal X_a+\mathcal E_a
        +\mathcal U_a\mathcal D_a\mathcal U_a^\top.
\end{equation*}
For $i\in\intset{r_a}$ and $\mathsf s \in \{+,-\}$,  choose right and left eigenvectors  associated with $\widehat \lambda_{a,i,\mathsf s}$ normalized by
\begin{equation}
\label{eq:rect-left-right-normalization}
        \norm{\widetilde r_{a,i,\mathsf s}}_2=1,
        \quad
        \widetilde\ell_{a,i,\mathsf s}^\top
        \widetilde r_{a,i,\mathsf s}=1,
\end{equation}
and set
\[
\widetilde{\mathcal P}_{a,i,\mathsf s}
\deq\widetilde r_{a,i,\mathsf s}\widetilde\ell_{a,i,\mathsf s}^\top.
\]

With $\mathfrak u_{a,i,\mathsf s}$ defined by applying
\eqref{eq:rect-signal-vectors} samplewise, we have
\begin{equation}
\label{eq:rect-block-projector-identity}
        \mathfrak u_{a,i,+}\mathfrak u_{a,i,+}^\top
        +\mathfrak u_{a,i,-}\mathfrak u_{a,i,-}^\top
        =\begin{pmatrix}
          u_{a,i}u_{a,i}^\top&0\\
          0&v_{a,i}v_{a,i}^\top
        \end{pmatrix}.
\end{equation}
This identity motivates extracting both diagonal blocks from the same pair
of empirical outlier projections.

For a $(p+n)\times(p+n)$ matrix $M$, let $[M]_{11}$ and $[M]_{22}$
denote its upper left $p\times p$ and  lower right $n\times n$ blocks.  Define
\begin{equation}
\label{eq:rect-projector-estimators}
        \widehat\Pi^U_{a,i}
        \deq[\widetilde{\mathcal P}_{a,i,+}
              +\widetilde{\mathcal P}_{a,i,-}]_{11},
        \qquad 
        \widehat\Pi^V_{a,i}
        \deq[\widetilde{\mathcal P}_{a,i,+}
              +\widetilde{\mathcal P}_{a,i,-}]_{22}.
\end{equation}
Their targets are $u_{a,i}u_{a,i}^\top$ and $v_{a,i}v_{a,i}^\top$.  
For $i\in\intset{r_1}$ and $j\in\intset{r_2}$, define 
\begin{equation}
\label{eq:rect-magnitude-estimators}
\begin{aligned}
        \widehat q^U_{ij}
        &\deq\Re\Tr(\widehat\Pi^U_{1,i}\widehat\Pi^U_{2,j}),
        &\widehat r^{U,\mathrm{abs}}_{ij}
        &\deq\sqrt{\min\{1,\max\{0, \widehat q^U_{ij}\}\}},\\
        \widehat q^V_{ij}
        &\deq\Re\Tr(\widehat\Pi^V_{1,i}\widehat\Pi^V_{2,j}),
        &\widehat r^{V,\mathrm{abs}}_{ij}
        &\deq\sqrt{\min\{1,\max\{0, \widehat q^V_{ij}\}\}}.
\end{aligned}
\end{equation}
Choose the sign of each right eigenvector associated with a positive outlier so that its first nonzero coordinate is positive.
Change the sign of the corresponding left eigenvector to preserve \eqref{eq:rect-left-right-normalization}, and partition the eigenvectors as
\begin{equation*}
        \widetilde r_{a,i,+}
        =\begin{pmatrix}x^U_{a,i}\\x^V_{a,i}\end{pmatrix},
        \qquad
        \widetilde\ell_{a,i,+}
        =\begin{pmatrix}y^U_{a,i}\\y^V_{a,i}\end{pmatrix}.
\end{equation*}
With $\operatorname{sgn}(0)=1$, define 
\begin{equation}
\label{eq:rect-signed-estimators}
        \widehat R^{U,\mathrm{eqv}}_{ij}
        \deq\operatorname{sgn}((y^U_{1,i})^\top x^U_{2,j})
        \widehat r^{U,\mathrm{abs}}_{ij},
        \qquad
        \widehat R^{V,\mathrm{eqv}}_{ij}
        \deq\operatorname{sgn}((y^V_{1,i})^\top x^V_{2,j})
        \widehat r^{V,\mathrm{abs}}_{ij}.
\end{equation}
Finally, set $\widehat d_{ai} \deq 2 \Re \widehat \lambda_{a,i,+}$ for $i\in\intset{r_a}$ and define the signal correlation estimator as
\begin{equation}
\label{eq:rect-plug-in}
        \widehat\rho_{\mathrm{sig}}
        \deq
        \frac{\sum_{i,j}\widehat d_{1i}\widehat d_{2j}
        \widehat R^{U,\mathrm{eqv}}_{ij}
        \widehat R^{V,\mathrm{eqv}}_{ij}}
        {(\sum_i\widehat d_{1i}^2)^{1/2}
         (\sum_j\widehat d_{2j}^2)^{1/2}}.
\end{equation}

The following algorithm summarizes the complete signal correlation estimator for the rectangular case, and its consistency is established in Theorem~\ref{thm:rect-overlap-consistency}.
\begin{algorithm}[H]
\caption{(Rectangular case)  Overlap and signal correlation estimation}
\label{alg:rect-correlation}
\begin{algorithmic}[1]
\Require Two observations $Y_1,Y_2\in\R^{p\times n}$ and known $r_1, r_2$.
\Ensure Left/right overlap estimates and the signal correlation estimate $\widehat\rho_{\mathrm{sig}}$.
\For{$a=1,2$}
\State Draw a Bernoulli mask $P_a$ and form $\mathcal Y_a$ as in \eqref{eq:rect-Y-def}.
\State Compute the eigendecomposition of $\mathcal Y_a$; set $\widehat d_{ai}=2\Re\widehat\lambda_{a,i,+}$ for $i\in\intset{r_a}$.
\State Form the normalized eigenvectors and $\widehat\Pi^U_{a,i},\widehat\Pi^V_{a,i}$ in \eqref{eq:rect-left-right-normalization}--\eqref{eq:rect-projector-estimators}.
\EndFor
\State Compute the left/right overlap magnitudes in \eqref{eq:rect-magnitude-estimators} and signed estimates in \eqref{eq:rect-signed-estimators}.
\State Compute $\widehat\rho_{\mathrm{sig}}$ from \eqref{eq:rect-plug-in}.
\State \textbf{Return} $\widehat R^{U,\mathrm{eqv}}$, $\widehat R^{V,\mathrm{eqv}}$, and $\widehat\rho_{\mathrm{sig}}$.
\end{algorithmic}
\end{algorithm}

\begin{theorem}
\label{thm:rect-overlap-consistency}
Consider the rectangular model \eqref{eq:rect-two-sample-model}. 
For each $a\in\intset{2}$, suppose that Assumptions~\ref{asm:rectangular-assumptions} and \ref{asm:rect-two-separation} hold for $Y_a$ with its own parameters. 
Then the left and right overlap estimators satisfy
\begin{align*}
        &\min_{\Xi_1\in\mathscr S_{r_1},\,\Xi_2\in\mathscr S_{r_2}} 
        \left\|\widehat R^{U,\mathrm{eqv}}-\Xi_1R_U\Xi_2\right\|_{\max}
        =\smallop(1),\\
        &\min_{\Xi_1\in\mathscr S_{r_1},\,\Xi_2\in\mathscr S_{r_2}} 
        \left\|\widehat R^{V,\mathrm{eqv}}-\Xi_1R_V\Xi_2\right\|_{\max}
        =\smallop(1).
\end{align*}
The signal correlation estimator satisfies
\[
    \widehat\rho_{\mathrm{sig}}-\rho_{\mathrm{sig}}=\smallop(1).
\]
\end{theorem}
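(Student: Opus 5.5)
The plan is to reduce Theorem~\ref{thm:rect-overlap-consistency} to a rectangular analogue of Proposition~\ref{prop:split-evec-projection}, then do finite-dimensional bookkeeping with Proposition~\ref{prop:rect-phase}. First, I would establish the following for each sample $a$, each $i\in\intset{r_a}$, each $\mathsf s$, and all deterministic unit vectors $a_n,b_n\in\C^{p+n}$:
\[
a_n^*(\widetilde{\mathcal P}_{a,i,\mathsf s}-\mathfrak u_{a,i,\mathsf s}\mathfrak u_{a,i,\mathsf s}^\top)b_n=\smallop(1).
\]
The proof would copy that of Proposition~\ref{prop:split-evec-projection}. By Proposition~\ref{prop:rect-phase} and Assumption~\ref{asm:rect-two-separation}, the $2r_a$ outliers are separated from each other and from the bulk by a fixed distance, with probability tending to one. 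So $\widetilde{\mathcal P}_{a,i,\mathsf s}=-\frac{1}{2\pi\ii}\oint_{\Gamma}(\mathcal Y_a-z)^{-1}\dif z$ over a small circle $\Gamma$ around $\alpha_{a,i,\mathsf s}$. Woodbury's identity with $\mathcal Y_a=\mathcal X_a+\mathcal E_a+\mathcal U_a\mathcal D_a\mathcal U_a^\top$ then expresses the projector through $G(z)=(\mathcal X_a+\mathcal E_a-z)^{-1}$. By the isotropic law outside the spectral radius of the non-Hermitian noise, together with $\|\mathcal E_a\|_2=\smallop(1)$, we get $a_n^*G(z)b_n=-z^{-1}a_n^*b_n+\smallop(1)$ uniformly on $\Gamma$. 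The first-order off-diagonal terms vanish in expectation, and the deterministic limit of $\mathcal U^\top G\mathcal U$ is $-z^{-1}I$. The residue computation then gives $\mathfrak u\mathfrak u^\top$. This is also the reason for the separation assumption: the contour must enclose exactly one eigenvalue.

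Second, I would pass from bilinear forms to traces. Using \eqref{eq:rect-block-projector-identity}, write $\widehat\Pi^U_{a,i}=u_{a,i}u_{a,i}^\top+\Delta_{a,i}$. We have
\[
\Tr(\widehat\Pi^U_{1,i}\widehat\Pi^U_{2,j})=(u_{1,i}^\top u_{2,j})^2+u_{2,j}^\top\Delta_{1,i}u_{2,j}+u_{1,i}^\top\Delta_{2,j}u_{1,i}+\Tr(\Delta_{1,i}\Delta_{2,j}).
\]
The two middle terms are $\smallop(1)$ by the first step, since $u_{2,j}$ is deterministic and independent of the first sample, and symmetrically for the other. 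The cross term is the main obstacle, because $\Delta$ is not small in operator norm. I would use the rank-one structure: $\widetilde{\mathcal P}=\widetilde r\widetilde\ell^\top$, so $\Tr(\Delta_{1,i}\Delta_{2,j})$ reduces to a finite sum of products like $(y_{1}^\top x_2)(y_2^\top x_1)$. Independence of the samples is used here. Conditioning on sample 2, the vectors $x_2,y_2$ are fixed, and the first step applied to sample 1 gives $y_1^\top x_2=c\,u_{1,i}^\top x_2+\smallop(1)$ with $c$ the limiting factor. This needs the bilinear convergence for test vectors independent of the sample, not just deterministic ones, and with $\|y_2\|_2$ tight; the latter follows from $|\widetilde\ell^\top\widetilde r|=1$ and the first step. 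Iterating this argument gives $\widehat q^U_{ij}=(R_U)_{ij}^2+\smallop(1)$, so $\widehat r^{U,\mathrm{abs}}_{ij}\to|(R_U)_{ij}|$ because clipping and square root are continuous. The $V$-block is identical.

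Third, for the signs, the first step shows that $x^U_{a,i}$ and $y^U_{a,i}$ are asymptotically aligned with $\epsilon_{a,i}u_{a,i}$ and $\epsilon_{a,i}u_{a,i}$ up to positive constants, where $\epsilon_{a,i}=\pm1$ is fixed by the orientation rule. The right and left signs agree because $\widetilde\ell^\top\widetilde r=1$. So $\operatorname{sgn}((y^U_{1,i})^\top x^U_{2,j})=\epsilon_{1,i}\epsilon_{2,j}\operatorname{sgn}((R_U)_{ij})$ whenever $|(R_U)_{ij}|$ is bounded below. When $(R_U)_{ij}\to0$ the sign is irrelevant, since the magnitude is already $\smallop(1)$. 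Taking $\Xi_a=\diag(\epsilon_{a,i})$ gives the max-norm statement, with the same $\Xi$ for $U$ and $V$ because the sign is shared by the joint vector $\widetilde r_{a,i,+}$.

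Finally, the common choice of $\Xi$ makes the product $\widehat R^{U}_{ij}\widehat R^{V}_{ij}$ converge to $(R_U)_{ij}(R_V)_{ij}$. Combining this with Theorem~\ref{thm:rect-spike-consistency} for the $\widehat d_{ai}$ and using that the denominator of \eqref{eq:rect-plug-in} is bounded below by $d_{a1}>0$, the continuous mapping theorem gives $\widehat\rho_{\mathrm{sig}}-\rho_{\mathrm{sig}}=\smallop(1)$.
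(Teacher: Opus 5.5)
Your route is the paper's. The paper proves the Wigner version (Theorem~\ref{thm:entrywise-overlap}), and the rectangular statement follows by the same argument with $\mathfrak u_{a,i,+}=(u_{a,i}^\top,v_{a,i}^\top)^\top/\sqrt2$. The ingredients are the same as yours: Riesz projection plus Woodbury, the four-term trace expansion, independence of the two samples for the cross term, signs through the random normalizer $c_{a,i}\deq\mathfrak u_{a,i,+}^\top\widetilde r_{a,i,+}$, and a plug-in for $\widehat\rho_{\mathrm{sig}}$. You also correctly see that one random $\Xi_a=\diag(\operatorname{sgn}c_{a,1},\ldots,\operatorname{sgn}c_{a,r_a})$ serves both the $U$ and $V$ blocks. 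This is exactly what makes $\widehat R^{U,\mathrm{eqv}}_{ij}\widehat R^{V,\mathrm{eqv}}_{ij}$ consistent for $(R_U)_{ij}(R_V)_{ij}$.

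\textbf{The gap.} One load-bearing step is mis-justified. You claim that tightness of $\|y_2\|_2$, i.e.\ of $\|\widetilde\ell_{a,i,\mathsf s}\|_2=\|\widetilde{\mathcal P}_{a,i,\mathsf s}\|_2$, follows from $\widetilde\ell^\top\widetilde r=1$ and the bilinear convergence of your first step. It does not.
\begin{itemize}
\item Since $\|\widetilde r\|_2=1$, the normalization only gives the lower bound $\|\widetilde\ell\|_2\geq1$.
\item Convergence of bilinear forms against deterministic vectors cannot bound an operator norm. Replace $\widetilde\ell$ by $\widetilde\ell+M_nw$, with $w$ uniform on the unit sphere of $\widetilde r^{\perp}$ and $1\ll M_n\ll\sqrt n$. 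Both properties survive, yet $\|\widetilde\ell\|_2\to\infty$.
\end{itemize}
You need this bound twice. First, to feed the other sample's eigenvectors in as independent test vectors. Second, to get $c_{a,i}^{-1}=\bigop(1)$ from $c_{a,i}\,\widetilde\ell_{a,i,+}^\top\mathfrak u_{a,i,+}=1+\smallop(1)$. Without the second, the relation $\widetilde\ell^\top v=c^{-1}\mathfrak u^\top v+c^{-1}\smallop(1)$ carries no information and the sign step collapses. Relatedly, the factor in your Step~2 should be $c_{1,i}^{-1}/\sqrt2$, and it is a random normalizer, not a limit.

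\textbf{The fix} is the paper's \eqref{eq:samplewise-proof-riesz-norm}. On the good event, your own Woodbury contour representation gives $\|\widetilde{\mathcal P}_{a,i,\mathsf s}\|_2\leq\rho_{a,i,\mathsf s}\sup_{\Gamma}\|\widetilde R_{\mathcal X_a}\|_2^2\sup_{\Gamma}\|L_a^{-1}\|_2=\bigop(1)$. This uses the uniform exterior singular-value gap (Lemma~\ref{lem:uniform-exterior-singular-gap}) and the contour bound on $L_a^{-1}$. You should also verify three things the transpose normalization and the orientation rule presuppose:
\begin{itemize}
\item the selected outliers are real, which follows from conjugation invariance of the disks;
\item they are simple;
\item $\widetilde\ell^\top\widetilde r\neq0$, so the normalization can be imposed.
\end{itemize}

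\textbf{The cross term.} Once the norm bound is in place, your factor-by-factor treatment of $\Tr(\Delta_{1,i}\Delta_{2,j})$ works, but it does more than necessary. Write $\Delta_{1,i}=\sum_{\mathsf s}x_{1,\mathsf s}y_{1,\mathsf s}^\top-u_{1,i}u_{1,i}^\top$, with $x_{1,\mathsf s},y_{1,\mathsf s}$ the $U$-blocks of $\widetilde r_{1,i,\mathsf s},\widetilde\ell_{1,i,\mathsf s}$. Then $\Tr(\Delta_{1,i}\Delta_{2,j})=\sum_{\mathsf s}y_{1,\mathsf s}^\top\Delta_{2,j}x_{1,\mathsf s}-u_{1,i}^\top\Delta_{2,j}u_{1,i}$. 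Each term is $\smallop(1)$ by bilinear convergence of sample~2 against tight test vectors independent of it. This is in effect the paper's rank-three SVD argument. It needs no $c$'s for the magnitudes, so the $c$'s enter only in the sign step.
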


\section{Numerical Simulations}
\label{sec:numerical-simulations}

This section perform a Monte Carlo simulation for the proposed estimators in Algorithms~\ref{alg:spike-estimation}--\ref{alg:overlap-matrix-estimation}.
Additional simulation results for Propositions~\ref{prop:outlier-location}
and~\ref{prop:rect-phase} and Algorithms~\ref{alg:rect-spike}--\ref{alg:rect-correlation} are provided in Supplementary Material.

\paragraph*{Common setup}
For each configuration, use $n\in\{250,500,750,1000\}$ and $B=500$ independent data replications.
Taking $\kappa \in \{1,4,8\}$ and define the variance profile $T^{(\kappa)}$ as follows:
\begin{align*}
        T^{(\kappa)}
        =
        \frac1n
        \begin{pmatrix}
        \kappa J_{n/2} & J_{n/2}\\
        J_{n/2} & J_{n/2}
        \end{pmatrix},
\end{align*}
where $J_m \deq \one_m\one_m^\top$.
For $T=(t_{ij})\in\{T^{(\kappa)}, \kappa = 1,4,8\}$, set $\mathbb T\deq T/2$ and $b_{\star,n}\deq\sqrt{\|\mathbb T\|_2}$.
For $1\leq i\leq j\leq n$, generate the noise matrix $X$ by
$
        X_{ij}
        =
        \sqrt{t_{ij}}\,\xi_{ij},
        X_{ji}
        =
        X_{ij}.
$
In the primary experiment, the variables $\xi_{ij}$ are independent standard Gaussian variables.
In the robustness experiment, replace them with independent Rademacher variables satisfying $\Pr(\xi_{ij}=1)=\Pr(\xi_{ij}=-1)=1/2$.
For every use of random splitting, draw $P_{ij}\iidsim\mathrm{Bernoulli}(1/2)$ for $1\leq i\leq j\leq n$ and set $P_{ji}=P_{ij}$.
All auxiliary masks are independent of the observations and are redrawn independently across replications.

Set $m=n/2$. 
For $i\in\intset{m}$ and $k\in\intset{r-1}$, define the orthonormal vectors by
\[
        h_1(i)=\frac1{\sqrt m},
        \qquad
        h_{k+1}(i)=
        \sqrt{\frac2m}
        \cos\!\left\{
        \frac{\pi k(i-1/2)}{m}
        \right\},
\]
where $h_k(i)$ denotes the $i$-th entry of $h_k$.
Then, let 
\[
u_k = (\sqrt{0.9}h_k^{\top}, \sqrt{0.1} h_k^{\top})^{\top},\qquad k\in\intset{r}.
\]
These vectors satisfy the required incoherence condition \ref{asm:delocalization}.

\subsection{Simulation for Algorithm~\ref{alg:spike-estimation}}
We use two different variance profiles $T^{(1)}$ and $T^{(4)}$.
Set $r=3$ and  $U=(u_1,u_2,u_3)$.
Consider the normalized spike configuration $\boldsymbol\gamma=(2.00,1.60,1.30)$ and $(2.00,1.50,1.50)$.
With $d_k=2b_{\star,n}\gamma_k$ and $D=\diag(d_1,d_2,d_3)$, form $Y=X+UDU^\top$ and apply Algorithm~\ref{alg:spike-estimation} for $n\in\{250,500,750,1000\}$ to assess its finite-sample performance.

Let $\widehat d_k^{(b)}$ denote the estimate returned in replication $b$.
We report the mean absolute error and empirical standard deviation 
\[
        \widehat{\mathsf{MAE}}_k
        \deq
        \frac1{B}
        \sum_{b=1}^B
        \big|\widehat d_k^{(b)} - d_k\big|,
        \qquad
        \widehat{\mathsf{SD}}_k
        \deq
        \bigg[
        \frac1{B-1}
        \sum_{b=1}^B
        \big(\widehat d_k^{(b)}-\overline d_k\big)^2
        \bigg]^{1/2},
\]
where $\overline d_k \deq B^{-1} \sum_{b=1}^B \widehat d_k^{(b)}$ denotes the mean of estimates.

The simulation results are summarized in Table~\ref{tab:algorithm1-results}.
It reports the mean absolute errors and
empirical standard deviations for Algorithm~\ref{alg:spike-estimation} under
the homogeneous and heterogeneous variance profiles for both normalized spike configurations.
Each entry has the form $\widehat{\mathsf{MAE}}_k\;(\widehat{\mathsf{SD}}_k)$.
For both normalized spike configurations and variance profiles, $\widehat{\mathsf{MAE}}_k$ and $\widehat{\mathsf{SD}}_k$ generally decrease as $n$ increases.
The Gaussian and Rademacher results are close, indicating the robustness of the proposed estimator.

We also compare Algorithm~\ref{alg:spike-estimation} with two alternative methods.
For readers's convenience, we provide a brief description of each method.
To our knowledge, there are no existing methods for estimating spikes for Wigner type matrices under a general variance profile.
Both methods are designed for the homogeneous noise setting.
For the comparison, we set $r=1$ and $d_1=2b_{\star,n}\gamma$ with $\gamma=1.50$, and use $T^{(\kappa)}$ for $\kappa\in\{1,4,8\}$ under both Gaussian and Rademacher noise at $n=1000$.

\begin{itemize}
        \item \citet{BykhovskayaGorinSodin2025}'s \texttt{BGS25} estimator: This estimator is based on the classical deformed Wigner outlier map.
        Let $\lambda_1(Y)\geq\cdots\geq\lambda_n(Y)$, and let $\ell\approx0.81$ be the positive number satisfying
        \[
                \int_{-2}^{-\ell}\frac{\sqrt{4-x^2}}{2\pi}\,\mathrm{d}x
                =
                \int_{\ell}^{2}\frac{\sqrt{4-x^2}}{2\pi}\,\mathrm{d}x
                =\frac14,
        \]
        and set
        \[
                \sigma_0^2
                \deq
                \int_{-\ell}^{\ell}
                \frac{x^2\sqrt{4-x^2}}{2\pi}\,\mathrm{d}x.
        \]
        They estimate the noise variance by
        \[
                \widehat\sigma^{\,2}
                \deq
                \frac{1}{\sigma_0^2 n}
                \sum_{i=\lfloor n/4\rfloor+1}^{\lfloor 3n/4\rfloor}
                \lambda_i(Y)^2.
        \]
        The proposed estimator for $d_k$ is
        \[
        \widehat d_{k,\mathtt{BGS25}}
        =
        \begin{cases}
        \displaystyle
        \frac{\lambda_k(Y)+ \sqrt{\lambda_k(Y)^2-4\widehat\sigma^{2}}}{2},
        & \lambda_k(Y)>2\widehat\sigma,\\[7pt]
        0, & \lambda_k(Y)\leq2\widehat\sigma.
        \end{cases}
        \]

        \item The second comparison method is based on the outlier equation for an additive finite rank perturbation. \citet{BenaychGeorgesNadakuditi2011} show that a supercritical population spike $d$ and its associated sample spike $\lambda$ satisfies $d + 1/m(\lambda) = 0$, where $m(\lambda)$ denotes the Stieltjes transform of the noise eigenvalue distribution. 
        We estimate $m(z)$ from the empirical bulk spectrum after deleting the leading $r$ eigenvalues:
        \[
                \widehat m(z)
                \deq
                \frac{1}{n-r}\sum_{j=r+1}^n\frac{1}{\lambda_j(Y)-z}.
        \]
        Then, we define the spike estimator by 
        \[ 
                \widehat d_{k,\mathtt{BGN11}} \deq -\frac{1}{\widehat m\big(\lambda_k(Y)\big)}.
        \]
        The closely related method was proposed by \citet{BaiDing2012} and \citet{FanGuoZheng2020} for the spiked covariance model and spiked correlation model, respectively.
\end{itemize}

The simulation results for the comparison methods are summarized in Figure~\ref{fig:algorithm1-comparison}.
At $n=1000$, the three estimators have similar distributions under
$T^{(1)}$, whereas Algorithm~\ref{alg:spike-estimation} is centered much
closer to the target under $T^{(4)}$ and $T^{(8)}$ for both noise laws.

\subsection{Simulation for Algorithm~\ref{alg:overlap-matrix-estimation}}
Set $r=2$ and
\[
        A
        =
        \begin{pmatrix}
        0.8&0.3\\
        -0.2&0.6
        \end{pmatrix},
        \qquad
        R(\omega)
        \deq
        \omega\frac{A}{\|A\|_2},
        \qquad
        \omega\in\{0.25,0.60,0.90\}.
\]
For each $\omega$, let
\[
        U_1=(u_1,u_2),
        \qquad
        U_2
        =
        U_1R(\omega)
        +
        (u_3,u_4)
        \left(I_2-R(\omega)^\top R(\omega)\right)^{1/2},
\]
where the square root is the positive semidefinite square root.
Then $U_1^\top U_1=U_2^\top U_2=I_2$ and $U_1^\top U_2=R(\omega)$, and columns of $U_1$ and $U_2$ satisfy the incoherence condition \ref{asm:delocalization}.
Use $D_1=\diag(5,4)$, $D_2=\diag(6,4.5)$,
and the profile pairs 
\[ 
        (T_1,T_2) 
        \quad \in \quad 
        \big\{ (T^{(1)},T^{(1)}), \; (T^{(1)},T^{(4)}), \; (T^{(4)},T^{(4)})\big\}.
\]
Generate $X_1$ and $X_2$ independently and form $Y_a=X_a+U_aD_aU_a^\top$ for $a\in\{1,2\}$.
The three values of $\omega$ represent weak, moderate, and strong subspace overlap, respectively.
For every configuration, $\|R(\omega)\|_2=\omega<1$.
This matrix has nonzero off-diagonal entries and therefore tests recovery of the full overlap matrix.
The corresponding population signal correlations $\rho_{\mathrm{sig}}$ are
$0.0510$, $0.2939$, and $0.6612$, respectively.
Generate $B=500$ independent replications and apply Algorithm~\ref{alg:overlap-matrix-estimation} to estimate the overlap matrix $R(\omega)$ and the signal correlation $\rho_{\mathrm{sig}}$.

For each replication $b$, define
\[
        \mathsf E_{\mathrm{eqv}}^{(b)}
        \deq
        \min_{\Xi_1,\Xi_2\in\mathscr S_2}
        \left\|
        \widehat R_n^{\,\mathrm{eqv},(b)}
        -
        \Xi_1R(\omega)\Xi_2
        \right\|_{\max},
        \qquad
        \mathsf E_{\mathrm{sig}}^{(b)}
        \deq
        \Big|
        \widehat\rho_{\mathrm{sig}}^{(b)}-\rho_{\mathrm{sig}}
        \Big|.
\]
We report the mean errors
$B^{-1}\sum_{b=1}^B\mathsf E_{\mathrm{eqv}}^{(b)}$ and
$B^{-1}\sum_{b=1}^B\mathsf E_{\mathrm{sig}}^{(b)}$, see Tables~\ref{tab:algorithm2-results} and \ref{tab:algorithm2-signal-correlation-results}. 
For both metrics, every variance-profile pair and value of $\omega$ exhibits a
decrease in mean error as $n$ increases, and the Gaussian and Rademacher results
are close. 

\begin{table}[htbp]
\centering
% \scriptsize
\footnotesize
\setlength{\tabcolsep}{2.5pt}
\begin{tabular}{@{}lccc@{\hspace{8pt}}ccc@{}}
\toprule
&\multicolumn{3}{c}{Homogeneous $T^{(1)}$}
 & \multicolumn{3}{c}{Heterogeneous $T^{(4)}$}\\
\cmidrule(lr){2-4}\cmidrule(lr){5-7}
$n$ & $k=1$ & $k=2$ & $k=3$
 & $k=1$ & $k=2$ & $k=3$\\
\midrule
&\multicolumn{6}{l}{$\boldsymbol\gamma=(2.00,1.60,1.30)$}\\
\cmidrule{2-7}
& \multicolumn{6}{l}{Gaussian}\\
250 & $0.0860\;(0.1037)$ & $0.1070\;(0.1195)$ & $0.1602\;(0.1469)$
 & $0.1534\;(0.1895)$ & $0.1807\;(0.2097)$ & $0.2653\;(0.2474)$\\
500 & $0.0599\;(0.0723)$ & $0.0686\;(0.0798)$ & $0.0994\;(0.1119)$
 & $0.1066\;(0.1341)$ & $0.1183\;(0.1430)$ & $0.1934\;(0.2058)$\\
750 & $0.0465\;(0.0581)$ & $0.0557\;(0.0669)$ & $0.0771\;(0.0941)$
 & $0.0893\;(0.1123)$ & $0.1011\;(0.1224)$ & $0.1415\;(0.1651)$\\
1000 & $0.0424\;(0.0519)$ & $0.0503\;(0.0601)$ & $0.0670\;(0.0799)$
 & $0.0762\;(0.0946)$ & $0.0875\;(0.1061)$ & $0.1203\;(0.1410)$\\
& \multicolumn{6}{l}{Rademacher}\\
250 & $0.0909\;(0.1084)$ & $0.1021\;(0.1124)$ & $0.1608\;(0.1483)$
 & $0.1460\;(0.1788)$ & $0.1661\;(0.1947)$ & $0.2657\;(0.2496)$\\
500 & $0.0604\;(0.0739)$ & $0.0698\;(0.0830)$ & $0.1045\;(0.1135)$
 & $0.1051\;(0.1330)$ & $0.1232\;(0.1482)$ & $0.1931\;(0.2079)$\\
750 & $0.0495\;(0.0609)$ & $0.0550\;(0.0659)$ & $0.0789\;(0.0920)$
 & $0.0860\;(0.1071)$ & $0.0969\;(0.1204)$ & $0.1516\;(0.1764)$\\
1000 & $0.0408\;(0.0512)$ & $0.0480\;(0.0565)$ & $0.0614\;(0.0781)$
 & $0.0799\;(0.1003)$ & $0.0886\;(0.1090)$ & $0.1164\;(0.1415)$\\
\midrule
&\multicolumn{6}{l}{$\boldsymbol\gamma=(2.00,1.50,1.50)$}\\
\cmidrule{2-7}
& \multicolumn{6}{l}{Gaussian}\\
250 & $0.0854\;(0.1013)$ & $0.0836\;(0.1028)$ & $0.2302\;(0.1391)$
 & $0.1551\;(0.1877)$ & $0.1494\;(0.1803)$ & $0.3848\;(0.2405)$\\
500 & $0.0580\;(0.0710)$ & $0.0625\;(0.0723)$ & $0.1271\;(0.0825)$
 & $0.1151\;(0.1437)$ & $0.1135\;(0.1272)$ & $0.2438\;(0.1653)$\\
750 & $0.0482\;(0.0589)$ & $0.0535\;(0.0581)$ & $0.0977\;(0.0631)$
 & $0.0920\;(0.1130)$ & $0.0979\;(0.1080)$ & $0.1855\;(0.1259)$\\
1000 & $0.0404\;(0.0506)$ & $0.0515\;(0.0527)$ & $0.0836\;(0.0578)$
 & $0.0753\;(0.0940)$ & $0.0893\;(0.0947)$ & $0.1469\;(0.1010)$\\
& \multicolumn{6}{l}{Rademacher}\\
250 & $0.0906\;(0.1082)$ & $0.0772\;(0.0928)$ & $0.2188\;(0.1331)$
 & $0.1551\;(0.1870)$ & $0.1506\;(0.1825)$ & $0.4023\;(0.2506)$\\
500 & $0.0594\;(0.0733)$ & $0.0611\;(0.0688)$ & $0.1239\;(0.0816)$
 & $0.1057\;(0.1345)$ & $0.1217\;(0.1353)$ & $0.2461\;(0.1704)$\\
750 & $0.0473\;(0.0588)$ & $0.0501\;(0.0540)$ & $0.0988\;(0.0662)$
 & $0.0889\;(0.1118)$ & $0.0974\;(0.1069)$ & $0.1819\;(0.1264)$\\
1000 & $0.0403\;(0.0499)$ & $0.0475\;(0.0492)$ & $0.0781\;(0.0547)$
 & $0.0736\;(0.0907)$ & $0.0906\;(0.0878)$ & $0.1385\;(0.0981)$\\
\bottomrule
\end{tabular}
\caption{Mean absolute errors and empirical standard deviations, shown in
parentheses, for Algorithm~\ref{alg:spike-estimation} under the homogeneous
profile $T^{(1)}$ (left) and heterogeneous profile $T^{(4)}$ (right),
for two normalized spike configurations, based on $B=500$ replications.}
\label{tab:algorithm1-results}
\end{table}

\begin{figure}[htbp]
\centering
\includegraphics[width=\textwidth]{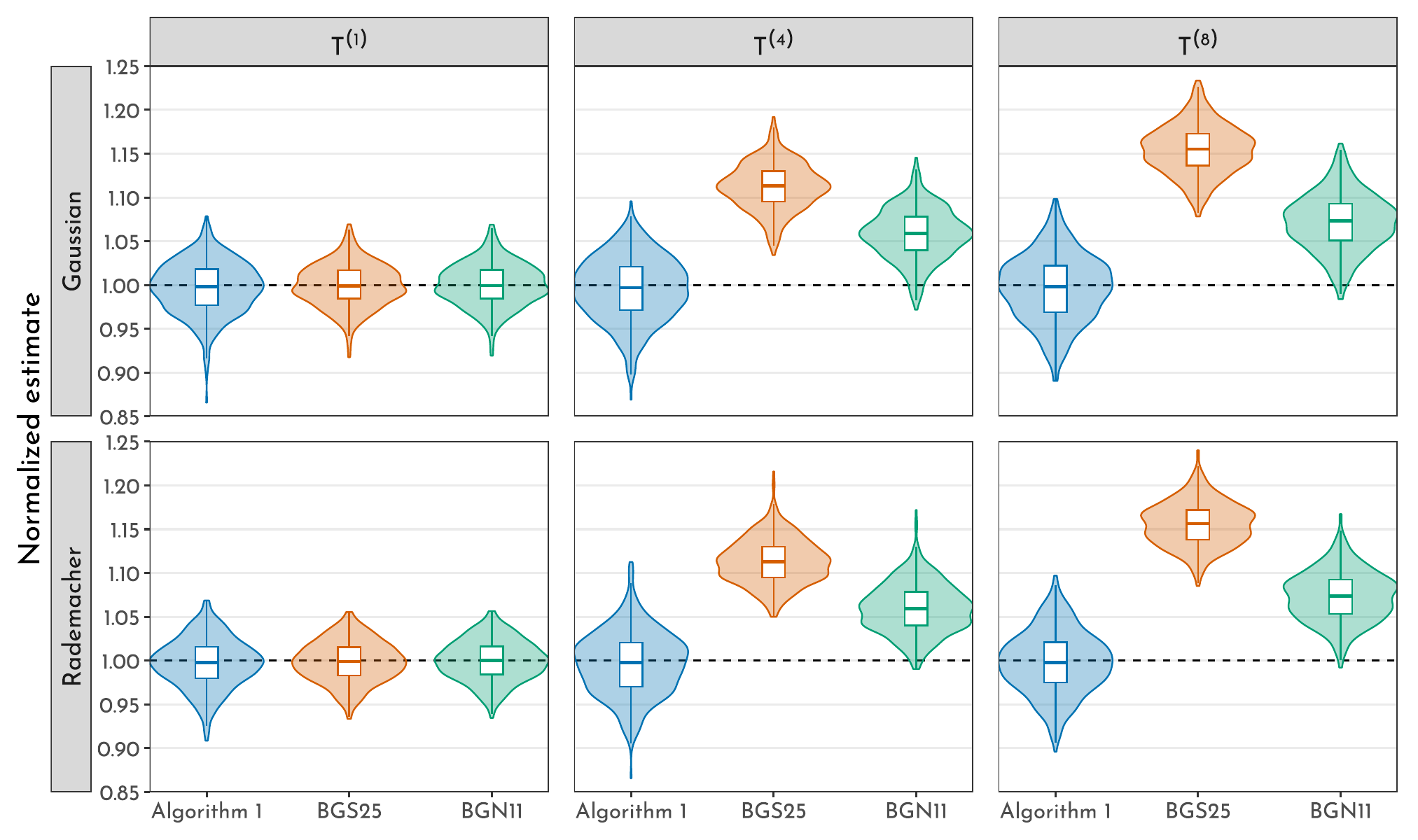}
\caption{Violin plots of the normalized spike estimates
$\widehat d_1/d_1$ at $n=1000$ for Algorithm~\ref{alg:spike-estimation},
\texttt{BGS25}, and \texttt{EmpM} under $T^{(1)}$, $T^{(4)}$, and
$T^{(8)}$, with Gaussian noise (top) and Rademacher noise (bottom).  Each
violin is based on $B=500$ replications; the embedded boxplot shows the
median and interquartile range, and the dashed horizontal line marks the
target value $1$.  The normalized spike strength is $\gamma=1.50$.}
\label{fig:algorithm1-comparison}
\end{figure}

\begin{table}[htbp]
\centering
% \scriptsize
\footnotesize
\setlength{\tabcolsep}{2.5pt}
\begin{tabular}{@{}l*{9}{c}@{}}
\toprule
\multicolumn{1}{c}{}
 & \multicolumn{3}{c}{$(T^{(1)},T^{(1)})$}
 & \multicolumn{3}{c}{$(T^{(1)},T^{(4)})$}
 & \multicolumn{3}{c}{$(T^{(4)},T^{(4)})$}\\
\cmidrule(lr){2-4}\cmidrule(lr){5-7}\cmidrule(lr){8-10}
$n$ 
 & $\omega=0.25$ & $\omega=0.60$ & $\omega=0.90$
 & $\omega=0.25$ & $\omega=0.60$ & $\omega=0.90$
 & $\omega=0.25$ & $\omega=0.60$ & $\omega=0.90$\\
\midrule
&\multicolumn{9}{l}{Gaussian}\\
250 & $0.0397$ & $0.0495$ & $0.0658$ & $0.0583$ & $0.0708$ & $0.0885$ & $0.0804$ & $0.1038$ & $0.1230$\\
500 & $0.0270$ & $0.0337$ & $0.0438$ & $0.0403$ & $0.0485$ & $0.0620$ & $0.0549$ & $0.0700$ & $0.0901$\\
750 & $0.0206$ & $0.0273$ & $0.0347$ & $0.0302$ & $0.0400$ & $0.0489$ & $0.0435$ & $0.0595$ & $0.0718$\\
1000 & $0.0177$ & $0.0228$ & $0.0302$ & $0.0275$ & $0.0326$ & $0.0421$ & $0.0372$ & $0.0478$ & $0.0617$\\ 
\cmidrule{2-10} 
&\multicolumn{9}{l}{Rademacher}\\
250 & $0.0409$ & $0.0483$ & $0.0636$ & $0.0600$ & $0.0726$ & $0.0865$ & $0.0778$ & $0.1050$ & $0.1296$\\
500 & $0.0262$ & $0.0343$ & $0.0453$ & $0.0403$ & $0.0493$ & $0.0570$ & $0.0555$ & $0.0717$ & $0.0912$\\
750 & $0.0210$ & $0.0274$ & $0.0347$ & $0.0311$ & $0.0382$ & $0.0497$ & $0.0434$ & $0.0568$ & $0.0734$\\
1000 & $0.0181$ & $0.0241$ & $0.0307$ & $0.0265$ & $0.0337$ & $0.0434$ & $0.0380$ & $0.0470$ & $0.0627$\\
\bottomrule
\end{tabular}
\caption{Mean sign-equivalence-class errors for the Wigner overlap estimator in
Algorithm~\ref{alg:overlap-matrix-estimation} under three variance-profile
pairs, based on $B=500$ replications.}
\label{tab:algorithm2-results}
\end{table}

\begin{table}[htbp]
\centering
\footnotesize
\setlength{\tabcolsep}{2.5pt}
\begin{tabular}{@{}l*{9}{c}@{}}
\toprule
\multicolumn{1}{c}{}
 & \multicolumn{3}{c}{$(T^{(1)},T^{(1)})$}
 & \multicolumn{3}{c}{$(T^{(1)},T^{(4)})$}
 & \multicolumn{3}{c}{$(T^{(4)},T^{(4)})$}\\
\cmidrule(lr){2-4}\cmidrule(lr){5-7}\cmidrule(lr){8-10}
$n$
 & $\omega=0.25$ & $\omega=0.60$ & $\omega=0.90$
 & $\omega=0.25$ & $\omega=0.60$ & $\omega=0.90$
 & $\omega=0.25$ & $\omega=0.60$ & $\omega=0.90$\\
\midrule
&\multicolumn{9}{l}{Gaussian}\\
250 & $0.0051$ & $0.0119$ & $0.0252$ & $0.0077$ & $0.0171$ & $0.0286$ & $0.0108$ & $0.0235$ & $0.0370$\\
500 & $0.0036$ & $0.0077$ & $0.0131$ & $0.0053$ & $0.0118$ & $0.0173$ & $0.0069$ & $0.0148$ & $0.0226$\\
750 & $0.0029$ & $0.0061$ & $0.0102$ & $0.0042$ & $0.0096$ & $0.0127$ & $0.0054$ & $0.0128$ & $0.0168$\\
1000 & $0.0025$ & $0.0053$ & $0.0078$ & $0.0037$ & $0.0074$ & $0.0108$ & $0.0052$ & $0.0114$ & $0.0145$\\
\cmidrule{2-10}
&\multicolumn{9}{l}{Rademacher}\\
250 & $0.0054$ & $0.0110$ & $0.0246$ & $0.0081$ & $0.0172$ & $0.0301$ & $0.0101$ & $0.0247$ & $0.0394$\\
500 & $0.0037$ & $0.0079$ & $0.0143$ & $0.0055$ & $0.0117$ & $0.0174$ & $0.0072$ & $0.0155$ & $0.0236$\\
750 & $0.0030$ & $0.0063$ & $0.0095$ & $0.0044$ & $0.0089$ & $0.0130$ & $0.0061$ & $0.0117$ & $0.0176$\\
1000 & $0.0026$ & $0.0056$ & $0.0077$ & $0.0036$ & $0.0078$ & $0.0103$ & $0.0050$ & $0.0111$ & $0.0143$\\
\bottomrule
\end{tabular}
\caption{Mean absolute errors of the Wigner signal correlation estimator
$\widehat\rho_{\mathrm{sig}}$ in
Algorithm~\ref{alg:overlap-matrix-estimation} under three variance-profile
pairs, based on $B=500$ replications.}
\label{tab:algorithm2-signal-correlation-results}
\end{table}

\section{Proof Strategy for the Spiked Wigner Model}
\label{sec:proof-strategy}

\subsection{Auxiliary Lemmas}

We start from  the decomposition in \eqref{eq:split-signal-decomposition}.
The $\mathcal U\mathcal D\mathcal U^\top$ term has the desired eigenvalues $\alpha_{k,\mathsf s}=\mathsf s d_k/2$.
The following lemma controls the remainder $\mathcal E$ in the decomposition \eqref{eq:split-signal-decomposition}.

\begin{lemma}
\label{lem:mask-remainder-bound}
Recall the definition of $\mathcal E$ in \eqref{eq:split-signal-decomposition}.
Suppose that Assumption~\ref{asm:delocalization} and \ref{asm:signal} hold,  then
\begin{equation*}
        \|\mathcal E\|_2
        =
        \bigop(\mu_n)
        =
        \smallop(1).
\end{equation*}
\end{lemma}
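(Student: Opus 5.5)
Since $\mathcal E$ is block anti-diagonal with blocks $\mathcal R$ and $-\mathcal R$, we have $\|\mathcal E\|_2=\|\mathcal R\|_2$, so it suffices to bound $\mathcal R=(P-\frac12\one_n\one_n^\top)\circ S$. Write $W\deq 2P-\one_n\one_n^\top$. This is a symmetric matrix whose upper-triangular entries are i.i.d. Rademacher, and it is independent of everything else. Then $\mathcal R=\frac12 W\circ S=\frac12\sum_{k=1}^r d_k\, W\circ(u_ku_k^\top)$. Since $r$ is fixed and $d_k\le d_{\max}\le C$, the triangle inequality reduces the claim to showing $\|W\circ(uu^\top)\|_2=\bigop(\|u\|_\infty)$ for a single deterministic unit vector $u$.

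The key identity is $W\circ(uu^\top)=D_uWD_u$, where $D_u=\diag(u)$. Hence
\[
\|W\circ(uu^\top)\|_2\le\|D_u\|_2\,\|W\|_2\,\|D_u\|_2=\|u\|_\infty^2\|W\|_2 .
\]
Standard bounds for symmetric Rademacher (Wigner) matrices give $\|W\|_2\le 3\sqrt n$ with overwhelming probability, for example via the moment method or Bernstein-type concentration. This yields the bound $\|u\|_\infty^2\sqrt n$. That is not good enough in general: if $\|u\|_\infty=\mu_n$ decays slowly, $\mu_n^2\sqrt n$ need not be $\bigo(\mu_n)$.

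The main step is therefore a sharper inequality for random-sign Hadamard products that uses both the $\ell_2$ normalisation and the $\ell_\infty$ bound. I would use the noncommutative Khintchine inequality (or the Bandeira--van Handel bound) for $Z=\sum_{i\le j}w_{ij}u_iu_j(E_{ij}+E_{ji})$. It gives
\[
\E\|Z\|_2\lesssim \sigma+\sigma_*\sqrt{\log n},
\]
where
\[
\sigma^2=\max_i\sum_j u_i^2u_j^2=\|u\|_\infty^2
\quad\text{and}\quad
\sigma_*=\max_{i,j}|u_iu_j|=\|u\|_\infty^2 .
\]
So $\E\|Z\|_2\lesssim\mu_n+\mu_n^2\sqrt{\log n}$. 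The second term is $\bigo(\mu_n)$ as soon as $\mu_n\sqrt{\log n}\lesssim1$.

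Otherwise, I would use the deterministic bound
\[
\|W\circ(uu^\top)\|_2\le\|W\circ(uu^\top)\|_F=\|u\|_2^2=1 ,
\]
which is $\bigo(\mu_n)$ only when $\mu_n$ is bounded below. That does not cover every case. The obstacle is precisely the regime $1/\sqrt{\log n}\ll\mu_n\to0$.

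In that regime I would refine the argument with the Bandeira--van Handel bound $\E\|Z\|\lesssim\sigma+\sigma_*\sqrt{\log n}$ applied after splitting $u=u^{\rm big}+u^{\rm small}$, where $u^{\rm big}$ collects the coordinates with $|u_i|>\mu_n/\sqrt{\log n}$. Since $\|u\|_2=1$, the vector $u^{\rm big}$ has at most $\log n/\mu_n^2$ nonzero coordinates.

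For the small part, the Bandeira--van Handel bound gives $\sigma\le\mu_n$ and $\sigma_*\sqrt{\log n}\le\mu_n^2/\sqrt{\log n}\le\mu_n$.

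For the cross and big parts, $\sigma\le\mu_n$ still holds. The $\sigma_*$ term is the delicate one. I would control it through the sparsity of $u^{\rm big}$, using that a random-sign matrix supported on an $m\times m$ block with entries bounded by $\mu_n^2$ has norm $\lesssim\mu_n^2\sqrt m\cdot\|u^{\rm big}\|$-weighted. Equivalently, I would apply the bound $\lesssim\sigma+\sigma_*\sqrt{\log m}$ with the smaller dimension $m$.

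Finally, Markov's inequality converts the expectation bound into $\|\mathcal E\|_2=\bigop(d_{\max}\mu_n)=\bigop(\mu_n)$, which is $\smallop(1)$ by Assumption~\ref{asm:delocalization}.
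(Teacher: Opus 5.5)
\textbf{Gap.} Several parts of your argument are fine: the reduction to $\|\mathcal R\|_2$, the rank-one splitting, the identity $W\circ(uu^\top)=D_uWD_u$, the Bandeira--van Handel step and the final Markov argument. Together, however, they prove the lemma only when $\mu_n\lesssim(\log n)^{-1/2}$. The lemma assumes nothing beyond $\mu_n=\smallo(1)$, and your repair of the remaining regime does not close.

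With the threshold $t=\mu_n/\sqrt{\log n}$, the small--small and cross pieces are indeed $\bigo(\mu_n)$; for the cross piece, $\sigma_*\le\mu_n t$, so $\sigma_*\sqrt{\log n}\le\mu_n^2$. The big--big piece, though, lives on $m\le\log n/\mu_n^2$ coordinates and still has entries as large as $\mu_n^2$.
\begin{itemize}
\item The estimate $\mu_n^2\sqrt m$ you mention gives $\mu_n\sqrt{\log n}$, which is worse than your starting bound.
\item The dimension-reduced bound $\sigma+\sigma_*\sqrt{\log m}$ gives $\mu_n+\mu_n^2\sqrt{2\log\log n}$ once $\mu_n\geq(\log n)^{-1/2}$. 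This is not $\bigo(\mu_n)$ when $\mu_n$ decays more slowly than $(\log\log n)^{-1/2}$.
\item For $\mu_n\geq(\log\log n)^{-1/4}$ it is not even $\smallo(1)$. Combined with the trivial Frobenius bound $1$, you get no conclusion there.
\end{itemize}
Iterating the split only moves the obstruction down one logarithm per level, and each level adds its own $\bigo(\mu_n)$ cross terms. So iteration does not give a uniform $\bigo(\mu_n)$ bound either, at least not without substantial extra work.

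\textbf{Missing ingredient.} You need a norm inequality whose large-entry term sees the $\ell_2$ normalisation of $u$, rather than $\max_{i,j}|u_iu_j|$ times a dimensional logarithm. The paper uses Lata\l{}a's inequality
\[
\E\|Q\|_2\lesssim\max_i\Bigl(\sum_j\E Q_{ij}^2\Bigr)^{1/2}+\max_j\Bigl(\sum_i\E Q_{ij}^2\Bigr)^{1/2}+\Bigl(\sum_{i,j}\E Q_{ij}^4\Bigr)^{1/4},
\]
applied directly to the upper-triangular part $Q$ of $\mathcal R=Q+Q^\top$. The row sums are $(S^2)_{ii}=\sum_k d_k^2u_k(i)^2\le rd_{\max}^2\mu_n^2$. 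The fourth-moment sum satisfies $\sum_{i,j}S_{ij}^4\le\max_{i,j}S_{ij}^2\,\|S\|_{\mathrm F}^2\lesssim\mu_n^4$.

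In your rank-one notation, the fourth-moment term is $(\sum_{i,j}u_i^4u_j^4)^{1/4}=\|u\|_4^2\le\|u\|_\infty\|u\|_2=\mu_n$, with no logarithm. This single inequality therefore replaces your entire case analysis. A sharper dimension-free bound in the spirit of Lata\l{}a--van Handel--Youssef would also suffice; there $\sqrt{\log n}$ is replaced by $\sqrt{\log i}$ along rows sorted by size. Since $|u_{(i)}|\le i^{-1/2}$, the extra term becomes $\bigo(\mu_n^2\sqrt{\log(1/\mu_n)})$.
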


Since $\mathcal X$ is non-Hermitian, its eigenvalues cannot be controlled directly by a Hermitian local law.
For $z\in\mathbb C$, using Girko's Hermitization trick \citep{Girko1985}, we analyze the spectrum of $\mathcal X$ by introducing the $4n\times 4n$ Hermitian linearization
\begin{equation}
\label{eq:hermitization}
        H_z
        \deq
        \begin{pmatrix}
        0&\mathcal X-zI_{2n}\\
        (\mathcal X-zI_{2n})^*&0
        \end{pmatrix}.
\end{equation}
For any $\zeta\in\mathbb C^+$, the resolvent of $H_z$ is denoted by
\[
        \mathfrak G_z(\zeta)
        \deq (H_z-\zeta I_{4n})^{-1}.
\]
The eigenvalues of $H_z$ are the signed singular values of $\mathcal X-zI_{2n}$.
In particular, $z$ is an eigenvalue of $\mathcal X$ if and only if zero is an eigenvalue of $H_z$.

Following \citet{AjankiErdosKruger2019}, we will show that the resolvent $\mathfrak G_z(\zeta)$ can be approximated by a deterministic matrix $M_z(\zeta)$, which is the
unique solution with positive imaginary part to the matrix Dyson
equation (MDE)
\begin{equation}
\label{eq:MDE}
        -M_z(\zeta)^{-1}
        =
        \zeta I_{4n}-\E H_z+\mathcal S[M_z(\zeta)],
        \qquad
        \mathcal S[B]
        \deq
        \E[\mathcal H B\mathcal H],
\end{equation}
where $\mathcal H \deq H_z-\E H_z$.

\begin{lemma}[MDE solution]
\label{lem:block-structure}
The MDE solution to \eqref{eq:MDE} has the form
\[
        M_z(\zeta)
        =
        \begin{pmatrix}
        M_{11}&0&M_{13}&0\\
        0&M_{11}&0&M_{13}\\
        M_{31}&0&M_{11}&0\\
        0&M_{31}&0&M_{11}
        \end{pmatrix},
\]
where $M_{11}=\diag\bigl(m_{z,1}(\zeta),\ldots,m_{z,n}(\zeta)\bigr)$, 
$M_{13}=\diag\bigl(w_{z,1}(\zeta),\ldots,w_{z,n}(\zeta)\bigr)$, 
and $M_{31}=\diag\bigl(\widetilde w_{z,1}(\zeta),\ldots,\widetilde w_{z,n}(\zeta)\bigr)$
are diagonal matrices, with
\begin{align}
        m_{z,i}(\zeta)
        &=
        -
        \frac{\zeta+a_{z,i}(\zeta)}
        {(\zeta+a_{z,i}(\zeta))^2-|z|^2},
        \qquad 
        a_{z,i}(\zeta)
        =
        \sum_{k=1}^n \mathbb T_{ik}m_{z,k}(\zeta),
        \label{eq:MDE_m_a}\\
        w_{z,i}(\zeta)
        &=
        \frac{z}
        {(\zeta+a_{z,i}(\zeta))^2-|z|^2},
        \qquad
        \widetilde w_{z,i}(\zeta)
        =
        \frac{\overline z}
        {(\zeta+a_{z,i}(\zeta))^2-|z|^2}.\notag
\end{align}
\end{lemma}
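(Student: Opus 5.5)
We first compute the self-energy operator $\mathcal S$ explicitly, then verify that the displayed block ansatz turns the MDE \eqref{eq:MDE} into the scalar system \eqref{eq:MDE_m_a}, and finally invoke uniqueness of the MDE solution with positive imaginary part \citep{AjankiErdosKruger2019} to conclude that this ansatz is \emph{the} solution.

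\textbf{Computing $\mathcal S$.} Here $\E H_z=\begin{pmatrix}0&-zI_{2n}\\-\bar zI_{2n}&0\end{pmatrix}$ and $\mathcal H=\begin{pmatrix}0&\mathcal X\\\mathcal X^*&0\end{pmatrix}$ with $\mathcal X=\begin{pmatrix}0&\mathsf A\\\mathsf B&0\end{pmatrix}$. The covariances of the entries follow from the independence of $P$ and $X$:
\[
\E \mathsf A_{ij}\mathsf A_{kl}=\tfrac{t_{ij}}{2}(\delta_{ik}\delta_{jl}+\delta_{il}\delta_{jk}).
\]
The same holds for $\mathsf B$, while $\E\mathsf A_{ij}\mathsf B_{kl}=0$ because $P_{ij}(1-P_{ij})=0$. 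This vanishing cross-covariance is the key point.

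For $B$ in $4\times4$ block form with $n\times n$ blocks $B_{\alpha\beta}$, only the blocks of $\mathcal H B\mathcal H$ built from $\E \mathsf A B_{\alpha\beta}\mathsf A^\top$, $\E\mathsf A B_{\alpha\beta}\mathsf A$, and the analogous $\mathsf B$ terms survive. For a symmetric variance profile, $\E[\mathsf A C\mathsf A]=\diag(\mathbb T\,\mathrm{dg}(C))+\mathbb T\circ C^\top$, where $\mathrm{dg}$ extracts the diagonal as a vector. The second term is an off-diagonal contribution of order $1/n$ in operator norm on diagonal inputs, and it vanishes in the diagonal parts we need. So I will restrict attention to the invariant subspace of block matrices whose blocks are diagonal, where $\mathcal S$ acts through $\mathbb T$.

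\textbf{Reduction to scalars.} On that subspace, I track which blocks of $\mathcal S[M]$ are nonzero. The $(1,1)$ block of $\mathcal S[M]$ involves $\E \mathsf A M_{44}\mathsf A^\top$. For the ansatz, $M_{44}=M_{11}$, so this gives $\diag(\mathbb T m)$. Similarly, every diagonal block of $\mathcal S[M]$ equals $\diag(a_z)$ with $a_{z,i}=\sum_k\mathbb T_{ik}m_{z,k}$. All off-diagonal blocks of $\mathcal S[M]$ vanish: they would require pairing $M_{13}$-type blocks with mixed $\mathsf A\mathsf B$ covariances, or pairings that place the $\mathsf A$ and $\mathsf B$ blocks in incompatible positions.

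The right-hand side of \eqref{eq:MDE} therefore becomes
\[
\begin{pmatrix}(\zeta+a)I&-z\\-\bar z&(\zeta+a)I\end{pmatrix}
\]
in $2\times2$ blocks of size $2n$, with $a=\diag(a_z)$ repeated. Inverting this $2\times2$ block of commuting diagonal matrices yields exactly $m_{z,i}$, $w_{z,i}$ and $\widetilde w_{z,i}$ as displayed, with the stated sign conventions. The $(1,3)$ and $(2,4)$ positions receive $w$, and $(3,1)$ and $(4,2)$ receive $\widetilde w$, which reproduces the claimed pattern.

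\textbf{Existence and uniqueness.} The scalar system $m_i=-(\zeta+a_i)/((\zeta+a_i)^2-|z|^2)$ with $a=\mathbb Tm$ is exactly the vector Dyson equation for Hermitized i.i.d.-type matrices with profile $\mathbb T$. It has a unique solution with $\Im m_i>0$ for $\zeta\in\mathbb C^+$, obtained either by a fixed-point or contraction argument in the Carathéodory metric or by citing \citet{AjankiErdosKruger2019}. The resulting $M_z$ has positive-definite imaginary part. By uniqueness of the MDE solution among matrices with positive imaginary part, it equals $M_z(\zeta)$.

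\textbf{Main obstacle.} The delicate step is the bookkeeping for $\mathcal S$. First, the symmetry of $X$ produces a transpose term $\mathbb T\circ C^\top$. Second, the $\mathsf A$ and $\mathsf B$ blocks are correlated through the shared matrix $X$. I will show that the transpose term maps diagonal inputs to diagonal outputs, since it contributes only $\diag(\mathbb T_{ii}C_{ii})$, which is then absorbed because $\mathcal S$ is defined exactly. I will also check carefully that $\E\mathsf A_{ij}\mathsf B_{ji}=0$, which follows from the complementary masks together with the symmetry of $P$. This is what makes the off-diagonal blocks of $\mathcal S[M]$ vanish.
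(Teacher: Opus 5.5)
\textbf{Verdict.} Your approach is sound, but it takes a different route from the paper. Two computations in your proposal are wrong as written and need fixing.

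\textbf{Comparison of routes.} The paper never verifies an ansatz. It shows that the MDE is mapped to itself under three conjugations: by $\diag(I_n,-I_n,I_n,-I_n)$; by block-diagonal sign matrices $\diag(Q,Q,Q,Q)$ with $Q$ a diagonal $\pm1$ matrix; and by the block permutation swapping blocks $1\leftrightarrow2$ and $3\leftrightarrow4$, which uses $(\mathsf A,\mathsf B)\overset{d}{=}(\mathsf B,\mathsf A)$. Uniqueness then forces the zero/diagonal pattern. Only afterwards does the paper write the reduced $2\times2$ equations, and a final swap of the diagonal blocks plus uniqueness gives $M_{11}=M_{33}$. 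In that route, existence comes for free from the general MDE theory.

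You instead compute $\mathcal S$ on the $\mathcal S$-invariant space of matrices with diagonal $n\times n$ blocks, check that the ansatz closes, and then invoke uniqueness. This avoids the distributional symmetry and the sign-flip argument, but you must supply existence of a structured solution with $\Im M>0$ yourself. Both routes rest on the same fact, $\E[\mathsf A C\mathsf B]=0$.

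\textbf{Making your existence step precise.} The cleanest form of your contraction remark is as follows. The map $B\mapsto-(\zeta I_{4n}-\E H_z+\mathcal S[B])^{-1}$ preserves the structured set, because in each coordinate the inverse of $\begin{pmatrix}p&q\\ r&p\end{pmatrix}$ again has equal diagonal entries. Hence the fixed-point iteration of the standard existence proof, started at $\ii I_{4n}$, stays in this closed set and converges to $M_z(\zeta)$. If you instead construct the scalar solution separately, $\Im M>0$ follows from $\Im M=M\,\Im(-M^{-1})\,M^*$ together with $\Im(-M^{-1})=(\Im\zeta+\Im a_{z,i})I_2>0$ in each coordinate plane.

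\textbf{Correction 1: the covariance formula.} The correct covariance is $\E\mathsf A_{ij}\mathsf A_{kl}=\mathbb T_{ij}\,\indicator\{\{i,j\}=\{k,l\}\}$. Your formula $\tfrac{t_{ij}}2(\delta_{ik}\delta_{jl}+\delta_{il}\delta_{jk})$ double counts the case $i=j=k=l$; here $\E\mathsf A_{ii}^2=t_{ii}/2$, not $t_{ii}$.
\begin{itemize}
\item With the correct formula, $\E[\mathsf A C\mathsf A]=\diag(\mathbb T\,\mathrm{dg}(C))$ exactly for diagonal $C$.
\item So there is no leftover $\diag(\mathbb T_{ii}C_{ii})$. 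Had there been one, it could not be ``absorbed'': it would turn $a_{z,i}$ into $\sum_k\mathbb T_{ik}m_{z,k}+\mathbb T_{ii}m_{z,i}$, contradicting the statement.
\end{itemize}

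\textbf{Correction 2: a sign.} The term $-\E H_z$ contributes $+zI_{2n}$ and $+\overline zI_{2n}$. So the right-hand side of \eqref{eq:MDE} is $\begin{pmatrix}(\zeta+a)I&zI\\ \overline zI&(\zeta+a)I\end{pmatrix}$. Writing $D_i=(\zeta+a_{z,i})^2-|z|^2$, negating its inverse gives $w_{z,i}=z/D_i$ and $\widetilde w_{z,i}=\overline z/D_i$, as stated. Your displayed matrix with $-z$ and $-\overline z$ would give the opposite signs, contradicting your own claim that the inversion reproduces the stated formulas.
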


The explicit coordinate representation in Lemma~\ref{lem:block-structure} is the starting point for the support analysis in Lemma~\ref{lem:support}.
It reduces the matrix valued MDE to the scalar system \eqref{eq:MDE_m_a}, whose stability near zero determines whether the self consistent density has a gap around the origin.
Set $N\deq4n$ and let $\nu_z$ be the self consistent density of states associated with $M_z$, defined by
\begin{equation}\label{eq:self consistent-density}
        \frac1N\Tr M_z(\zeta)
        =
        \int_{\mathbb R}\frac{1}{x-\zeta}\,\dif\nu_z(x).
\end{equation}

The following lemma shows that the support of the self-consistent density is away from 0 by a distance of $\delta'$.

\begin{lemma}
\label{lem:support}
Fix $\varepsilon>0$.
Under Assumption~\ref{asm:moment}, there exists $\delta'(\varepsilon)>0$ such that
\[
        \dist(0,\supp\nu_z)>\delta'(\varepsilon)
\]
for every $z\in\mathbb C$ with $|z|>b_{\star,n}+\varepsilon$.
\end{lemma}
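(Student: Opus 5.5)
We prove Lemma~\ref{lem:support} by showing that the scalar system \eqref{eq:MDE_m_a} has a solution which, near $\zeta=0$, is purely imaginary with vanishing imaginary part and depends analytically on $\zeta$. The Stieltjes inversion of \eqref{eq:self consistent-density} then shows that $\nu_z$ has no mass in a neighbourhood of the origin.

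\textbf{Step 1: the solution at $\zeta = 0$.} Since $a_{z,i}$ is linear in $m_z$, we write $m_{z,i}(\ii\eta)=\ii v_i(\eta)$ with $v_i>0$ on the imaginary axis. Then $a_{z,i}=\ii(\mathbb T v)_i$, and \eqref{eq:MDE_m_a} becomes the real fixed-point system
\[
v_i=\frac{\eta+(\mathbb T v)_i}{(\eta+(\mathbb T v)_i)^2+|z|^2},\qquad i\in\intset{n}.
\]
We claim that for $|z|>b_{\star,n}+\varepsilon$ the only nonnegative solution at $\eta=0$ is $v\equiv0$, and that $\|v(\eta)\|_\infty\lesssim \eta$ as $\eta\downarrow0$.

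To see this, write $u_i=(\mathbb T v)_i$. Since $t_{ij}\geq C_*/n>0$, $u$ is comparable to its average. The map $x\mapsto x/(x^2+|z|^2)$ is bounded by $x/|z|^2$, so
\[
v\le (\eta+\mathbb T v)/|z|^2
\]
holds entrywise. Let $f$ be the Perron--Frobenius left eigenvector of $\mathbb T$, which is positive with eigenvalue $\|\mathbb T\|_2=b_{\star,n}^2$. Pairing with $f$ gives
\[
\langle f,v\rangle\,(1-b_{\star,n}^2/|z|^2)\le \eta\langle f,\one\rangle/|z|^2 .
\]
This yields $\langle f,v\rangle\lesssim_\varepsilon \eta$. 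The entries of $f$ are comparable because $\mathbb T$ is flat, so $\|v\|_\infty\lesssim \eta+\|\mathbb T v\|_\infty\lesssim_\varepsilon\eta$. Consequently $\Im \frac1N\Tr M_z(\ii\eta)=O(\eta)$, and $\Re$ vanishes by symmetry.

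\textbf{Step 2: a stable extension to a real neighbourhood of $0$.} At $\zeta=0$, $m=0$, we have $m_{z,i}=-(\zeta+a_i)/((\zeta+a_i)^2-|z|^2)$. The Jacobian of the fixed-point map $F(m,\zeta)$ with respect to $m$ at $(0,0)$ is $\mathbb T/|z|^2$, whose norm is at most $b_{\star,n}^2/(b_{\star,n}+\varepsilon)^2<1$ uniformly in $n$; here $b_{\star,n}$ is bounded above and below by the flatness assumption. Hence $I-\partial_mF$ is invertible with bounded inverse in $\ell^\infty$. We use the $\ell^\infty\to\ell^\infty$ norm of the positive matrix $\mathbb T$, comparable via the Perron weights, or equivalently the weighted norm $\|\cdot\|_{f}$, in which the contraction constant is exactly $b_{\star,n}^2/|z|^2$.

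The implicit function theorem, or a quantitative Banach fixed-point argument in a ball of radius $c$ in weighted $\ell^\infty$, then gives for $|\zeta|<\delta'$ a unique small solution $m(\zeta)$, analytic in $\zeta$. On the real segment $(-\delta',\delta')$ this solution is real, since $F$ maps real vectors to real vectors. By uniqueness of the small solution together with Step 1, it coincides with the MDE solution $M_z(\zeta)$ for $\zeta\in\ii(0,\delta')$.

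\textbf{Step 3: conclusion.} Since $\frac1N\Tr M_z(\zeta)$ is analytic across $(-\delta',\delta')$ and real there, the Stieltjes inversion formula gives $\nu_z((-\delta',\delta'))=0$. The constant $\delta'$ depends only on $\varepsilon$, $C_*$ and $C^*$, uniformly in $n$ and in $z$. For very large $|z|$ the gap is trivially at least of order $|z|$, since $\|\E H_z\|$ dominates.

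\textbf{Main obstacle.} The delicate point is the identification in Step 2: we must ensure that the analytically continued small solution is indeed the unique solution with positive imaginary part that defines $M_z$. We would handle this by showing, via the contraction bound, that the MDE solution stays in the small ball for $\zeta=\ii\eta$ with small $\eta$ (which is exactly Step 1), and then using uniqueness of the fixed point in that ball. Uniformity of the contraction constant in $n$ is where the Perron-weighted norm is essential.
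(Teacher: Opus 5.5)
Your argument is correct, and it runs on the same mechanism as the paper's proof. The paper also works with a contraction in the Perron-weighted sup-norm $\|q\|_v=\max_i|q_i|/(v_{\mathrm{PF}})_i$ around the trivial solution at $\zeta=0$. It identifies the resulting fixed point with the MDE solution using the bound $0\le u(\eta)\le\eta\,\mathbb T_z^{-1}\one_n$ on the imaginary axis; your Step~1 is exactly \eqref{eq:x-eta-bound}. It then gets realness on a real interval from uniqueness under complex conjugation, and concludes by Stieltjes inversion.

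The difference is packaging. The paper reaches Lemma~\ref{lem:support} indirectly: it bounds $\ell(z)$ through Lemma~\ref{202604211403} and then invokes the comparison of Lemma~\ref{lem:pseudospectrum-comparison}. The inclusion actually used, $\mathbb D_{c\tau^{3/2}}\subseteq\widetilde{\mathbb D}_\tau$, rests on \eqref{eq:gap-lower}, namely $\dist(0,\supp\nu_z)\geq\kappa\min\{\delta^{3/2},1\}$ with $\delta=|z|^2-b_{\star,n}^2$. Since $|z|>b_{\star,n}+\varepsilon$ forces $\delta\geq\varepsilon^2$, this gives the lemma at once. Your direct route therefore loses nothing, and it avoids both the $\ell(z)$ detour and the $\eta^{2/3}$ asymptotics.

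The paper's reformulation in $q=\zeta\one_n+\mathbb T m$ splits off the linear part $(I_n-\mathbb T/|z|^2)^{-1}$ from a cubic remainder $\Psi_z$. That organisation is what makes the $\delta^{3/2}$ rate transparent as $\delta\downarrow0$, and the two-sided comparison lemma needs that rate. For fixed $\varepsilon$ your fixed-radius ball suffices, with one proviso. The relevant constant is not the Jacobian at the origin but the Lipschitz constant on the ball, $b_{\star,n}^2\sup|\phi'(w)|$, where $\phi(w)=w/(|z|^2-w^2)$ and $\phi'(w)=(|z|^2+w^2)/(|z|^2-w^2)^2$. You need to choose the radius (depending on $\varepsilon$) so that this stays below one. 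That can be done uniformly over $|z|\geq b_{\star,n}+\varepsilon$ because $b_{\star,n}\asymp1$, so the separate remark about very large $|z|$ is unnecessary.

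Two steps should be made explicit.
\begin{itemize}
\item The claim $m_{z,i}(\ii\eta)\in\ii(0,\infty)$ does not follow from linearity of $a_{z,i}$. It follows because $-\overline{m_z(\ii\eta)}$ solves the same system with positive imaginary part, so uniqueness forces $m_z(\ii\eta)=-\overline{m_z(\ii\eta)}$.
\item Agreement of your small fixed point with $m_z$ on a segment of the imaginary axis is not yet enough for Stieltjes inversion. You must first extend it to the upper half-disk $\{|\zeta|<\delta',\ \Im\zeta>0\}$ by the identity theorem, and only then let $\eta\downarrow0$ at real $E$.
\end{itemize}
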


Throughout the remainder of the paper, we adopt the notion of stochastic domination notation introduced by \citet{Erds2013averaging}.
\begin{definition}[Stochastic domination]
Let
\[
        \xi
        =
        \bigl(\xi_N(u):N\in\mathbb N,\ u\in\mathcal I_N\bigr),
        \qquad
        \chi
        =
        \bigl(\chi_N(u):N\in\mathbb N,\ u\in\mathcal I_N\bigr)
\]
be two families of real random variables, where $\chi$ is nonnegative and $\mathcal I_N$ is a possibly $N$-dependent parameter set.
We say that $\xi$ is \emph{stochastically dominated} by $\chi$, uniformly in $u$, if, for arbitrary small $\varepsilon>0$ and large $D>0$, there exists $N_0(\varepsilon,D)$ such that
\[
        \sup_{u\in\mathcal I_N}
        \Pr\!\left(
        |\xi_N(u)|>N^\varepsilon \chi_N(u)
        \right)
        \leq
        N^{-D}
\]
for all $N\geq N_0(\varepsilon,D)$.
We write $\xi=\bigo_\prec(\chi)$ or $\xi\prec \chi$ when $\xi$ is stochastically dominated by $\chi$ uniformly in $u$.
If $\xi$ and $\chi$ are deterministic, then $\xi\prec \chi$ means that, for every $\varepsilon>0$,
\[
        |\xi_N(u)|
        \leq
        N^\varepsilon \chi_N(u)
\]
uniformly in $u$ for all sufficiently large $N\geq N_0(\varepsilon)$.
\end{definition}

\begin{lemma}[Local law of $H_z$]
\label{lem:local-law}
Fix $C_z<\infty$ and assume that $|z|\leq C_z$.
Fix $\tau>0$ and define
\[
        \mathbb D_{\mathrm{out}}^\tau
        \deq
        \left\{
        \zeta\in\mathbb C^+:
        |\zeta|\leq N^{C_0},
        \ \dist(\zeta,\supp\nu_z)\geq N^{-\tau}
        \right\},
\]
where $C_0>100$ is an arbitrary fixed constant.
Under Assumption~\ref{asm:moment},
\[
        \sup_{\zeta\in\mathbb D_{\mathrm{out}}^\tau}
        (1+|\zeta|)^2
        \left|
        x^*
        \bigl(\mathfrak G_z(\zeta)-M_z(\zeta)\bigr)
        y
        \right|
        \prec
        \frac{\|x\|_2\|y\|_2}{\sqrt N}
\]
for deterministic vectors $x,y\in\mathbb C^N$, and
\[
        \sup_{\zeta\in\mathbb D_{\mathrm{out}}^\tau}
        (1+|\zeta|)^2
        \left|
        \frac1N
        \Tr\!\left[
        B\bigl(\mathfrak G_z(\zeta)-M_z(\zeta)\bigr)
        \right]
        \right|
        \prec
        \frac{\|B\|_2}{N}
\]
for every deterministic matrix $B\in\mathbb C^{N\times N}$.
\end{lemma}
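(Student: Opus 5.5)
We prove the local law for $H_z$ with a correlated-entry local-law framework for Hermitian random matrices with finite-range (here, finitely many) correlations, in the spirit of \citet{AjankiErdosKruger2019}. Set $\mathcal H=H_z-\E H_z$. Its entries are built from $\mathsf A_{ij}=P_{ij}X_{ij}$ and $\mathsf B_{ij}=(1-P_{ij})X_{ij}$. Since $X$ and $P$ are symmetric, each upper-triangular pair $\{i,j\}$ produces one random vector $(P_{ij},X_{ij})$. That vector enters $\mathcal X$ only in the four positions $\mathsf A_{ij},\mathsf A_{ji},\mathsf B_{ij},\mathsf B_{ji}$, and hence enters $\mathcal H$ only in their images and their conjugate-transpose positions. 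Different pairs are independent. Consequently the entries of $\mathcal H$ split into independent groups of bounded size, which is a correlation structure with uniformly bounded dependence neighbourhoods.

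\textbf{Step 1: moment bounds.} Every entry satisfies $\E|\sqrt N\mathcal H_{ab}|^k\le C_k$ by Assumption~\ref{asm:moment}, since $|P_{ij}|\le 1$. The same holds for the cumulants needed in the cumulant expansion.

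\textbf{Step 2: flatness and stability.} Writing $\E H_z$ as the block matrix with $-z$ and $-\bar z$ on the off-diagonal blocks, we check that $\mathcal S$ is flat: $c N^{-1}\Tr B\le \mathcal S[B]\le CN^{-1}\Tr B$ for $B\ge 0$, restricted to the relevant diagonal blocks. This follows from $t_{ij}\asymp n^{-1}$ and $\E P_{ij}^2=\E(1-P_{ij})^2=1/2$. Flatness gives $\|M_z(\zeta)\|\le C$ and bounded stability of the MDE on $\mathbb D_{\mathrm{out}}^\tau$. Using the explicit diagonal form in Lemma~\ref{lem:block-structure}, stability reduces to inverting $1-\mathcal C_M\mathcal S$ on the $4\times 4$ block-diagonal structure. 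Away from $\supp\nu_z$ this inverse is bounded, because $\Im M$ is comparable to $\Im\zeta/\dist(\zeta,\supp\nu_z)^2$ and $\dist(\zeta,\supp\nu_z)\ge N^{-\tau}$ keeps the stability operator at most polynomially large. For large $|\zeta|$ the bound is immediate from $\|M\|\lesssim|\zeta|^{-1}$.

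\textbf{Step 3: the self-consistent equation.} Write $\mathfrak G$ for $\mathfrak G_z(\zeta)$. We show that $D\deq \mathcal H\mathfrak G+\mathcal S[\mathfrak G]\mathfrak G$ satisfies the isotropic bound $|x^*Dy|\prec N^{-1/2}\|x\|\|y\|$ and the averaged bound $|N^{-1}\Tr BD|\prec N^{-1}\|B\|$. The method is a cumulant expansion in the independent groups. The second-order term cancels against $\mathcal S[\mathfrak G]\mathfrak G$ precisely because $\mathcal S$ was defined as $\E[\mathcal H B\mathcal H]$, so it includes the intra-group covariances, for example between $\mathsf A_{ij}$ and $\mathsf B_{ij}$. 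Higher cumulants are handled using the fact that each group has bounded size. This is the main obstacle: we must control the $\mathsf A$--$\mathsf B$ cross-correlations, which are not small, since $\E[\mathsf A_{ij}\mathsf B_{ij}]=0$ while $\E[\mathsf A_{ij}^2\mathsf B_{ij}^2]\neq 0$. Because the dependence is confined to bounded groups, the approach of \citet{AjankiErdosKruger2019} for correlated matrices applies with a decay parameter of order one. We would verify their condition directly rather than redo the high-moment bounds.

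\textbf{Step 4: bootstrap.} The stability from Step 2 converts the bounds on $D$ into $\mathfrak G-M=\bigo_\prec(N^{-1/2})$ in the isotropic sense and $\bigo_\prec(N^{-1})$ in the averaged sense. We start at $\Im\zeta\sim 1$ and propagate through $\mathbb D_{\mathrm{out}}^\tau$ by continuity, using a polynomial grid and the Lipschitz continuity of $\mathfrak G$ and $M$. Outside the spectrum the argument needs no small-$\Im\zeta$ input, because $\|\mathfrak G\|\le\dist(\zeta,\spec H_z)^{-1}$. To guarantee this, we first show that $\spec H_z$ stays away from $\zeta$ with high probability, using the averaged law together with a standard counting argument. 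Finally, for $|\zeta|\ge C$ the expansions $\mathfrak G=-\zeta^{-1}\sum_k(H_z/\zeta)^k$ and its analogue for $M$ supply the weight $(1+|\zeta|)^2$. A union bound over the grid gives the supremum.
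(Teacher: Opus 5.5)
Your architecture matches the paper's: independent groups of bounded size (one per pair $\{i,j\}$), then an appeal to a general local law for correlated matrices. However, Step~2 has a genuine gap, because $\mathcal S$ is not flat for the Hermitization $H_z$.

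\textbf{Why flatness fails.} The random part $\mathcal H$ lives only on the block positions $(1,4),(4,1),(2,3),(3,2)$. Hence $\mathcal S[B]_{11}=\E[\mathsf A B_{44}\mathsf A^\top]$ depends only on $B_{44}$. For $B=\diag(I_n,0,0,0)\geq0$ you get $\mathcal S[B]_{11}=0$, although $N^{-1}\Tr B=1/4$. Equivalently, $\E|u^*\mathcal H v|^2=0$ whenever $u,v$ are both supported in the first block. So the lower bound $cN^{-1}\Tr B\leq\mathcal S[B]$ fails, and the qualifier ``restricted to the relevant diagonal blocks'' does not rescue it. Consequently you can use neither the theorem of \citet{AjankiErdosKruger2019}, which assumes flatness, nor your deduction ``flatness $\Rightarrow\|M_z\|\leq C$ and bounded stability''.

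\textbf{Why blockwise flatness is not enough.} Even a blockwise version does not give $z$-uniform stability. At $\zeta=0$, the linearization of the reduced equation \eqref{eq:MDE_m_a} is $I_n-\mathbb T/|z|^2$, whose inverse has norm $|z|^2/(|z|^2-b_{\star,n}^2)$. This blows up as $|z|\downarrow b_{\star,n}$, which the statement allows.

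\textbf{Polynomial stability is not enough either.} As you note yourself, a stability bound that is only polynomial in $N^{\tau}$ would make the bootstrap of Step~4 return $N^{-1/2+C\tau}$, not the uniform $N^{-1/2}$ that is claimed.

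\textbf{A smaller error.} Since $\mathsf A_{ij}\mathsf B_{ij}=P_{ij}(1-P_{ij})X_{ij}^2\equiv0$, you have $\E[\mathsf A_{ij}^2\mathsf B_{ij}^2]=0$. The dependence between $\mathsf A_{ij}$ and $\mathsf B_{ij}$ appears instead in the mixed cumulant $\kappa(\mathsf A_{ij},\mathsf A_{ij},\mathsf B_{ij},\mathsf B_{ij})=-\E\mathsf A_{ij}^2\,\E\mathsf B_{ij}^2=-t_{ij}^2/4$.

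\textbf{How the paper proceeds.} The paper runs no stability or bootstrap argument of its own. It invokes \citet[Theorem~2.1]{Erdos2019random} and checks that theorem's hypotheses:
\begin{itemize}
\item (A) bounded expectation, here $\|\E H_z\|=|z|\leq C_z$;
\item (B) uniform moment bounds for $\sqrt N\mathcal H$;
\item (C) bounded cumulant norms;
\item (D) higher-order correlation decay via nested neighbourhoods.
\end{itemize}
Flatness is not among them. The substance lies in (C) and (D). Labels are grouped into companion classes $\mathscr C_e$ of size at most $8$, one for each pair $e=\{i,j\}$. Joint cumulants vanish unless all nonzero labels lie in a single class, and they are $\bigo(1)$ by the moment--cumulant formula. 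This bounds by constants the norms $|\!|\!|\kappa|\!|\!|_k^{\mathrm{av}}$ and $|\!|\!|\kappa|\!|\!|_k^{\mathrm{iso}}$, the direct/cross splitting of the second cumulant, and the $dd$ part of the third cumulant. Condition (D) holds with $\mathcal N_1(\alpha)=\dots=\mathcal N_R(\alpha)=\mathscr C(\alpha)$, because the cumulant in (D) is then identically zero.

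\textbf{Repair.} Replace your appeal to \citet{AjankiErdosKruger2019} plus flatness with this verification of the Erd\H{o}s--Kr\"uger--Schr\"oder assumptions. Alternatively, establish boundedness of $M_z$ and stability by hand from the explicit block-diagonal structure of Lemma~\ref{lem:block-structure}.
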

\begin{proof}
The lemma  is actually a consequence of the local law of \citet[Theorem~2.1]{Erdos2019random} applied to \eqref{eq:hermitization}. One needs to check if our model satisfies the Assumptions (A)--(D) in \cite{Erdos2019random}. Especially, we shall verify some  cumulant and correlation decay conditions for our matrix entries. Such verification is deferred to Section~\ref{sec:proof-local-law} of the supplement. 
 \end{proof}

For $z\in\mathbb C$ such that $\varrho(\mathcal X)<|z|$, define
\begin{equation}
\label{eq:G-calG-def}
        G(z)\deq(z^2I-\mathsf A\mathsf B)^{-1},
        \qquad
        \mathcal G(z)\deq(z^2I-\mathsf B\mathsf A)^{-1}.
\end{equation}
Notice that $G(z)\mathsf A=\mathsf A\mathcal G(z)$ and $\mathsf B G(z)=\mathcal G(z)\mathsf B$.
Using these identities, the resolvent of $\mathcal X$ can be expressed in block form as
\begin{equation}
        \label{eq:block-inverse}
        (zI_{2n}-\mathcal X)^{-1}
        =
        \begin{pmatrix}
        zG(z)&\;G(z)\mathsf A\\
        \mathsf B G(z)&\;z\mathcal G(z)
        \end{pmatrix}.
\end{equation}
The following lemma shows that the smallest singular values of $z-\mathcal{X}$ and $z-\mathcal{X}-\mathcal{E}$ are bounded away from 0.

\begin{lemma}
\label{lem:uniform-exterior-singular-gap}
Fix $\varepsilon>0$ and $C<\infty$,  let 
\begin{equation}\label{eq:annulus-def}
       \mathscr A_{n,\varepsilon,C}
       \deq 
\{z\in\mathbb C:b_{\star,n}+\varepsilon\leq|z|\leq C\}.
\end{equation}
Under Assumption~\ref{asm:moment}, there exists $\gamma_{\varepsilon,C}>0$ such that
\begin{equation}
\label{eq:uniform-exterior-gap-split-noise}
        \inf_{z\in \mathscr A_{n,\varepsilon,C}}
        \sigma_{\min}(zI_{2n}-\mathcal X)
        \geq
        \gamma_{\varepsilon,C}
\end{equation}
with probability tending to one.
Consequently, if the matrix $\mathcal E$ satisfies $\|\mathcal E\|_2=\smallop(1)$, then
\[
        \inf_{z\in \mathscr A_{n,\varepsilon,C}}
        \sigma_{\min}(zI_{2n}-\mathcal X-\mathcal E)
        \geq
        \frac12\gamma_{\varepsilon,C}
\]
with probability tending to one.
\end{lemma}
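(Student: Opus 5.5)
The plan is to reduce the singular value bound to the absence of eigenvalues of $H_z$ near zero, uniformly over the annulus $\mathscr A_{n,\varepsilon,C}$. Since $\sigma_{\min}(zI_{2n}-\mathcal X)$ is the smallest absolute eigenvalue of $H_z$, it suffices to show that, with probability tending to one, $H_z$ has no eigenvalues in $[-\gamma,\gamma]$ for every $z$ in the annulus.

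\textbf{Step 1 (deterministic gap).} By Lemma~\ref{lem:support} applied with $\varepsilon$, we have $\dist(0,\supp\nu_z)>\delta'(\varepsilon)$ for all $|z|>b_{\star,n}+\varepsilon/2$. This bound holds uniformly in $n$, because Lemma~\ref{lem:support} depends only on the constants in Assumption~\ref{asm:moment}. Set $\gamma_0\deq\delta'(\varepsilon/2)/2$. Then the spectral domain $\{\zeta=E+\ii\eta:|E|\leq\gamma_0,\ \eta\in(0,1]\}$ stays at distance at least $\gamma_0$ from $\supp\nu_z$, so it lies in $\mathbb D_{\mathrm{out}}^\tau$.

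\textbf{Step 2 (no eigenvalues in the gap, fixed $z$).} I fix $z$ and use Lemma~\ref{lem:local-law} with $B=I$. For $\zeta=E+\ii\eta$ with $|E|\leq\gamma_0$ and $\eta=N^{-1+\epsilon}$ (or any small $\eta$), the averaged law gives
\[
\Im\tfrac1N\Tr\mathfrak G_z(\zeta)=\Im\tfrac1N\Tr M_z(\zeta)+\bigo_\prec(N^{-1}).
\]
The deterministic term is $\bigo(\eta)$, since $\zeta$ lies outside the support by distance $\gamma_0$. An eigenvalue $\lambda$ of $H_z$ with $|\lambda|\leq\gamma_0/2$ would contribute at least $\frac1{N}\cdot\frac{1}{\eta}\cdot\frac12$ at $E=\lambda$, which is $\gg N^{-1+\epsilon'}$ once $\eta=N^{-2\epsilon}$ is chosen suitably. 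This contradicts the local law on a grid of $E$ values. Because the bound is stochastic domination, the failure probability is $N^{-D}$.

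\textbf{Step 3 (uniformity in $z$).} This step uses a net argument. I take an $N^{-10}$-net of the compact annulus, of cardinality $\bigo(N^{20})$, and apply a union bound using the $N^{-D}$ probabilities from Step~2. To pass from net points to the whole annulus I use the Lipschitz bound $|\sigma_{\min}(zI-\mathcal X)-\sigma_{\min}(z'I-\mathcal X)|\leq|z-z'|$. The net is built over a slightly enlarged annulus with inner radius $b_{\star,n}+\varepsilon/2$, which gives margin for the perturbation. Setting $\gamma_{\varepsilon,C}=\gamma_0/4$ then proves \eqref{eq:uniform-exterior-gap-split-noise}.

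\textbf{Step 4 (consequence).} Weyl's inequality for singular values gives $\sigma_{\min}(zI-\mathcal X-\mathcal E)\geq\sigma_{\min}(zI-\mathcal X)-\|\mathcal E\|_2$. On the event $\|\mathcal E\|_2\leq\gamma_{\varepsilon,C}/2$, whose probability tends to one, this yields the bound $\frac12\gamma_{\varepsilon,C}$ uniformly in $z$.

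The main obstacle I expect is Step~2: excluding eigenvalues outside the support, not merely controlling their density. If the averaged law at scale $\eta\sim N^{-1+\epsilon}$ is not directly available up to the real axis inside the gap, I would fall back on the isotropic bound. The entries $\Im\mathfrak G_{ii}$ would each be $\bigo_\prec(\eta+N^{-1/2})$, while an eigenvalue at $E$ forces $\sum_i\Im\mathfrak G_{ii}(E+\ii\eta)\geq(2\eta)^{-1}$. Choosing $\eta=N^{-1/4}$ makes this incompatible with $N(\eta+N^{-1/2})$, which gives the required contradiction.
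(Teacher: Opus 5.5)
Your argument is essentially the paper's proof. Both use a deterministic gap for $\nu_z$ from Lemma~\ref{lem:support} applied with $\varepsilon/2$; your Step~1 says ``with $\varepsilon$'' but then correctly uses $\delta'(\varepsilon/2)$. Both exclude eigenvalues of $H_z$ near $0$ via the averaged law of Lemma~\ref{lem:local-law} at $\eta=N^{-1+\tau}$, which is the paper's Lemma~\ref{lem:no-eigs-gap}. Both then use a net in $z$ with $\|H_z-H_w\|_2=|z-w|$, and finish with the triangle inequality for $\mathcal E$. The only structural difference is the net. The paper uses a net of fixed mesh $a_{\varepsilon,C}/4$, whose cardinality does not depend on $n$, so it only needs $\Pr(\spec(H_w)\cap I\neq\varnothing)\to0$ at each net point. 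Your $N^{-10}$-net of size $\bigo(N^{20})$ needs $N^{-D}$ bounds uniformly in $z$. Stochastic domination supplies these, but the fine mesh buys nothing because the gap is of order one.

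There is one correction to Step~2. Besides the $\bigo_\prec(N^{-1})$ error, you must beat the deterministic term $\Im\frac1N\Tr M_z(E+\ii\eta)\leq\eta/\gamma_0^2$. Hence the eigenvalue contribution $(2N\eta)^{-1}$ gives a contradiction only when $\eta\ll N^{-1/2}$. Your first choice $\eta=N^{-1+\epsilon}$ with $\epsilon<1/2$ works; this is exactly the paper's condition $\tau<1/2$. However, ``any small $\eta$'' and $\eta=N^{-2\epsilon}$ with small $\epsilon$ do not work.

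The same issue breaks your fallback. At $\eta=N^{-1/4}$ the lower bound $(2\eta)^{-1}\asymp N^{1/4}$ is far below $N(\eta+N^{-1/2})\asymp N^{3/4}$, so there is no contradiction; you would need $\eta\leq N^{-1/2-c}$. The fallback is unnecessary anyway. As your Step~1 already shows, the whole strip $\{|E|\leq\gamma_0,\ \eta>0\}$ lies in $\mathbb D^\tau_{\mathrm{out}}$, where the averaged law of Lemma~\ref{lem:local-law} holds for every $\eta>0$.
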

Equation~\eqref{eq:block-inverse} and Lemma~\ref{lem:local-law} yield the following general quadratic form estimates.
\begin{lemma}[Quadratic form estimate] 
\label{lem:quadratic form-convergence}
Fix $\varepsilon>0$ and $C<\infty$, and let $\mathscr K_n$ be any deterministic compact subset of the annulus $\mathscr A_{n,\varepsilon,C}$ defined in \eqref{eq:annulus-def}
Under Assumption~\ref{asm:moment}, for every pair of deterministic vectors $x,y\in\mathbb C^n$ satisfying $\|x\|_2 \vee \|y\|_2 = \bigo(1)$,                  
\begin{align}
        \sup_{z\in \mathscr K_n}
        \left|x^*\bigl\{G(z)-z^{-2}I_n\bigr\}y\right|
        &\prec n^{-1/2},
        &
        \sup_{z\in \mathscr K_n}
        \left|x^*\bigl\{\mathcal G(z)-z^{-2}I_n\bigr\}y\right|
        &\prec n^{-1/2},
        \label{eq:quantitative-G-calG-forms}\\
        \sup_{z\in \mathscr K_n}\left|x^*\mathsf B G(z)y\right|
        &\prec n^{-1/2},
        &
        \sup_{z\in \mathscr K_n}\left|x^*\mathsf A\mathcal G(z)y\right|
        &\prec n^{-1/2}.
        \label{eq:quantitative-BG-AcalG-forms}
\end{align}
\end{lemma}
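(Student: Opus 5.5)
The plan is to read the four blocks of $G,\mathcal G,\mathsf BG,\mathsf A\mathcal G$ off the resolvent $(zI_{2n}-\mathcal X)^{-1}$ via \eqref{eq:block-inverse}, and to express that resolvent as an off-diagonal block of the Hermitized resolvent $\mathfrak G_z(\zeta)$ at $\zeta=0$ (or at $\zeta=\ii\eta$ with $\eta\downarrow0$). Since
\[
H_z^{-1}=\begin{pmatrix}0&((\mathcal X-z)^*)^{-1}\\(\mathcal X-z)^{-1}&0\end{pmatrix},
\]
we get $(zI_{2n}-\mathcal X)^{-1}=-[\mathfrak G_z(0)]_{21}$, and we approximate it by $-[M_z(0)]_{21}$. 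Here we must justify evaluating at $\zeta=0$. By Lemma~\ref{lem:support}, $\dist(0,\supp\nu_z)>\delta'(\varepsilon)$ uniformly on $\mathscr A_{n,\varepsilon,C}$. Hence $\zeta=\ii\eta$ with $\eta=N^{-C'}$ lies in $\mathbb D^\tau_{\mathrm{out}}$. On the event of Lemma~\ref{lem:uniform-exterior-singular-gap}, $\sigma_{\min}(z-\mathcal X)\ge\gamma_{\varepsilon,C}$, so $\|\mathfrak G_z(\ii\eta)-\mathfrak G_z(0)\|\le\eta\gamma^{-2}$. Since $M_z(\zeta)$ is analytic away from $\supp\nu_z$, $M_z(\ii\eta)-M_z(0)=O(\eta)$ as well. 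Both errors are negligible compared with $n^{-1/2}$.

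Next we identify $[M_z(0)]_{21}$ explicitly using Lemma~\ref{lem:block-structure}. At $\zeta=0$ the natural candidate is $m_{z,i}(0)=0$ for all $i$: then $a_{z,i}=0$, which is consistent with \eqref{eq:MDE_m_a}, and it gives $w_{z,i}=z/(-|z|^2)=-1/\bar z$ and $\widetilde w_{z,i}=-1/z$. Concretely:
\begin{itemize}
\item We argue that the solution obtained as the limit $\eta\downarrow0$ of the positive-imaginary-part solution is this one. At $\zeta=\ii\eta$ the $m_{z,i}$ are purely imaginary by symmetry of the spectrum of $H_z$. They are $O(\eta)$ because $0$ lies outside the support and $\Im m\asymp\eta\int\dif\nu/|x|^2$.
\item The $(2,1)$ block of $M_z(0)$ (rows $2n+1..4n$, columns $1..2n$) is $\diag(M_{31},M_{31})$ with entries $-1/z$. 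This reproduces $(zI-\mathcal X)^{-1}\approx z^{-1}I_{2n}$, the "free" approximation expected outside the spectrum.
\item Reading off blocks via \eqref{eq:block-inverse}: $zG(z)\approx z^{-1}I_n$, so $G(z)\approx z^{-2}I_n$, and likewise $\mathcal G(z)\approx z^{-2}I_n$. The off-diagonal blocks $G\mathsf A$ and $\mathsf BG=\mathcal G\mathsf B$ have deterministic approximation $0$, because $M$'s $(2,1)$ block is block-diagonal in the $n\times n$ sub-blocks.
\item For $x^*\mathsf A\mathcal G y$ we use $\mathsf A\mathcal G=G\mathsf A$, which is the $(1,2)$ sub-block.
\end{itemize}

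The isotropic bound of Lemma~\ref{lem:local-law} is then applied with vectors $(0,0,x,0)$ and $(y,0,0,0)$ and their analogues, embedded in $\mathbb C^{4n}$. This gives each quadratic form within $\prec n^{-1/2}$ of its deterministic limit at each fixed $z$. The factor $|z|^{-1}$ needed to pass from $zG$ to $G$ is bounded on $\mathscr A_{n,\varepsilon,C}$, and the factor $z^{-2}$ causes no trouble there either.

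The main obstacle is uniformity in $z\in\mathscr K_n$, since the local law as stated is pointwise in $z$. We take an $n^{-3}$-net of the compact set $\mathscr K_n\subset\{|z|\le C\}$; it has polynomially many points. A union bound is compatible with $\prec$, because $\prec$ carries $N^{-D}$ failure probabilities. Between net points we use Lipschitz continuity. On the high-probability event of Lemma~\ref{lem:uniform-exterior-singular-gap} (with $\varepsilon/2$), the resolvent identity gives $\|\partial_z(z-\mathcal X)^{-1}\|\le\gamma^{-2}$, and $\|\mathsf A\|,\|\mathsf B\|=O(1)$ with high probability. The deterministic limits $z^{-2}$ and $0$ are smooth. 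Together these control the oscillation by $O(n^{-3})$.

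One technicality remains: Lemma~\ref{lem:uniform-exterior-singular-gap} is stated only with probability tending to one, whereas $\prec$ requires overwhelming probability. If needed we avoid this by working at $\zeta=\ii\eta$ with $\eta=n^{-2}$ directly. There the Lipschitz bound $\eta^{-2}$ for $\mathfrak G_z$ holds deterministically, so a finer net suffices. The only step that uses the singular-value gap is the passage from $\ii\eta$ to $0$, and for that we instead invoke the local law's own consequence that, with overwhelming probability, there are no eigenvalues of $H_z$ near $0$. This is the standard rigidity-outside-support statement that accompanies \citet[Theorem~2.1]{Erdos2019random}.
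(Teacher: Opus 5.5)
Your proposal is correct and follows essentially the paper's route: the four forms are read off the lower-left block of $H_z^{-1}=\mathfrak G_z(0)$ and compared with the explicit $M_z(0)$, where $m_{z,i}(0)=0$ and the off-diagonal entries are $-z^{-1}$. The comparison passes through $\zeta=\ii\eta$ with polynomially small $\eta$, and uniformity in $z$ comes from a polynomial net, a union bound and Lipschitz continuity. The purely imaginary property of $m_{z,i}(\ii\eta)$ that you attribute to spectral symmetry is proved in the proof of Lemma~\ref{lem:support} via the MDE symmetry $m\mapsto-\overline m$.

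The only difference is that the paper never invokes Lemma~\ref{lem:uniform-exterior-singular-gap} here. It first proves the overwhelming-probability gap you cite as standard, directly from the averaged local law at $\eta_0=N^{-1+\tau}$: an eigenvalue in $[-\gamma/2,\gamma/2]$ would force $\Im\frac1N\Tr\mathfrak G_z\geq N^{-\tau}$. It then makes the gap uniform in $z$ with a $\gamma/8$-net and Weyl's inequality, using $\|H_z-H_w\|_2=|z-w|$, which is exactly the uniform version your passage from $\ii\eta$ to $0$ needs.
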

Lemma~\ref{lem:quadratic form-convergence} is used directly in the outlier analysis and is also an input to Lemma~\ref{lem:resolvent-est}.

By Lemmas~\ref{lem:mask-remainder-bound} and \ref{lem:uniform-exterior-singular-gap}, $zI_{2n}-\mathcal X-\mathcal E$ is invertible with high probability for $z \in \mathscr A_{n,\varepsilon,C}$. 
Set
\begin{equation}\label{eq:perturbed-split-noise-resolvent-def}
        \widetilde R_{\mathcal X}(z)\deq(zI_{2n}-\mathcal X-\mathcal E)^{-1}.
\end{equation}
The following lemma shows that the projected resolvent of $\mathcal X$ is close to its deterministic limit uniformly over the exterior annulus.

\begin{lemma}[Resolvent estimates]
\label{lem:resolvent-est}
Fix $\varepsilon>0$ and $C<\infty$, and let $\mathscr K_n$ be any deterministic compact subset of the annulus $\mathscr A_{n,\varepsilon,C}$ defined in \eqref{eq:annulus-def}.
Suppose that Assumptions~\ref{asm:moment}--\ref{asm:delocalization} hold, that $r$ is fixed, and $d_{\max}=\bigo(1)$.
Then, for every deterministic unit vector $a\in\mathbb C^{2n}$,
\begin{align*}
        \sup_{z\in \mathscr K_n}
        \left\|
        a^*\widetilde R_{\mathcal X}(z)\mathcal U-\frac1z a^*\mathcal U
        \right\|_2
        &=
        \bigop(\eta_n). 
\end{align*}
Here $\eta_n$ is defined in \eqref{eq:outlier-error-scale}, and $\mathcal U$ is the signal basis defined in \eqref{eq:normalized-spike-basis}.
\end{lemma}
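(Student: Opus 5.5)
We prove Lemma~\ref{lem:resolvent-est} in two stages. First we replace $\widetilde R_{\mathcal X}(z)$ by the unperturbed resolvent $R_{\mathcal X}(z)\deq(zI_{2n}-\mathcal X)^{-1}$. Then we show that $a^*R_{\mathcal X}(z)\mathcal U$ is close to $z^{-1}a^*\mathcal U$ uniformly on $\mathscr K_n$.

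\textbf{Removing $\mathcal E$.} Start from the resolvent identity
\[
\widetilde R_{\mathcal X}(z)-R_{\mathcal X}(z)=\widetilde R_{\mathcal X}(z)\,\mathcal E\, R_{\mathcal X}(z).
\]
By Lemma~\ref{lem:uniform-exterior-singular-gap}, with probability tending to one we have $\|R_{\mathcal X}(z)\|_2\le\gamma^{-1}$ and $\|\widetilde R_{\mathcal X}(z)\|_2\le 2\gamma^{-1}$ uniformly on $\mathscr A_{n,\varepsilon,C}\supseteq\mathscr K_n$. Lemma~\ref{lem:mask-remainder-bound} gives $\|\mathcal E\|_2=\bigop(\mu_n)$. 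Since $\mathcal U$ has $2r$ orthonormal columns, this yields
\[
\sup_{z\in\mathscr K_n}\|a^*(\widetilde R_{\mathcal X}-R_{\mathcal X})\mathcal U\|_2=\bigop(\mu_n),
\]
which is within $\bigop(\eta_n)$. The factor $d_{\max}$ in $\eta_n$ is harmless, since $\mathcal E$ carries the factor $S$ and the bound is in fact $\bigop(d_{\max}\mu_n)$.

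\textbf{Unperturbed resolvent at a fixed $z$.} By \eqref{eq:block-inverse}, each entry of $a^*R_{\mathcal X}(z)\mathfrak u_{k,\mathsf s}$ splits into terms of four types, where $a=(a_1^\top,a_2^\top)^\top$:
\[
z\,a_1^*G(z)u_k,\qquad a_1^*G(z)\mathsf A u_k,\qquad a_2^*\mathsf BG(z)u_k,\qquad z\,a_2^*\mathcal G(z)u_k.
\]
Lemma~\ref{lem:quadratic form-convergence} handles three of them directly. It gives $z\,a_1^*Gu_k=z^{-1}a_1^*u_k+\bigo_\prec(n^{-1/2})$, similarly $z\,a_2^*\mathcal G u_k=z^{-1}a_2^*u_k+\bigo_\prec(n^{-1/2})$, and $a_2^*\mathsf BGu_k\prec n^{-1/2}$. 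For the remaining term we use $G\mathsf A=\mathsf A\mathcal G$, which turns it into $a_1^*\mathsf A\mathcal G u_k\prec n^{-1/2}$, again covered by the lemma. Collecting the pieces with the $\pm$ weights in $\mathfrak u_{k,\mathsf s}$ gives $a^*R_{\mathcal X}\mathfrak u_{k,\mathsf s}=z^{-1}a^*\mathfrak u_{k,\mathsf s}+\bigo_\prec(n^{-1/2})$. The rank $2r$ is fixed, so there are finitely many columns. The lemma is already stated with $\sup_{z\in\mathscr K_n}$, so the bound holds uniformly.

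\textbf{Main obstacle: uniformity in $z$.} If Lemma~\ref{lem:quadratic form-convergence} is taken as uniform, nothing more is needed. If only pointwise bounds are available, we argue by a net. Choose an $n^{-1}$-net of $\mathscr K_n$ with polynomially many points and apply a union bound, which stochastic domination allows. Off the net, use Lipschitz continuity on the high-probability event of Lemma~\ref{lem:uniform-exterior-singular-gap}: $\|\partial_z R_{\mathcal X}\|_2=\|R_{\mathcal X}^2\|_2\le\gamma^{-2}$, and $z^{-1}$ is Lipschitz on the annulus, so moving to the nearest net point costs only $\bigo(n^{-1})$.

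\textbf{Conclusion.} Since $\prec n^{-1/2}$ implies $\bigop(n^{-1/2+\vartheta})$, adding the two stages gives the claimed rate $\bigop(\eta_n)$.
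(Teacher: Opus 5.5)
Your proposal is correct and follows the paper's proof essentially step for step. You use the block inverse \eqref{eq:block-inverse} with $G\mathsf A=\mathsf A\mathcal G$ and the already $z$-uniform Lemma~\ref{lem:quadratic form-convergence} to get the $\bigo_\prec(n^{-1/2})$ bound for the unperturbed resolvent. You then use the resolvent identity, the singular-gap bound of Lemma~\ref{lem:uniform-exterior-singular-gap}, and $\|\mathcal E\|_2=\bigop(\mu_n)$ from Lemma~\ref{lem:mask-remainder-bound} to pass to $\widetilde R_{\mathcal X}$. Your reversed order of the two stages and the optional net argument are immaterial.
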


Lemma~\ref{lem:resolvent-est} follows from the block inverse \eqref{eq:block-inverse}, the resolvent identity, Lemmas~\ref{lem:uniform-exterior-singular-gap} and \ref{lem:quadratic form-convergence}.

\subsection{Proof of Proposition~\ref{prop:outlier-location}}

Recall the decomposition \eqref{eq:split-signal-decomposition} and the definition of $\widetilde R_{\mathcal X}(z)$ from \eqref{eq:perturbed-split-noise-resolvent-def}. 
Set
\begin{equation*}
        F_n(z)
        \deq
        I_{2r}-\mathcal D \mathcal U^\top \widetilde R_{\mathcal X}(z)\mathcal U,
        \qquad
        F_0(z)
        \deq
        I_{2r}-\frac1z\mathcal D.
\end{equation*}
A standard norm bound for Wigner type matrices
\citep[Corollary 1.2]{Erdos2019bounds}, the deterministic bounds
\[
\|\mathcal E\|_2\leq \frac12\|S\|_{\mathrm F}
\leq \frac{\sqrt{r}}{2}d_{\max}
\quad\text{and}\quad
\|\mathcal U\mathcal D\mathcal U^\top\|_2=\frac{d_{\max}}2,
\]
and \eqref{eq:split-signal-decomposition} give a deterministic $\mathsf C <\infty$
such that, for every fixed $D>0$ and all sufficiently large $n$,
\begin{equation}\label{eq:outlier-norm-cutoff}
        \Pr(\|\mathcal Y\|_2\geq \mathsf C )\leq n^{-D}.
\end{equation}
Fix $\varepsilon>0$ and work on $\mathscr A_{n,\varepsilon,\mathsf C }$ (see \eqref{eq:annulus-def}).
With probability tending to one, Lemmas~\ref{lem:mask-remainder-bound} and \ref{lem:uniform-exterior-singular-gap} make $zI_{2n}-\mathcal X-\mathcal E$ uniformly invertible on $\mathscr A_{n,\varepsilon,\mathsf C }$.
Applying Lemma~\ref{lem:resolvent-est} implies that
\begin{equation}
\label{eq:outlier-F-comparison}
        \sup_{z\in\mathscr A_{n,\varepsilon,\mathsf C }}
        \|F_n(z)-F_0(z)\|_2
        =
        \bigop(\eta_n).
\end{equation}
The deterministic function has the explicit determinant
\begin{equation}\label{eq:det-F0}
        \det F_0(z)
        =
        \prod_{k=1}^r
        \prod_{\mathsf s\in\{+,-\}}
        \left(
        1-\frac{\alpha_{k,\mathsf s}}{z}
        \right),
\end{equation}
so its only exterior zeros are the signed spike locations $\alpha_{k,\mathsf s}$.

Let
\[
        \Lambda_+
        \deq
        \{\alpha_{k,\mathsf s}:k\in\intset{r_+},\ \mathsf s\in\{+,-\}\}
\]
be the set of distinct signed spike locations.
Let $\delta>0$ be constant in Assumption~\ref{asm:signal}.  
Fix $\kappa\in(0,\delta)$ and take $\varepsilon=\kappa$ in the annuals $\mathscr A_{n,\varepsilon,\mathsf C}$.  
Increase the deterministic norm cutoff $\mathsf C $ if necessary so that
$
        \max_{\alpha\in\Lambda_+}|\alpha|<\mathsf C .
$
For each $\alpha\in\Lambda_+$, let $m_\alpha$ be the multiplicity of $\alpha$ as a zero of $\det F_0$. 
The equation \eqref{eq:det-F0} shows that $\alpha$ is a zero of $\det F_0$ of order $m_\alpha$, and $\sum_{\alpha\in\Lambda_+}m_\alpha=2r_+$.
Fix $\zeta>0$.  
Equation~\eqref{eq:outlier-F-comparison} gives $M\equiv M_\zeta<\infty$ such that, for all sufficiently large $n$,
\[
        \Pr\left(
        \sup_{z\in\mathscr A_{n,\kappa,\mathsf C }}
        \|F_n(z)-F_0(z)\|_2
        \leq M\eta_n
        \right)
        \geq1-\zeta.
\]
Let $\Omega_n(M)$ be the intersection of the event in the previous probability bound and 
\[
        \left\{
        \inf_{z\in\mathscr A_{n,\kappa,\mathsf C }}
        \sigma_{\min}(zI_{2n}-\mathcal X-\mathcal E)
        \geq\frac12\gamma_{\kappa,\mathsf C }
        \right\} 
        \bigcap 
        \{\|\mathcal Y\|_2<\mathsf C \}.
\]
Lemmas~\ref{lem:mask-remainder-bound} and \ref{lem:uniform-exterior-singular-gap} and the bound \eqref{eq:outlier-norm-cutoff} give
\[
        \Pr\bigl(\Omega_n(M)\bigr)
        \geq1-\zeta-\smallo(1).
\]
On this event, the determinant factorization \eqref{eq:outlier-determinant-factorization}
holds there.

Set $c_r\deq2^{1/(2r)}-1$ and choose $C_\eta>\mathsf C M/c_r$. 
For $\alpha\in\mathbb C$ and $t>0$, set $D_\alpha(t) \deq \{z\in\mathbb C:|z-\alpha|<t\}.$
Let
\[ 
        \mathscr W_n
        \deq
        \bigcup_{\alpha\in\Lambda_+}
        D_\alpha(C_\eta\eta_n)
\]
Since $\eta_n=\smallo(1)$, we have $C_{\eta}\eta_n < \delta - \kappa$ for all sufficiently large $n$, and then the closed disks $\overline{D_\alpha(C_\eta\eta_n)}$ lie in $\mathscr A_{n,\kappa,\mathsf C }$, and they are pairwise disjoint.
For $z\in\mathscr A_{n,\kappa,\mathsf C }\setminus\mathscr W_n$,  if $k\leq r_+$, then $\alpha_{k,\mathsf s}\in\Lambda_+$, and hence
$|z-\alpha_{k,\mathsf s}|\geq C_\eta\eta_n$;
if $k>r_+$, then Assumption~\ref{asm:signal} and the definition of $\mathscr A_{n,\kappa,\mathsf C }$ give $|z-\alpha_{k,\mathsf s}|\geq |z|-|\alpha_{k,\mathsf s}|\geq \delta+\kappa$.
Consequently, for all sufficiently large $n$,
\begin{align*}
        \|F_0(z)^{-1}\|_2
        =
        \max_{k,\mathsf s}
        \frac{|z|}{|z-\alpha_{k,\mathsf s}|}
        \leq
        \max\left\{
        \frac{\mathsf C}{C_\eta\eta_n},
        \frac{\mathsf C}{\delta+\kappa}
        \right\}
        =
        \frac{\mathsf C}{C_\eta\eta_n}.
\end{align*}
Consequently, on $\Omega_n(M)$,
\begin{equation}\label{eq:outlier-F-relative-bound}
        \sup_{z\in\mathscr A_{n,\kappa,\mathsf C }\setminus\mathscr W_n}
        \left\|F_0(z)^{-1}\{F_n(z)-F_0(z)\}\right\|_2
        \leq
        \frac{\mathsf C M}{C_\eta}
        <c_r < 1.
\end{equation}
Hence, for $z\in \mathscr A_{n,\kappa,\mathsf C }\setminus\mathscr W_n$, the matrix
\[
        F_n(z)
        =
        F_0(z)
        \left[I_{2r}+F_0(z)^{-1}\{F_n(z)-F_0(z)\}\right]
\]
is invertible, and thus $\det F_n(z) \neq 0$.

For each $\alpha\in\Lambda_+$, both $F_n$ and $F_0$ are analytic on a
neighborhood of $\overline{D_\alpha(C_\eta\eta_n)}$ on $\Omega_n(M)$.  Set
\[
        K_n(z)
        \deq
        F_0(z)^{-1}\{F_n(z)-F_0(z)\},
        \qquad
        z\in\partial D_\alpha(C_\eta\eta_n).
\]
The bound \eqref{eq:outlier-F-relative-bound} gives
\[
        \sup_{z\in\partial D_\alpha(C_\eta\eta_n)}
        \|K_n(z)\|_2
        <
        c_r
        =
        2^{1/2r}-1,
\]
and
\[
        F_n(z)=F_0(z)\{I_{2r}+K_n(z)\},
        \qquad
        z\in\partial D_\alpha(C_\eta\eta_n).
\]
Writing $\lambda_1(K_n(z)),\ldots,\lambda_{2r}(K_n(z))$ for the eigenvalues of
$K_n(z)$ and using $|\lambda_j(K_n(z))|\leq\|K_n(z)\|_2$, we obtain
\begin{align*}
        \left|\det\{I_{2r}+K_n(z)\}-1\right|
        &=
        \left|
        \prod_{j=1}^{2r}\{1+\lambda_j(K_n(z))\}-1
        \right|\\
        &\leq
        \{1+\|K_n(z)\|_2\}^{2r}-1\\
        &<
        (1+c_r)^{2r}-1
        =1.
\end{align*}
Therefore, for every $z\in\partial D_\alpha(C_\eta\eta_n)$,
\begin{align*}
        |\det F_n(z)-\det F_0(z)|
        =
        |\det F_0(z)|
        \left|\det\{I_{2r}+K_n(z)\}-1\right|
        <
        |\det F_0(z)|.
\end{align*}
Rouch\'e's theorem shows that $\det F_n$ and $\det F_0$ have the same
number of zeros inside $D_\alpha(C_\eta\eta_n)$, counted with multiplicity.
This disk contains no zero of $\det F_0$ other than $\alpha$, whose order
is $m_\alpha$.  Hence it contains exactly $m_\alpha$ zeros of $\det F_n$,
counted with multiplicity.

On $\Omega_n(M)$, the matrix determinant lemma gives
\begin{equation}
\label{eq:outlier-determinant-factorization}
        \det(zI_{2n}-\mathcal Y)
        =
        \det(zI_{2n}-\mathcal X-\mathcal E)\det F_n(z),
        \qquad 
        z\in \mathscr A_{n,\varepsilon,\mathsf C}.
\end{equation}
Since $zI_{2n}-\mathcal X-\mathcal E$ is invertible on $\mathscr A_{n,\varepsilon,\mathsf C}$, the order of each zero of $\det F_n$ equals its order as a zero of $\det(zI_{2n}-\mathcal Y)$. 
Collecting the eigenvalues in $D_{\alpha}(C_{\eta}\eta_n)$ for $\alpha\in\Lambda_+$, and repeating them according to algebraic multiplicity, we may label them as
$\lambda_{k,\mathsf s}(\mathcal Y)$ for $k\in\intset{r_+}$ and $\mathsf s\in\{+,-\}$,
so that, on $\Omega_n(M)$,
\[
        \max_{k\in\intset{r_+},\;\mathsf s\in\{+,-\}}
        \left|
        \lambda_{k,\mathsf s}(\mathcal Y)-\alpha_{k,\mathsf s}
        \right|
        \leq C_\eta\eta_n.
\]
Since $\Pr(\Omega_n(M))\geq1-\zeta-\smallo(1)$ and $\zeta>0$ was arbitrary,
this proves \eqref{eq:outlier-location-rate}.

Since $\sum_{\alpha\in\Lambda_+}m_\alpha=2r_+$, there are exactly $2r_+$
eigenvalues of $\mathcal Y$ in $\mathscr A_{n,\kappa,\mathsf C }$ and no
others there.  The norm cutoff gives
$|\lambda|\leq\|\mathcal Y\|_2<\mathsf C $ for every eigenvalue $\lambda$
of $\mathcal Y$.  Therefore, there are exactly $2r_+$ eigenvalues of
$\mathcal Y$ in $\{z:|z|\geq b_{\star,n}+\kappa\}$ and no others there.

\subsection{Proof of Proposition~\ref{prop:split-evec-projection}}

Fix $k\in\intset{r_+}$ and $\mathsf s\in\{+,-\}$.
Choose a fixed $\varepsilon\in(0,\delta)$. Assumption~\ref{asm:signal} then gives $b_{\star,n}+\varepsilon<\min_{k\in\intset{r_+}} d_k/2$ for all sufficiently large $n$.
Set
\begin{align}
\label{eq:Delta-def}
        \Delta_{k,\mathsf s}
        &\deq
        1\wedge|\alpha_{k,\mathsf s}|
        \wedge(|\alpha_{k,\mathsf s}|-b_{\star,n}-\varepsilon)
        \wedge
        \min_{\substack{
        \ell\in\intset{r},\ \mathsf t\in\{+,-\}\\
        (\ell,\mathsf t)\neq(k,\mathsf s)}}
        |\alpha_{k,\mathsf s}-\alpha_{\ell,\mathsf t}|,\\
\label{eq:outlier-contours-def}
        D_{k,\mathsf s}
        &\deq
        \left\{z\in\mathbb C:
        |z-\alpha_{k,\mathsf s}|<\frac{\Delta_{k,\mathsf s}}4
        \right\},
        \qquad
        \Gamma_{k,\mathsf s}
        \deq
        \partial D_{k,\mathsf s}.
\end{align}
The contour $\Gamma_{k,\mathsf s}$ is positively oriented.
The spike locations $\alpha_{k,\mathsf s}$ are pairwise distinct, so Assumption~\ref{asm:signal} implies that $\Delta_{k,\mathsf s}$ is bounded away from zero for all sufficiently large $n$, uniformly in $(k,\mathsf s)$. 
Consequently, $\eta_n=\smallo(1)$ gives
\[ 
\eta_n=\smallo(\Delta_{k,\mathsf s})
\] 
uniformly in $(k,\mathsf s)$.
Proposition~\ref{prop:outlier-location} and $\eta_n=\smallo(\Delta_{k,\mathsf s})$ imply that, with probability tending to one, the disk $D_{k,\mathsf s}$ contains exactly one eigenvalue of $\mathcal Y$, and $\Gamma_{k,\mathsf s}$ contains none.
Hence this eigenvalue is algebraically simple with probability tending to one.

Since the spectrum of $\mathcal Y$ is finite, there exists an open neighborhood $U_{k,\mathsf s}$ of $\overline{D_{k,\mathsf s}}$ such that
$ U_{k,\mathsf s}\cap\operatorname{spec}(\mathcal Y) = \{\lambda_{k,\mathsf s}(\mathcal Y)\}.
$
For $z\in U_{k,\mathsf s}\setminus
\{\lambda_{k,\mathsf s}(\mathcal Y)\}$, the resolvent has the following Laurent expansion
\[
(zI_{2n}-\mathcal Y)^{-1}
=
\frac{
\widetilde{\mathcal P}_{k,\mathsf s}
}{
z-\lambda_{k,\mathsf s}(\mathcal Y)
}
+
H_{k,\mathsf s}(z),
\]
where $H_{k,\mathsf s}$ is analytic on $U_{k,\mathsf s}$.
The Cauchy residue theorem therefore gives
\begin{equation}\label{eq:outlier-projection-integral}
        \frac{1}{2\pi\ii}
        \oint_{\Gamma_{k,\mathsf s}}
        (zI_{2n}-\mathcal Y)^{-1}\,\dif z 
        = 
        \widetilde{\mathcal P}_{k,\mathsf s}.
\end{equation}
Lemmas~\ref{lem:mask-remainder-bound} and \ref{lem:uniform-exterior-singular-gap} also imply that $\widetilde R_{\mathcal X}(z)$ is analytic on $U_{k, \mathsf s}$.
Set
\[
        L(z)
        \deq
        \mathcal D^{-1}
        -
        \mathcal U^*\widetilde R_{\mathcal X}(z)\mathcal U,
        \qquad
        L_0(z)
        \deq
        \mathcal D^{-1}-\frac1zI_{2r},
        \qquad
        K_0(z)
        \deq
        L_0(z)^{-1}.
\]
The definition of $D_{k,\mathsf s}$ in \eqref{eq:outlier-contours-def} therefore gives
\begin{equation}\label{eq:K0-bound}
        \sup_{z\in\Gamma_{k,\mathsf s}}
        \|K_0(z)\|_2
        = 
        \sup_{z\in\Gamma_{k,\mathsf s}}
        \max_{\ell,\mathsf t}
        \bigg|
        \frac{z \alpha_{\ell, \mathsf t}}{z-\alpha_{\ell, \mathsf t}}
        \bigg|
        \leq
        \frac{C}{\Delta_{k,\mathsf s}}.
\end{equation}
Write
$
        E(z)
        \deq
        L(z)-L_0(z).
$
Lemma~\ref{lem:resolvent-est} yields
\begin{equation}\label{eq:E-bound}
        \sup_{z\in\Gamma_{k,\mathsf s}}
        \|E(z)\|_2
        =
        \bigop(\eta_n).
\end{equation}

This, together with \eqref{eq:K0-bound} and \eqref{eq:E-bound}, implies that
\begin{align*}
        q_n
        \deq
        \sup_{z\in\Gamma_{k,\mathsf s}}
        \|K_0(z)E(z)\|_2 
        =
        \bigop\!\left(
        \frac{\eta_n}{\Delta_{k,\mathsf s}}
        \right)
        =
        \smallop(1).
\end{align*}
For every $z\in\Gamma_{k,\mathsf s}$,
\[
        L(z)
        =
        L_0(z)\{I_{2r}+K_0(z)E(z)\}.
\]
On the event $\{q_n\leq 1/2\}$, the Neumann series gives
\[
        \sup_{z\in\Gamma_{k,\mathsf s}}
        \|\{I_{2r}+K_0(z)E(z)\}^{-1}\|_2
        \leq
        \sum_{j=0}^\infty q_n^j
        \leq        
        \frac{1}{1-q_n}
        \leq
        2.
\]
Because $q_n=\smallop(1)$, this event has probability tending to one. Hence $L(z)$ is invertible on $\Gamma_{k,\mathsf s}$ with probability tending to one, and
\begin{equation}\label{eq:L-inverse-bound}
        \sup_{z\in\Gamma_{k,\mathsf s}}
        \|L(z)^{-1}\|_2
        =
        \sup_{z\in\Gamma_{k,\mathsf s}}
        \|\{I_{2r}+K_0(z)E(z)\}^{-1}K_0(z)\|_2
        =
        \bigop(\Delta_{k,\mathsf s}^{-1})
\end{equation}
on this event, where the last inequality uses \eqref{eq:K0-bound}.
The resolvent identity then gives
\begin{equation}\label{eq:L-inverse-comparison}
        \sup_{z\in\Gamma_{k,\mathsf s}}
        \|L(z)^{-1}-K_0(z)\|_2
        =
        \bigop\!\left(
        \frac{\eta_n}{\Delta_{k,\mathsf s}^2}
        \right).
\end{equation}
The Woodbury identity and the analyticity of $\widetilde R_{\mathcal X}$ reduce the integral \eqref{eq:outlier-projection-integral} to
\begin{equation}\label{eq:evec-proj-woodbury}
        a_n^*\widetilde{\mathcal P}_{k,\mathsf s} b_n
        =
        \frac{1}{2\pi\ii}
        \oint_{\Gamma_{k,\mathsf s}}
        a_n^*\widetilde R_{\mathcal X}(z)\mathcal U
        L(z)^{-1}
        \mathcal U^*\widetilde R_{\mathcal X}(z)b_n
        \,\dif z.
\end{equation}
Lemma~\ref{lem:resolvent-est}, Equation~\eqref{eq:L-inverse-comparison}, and $|\Gamma_{k,\mathsf s}|=\pi\Delta_{k,\mathsf s}/2$ give 
\[
        a_n^*\widetilde{\mathcal P}_{k,\mathsf s} b_n
        =
        \frac{1}{2\pi\ii}
        \oint_{\Gamma_{k,\mathsf s}}
        \frac1{z^2}
        a_n^*\mathcal U K_0(z)\mathcal U^*b_n
        \,\dif z
        +
        \bigop\!\left(
        \frac{\eta_n}{\Delta_{k,\mathsf s}}
        \right).
\]
Since $0\notin\overline{D_{k,\mathsf s}}$ and $\alpha_\nu\in D_{k,\mathsf s}$ exactly when $\nu=(k,\mathsf s)$, we have
\[
        \frac{1}{2\pi\ii}
        \oint_{\Gamma_{k,\mathsf s}}
        \frac{\alpha_\nu}{z(z-\alpha_\nu)}
        \,\dif z
        =
        \indicator\{\nu=(k,\mathsf s)\}.
\]
Summing these residues gives $a_n^*\mathcal P_{k,\mathsf s}b_n$, which proves \eqref{eq:evec-proj-main-bilinear}.

\subsection{Proof of Theorem~\ref{thm:entrywise-overlap}}

Fix $\varepsilon_0\in(0,\tau)$ and, for $a\in\{1,2\}$, $i\in\intset{r_a}$, and $\mathsf s\in\{+,-\}$, set
\[
        \alpha_{a,i,\mathsf s}
        \deq
        \mathsf s\frac{d_{ai}}2,
        \qquad
        \mathfrak u_{a,i,\mathsf s}
        \deq
        \frac1{\sqrt2}
        \begin{pmatrix}
        u_{a,i}\\
        \mathsf s u_{a,i}
        \end{pmatrix}.
\]
Define $\Delta_{a,i,\mathsf s}$ analogously to \eqref{eq:Delta-def} with $\varepsilon=\varepsilon_0$, reference radius $b_{\star,a,n}$, and all $2r$ signed spike locations $\{\alpha_{a,k,\mathsf t}:k\in\intset{r},\ \mathsf t\in\{+,-\}\}$, where $\alpha_{a,k,\mathsf t}=\mathsf t d_{ak}/2$ for every $k\in\intset{r}$.
By Assumptions~\ref{asm:wigner-assumptions} and~\ref{asm:overlap-entry-assumptions}, we have
\begin{equation}
\label{eq:samplewise-outlier-separation}
        \liminf_{n\to\infty}
        \min_{a,i,\mathsf s}\Delta_{a,i,\mathsf s}>0,
        \qquad
        \max_{a,i,\mathsf s}
        \frac{\eta_{a,n}}{\Delta_{a,i,\mathsf s}}
        \to0.
\end{equation}

For $a\in\{1,2\}$, let $\mathcal G_{a,n}$ be the event that, for every
$k\in\intset{r}$ and $\mathsf t\in\{+,-\}$, the disk centered at
$\alpha_{a,k,\mathsf t}$ with radius $\Delta_{a,k,\mathsf t}/4$ contains
exactly one eigenvalue of $\mathcal Y_a$, counted with algebraic multiplicity.
Applying Proposition~\ref{prop:outlier-location} to sample $a$ and using
\eqref{eq:samplewise-outlier-separation}, we obtain
\[
        \frac{
        \max_{k,\mathsf t}
        |\widehat\lambda_{a,k,\mathsf t}
        -\alpha_{a,k,\mathsf t}|
        }{
        \min_{k,\mathsf t}\Delta_{a,k,\mathsf t}
        }
        =
        \bigop\!\left(
        \max_{k,\mathsf t}
        \frac{\eta_{a,n}}{\Delta_{a,k,\mathsf t}}
        \right)
        =
        \smallop(1).
\]
Thus, with probability tending to one, every selected eigenvalue lies in its corresponding disk.  
These $2r_a$ disks are pairwise disjoint and contained in $\{z:|z|>b_{\star,a,n}+\varepsilon_0\}$.  
The exact exterior count in Proposition~\ref{prop:outlier-location}, applied with $\kappa=\varepsilon_0$, then implies that each disk contains exactly one eigenvalue, counted with algebraic multiplicity. 
Consequently,
\begin{equation}
\label{eq:samplewise-good-event}
        \Pr(\mathcal G_{a,n}^{\mathsf c})=\smallo(1).
\end{equation}

On $\mathcal G_{a,n}$, every selected eigenvalue
$\widehat\lambda_{a,i,\mathsf s}$ is algebraically simple.  It is also real.
Indeed, $\mathcal Y_a$ is real, and the disk centered at the real number
$\alpha_{a,i,\mathsf s}$ is invariant under complex conjugation.  If
$\widehat\lambda_{a,i,\mathsf s}$ were nonreal, its distinct complex conjugate
would lie in the same disk, contradicting the definition of
$\mathcal G_{a,n}$.

We next verify that the eigenvector normalization in the estimator is
well defined.  For any real, algebraically simple eigenvalue, let
$r_0$ and $\ell_0$ be nonzero real right and left eigenvectors, respectively,
and set
\[
        A
        \deq
        \mathcal Y_a-\widehat\lambda_{a,i,\mathsf s}I_{2n}.
\]
Then
\[
        Ar_0=0,
        \qquad
        \ker(A^\top)=\operatorname{span}\{\ell_0\}.
\]
If $\ell_0^\top r_0=0$, then $r_0$ is orthogonal to $\ker(A^\top)$ and hence
belongs to the column space of $A$.  Thus $Av=r_0$ for some $v$, while
$Ar_0=0$.  This gives a Jordan chain of length two and contradicts algebraic
simplicity.  Therefore $\ell_0^\top r_0\neq0$.  After normalizing $r_0$ and
rescaling $\ell_0$, we may impose
\[
        \|r_0\|_2=1,
        \qquad
        \ell_0^\top r_0=1.
\]
It follows that the eigenvector convention in the definition of
$\widetilde{\mathcal P}_{a,i,\mathsf s}$ is well defined on
$\mathcal G_{a,n}$.  On this event,
$\widetilde{\mathcal P}_{a,i,\mathsf s}$ is the rank one Riesz projection
appearing in Proposition~\ref{prop:split-evec-projection}.

\begin{lemma}
\label{lem:samplewise-projector-approximation}
For $a\in\{1,2\}$, $i\in\intset{r}$, and $\mathsf s\in\{+,-\}$, define
\[
        \mathcal P_{a,i,\mathsf s}
        \deq
        \mathfrak u_{a,i,\mathsf s}
        \mathfrak u_{a,i,\mathsf s}^\top,
        \qquad
        \Pi_{a,i}
        \deq
        u_{a,i}u_{a,i}^\top,
\]
and
\[
        \mathscr E^{\mathrm{out}}_{a,i,\mathsf s}
        \deq
        \widetilde{\mathcal P}_{a,i,\mathsf s}
        -
        \mathcal P_{a,i,\mathsf s},
        \qquad
        \mathscr E_{a,i}
        \deq
        \widehat\Pi_{a,i}-\Pi_{a,i}.
\]
Under the assumptions of Theorem~\ref{thm:entrywise-overlap},
\begin{equation}
\label{eq:samplewise-direction-rank-norm}
        \operatorname{rank}
        (\mathscr E^{\mathrm{out}}_{a,i,\mathsf s})
        \leq
        2,
        \qquad
        \|\mathscr E^{\mathrm{out}}_{a,i,\mathsf s}\|_2
        =
        \bigop(1),
        \qquad
        \operatorname{rank}(\mathscr E_{a,i})
        \leq
        3,
        \qquad
        \|\mathscr E_{a,i}\|_2
        =
        \bigop(1).
\end{equation}
For every fixed $\eta>0$,
\begin{equation}
\label{eq:samplewise-direction-isotropic}
        \sup_{\substack{x,y\in\mathbb C^{2n}\ \mathrm{deterministic}\\
        \|x\|_2\vee\|y\|_2\leq1}}
        \Pr\!\left(
        |x^*\mathscr E^{\mathrm{out}}_{a,i,\mathsf s}y|
        >
        \eta
        \right)
        =
        \smallo(1),
\end{equation}
and the analogous assertion holds for $\mathscr E_{a,i}$ and deterministic vectors in $\mathbb C^n$.
If $x_n,y_n$ are independent of $(X_a,P_a)$ and satisfy
$\|x_n\|_2\vee\|y_n\|_2=\bigop(1)$, then
\begin{equation}
\label{eq:samplewise-direction-isotropic-random}
        x_n^*\mathscr E_{a,i}y_n=\smallop(1),
        \qquad
        x_n^*\mathscr E^{\mathrm{out}}_{a,i,\mathsf s}y_n=\smallop(1).
\end{equation}
\end{lemma}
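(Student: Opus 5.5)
Throughout we work on the good event $\mathcal G_{a,n}$ from \eqref{eq:samplewise-good-event}, whose complement has vanishing probability. On it, $\widetilde{\mathcal P}_{a,i,\mathsf s}$ is the rank one Riesz projection of Proposition~\ref{prop:split-evec-projection}, and $\widehat\Pi_{a,i}$ is well defined.

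\emph{Rank statements.} These are immediate. $\mathscr E^{\mathrm{out}}_{a,i,\mathsf s}$ is the difference of two rank one matrices, so its rank is at most $2$. For $\mathscr E_{a,i}$, the matrix $\widehat\Pi_{a,i}$ is the upper-left block of a sum of two rank one matrices, hence has rank at most $2$. Subtracting the rank one matrix $\Pi_{a,i}$ gives rank at most $3$.

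\emph{Operator norm bounds.} For $\|\mathscr E^{\mathrm{out}}_{a,i,\mathsf s}\|_2=\bigop(1)$ I would use the contour representation \eqref{eq:outlier-projection-integral} together with the Woodbury form \eqref{eq:evec-proj-woodbury}. On $\Gamma_{a,i,\mathsf s}$ the bound \eqref{eq:L-inverse-bound} gives $\|L^{-1}\|_2=\bigop(\Delta^{-1})$. The factor $\|\widetilde R_{\mathcal X}(z)\|_2\le 2/\gamma_{\varepsilon,C}$ follows from Lemma~\ref{lem:uniform-exterior-singular-gap}, and $\|\mathcal U\|_2=1$. Integrating the operator-valued integrand over a contour of length $\bigo(\Delta)$ then gives $\|\widetilde{\mathcal P}_{a,i,\mathsf s}\|_2=\bigop(1)$. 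Since $\|\mathcal P_{a,i,\mathsf s}\|_2=1$, the bound on $\mathscr E^{\mathrm{out}}$ follows. The bound for $\mathscr E_{a,i}$ follows because taking a diagonal block does not increase the norm.

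\emph{Isotropic statement \eqref{eq:samplewise-direction-isotropic}.} For fixed deterministic $x,y$ this is exactly Proposition~\ref{prop:split-evec-projection}, applied samplewise with the separation from Assumption~\ref{asm:overlap-entry-assumptions}. The only point to check is uniformity of the supremum over unit $x,y$. The error terms in that proof come from Lemma~\ref{lem:resolvent-est} and Lemma~\ref{lem:quadratic form-convergence}, and both hold with $\prec$ bounds that are uniform over deterministic unit vectors. Consequently $\Pr(|x^*\mathscr E^{\mathrm{out}}y|>\eta)$ is bounded by a quantity depending only on $n$.

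I expect the one mild obstacle to be the $\bigop$ bound of Lemma~\ref{lem:resolvent-est} as stated: it is not written uniformly in $a$. To handle this, I would revisit its proof, which combines the $\prec$ bounds with the deterministic bound on $\mathcal E$. This yields $\Pr(\sup_z\|a^*\widetilde R\mathcal U-z^{-1}a^*\mathcal U\|>M\eta_n)\le \zeta$ with $M$ independent of $a$. The event $\mathcal G_{a,n}$, together with the singular gap, does not depend on $x,y$.

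For $\mathscr E_{a,i}$, embed $x,y\in\mathbb C^n$ as $(x^\top,0)^\top$ and $(y^\top,0)^\top$, and write
\[
x^*\mathscr E_{a,i}y=\sum_{\mathsf s}\tilde x^*\mathscr E^{\mathrm{out}}_{a,i,\mathsf s}\tilde y,
\]
using the population identity $[\mathcal P_{a,i,+}+\mathcal P_{a,i,-}]_{11}=\Pi_{a,i}$.

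\emph{Random vectors \eqref{eq:samplewise-direction-isotropic-random}.} Condition on $(x_n,y_n)$, which is independent of $(X_a,P_a)$. Fix $\eta,\zeta>0$ and choose $K$ such that $\Pr(\|x_n\|_2\vee\|y_n\|_2>K)\le\zeta$. On the complementary event, normalize by $K$. By independence and Fubini,
\[
\Pr(|x_n^*\mathscr E y_n|>\eta)\le \zeta+\E\Bigl[\sup_{\|x\|,\|y\|\le1}\Pr\bigl(|x^*\mathscr E y|>\eta/K^2\bigr)\Big|_{x=x_n/K,\,y=y_n/K}\Bigr].
\]
The uniform bound from \eqref{eq:samplewise-direction-isotropic} makes the second term $\smallo(1)$. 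Measurability of the conditional probability in $(x,y)$ is routine. Since $\zeta$ was arbitrary, this gives the claim for both $\mathscr E_{a,i}$ and $\mathscr E^{\mathrm{out}}_{a,i,\mathsf s}$.
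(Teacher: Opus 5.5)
Your proposal is correct. For the rank bounds, the contour/Woodbury bound on $\|\widetilde{\mathcal P}_{a,i,\mathsf s}\|_2$, the reduction of $\mathscr E_{a,i}$ to the upper-left blocks of $\mathscr E^{\mathrm{out}}_{a,i,\pm}$, and the conditioning argument for independent random test vectors, it coincides with the paper's proof.

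The one step where you take a different route is the uniformity of the supremum in \eqref{eq:samplewise-direction-isotropic}.

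\emph{Your route.} You re-open the proofs of Lemma~\ref{lem:resolvent-est} and Lemma~\ref{lem:quadratic form-convergence} and track that every error term is controlled uniformly over deterministic unit vectors. This does work. The $\mathcal E$-contribution is bounded by $\|R_{\mathcal X}\|_2\|\mathcal E\|_2\|\widetilde R_{\mathcal X}\|_2$, which does not depend on the test vector. The quadratic-form terms inherit uniformity from the isotropic local law in Lemma~\ref{lem:local-law}. That uniformity, however, has to be checked against the cited local law rather than simply asserted.

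\emph{The paper's route.} The paper sidesteps this with a soft subsequence argument. Proposition~\ref{prop:split-evec-projection} holds for \emph{every} deterministic sequence of test vectors. So if the supremum did not tend to zero, one could pick near-maximizing vectors along a subsequence and extend them arbitrarily to a full deterministic sequence, contradicting the proposition.

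\emph{Comparison.} Both routes reach the same conclusion. The diagonal trick needs no quantitative bookkeeping and no re-examination of earlier lemmas. Your approach costs a pass back through two proofs and the uniformity of the underlying local law.
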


Proposition~\ref{prop:split-evec-projection} and Lemma~\ref{lem:samplewise-projector-approximation} therefore apply to each sample.
Write $\widehat\Pi_{a,i}=\Pi_{a,i}+\mathscr E_{a,i}$.
For fixed $i,j$,
\[
        \Tr(\widehat\Pi_{1,i}\widehat\Pi_{2,j})
        -
        \Tr(\Pi_{1,i}\Pi_{2,j})
        =
        \Tr(\mathscr E_{1,i}\Pi_{2,j})
        +
        \Tr(\Pi_{1,i}\mathscr E_{2,j})
        +
        \Tr(\mathscr E_{1,i}\mathscr E_{2,j}).
\]
The first two terms are $\smallop(1)$ by the deterministic-vector part of Lemma~\ref{lem:samplewise-projector-approximation}.
For the third term, condition on sample 1 and take a singular value decomposition
\[
        \mathscr E_{1,i}
        =
        \sum_{k=1}^{3}\sigma_k a_kb_k^*,
\]
where zero singular values are added if necessary.  Then
\[
        \Tr(\mathscr E_{1,i}\mathscr E_{2,j})
        =
        \sum_{k=1}^{3}\sigma_k b_k^*\mathscr E_{2,j}a_k.
\]
The vectors $a_k,b_k$ are independent of $X_2,P_2$.
Lemma~\ref{lem:samplewise-projector-approximation} shows that every bilinear
form in the sum is $\smallop(1)$.  The same lemma gives
$\max_k\sigma_k=\bigop(1)$, and hence
\[
        \Tr(\mathscr E_{1,i}\mathscr E_{2,j})=\smallop(1).
\]
Finally,
\[
        \Tr(\Pi_{1,i}\Pi_{2,j})
        =
        (u_{1,i}^\top u_{2,j})^2
        =R_{ij}^2.
\]
Taking real parts gives
\begin{equation}\label{eq:squared-overlap-consistency}
        \max_{i\in\intset{r_1}, j \in \intset{r_2}}
        \big|\widehat q_{ij}-R_{ij}^2\big|
        =
        \smallop(1).
\end{equation}
Let
\[
        \mathsf c(x)\deq\min\{1,\max\{0,x\}\}.
\]
Since $R_{ij}^2\in[0,1]$, projection onto $[0,1]$ and the inequality
$|\sqrt{x}-\sqrt{y}|\leq\sqrt{|x-y|}$ for $x,y\geq0$ give
\begin{align*}
        \big|\widehat r^{\,\mathrm{abs}}_{ij}-|R_{ij}|\big|
        \leq
        \sqrt{\big|\mathsf c(\widehat q_{ij})-R_{ij}^2\big|}
        \leq
        \sqrt{\big|\widehat q_{ij}-R_{ij}^2\big|}.
\end{align*}
This, together with \eqref{eq:squared-overlap-consistency}, gives
\begin{equation}
\label{eq:absolute-overlap-consistency}
        \max_{i\in\intset{r_1}, j \in \intset{r_2}}
        \left|
        \widehat r_{ij}^{\,\mathrm{abs}}
        -
        |R_{ij}|
        \right|
        =
        \smallop(1).
\end{equation}

Set
\[
        h_{a,i}
        \deq
        \mathfrak u_{a,i,+}
        =
        \frac1{\sqrt2}
        \begin{pmatrix}
        u_{a,i}\\
        u_{a,i}
        \end{pmatrix},
        \qquad
        J_1
        \deq
        \begin{pmatrix}
        I_n&0\\
        0&0
        \end{pmatrix},
        \qquad
        c_{a,i}
        \deq
        h_{a,i}^\top\widetilde r_{a,i,+}.
\]
Lemma~\ref{lem:samplewise-projector-approximation} gives
\[
        c_{a,i}
        \widetilde\ell_{a,i,+}^\top h_{a,i}
        =
        1+\smallop(1),
        \qquad
        \|\widetilde\ell_{a,i,+}\|_2
        =
        \|\widetilde{\mathcal P}_{a,i,+}\|_2
        =
        \bigop(1).
\]
The  bounds above and the fact that $r$ is fixed imply that the event
\[
        \mathcal C_n
        \deq
        \Big\{
        \min_{a,i}|c_{a,i}|>0
        \Big\}
\]
has probability tending to one, and $c_{a,i}^{-1}=\bigop(1)$ on $\mathcal C_n$.
On this event, for deterministic or independent $\bigop(1)$ test vectors $v$, by Lemma \ref{lem:samplewise-projector-approximation},
\[
        \smallop(1) = h_{a,i}^{\top} \big(\widetilde{\mathcal P}_{a,i,+} - h_{a,i}h_{a,i}^{\top} \big) v 
        = c_{a,i}\widetilde\ell_{a,i,+}^\top v - h_{a,i}^\top v,
\]
which gives
\[
        \widetilde\ell_{a,i,+}^\top v
        =
        c_{a,i}^{-1}h_{a,i}^\top v
        +
        \smallop(1).
\] 
Similarly, we have
\[
        v^\top\widetilde r_{a,i,+}
        =
        c_{a,i}v^\top h_{a,i}
        +
        \smallop(1).
\]
Applying these relations successively across the two independent samples yields
\[
        y_{1,i}^\top x_{2,j}
        =
        \widetilde\ell_{1,i,+}^\top
        J_1
        \widetilde r_{2,j,+}
        =
        \frac12
        c_{1,i}^{-1}c_{2,j}R_{ij}
        +
        \smallop(1).
\]
If $|R_{ij}|$ is bounded away from zero, we get
\begin{equation}
\label{eq:eqv-estimator-sign-motivation}
        \operatorname{sgn}(y_{1,i}^\top x_{2,j})
        =
        \operatorname{sgn}(c_{1,i})\operatorname{sgn}(c_{2,j})\operatorname{sgn}(R_{ij})
\end{equation}
with probability tending to one. 
Equation~\eqref{eq:absolute-overlap-consistency} provides the estimate
\[
        \widehat r^{\,\mathrm{abs}}_{ij}
        =|R_{ij}|+\smallop(1).
\]
Combining this magnitude with the sign in
\eqref{eq:eqv-estimator-sign-motivation} gives, for entries bounded away from
zero,
\begin{equation*}
        \widehat R^{\,\mathrm{eqv}}_{n,ij}
        \deq
        \operatorname{sgn}(y_{1,i}^\top x_{2,j})
        \widehat r^{\,\mathrm{abs}}_{ij}
        =
        \operatorname{sgn}(c_{1,i})\operatorname{sgn}(c_{2,j})R_{ij}+\smallop(1).
\end{equation*} 
If $R_{ij}$ tends to zero, no stable sign estimate is needed because $\widehat r^{\,\mathrm{abs}}_{ij}$ also tends to zero.
By convention, we set $\operatorname{sgn}(0)=1$.

Define $\Xi_{a,n}=\diag(\operatorname{sgn}(c_{a,1}),\ldots,\operatorname{sgn}(c_{a,r_a}))$.
For any fixed $\delta>0$, \eqref{eq:eqv-estimator-sign-motivation} holds simultaneously for all entries satisfying $|R_{ij}|\geq\delta$ with probability tending to one.  
For the remaining entries, \eqref{eq:absolute-overlap-consistency} bounds the error caused by an arbitrary sign by $2\delta+\smallop(1)$.  
Letting $\delta\downarrow0$ and using that $r$ is fixed yield
\[
        \left\|
        \widehat R_n^{\,\mathrm{eqv}}-\Xi_{1,n}R\Xi_{2,n}
        \right\|_{\max}
        =\smallop(1),
\]
which implies the first assertion of Theorem~\ref{thm:entrywise-overlap}.

It remains to prove the asserted consistency of the signal correlation estimator.  
By \eqref{eq:overlap-signed-estimator} and \eqref{eq:absolute-overlap-consistency},
\begin{align}
\label{eq:proof-squared-overlap-consistency}
        \max_{i\in\intset{r_1}, j \in \intset{r_2}}
        \left|
        (\widehat R^{\,\mathrm{eqv}}_{n,ij})^2-R_{ij}^2
        \right|
        &\leq
        2\max_{i\in\intset{r_1}, j \in \intset{r_2}}
        \left|
        \widehat r^{\,\mathrm{abs}}_{ij}-|R_{ij}|
        \right|
        =\smallop(1),
\end{align}
where we used that both $\widehat r^{\,\mathrm{abs}}_{ij}$ and $|R_{ij}|$ belong to $[0,1]$.  Applying Theorem~\ref{thm:spike-estimation-consistency} to $Y_1$ and $Y_2$ gives
\begin{equation}
\label{eq:proof-two-sample-spike-consistency}
        \max_{a\in\{1,2\}}\max_{i\in\intset{r}}
        |\widehat d_{ai}-d_{ai}|=\smallop(1).
\end{equation}
Set
\[
        \widehat N_n
        \deq
        \sum_{i\in\intset{r_1}}\sum_{j\in\intset{r_2}}
        \widehat d_{1i}\widehat d_{2j}
        (\widehat R^{\,\mathrm{eqv}}_{n,ij})^2,
        \qquad
        N_n
        \deq
        \sum_{i\in\intset{r_1}}\sum_{j\in\intset{r_2}} d_{1i}d_{2j}R_{ij}^2,
\]
and
\[
        \widehat A_{a,n}
        \deq
        \left(\sum_{i\in\intset{r_a}}\widehat d_{ai}^2\right)^{1/2},
        \qquad
        A_a
        \deq
        \left(\sum_{i\in\intset{r_a}} d_{ai}^2\right)^{1/2}.
\]
The estimates \eqref{eq:proof-squared-overlap-consistency} and \eqref{eq:proof-two-sample-spike-consistency} imply
\begin{equation*}
        |\widehat N_n-N_n|=\smallop(1),
        \qquad
        |\widehat A_{a,n}-A_a|
        \leq
        \left(\sum_{i\in\intset{r_a}}(\widehat d_{ai}-d_{ai})^2\right)^{1/2}
        =\smallop(1).
\end{equation*}
In particular, $A_1A_2>0$ and
$\widehat A_{1,n}\widehat A_{2,n}$ is bounded away from zero with
probability tending to one.  Moreover, $|N_n|=\bigo(1)$.  Therefore,
using \eqref{eq:wigner-overlap-signal-correlation-estimator} and the
definition of $\rho_{\mathrm{sig}}$,
\begin{align*}
        |\widehat\rho_{\mathrm{sig}}-\rho_{\mathrm{sig}}|
        &\leq
        \frac{|\widehat N_n-N_n|}
        {\widehat A_{1,n}\widehat A_{2,n}}
        +|N_n|
        \left|
        \frac{1}{\widehat A_{1,n}\widehat A_{2,n}}
        -\frac{1}{A_1A_2}
        \right|
        =\smallop(1),
\end{align*}
which proves the remaining assertion of Theorem~\ref{thm:entrywise-overlap}.

\section*{Acknowledgement}  Z. Bao would like to thank Yizhe Zhu for helpful discussions and references. Z. Bao, Y. Li and J. Qiu are partially supported by Hong Kong RGC Grant GRF  17304225 and 17303826. K. Cheong is partially supported by Hong Kong PhD Fellowship Scheme (HKPFS).

\section*{AI usage declaration} The project started in January 2026 and was largely completed in June 2026. All statistical methodology and mathematical derivations are the authors’ own. Nevertheless, since June 2026, we have used ChatGPT 5.6 to generate more simulation code, polish the writing, and check the proofs.

\newpage

\appendix

\vspace*{2em}
\begin{center}
\LARGE \bfseries Supplementary Material for ``Spike Estimation from Heteroscedastic Noise via Random Splitting''
\end{center}
\vspace{1.5em}

% \providecolor{lightblue}{RGB}{173,216,230}

\section{Additional Simulation Results}

This section provides additional simulation results for
Propositions~\ref{prop:outlier-location} and~\ref{prop:rect-phase}, and
Algorithms~\ref{alg:rect-spike} and~\ref{alg:rect-correlation} in the main
text.

\subsection{Simulations for Propositions~\ref{prop:outlier-location}
and~\ref{prop:rect-phase}}

We compare the Wigner and rectangular models through a common normalized-spike
design.  Every panel uses $n=1000$, standard Gaussian noise, an independent
$\mathrm{Bernoulli}(1/2)$ splitting mask, and one realization; hence no
Monte Carlo averaging is used.  In the rectangular model, we set
$p=3n/5=600$.

For $J_{a,b}\deq\one_a\one_b^\top$ and $\kappa\in\{1,4\}$, define
\[
        T_{a,n}^{(\kappa)}
        =
        \frac1n
        \begin{pmatrix}
        \kappa J_{a/2,n/2}&J_{a/2,n/2}\\
        J_{a/2,n/2}&J_{a/2,n/2}
        \end{pmatrix}.
\]
Thus $T^{(\kappa)}\deq T_{n,n}^{(\kappa)}$ is the Wigner profile, whereas
$T_{\mathrm{rec}}^{(\kappa)}\deq T_{p,n}^{(\kappa)}$ is its rectangular
analogue.  The choice $\kappa=1$ is homogeneous, while $\kappa=4$ increases
the variance in the upper left block.
For a selected profile $T=(t_{ij})$, the noise entries have the form
$X_{ij}=\sqrt{t_{ij}}\,\xi_{ij}$ with standard Gaussian $\xi_{ij}$.  In the
Wigner model, the upper triangular entries of $X$ and of the mask are generated
independently and then symmetrized.  In the rectangular model, all entries of
$X$ and of the $p\times n$ mask are generated independently.

For each even $a\in\{p,n\}$, put $m_a=a/2$ and, for
$i\in\intset{m_a}$, define
\[
        h_1^{(a)}(i)=\frac1{\sqrt{m_a}},
        \qquad
        h_{k+1}^{(a)}(i)
        =
        \sqrt{\frac2{m_a}}
        \cos\!\left\{\frac{\pi k(i-1/2)}{m_a}\right\},
        \qquad k\in\{1,2\}.
\]
The block-oriented directions are
\[
        g_k^{(a)}
        =
        \left(
        \sqrt{0.9}\,(h_k^{(a)})^\top,
        \sqrt{0.1}\,(h_k^{(a)})^\top
        \right)^\top,
        \qquad k\in\{1,2,3\}.
\]
For each $a$, these vectors are orthonormal, and every vector has squared mass
$0.9$ in its first half and $0.1$ in its second half.  In the Wigner model,
we take $U=(g_1^{(n)},\ldots,g_r^{(n)})$ and $S=UDU^\top$.  In the
rectangular model, we take
$U=(g_1^{(p)},\ldots,g_r^{(p)})$,
$V=(g_1^{(n)},\ldots,g_r^{(n)})$, and $S=UDV^\top$.

For both $\kappa=1$ and $\kappa=4$, the rank one experiment uses
$\gamma_1\in\{0.80,1.20,1.60\}$ and is shown for
the Wigner and rectangular models in Figures~\ref{fig:supp-phase-transition}
and~\ref{fig:supp-phase-transition-rectangular}, respectively.  The rank-three
experiment appears in Figures~\ref{fig:supp-distinct-repeated-spikes} and
\ref{fig:supp-distinct-repeated-spikes-rectangular}.  In both models, the
distinct-spike configuration is
$\boldsymbol\gamma=(2.00,1.60,1.30)$, while the repeated-spike configuration is
$\boldsymbol\gamma=(2.00,1.50,1.50)$, with $d_2=d_3$.

\begin{figure}[htbp]
\centering

% First subfigure setup
\begin{subfigure}[b]{\textwidth}
\centering
\includegraphics[width=\textwidth]{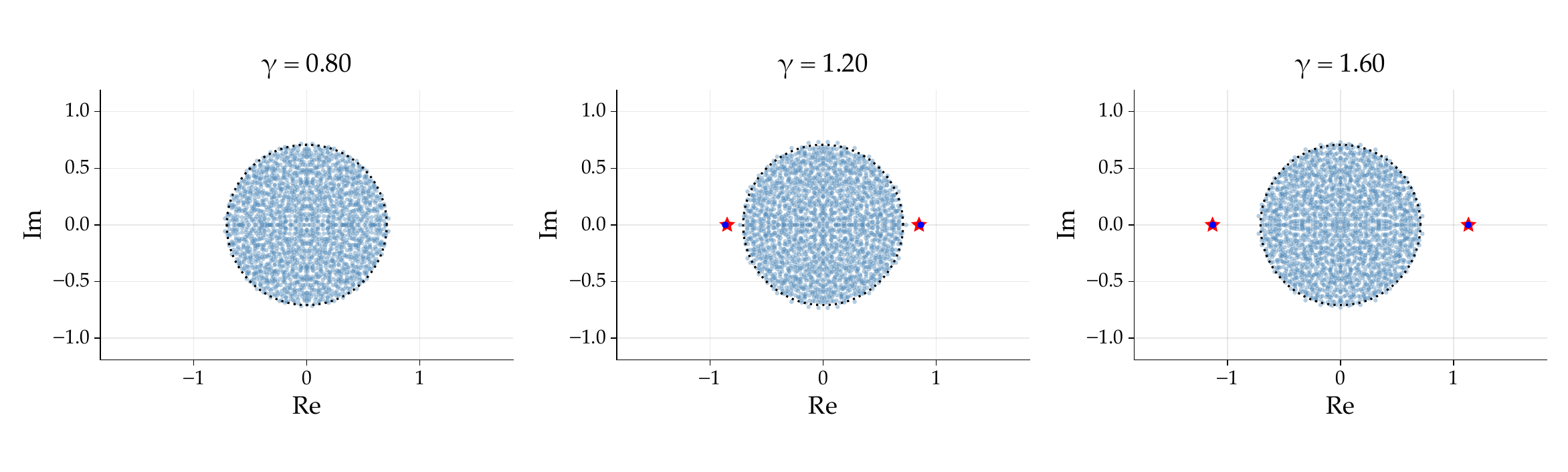}
\caption{Homogeneous variance profile}
\end{subfigure}
\vfill
% Second subfigure setup
\begin{subfigure}[b]{\textwidth}
\centering
\includegraphics[width=\textwidth]{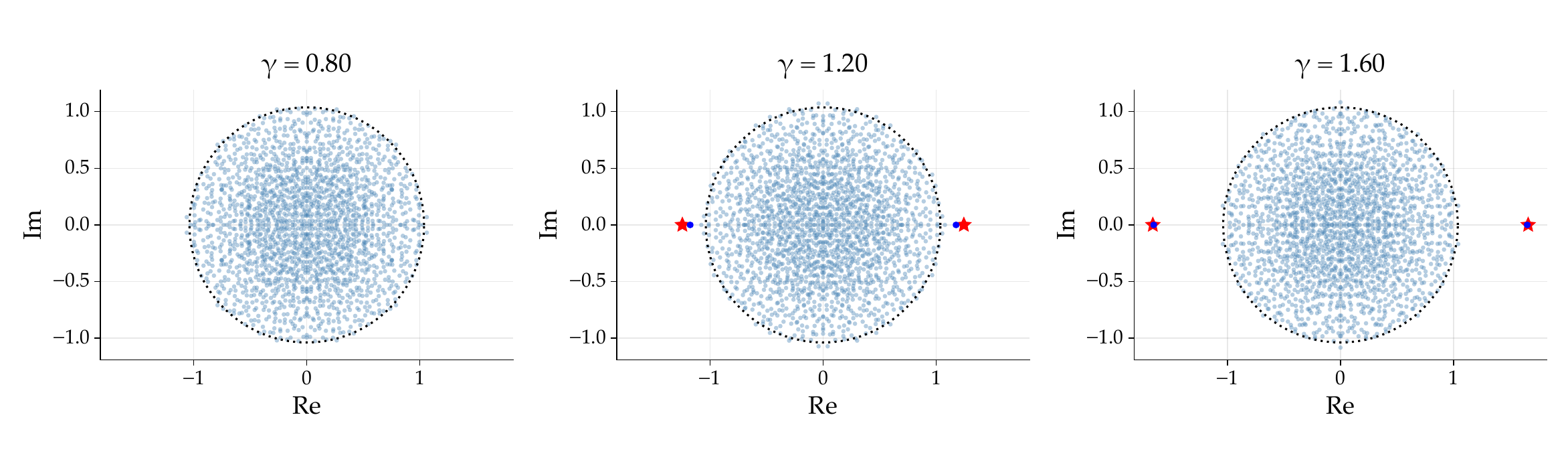}
\caption{Heterogeneous variance profile}
\end{subfigure}

\caption{Single-spike phase transition under the homogeneous variance
        profile $T^{(1)}$ (top row) and the heterogeneous variance profile
        $T^{(4)}$ (bottom row).  From left to right,
        $\gamma_1\in\{0.80,1.20,1.60\}$.  The light-blue points ({\color{blue!40!white}\textbullet}) show the
        spectrum of $\mathcal Y$, the dotted black circle marks $|z|=b_{\star,n}$, the
        red stars ({\color{red}$\bigstar$}) mark the theoretical locations $\pm d_1/2$ in the
        supercritical panels, and the blue points ({\color{blue}\textbullet}) mark the matched outliers.}
        \label{fig:supp-phase-transition}
\end{figure}

\begin{figure}[htbp]
\centering

% First subfigure setup
\begin{subfigure}[b]{0.49\textwidth}
\centering
\includegraphics[width=\textwidth]{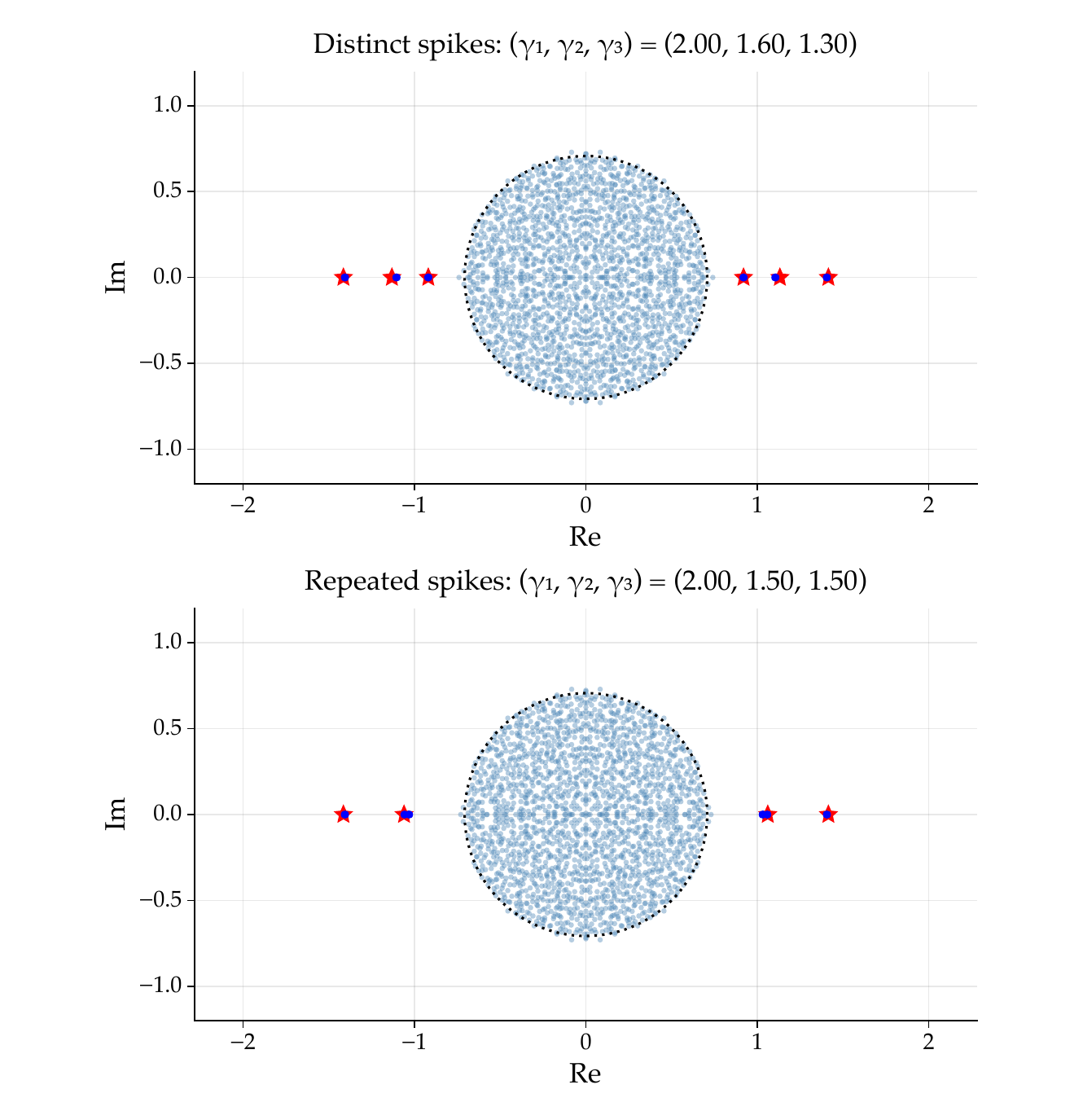}
\caption{Homogeneous variance profile}
\end{subfigure}
\hfill
% Second subfigure setup
\begin{subfigure}[b]{0.49\textwidth}
\centering
\includegraphics[width=\textwidth]{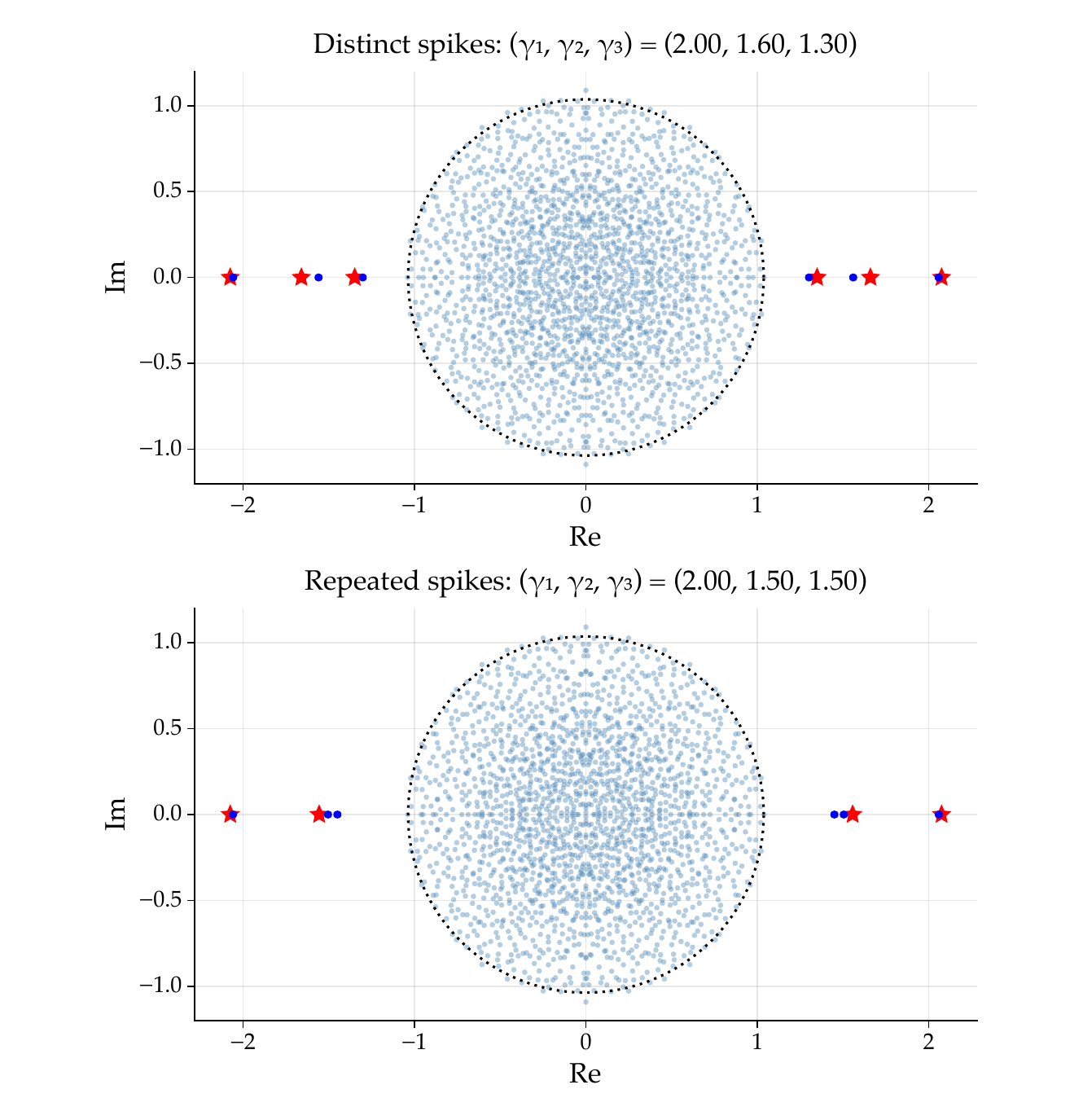}
\caption{Heterogeneous variance profile}
\end{subfigure}
\caption{Outlier locations for distinct and repeated rank-three spikes.
        The top row uses $\boldsymbol\gamma=(2.00,1.60,1.30)$, and the bottom
        row uses $\boldsymbol\gamma=(2.00,1.50,1.50)$.  The left and right
        columns correspond to $T^{(1)}$ and $T^{(4)}$, respectively.
        The plotting symbols have the same meanings as in
        Figure~\ref{fig:supp-phase-transition}.}
\label{fig:supp-distinct-repeated-spikes}
\end{figure}

\begin{figure}[htbp]
\centering

\begin{subfigure}[b]{\textwidth}
\centering
\includegraphics[width=\textwidth]{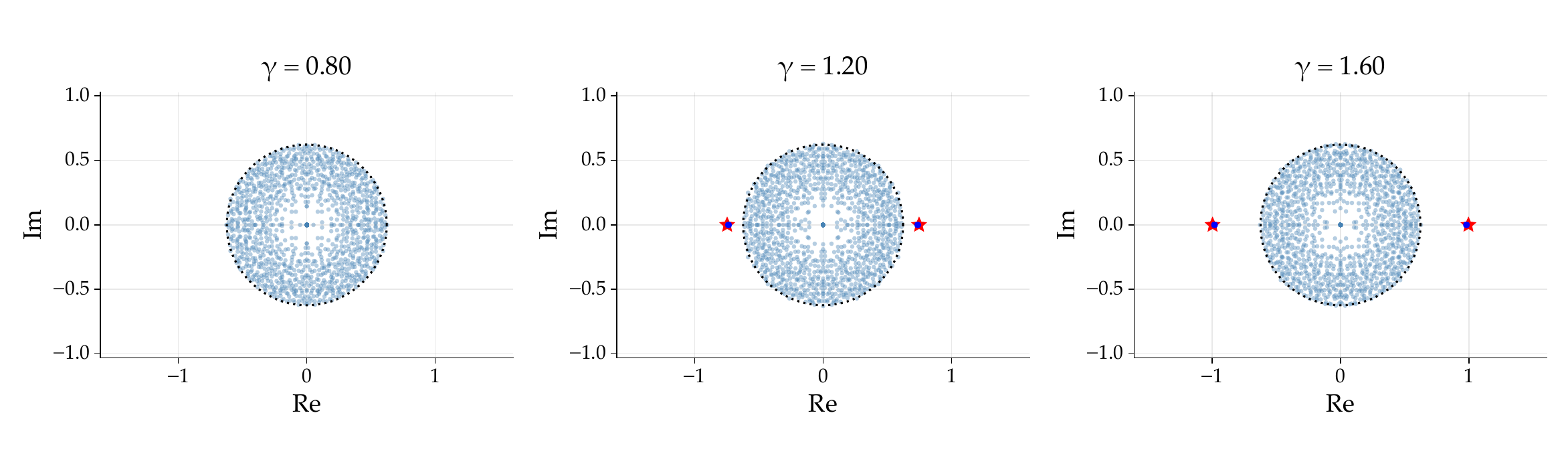}
\caption{Homogeneous rectangular variance profile}
\end{subfigure}
\vfill
\begin{subfigure}[b]{\textwidth}
\centering
\includegraphics[width=\textwidth]{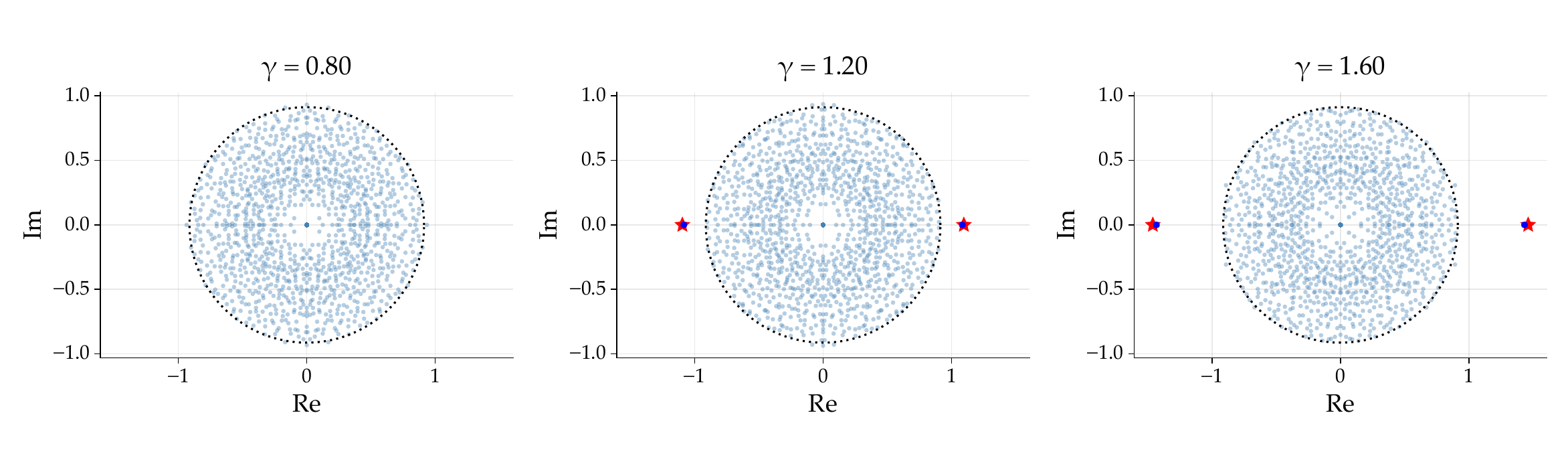}
\caption{Heterogeneous rectangular variance profile}
\end{subfigure}

\caption{Single-spike phase transition in the rectangular model with
        $n=1000$ and $p=600$, under $T_{\mathrm{rec}}^{(1)}$ (top) and
        $T_{\mathrm{rec}}^{(4)}$ (bottom).  From left to right,
        $\gamma_1\in\{0.80,1.20,1.60\}$.  The plotting symbols have the same
        meanings as in Figure~\ref{fig:supp-phase-transition}.}
\label{fig:supp-phase-transition-rectangular}
\end{figure}

\begin{figure}[htbp]
\centering

\begin{subfigure}[b]{0.49\textwidth}
\centering
\includegraphics[width=\textwidth]{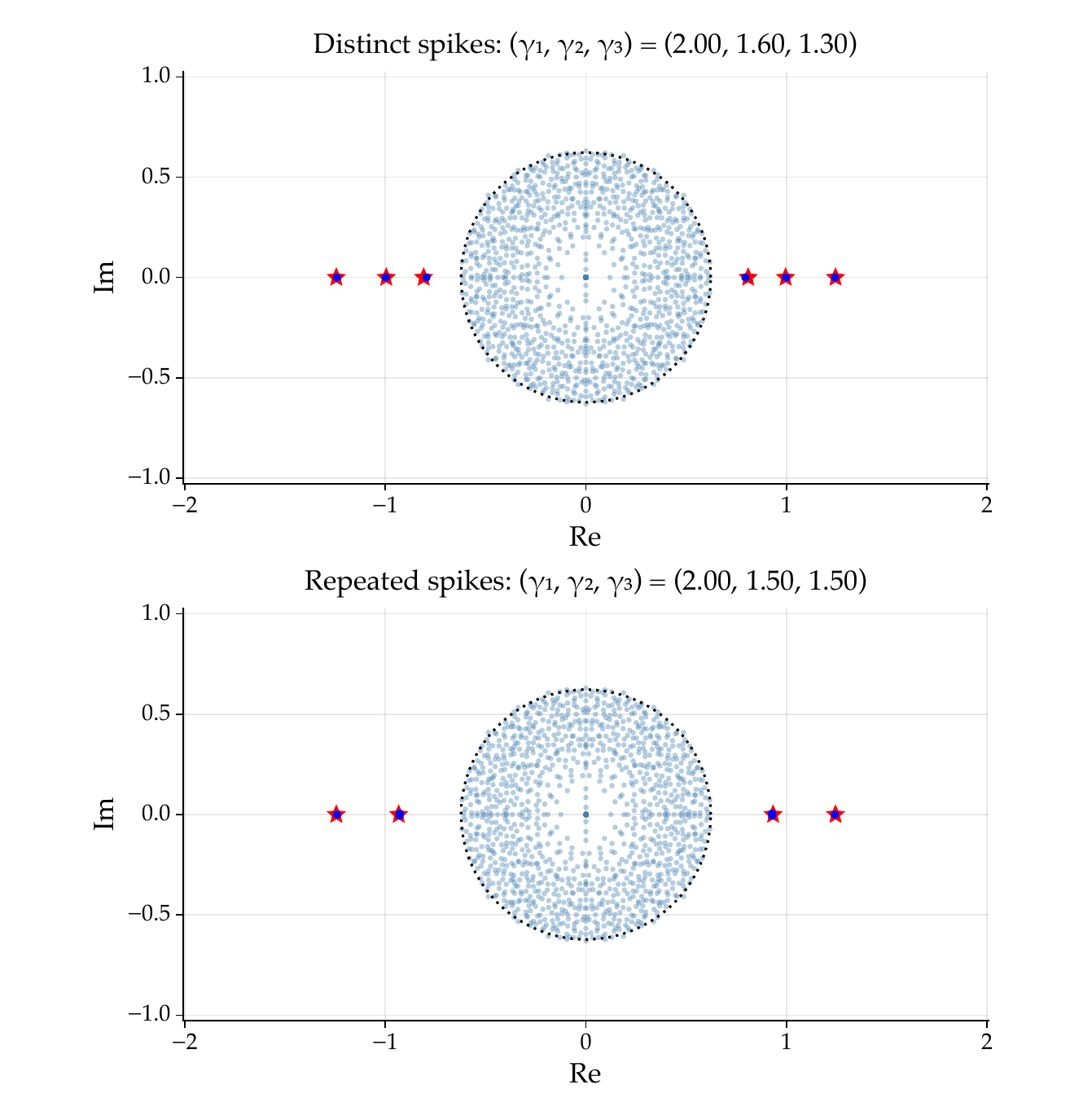}
\caption{Homogeneous rectangular variance profile}
\end{subfigure}
\hfill
\begin{subfigure}[b]{0.49\textwidth}
\centering
\includegraphics[width=\textwidth]{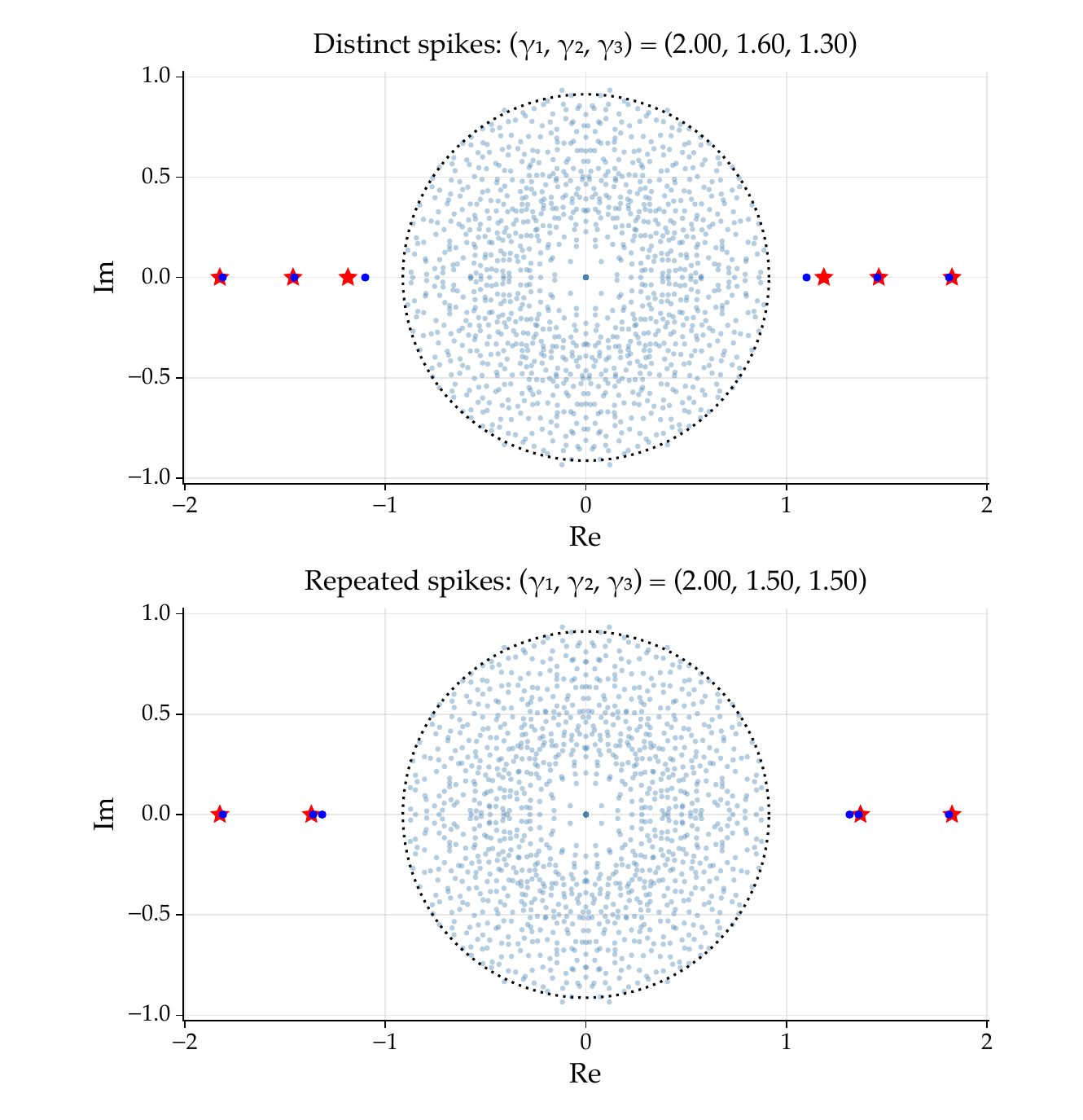}
\caption{Heterogeneous rectangular variance profile}
\end{subfigure}
\caption{Rectangular outlier locations for distinct and repeated rank-three
        spikes with $n=1000$ and $p=600$.  The top row uses
        $\boldsymbol\gamma=(2.00,1.60,1.30)$, and the bottom row uses
        $\boldsymbol\gamma=(2.00,1.50,1.50)$.  The left and right panels
        correspond to $T_{\mathrm{rec}}^{(1)}$ and
        $T_{\mathrm{rec}}^{(4)}$, respectively.  
        The plotting symbols have the same meanings as in
        Figure~\ref{fig:supp-phase-transition}. }
\label{fig:supp-distinct-repeated-spikes-rectangular}
\end{figure}

\subsection{Simulation for Algorithms~\ref{alg:rect-spike} and
\ref{alg:rect-correlation}}
\paragraph*{Common setup}
We use a rectangular analogue of the design in
Section~\ref{sec:numerical-simulations}.  For each configuration, take
$n\in\{250,500,750,1000\}$, $p=3n/5$, and $B=500$ independent data
replications.  For $J_{a,b}\deq\one_a\one_b^\top$ and
$\kappa\in\{1,4\}$, define
\[
        T_{\mathrm{rec}}^{(\kappa)}
        =\frac1n
        \begin{pmatrix}
        \kappa J_{p/2,n/2}&J_{p/2,n/2}\\
        J_{p/2,n/2}&J_{p/2,n/2}
        \end{pmatrix}.
\]
Thus $T_{\mathrm{rec}}^{(1)}$ is homogeneous, whereas
$T_{\mathrm{rec}}^{(4)}$ increases the variance in the upper-left block.
For a selected profile $T=(t_{ij})$, set
\[
        \mathbb T=\frac12
        \begin{pmatrix}0&T\\T^\top&0\end{pmatrix},
        \qquad
        b_{\star,n}=\sqrt{\|\mathbb T\|_2}
        =\sqrt{\frac{\|T\|_2}{2}},
\]
and independently generate
$X_{ij}=\sqrt{t_{ij}}\,\xi_{ij}$ for
$i\in\intset{p}$ and $j\in\intset{n}$.  As in Section~\ref{sec:numerical-simulations},
the primary and robustness experiments use independent standard Gaussian and
Rademacher variables $\xi_{ij}$, respectively.  Every auxiliary mask has
i.i.d. $\mathrm{Bernoulli}(1/2)$ entries, is independent of the observations,
and is redrawn across replications and samples.

For each even $a\in\{p,n\}$, put $m_a=a/2$ and, for
$i\in\intset{m_a}$, define
\[
        h_1^{(a)}(i)=\frac1{\sqrt{m_a}},
        \qquad
        h_{k+1}^{(a)}(i)
        =\sqrt{\frac2{m_a}}
        \cos\!\left\{\frac{\pi k(i-1/2)}{m_a}\right\},
\]
for the required values of $k$.  The block-oriented directions are
\[
        g_k^{(a)}
        =
        \left(
        \sqrt{0.9}\,(h_k^{(a)})^\top,
        \sqrt{0.1}\,(h_k^{(a)})^\top
        \right)^\top.
\]
For each $a$, the vectors $g_k^{(a)}$ are orthonormal, satisfy
Assumption~\ref{asm:rect-deloc}, and have squared mass $0.9$ in the first
half and $0.1$ in the second half.

\subsubsection*{Simulation for Algorithm~\ref{alg:rect-spike}}
Set $r=3$,
\[
        U=(g_1^{(p)},g_2^{(p)},g_3^{(p)}),
        \qquad
        V=(g_1^{(n)},g_2^{(n)},g_3^{(n)}),
\]
and consider
$\boldsymbol\gamma=(2.00,1.60,1.30)$ and
$\boldsymbol\gamma=(2.00,1.50,1.50)$.  For each variance profile, set
$d_k=2b_{\star,n}\gamma_k$, let $D=\diag(d_1,d_2,d_3)$, generate
$Y=X+UDV^\top$, and apply Algorithm~\ref{alg:rect-spike}.

Table~\ref{tab:algorithm3-results} reports the mean absolute errors and
empirical standard deviations for Algorithm~\ref{alg:rect-spike}; each entry
has the form
$\widehat{\mathsf{MAE}}_k\;(\widehat{\mathsf{SD}}_k)$.
For every fixed spike configuration, variance profile, noise law, and
component $k$, both quantities decrease monotonically over the four values of
$n$. 
The Gaussian and Rademacher results are close.

We also compare Algorithm~\ref{alg:rect-spike} with two other rectangular spike estimators.
For readers' convenience, we briefly summarize the comparison methods below. 

\begin{itemize}
        \item \citet{ShabalinNobel2013}'s \texttt{SN13} estimator: They assume homogeneous Gaussian noise and estimates the noise variance by
        \[
                \widehat\sigma_{\mathrm R}^{\,2}
                \deq \frac{\|Y\|_{\mathrm F}^2}{p}.
        \]
        Let $s_1\geq\cdots\geq s_p$ be the singular values of $Y$, and set
        \[
                A_k\deq
                s_k^2-\widehat\sigma_{\mathrm R}^{\,2}(1+\phi_n),
                \qquad
                \phi_n \deq p/n. 
        \]
        The proposed estimator is
        \[
        \widehat d_{k,\mathtt{SN13}}
        =
        \begin{cases}
        \displaystyle
        \left\{
        \frac{A_k+
        \sqrt{A_k^2-4\phi_n\widehat\sigma_{\mathrm R}^{\,4}}}{2}
        \right\}^{1/2},
        & s_k>\widehat\sigma_{\mathrm R}(1+\sqrt{\phi_n}),\\[9pt]
        0,
        & s_k\leq\widehat\sigma_{\mathrm R}(1+\sqrt{\phi_n}).
        \end{cases}
        \]
        \item \citet{Nadakuditi2014OptShrink}'s \texttt{OptShrink} estimator:
        For $z>s_{r+1}$, form the empirical left and companion transforms
        \[
        \widehat a_r(z)
        \deq\frac{1}{p-r}\sum_{j=r+1}^p\frac{z}{z^2-s_j^2},
        \qquad
        \widehat b_r(z)
        \deq\frac{1}{n-r}
        \left\{
        \sum_{j=r+1}^p\frac{z}{z^2-s_j^2}+\frac{n-p}{z}
        \right\},
        \]
        and set
        \[
                \widehat D_r(z)\deq\widehat a_r(z)\widehat b_r(z),
                \qquad
                \widehat d_{k,\mathtt{OptShrink}}
                \deq \widehat D_r(s_k)^{-1/2}.
        \]
        We assign zero if $\widehat D_r(s_k)$ is nonpositive or nonfinite. 

\end{itemize}

The \texttt{SN13} estimator is correctly specified for $T_{\mathrm{rec}}^{(1)}$.  
The \texttt{OptShrink} estimator requires bi-orthogonally invariant noise matrix, that is, $X \overset{\mathtt{d}}{=}UXV^{\top}$ for any orthogonal $U$ and $V$.

For numerical simulation, we set $r=1$ and $d_1=2b_{\star,n}\gamma$ with $\gamma=1.50$.  
In addition to $T_{\mathrm{rec}}^{(1)}$ and $T_{\mathrm{rec}}^{(4)}$, include $T_{\mathrm{rec}}^{(8)}$, defined by the same block-profile formula with $\kappa=8$, under both noise laws at $(p,n)=(600,1000)$.
Figure~\ref{fig:algorithm3-comparison} shows the violin plots of the normalized spike estimates $\widehat d_1/d_1$ to compare Algorithm~\ref{alg:rect-spike} with \texttt{SN13} and \texttt{OptShrink}. 
At $(p,n)=(600,1000)$, all estimators have similar distributions under $T_{\mathrm{rec}}^{(1)}$, whereas Algorithm~\ref{alg:rect-spike} is much closer to $1$ under $T_{\mathrm{rec}}^{(4)}$ and $T_{\mathrm{rec}}^{(8)}$ for both noise laws.

\subsubsection*{Simulation for Algorithm~\ref{alg:rect-correlation}}
Set $r=2$ and
\[
\begin{aligned}
        U_1&=(g_1^{(p)},g_2^{(p)}),
        &U_2&=U_1R_U(\omega)+(g_3^{(p)},g_4^{(p)})
        (I_2-R_U(\omega)^\top R_U(\omega))^{1/2},\\
        V_1&=(g_1^{(n)},g_2^{(n)}),
        &V_2&=V_1R_V(\omega)+(g_3^{(n)},g_4^{(n)})
        (I_2-R_V(\omega)^\top R_V(\omega))^{1/2}.
\end{aligned}
\]
To vary the overlap strength while retaining nonzero off-diagonal entries,
take
\[
        A_U=\begin{pmatrix}0.8&0.3\\-0.2&0.6\end{pmatrix},
        \qquad
        A_V=\begin{pmatrix}0.7&-0.1\\0.4&0.6\end{pmatrix},
        \qquad
        R_U(\omega)=\omega\frac{A_U}{\|A_U\|_2},
        \qquad
        R_V(\omega)=\omega\frac{A_V}{\|A_V\|_2},
\]
with $\omega\in\{0.25,0.60,0.90\}$.  Thus
$U_1^\top U_2=R_U(\omega)$, $V_1^\top V_2=R_V(\omega)$, and both overlap
matrices have operator norm $\omega<1$.

Use the variance-profile pairs
\[
        (T_1,T_2)
        \quad \in \quad \big\{
        (T_{\mathrm{rec}}^{(1)},T_{\mathrm{rec}}^{(1)}),
        (T_{\mathrm{rec}}^{(1)},T_{\mathrm{rec}}^{(4)}),
        (T_{\mathrm{rec}}^{(4)},T_{\mathrm{rec}}^{(4)})
        \big\}.
\]
For sample $a\in\{1,2\}$, let
$b_{\star,a,n}=\sqrt{\|T_a\|_2/2}$, use the distinct normalized singular
strengths
\[
        \boldsymbol\gamma_1=(2.00,1.60),
        \qquad
        \boldsymbol\gamma_2=(2.20,1.70),
        \qquad
        d_{ai}=2b_{\star,a,n}\gamma_{ai},
\]
	generate $X_1$ and $X_2$ independently, and form
	$Y_a=X_a+U_aD_aV_a^\top$ with $D_a=\diag(d_{a1},d_{a2})$.

Apply Algorithm~\ref{alg:rect-correlation} to each replication and record
\[
\begin{split}
        \mathsf E_{\mathrm{eqv}}^{(b)}
        &\deq\min_{\Xi_1,\Xi_2\in\mathscr S_2}
        \max\left\{
        \|\widehat R^{U,\mathrm{eqv},(b)}-\Xi_1R_U(\omega)\Xi_2\|_{\max},
        \|\widehat R^{V,\mathrm{eqv},(b)}-\Xi_1R_V(\omega)\Xi_2\|_{\max}
        \right\},\\
        \mathsf E_{\mathrm{sig}}^{(b)}
        &\deq\abs{\widehat\rho_{\mathrm{sig}}^{(b)}-\rho_{\mathrm{sig}}}.
\end{split}
\]
We report $\widehat{\mathsf E}_{\mathrm{eqv}}
=\frac1B\sum_{b=1}^B\mathsf E_{\mathrm{eqv}}^{(b)}$ and
$\widehat{\mathsf E}_{\mathrm{sig}}=\frac1B\sum_{b=1}^B\mathsf E_{\mathrm{sig}}^{(b)}$ in Tables~\ref{tab:algorithm4-joint-orbit-results} and \ref{tab:algorithm4-signal-correlation-results}, respectively.  For each fixed variance-profile pair, noise law, and overlap strength $\omega$, both quantities decrease monotonically over the four values of $n$.  The Gaussian and Rademacher results are close.

\begin{table}[htbp]
\centering
\footnotesize
\setlength{\tabcolsep}{2.5pt}
\begin{tabular}{@{}lccc@{\hspace{8pt}}ccc@{}}
\toprule
&\multicolumn{3}{c}{Homogeneous $T_{\mathrm{rec}}^{(1)}$}
 & \multicolumn{3}{c}{Heterogeneous $T_{\mathrm{rec}}^{(4)}$}\\
\cmidrule(lr){2-4}\cmidrule(lr){5-7}
$n$ & $k=1$ & $k=2$ & $k=3$
 & $k=1$ & $k=2$ & $k=3$\\
\midrule
& \multicolumn{6}{l}{$\boldsymbol\gamma=(2.00,1.60,1.30)$}\\
\cmidrule{2-7}
& \multicolumn{6}{l}{Gaussian}\\
250 & $0.0735\;(0.0722)$ & $0.0938\;(0.0951)$ & $0.1244\;(0.1248)$
 & $0.1276\;(0.1407)$ & $0.1610\;(0.1702)$ & $0.1962\;(0.2041)$\\
500 & $0.0487\;(0.0546)$ & $0.0593\;(0.0649)$ & $0.0722\;(0.0876)$
 & $0.0836\;(0.0990)$ & $0.1049\;(0.1154)$ & $0.1255\;(0.1530)$\\
750 & $0.0386\;(0.0453)$ & $0.0439\;(0.0481)$ & $0.0561\;(0.0667)$
 & $0.0679\;(0.0802)$ & $0.0797\;(0.0918)$ & $0.1010\;(0.1223)$\\
1000 & $0.0311\;(0.0381)$ & $0.0374\;(0.0421)$ & $0.0447\;(0.0556)$
 & $0.0548\;(0.0663)$ & $0.0676\;(0.0789)$ & $0.0871\;(0.1076)$\\
& \multicolumn{6}{l}{Rademacher}\\
250 & $0.0711\;(0.0744)$ & $0.0943\;(0.0961)$ & $0.1215\;(0.1240)$
 & $0.1295\;(0.1428)$ & $0.1589\;(0.1750)$ & $0.2143\;(0.2164)$\\
500 & $0.0479\;(0.0562)$ & $0.0557\;(0.0619)$ & $0.0722\;(0.0786)$
 & $0.0843\;(0.0987)$ & $0.1044\;(0.1193)$ & $0.1268\;(0.1484)$\\
750 & $0.0371\;(0.0425)$ & $0.0447\;(0.0493)$ & $0.0598\;(0.0693)$
 & $0.0681\;(0.0789)$ & $0.0808\;(0.0950)$ & $0.1049\;(0.1228)$\\
1000 & $0.0315\;(0.0384)$ & $0.0367\;(0.0424)$ & $0.0447\;(0.0520)$
 & $0.0584\;(0.0722)$ & $0.0674\;(0.0799)$ & $0.0824\;(0.1025)$\\
\midrule
& \multicolumn{6}{l}{$\boldsymbol\gamma=(2.00,1.50,1.50)$}\\
\cmidrule{2-7}
& \multicolumn{6}{l}{Gaussian}\\
250 & $0.0710\;(0.0745)$ & $0.0704\;(0.0851)$ & $0.1441\;(0.1018)$
 & $0.1368\;(0.1562)$ & $0.1187\;(0.1450)$ & $0.2465\;(0.1823)$\\
500 & $0.0490\;(0.0539)$ & $0.0438\;(0.0543)$ & $0.0820\;(0.0587)$
 & $0.0846\;(0.0984)$ & $0.0805\;(0.0998)$ & $0.1384\;(0.1097)$\\
750 & $0.0379\;(0.0449)$ & $0.0357\;(0.0445)$ & $0.0637\;(0.0483)$
 & $0.0693\;(0.0826)$ & $0.0661\;(0.0817)$ & $0.0981\;(0.0892)$\\
1000 & $0.0313\;(0.0371)$ & $0.0299\;(0.0374)$ & $0.0518\;(0.0393)$
 & $0.0586\;(0.0712)$ & $0.0605\;(0.0733)$ & $0.0883\;(0.0738)$\\
& \multicolumn{6}{l}{Rademacher}\\
250 & $0.0719\;(0.0739)$ & $0.0652\;(0.0793)$ & $0.1424\;(0.0982)$
 & $0.1292\;(0.1447)$ & $0.1219\;(0.1508)$ & $0.2454\;(0.1765)$\\
500 & $0.0507\;(0.0549)$ & $0.0446\;(0.0548)$ & $0.0840\;(0.0602)$
 & $0.0859\;(0.0974)$ & $0.0774\;(0.0982)$ & $0.1516\;(0.1150)$\\
750 & $0.0358\;(0.0420)$ & $0.0353\;(0.0440)$ & $0.0581\;(0.0456)$
 & $0.0667\;(0.0808)$ & $0.0655\;(0.0795)$ & $0.1049\;(0.0866)$\\
1000 & $0.0318\;(0.0362)$ & $0.0302\;(0.0379)$ & $0.0501\;(0.0403)$
 & $0.0571\;(0.0704)$ & $0.0566\;(0.0668)$ & $0.0921\;(0.0760)$\\
\bottomrule
\end{tabular}
\caption{Mean absolute errors and empirical standard deviations, shown in
parentheses, for Algorithm~\ref{alg:rect-spike} under the homogeneous and
heterogeneous rectangular variance profiles, for two normalized singular
spike configurations, based on $B=500$ replications.}
\label{tab:algorithm3-results}
\end{table}

\begin{figure}[htbp]
\centering
\includegraphics[width=\textwidth]{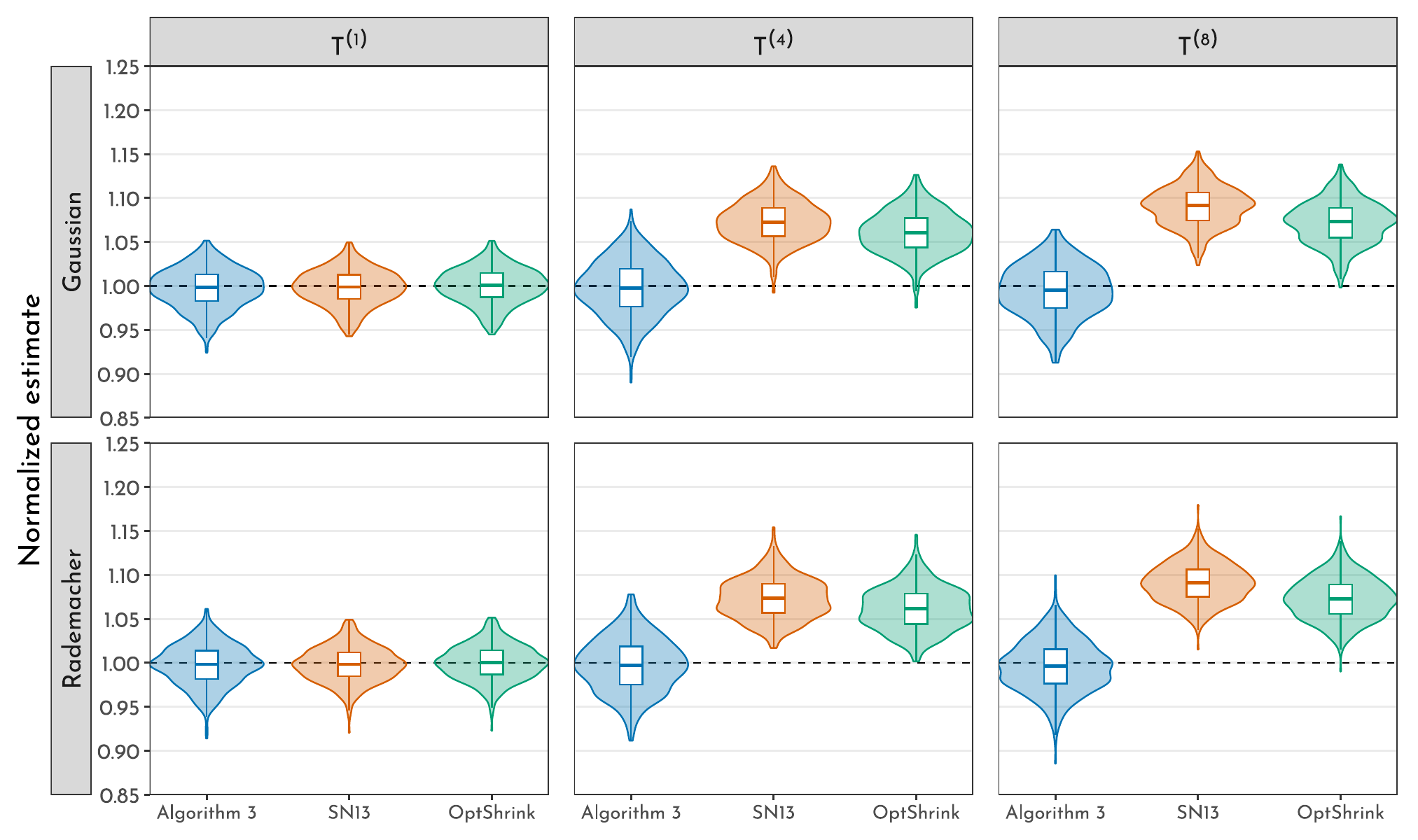}
\caption{Violin plots of the normalized spike estimates
$\widehat d_1/d_1$ at $(p,n)=(600,1000)$ for
Algorithm~\ref{alg:rect-spike}, \texttt{SN13}, and \texttt{OptShrink} under
$T_{\mathrm{rec}}^{(1)}$,
$T_{\mathrm{rec}}^{(4)}$, and $T_{\mathrm{rec}}^{(8)}$, with Gaussian
noise (top) and Rademacher noise (bottom).  Each violin is based on $B=500$
replications; the embedded boxplot shows the median and interquartile range,
and the dashed horizontal line marks the target value $1$.  The normalized
singular spike strength is $\gamma=1.50$.}
\label{fig:algorithm3-comparison}
\end{figure}

\begin{table}[htbp]
\centering
\footnotesize
\setlength{\tabcolsep}{2.5pt}
\begin{tabular}{@{}l*{9}{c}@{}}
\toprule
\multicolumn{1}{c}{}
 & \multicolumn{3}{c}{$\big(T_{\mathrm{rec}}^{(1)},T_{\mathrm{rec}}^{(1)}\big)$}
 & \multicolumn{3}{c}{$\big(T_{\mathrm{rec}}^{(1)},T_{\mathrm{rec}}^{(4)}\big)$}
 & \multicolumn{3}{c}{$\big(T_{\mathrm{rec}}^{(4)},T_{\mathrm{rec}}^{(4)}\big)$}\\
\cmidrule(lr){2-4}\cmidrule(lr){5-7}\cmidrule(lr){8-10}
$n$
 & $\omega=0.25$ & $\omega=0.60$ & $\omega=0.90$
 & $\omega=0.25$ & $\omega=0.60$ & $\omega=0.90$
 & $\omega=0.25$ & $\omega=0.60$ & $\omega=0.90$\\
\midrule
& \multicolumn{9}{l}{Gaussian}\\
250 & $0.1299$ & $0.1522$ & $0.1788$ & $0.1386$ & $0.1689$ & $0.1961$ & $0.1562$ & $0.1918$ & $0.2248$\\
500 & $0.0836$ & $0.0982$ & $0.1165$ & $0.0929$ & $0.1086$ & $0.1293$ & $0.1039$ & $0.1218$ & $0.1486$\\
750 & $0.0677$ & $0.0762$ & $0.0958$ & $0.0735$ & $0.0848$ & $0.1041$ & $0.0824$ & $0.0988$ & $0.1166$\\
1000 & $0.0536$ & $0.0663$ & $0.0786$ & $0.0625$ & $0.0729$ & $0.0891$ & $0.0719$ & $0.0840$ & $0.1026$\\
\cmidrule{2-10}
& \multicolumn{9}{l}{Rademacher}\\
250 & $0.1265$ & $0.1480$ & $0.1816$ & $0.1375$ & $0.1647$ & $0.1960$ & $0.1466$ & $0.1874$ & $0.2207$\\
500 & $0.0817$ & $0.0959$ & $0.1183$ & $0.0918$ & $0.1060$ & $0.1238$ & $0.1027$ & $0.1205$ & $0.1506$\\
750 & $0.0646$ & $0.0803$ & $0.0891$ & $0.0737$ & $0.0862$ & $0.1016$ & $0.0836$ & $0.1010$ & $0.1177$\\
1000 & $0.0562$ & $0.0657$ & $0.0781$ & $0.0609$ & $0.0727$ & $0.0860$ & $0.0721$ & $0.0847$ & $0.0992$\\
\bottomrule
\end{tabular}
\caption{Mean absolute errors $\widehat{\mathsf E}_{\mathrm{eqv}}$ for Algorithm~\ref{alg:rect-correlation} under three rectangular variance-profile pairs, based on $B=500$ replications.}
\label{tab:algorithm4-joint-orbit-results}
\end{table}

\begin{table}[htbp]
\centering
\footnotesize
\setlength{\tabcolsep}{2.5pt}
\begin{tabular}{@{}l*{9}{c}@{}}
\toprule
\multicolumn{1}{c}{}
 & \multicolumn{3}{c}{$\big(T_{\mathrm{rec}}^{(1)},T_{\mathrm{rec}}^{(1)}\big)$}
 & \multicolumn{3}{c}{$\big(T_{\mathrm{rec}}^{(1)},T_{\mathrm{rec}}^{(4)}\big)$}
 & \multicolumn{3}{c}{$\big(T_{\mathrm{rec}}^{(4)},T_{\mathrm{rec}}^{(4)}\big)$}\\
\cmidrule(lr){2-4}\cmidrule(lr){5-7}\cmidrule(lr){8-10}
$n$
 & $\omega=0.25$ & $\omega=0.60$ & $\omega=0.90$
 & $\omega=0.25$ & $\omega=0.60$ & $\omega=0.90$
 & $\omega=0.25$ & $\omega=0.60$ & $\omega=0.90$\\
\midrule
& \multicolumn{9}{l}{Gaussian}\\
250 & $0.0113$ & $0.0279$ & $0.0611$ & $0.0114$ & $0.0328$ & $0.0667$ & $0.0149$ & $0.0369$ & $0.0698$\\
500 & $0.0068$ & $0.0166$ & $0.0328$ & $0.0073$ & $0.0183$ & $0.0362$ & $0.0086$ & $0.0220$ & $0.0423$\\
750 & $0.0053$ & $0.0137$ & $0.0239$ & $0.0058$ & $0.0142$ & $0.0256$ & $0.0065$ & $0.0166$ & $0.0299$\\
1000 & $0.0043$ & $0.0118$ & $0.0188$ & $0.0052$ & $0.0136$ & $0.0225$ & $0.0057$ & $0.0149$ & $0.0244$\\
\cmidrule{2-10}
& \multicolumn{9}{l}{Rademacher}\\
250 & $0.0106$ & $0.0272$ & $0.0604$ & $0.0112$ & $0.0301$ & $0.0645$ & $0.0137$ & $0.0389$ & $0.0725$\\
500 & $0.0064$ & $0.0184$ & $0.0325$ & $0.0073$ & $0.0184$ & $0.0337$ & $0.0081$ & $0.0223$ & $0.0419$\\
750 & $0.0051$ & $0.0129$ & $0.0236$ & $0.0058$ & $0.0144$ & $0.0254$ & $0.0065$ & $0.0169$ & $0.0313$\\
1000 & $0.0045$ & $0.0112$ & $0.0187$ & $0.0048$ & $0.0128$ & $0.0210$ & $0.0059$ & $0.0134$ & $0.0234$\\
\bottomrule
\end{tabular}
\caption{Mean absolute errors $\widehat{\mathsf E}_{\mathrm{sig}}$ for
Algorithm~\ref{alg:rect-correlation} under three rectangular
variance-profile pairs, based on $B=500$ replications.}
\label{tab:algorithm4-signal-correlation-results}
\end{table}

\section{Proofs of Lemmas}

\subsection{Proof of Lemma~\ref{lem:mask-remainder-bound}}

By \eqref{eq:split-signal-decomposition}, $\|\mathcal E\|_2=\|\mathcal R\|_2$.
Let $Q$ be the upper triangular matrix with entries
\[
        Q_{ij}
        \deq
        \begin{cases}
                (P_{ij}-1/2)S_{ij}, & i<j,\\
                \frac12(P_{ii}-1/2)S_{ii}, & i=j,\\
                0, & i>j.
        \end{cases}
\]
Then $\mathcal R=Q+Q^\top$, and the entries of $Q$ are independent and centered.
By \cite[Theorem~2]{Latala2004some},
\begin{align}
        \E\|Q\|_2
        \lesssim
        &\max_i\bigg(\sum_j\E|Q_{ij}|^2\bigg)^{1/2}
        +
        \max_j\bigg(\sum_i\E|Q_{ij}|^2\bigg)^{1/2}
        +
        \bigg(\sum_{i,j}\E|Q_{ij}|^4\bigg)^{1/4}.
        \label{eq:Latala-mask-remainder-bound}
\end{align}
Since $S=UDU^\top$,
\begin{equation*}
        \sum_{j=1}^n S_{ij}^2
        =
        (S^2)_{ii}
        =
        \sum_{k=1}^r d_k^2u_k(i)^2
        \leq
        rd_{\max}^2\mu_n^2,
\end{equation*}
where $u_k(i)$ is the $i$-th entry of the vector $u_k$.
The same bound holds for each column because $S$ is symmetric.  As
$\E|P_{ij}-1/2|^2=1/4$, the first two terms on the right hand side of
\eqref{eq:Latala-mask-remainder-bound} are bounded by $C_r d_{\max}\mu_n$.

For the fourth moment term, Cauchy--Schwarz gives
\[
        |S_{ij}|
        =
        |e_i^\top UDU^\top e_j|
        \leq
        d_{\max}\|U^\top e_i\|_2\|U^\top e_j\|_2
        \leq
        rd_{\max}\mu_n^2.
\]
Moreover,
\[
        \sum_{i,j=1}^n S_{ij}^2
        =
        \|S\|_{\mathrm F}^2
        =
        \sum_{k=1}^r d_k^2
        \leq
        rd_{\max}^2.
\]
Using $\E|P_{ij}-1/2|^4=1/16$, we obtain
\begin{align*}
        \sum_{i,j}\E|Q_{ij}|^4
        \leq
        \frac1{16}\sum_{i,j}S_{ij}^4
        \leq
        \frac1{16}\max_{i,j}S_{ij}^2
        \sum_{i,j}S_{ij}^2
        \leq
        C_r d_{\max}^4\mu_n^4.
\end{align*}
Thus the last term in \eqref{eq:Latala-mask-remainder-bound} is also bounded by
$C_r d_{\max}\mu_n$.  Since $\mathcal R=Q+Q^\top$,
\[
        \E\|\mathcal E\|_2
        =
        \E\|\mathcal R\|_2
        \leq
        2\E\|Q\|_2
        \leq
        C_r d_{\max}\mu_n.
\]
Markov's inequality now gives
$\|\mathcal E\|_2=\bigop(\mu_n) = \smallop(1)$. 

\subsection{Proof of Lemma~\ref{lem:block-structure}}
Recall that $\mathcal H \deq H_z - \E H_z.$
Let
\[
        J\deq \diag(I_n,-I_n,I_n,-I_n).
\]
Then $J(\E H_z)J=\E H_z$ and $J\mathcal HJ=-\mathcal H$.
For any deterministic matrix $D\in\mathbb C^{4n\times 4n}$,
\[
        \mathcal S[J D J]
        =
        \E[\mathcal HJDJ\mathcal H]
        =
        J\,\E[(J\mathcal HJ)D(J\mathcal HJ)]\,J
        =
        J\,\mathcal S[D]J.
\]
For notational simplicity, we abbreviate $M_z(\zeta)$ as $M$ in this proof.
Conjugating the MDE by $J$ therefore gives
\[
        -(JMJ)^{-1}=\zeta I_{4n}-\E H_z +\mathcal S[JMJ].
\]
By uniqueness of the MDE solution, $JMJ=M$.
This forces the entries in the $(1,2)$, $(1,4)$, $(2,1)$, $(2,3)$, $(3,2)$, $(3,4)$, $(4,1)$, and $(4,3)$ block positions to vanish.

Next fix a diagonal sign matrix
\[
        Q_\varepsilon=\diag(\varepsilon_1,\dots,\varepsilon_n),
        \qquad
        \varepsilon_i\in\{-1,+1\},
        \qquad
        \mathcal Q_\varepsilon\deq \diag(Q_\varepsilon,Q_\varepsilon,Q_\varepsilon,Q_\varepsilon).
\]
We claim that for every deterministic $D$,
\begin{equation}\label{eq:self-energy-sign-change}
        \mathcal S[\mathcal Q_\varepsilon D\mathcal Q_\varepsilon]=\mathcal Q_\varepsilon\mathcal S[D]\mathcal Q_\varepsilon.
\end{equation}
Indeed, for $a,b,c,d\in\intset{4}$ and $i,j,k,\ell\in\intset{n}$,
\[
        (\mathcal S[\mathcal Q_\varepsilon D\mathcal Q_\varepsilon])_{(a,i),(b,j)}
        =
        \sum_{c,d=1}^4\sum_{k,\ell=1}^n
        \E\Bigl(\mathcal H_{(a,i),(c,k)}\,D_{(c,k),(d,\ell)}\,\mathcal H_{(d,\ell),(b,j)}\,\varepsilon_k\varepsilon_\ell\Bigr),
\]
where the notation $A_{(a,i),(b,j)}$ denotes the $((a-1)n+i, (b-1)n+j)$-th entry of $A\in\mathbb C^{4n \times 4n}$.
If
\[
        \E\bigl(\mathcal H_{(a,i),(c,k)}\mathcal H_{(d,\ell),(b,j)}\bigr)\neq0,
\]
then the two $\mathcal H$ entries must involve the same underlying entry of $X$ and the same mask type ($\mathsf A$ or $\mathsf B$), hence
$
        \{i,k\}=\{j,\ell\}.
$
Therefore $\varepsilon_k\varepsilon_\ell=\varepsilon_i\varepsilon_j$, and the previous display becomes
\[
        (\mathcal S[\mathcal Q_\varepsilon D\mathcal Q_\varepsilon])_{(a,i),(b,j)}
        =
        \varepsilon_i\varepsilon_j\,(\mathcal S[D])_{(a,i),(b,j)}
        =
        (\mathcal Q_\varepsilon\mathcal S[D]\mathcal Q_\varepsilon)_{(a,i),(b,j)}.
\]
This proves \eqref{eq:self-energy-sign-change}.
Since $\mathcal Q_\varepsilon (\E H_z) \mathcal Q_\varepsilon = \E H_z$, uniqueness of the MDE implies $\mathcal Q_\varepsilon M\mathcal Q_\varepsilon=M$.
Fix any surviving block, say $M_{11}$, and let $i\neq j$.
Choose $Q_\varepsilon$ with $\varepsilon_i=-1$, $\varepsilon_j=1$, and $\varepsilon_k=1$ for all $k\notin\{i,j\}$.
Then
\[
        (M_{11})_{ij}=(Q_\varepsilon M_{11}Q_\varepsilon)_{ij}=\varepsilon_i\varepsilon_j(M_{11})_{ij}=-(M_{11})_{ij},
\]
so $(M_{11})_{ij}=0$.
The same argument applies to
$M_{22},M_{33},M_{44},M_{13},M_{24},M_{31},M_{42}$; hence all
surviving blocks are diagonal.

Now let
\[
        \Pi=
        \begin{pmatrix}
                0   & I_n & 0   & 0   \\
                I_n & 0   & 0   & 0   \\
                0   & 0   & 0   & I_n \\
                0   & 0   & I_n & 0
        \end{pmatrix}.
\]
Then $\Pi (\E H_z) \Pi=\E H_z$.
Because $P_{ij}\sim \mathrm{Bernoulli}(1/2)$, the pair $(\mathsf A, \mathsf B)$ has the same distribution as $(\mathsf B,\mathsf A)$, hence $\Pi \mathcal H\Pi\overset{d}=\mathcal H$.
Therefore, for deterministic $D$,
\[
        \mathcal S[\Pi D\Pi]
        =
        \E[\mathcal H\Pi D\Pi \mathcal H]
        =
        \Pi\,\E[(\Pi \mathcal H\Pi)D(\Pi \mathcal H\Pi)]\,\Pi
        =
        \Pi\mathcal S[D]\Pi.
\]
Uniqueness of the MDE gives $\Pi M\Pi=M$, so
\begin{equation}\label{eq:block-equality-pi}
        M_{11}=M_{22},
        \qquad
        M_{33}=M_{44},
        \qquad
        M_{13}=M_{24},
        \qquad
        M_{31}=M_{42}.
\end{equation}

Since the surviving blocks are diagonal, the $i$th diagonal coordinate of the MDE reduces to
\[
        -\begin{pmatrix}
                m_{z,i}(\zeta)          &\; w_{z,i}(\zeta) \\
                \widetilde w_{z,i}(\zeta) & \; \widetilde m_{z,i}(\zeta)
        \end{pmatrix}^{-1}
        =
        \begin{pmatrix}
                \zeta+\alpha_{z,i}(\zeta) & z         \\    
                \overline z    & \zeta+\beta_{z,i}(\zeta)
        \end{pmatrix},
\]
where
\[
        m_{z,i}(\zeta)\deq (M_{11})_{ii},
        \qquad
        \widetilde m_{z,i}(\zeta)\deq (M_{33})_{ii},
        \qquad
        w_{z,i}(\zeta)\deq (M_{13})_{ii},
        \qquad
        \widetilde w_{z,i}(\zeta)\deq (M_{31})_{ii},
\]
and
\[
        \alpha_{z,i}(\zeta)=\E(\mathsf AM_{33} \mathsf A)_{ii}=\sum_{k=1}^n(\mathbb T)_{ik}\widetilde m_{z,k}(\zeta),
        \qquad
        \beta_{z,i}(\zeta)=\E(\mathsf BM_{11}\mathsf B)_{ii}=\sum_{k=1}^n(\mathbb T)_{ik}m_{z,k}(\zeta).
\]
Hence,
\begin{align*}
        m_{z,i}(\zeta) & =-\frac{\zeta+\beta_{z,i}(\zeta)}{\{\zeta+\alpha_{z,i}(\zeta)\}\{\zeta+\beta_{z,i}(\zeta)\} - |z|^2},  \\
        \widetilde m_{z,i}(\zeta) & =-\frac{\zeta+\alpha_{z,i}(\zeta)}{\{\zeta+\alpha_{z,i}(\zeta)\}\{\zeta+\beta_{z,i}(\zeta)\} - |z|^2}, \\
        w_{z,i}(\zeta) & =\frac{z}{\{\zeta+\alpha_{z,i}(\zeta)\}\{\zeta+\beta_{z,i}(\zeta)\} - |z|^2}, \\
        \widetilde w_{z,i}(\zeta) & =\frac{\overline z}{\{\zeta+\alpha_{z,i}(\zeta)\}\{\zeta+\beta_{z,i}(\zeta)\} - |z|^2}.
\end{align*}
Swapping $m_{z,i}(\zeta)$ and $\widetilde m_{z,i}(\zeta)$ interchanges
$\alpha_{z,i}(\zeta)$ and $\beta_{z,i}(\zeta)$ while leaving
$w_{z,i}(\zeta)$ and $\widetilde w_{z,i}(\zeta)$ unchanged.  Therefore,
using \eqref{eq:block-equality-pi}, the block matrix
\[
        \widetilde M
        \deq
        \begin{pmatrix}
                M_{33} & 0      & M_{13} & 0      \\
                0      & M_{44} & 0      & M_{24} \\
                M_{31} & 0      & M_{11} & 0      \\
                0      & M_{42} & 0      & M_{22}
        \end{pmatrix}
\]
satisfies the same reduced equations in both paired block coordinates
and hence solves the full MDE.  By uniqueness, $\widetilde M=M$, so
\[
        M_{11}=M_{33},
        \qquad
        M_{22}=M_{44}.
\]
Combining this with \eqref{eq:block-equality-pi} yields
\[
        M_{11}=M_{22}=M_{33}=M_{44},
        \qquad
        M_{13}=M_{24},
        \qquad
        M_{31}=M_{42}.
\]
This proves the lemma.

\subsection{Proof of Lemma~\ref{lem:support}}

Let 
\begin{equation}\label{eq:ell-def}
        \ell(z)
        \deq
        \limsup_{\eta\downarrow0}
        \frac1\eta
        \max_{j\in\intset{n}}
        |\Im m_{z,j}(\ii\eta)|.
\end{equation}
For $\tau>0$, define
\[
        \mathbb D_\tau
        \deq
        \big\{
        z\in\mathbb C \;:\;
        \dist(0,\supp\nu_z)\leq\tau
        \big\},
        \qquad
        \widetilde{\mathbb D}_\tau
        \deq
        \big\{
        z\in\mathbb C \;:\;
        \ell(z)
        \geq 1/\tau
        \big\}.
\]
The set $\mathbb D_\tau$ is the self consistent $\tau$-pseudospectrum of $\mathcal X$. 
The following lemma compares the self consistent pseudospectra $\mathbb D_\tau$ and $\widetilde{\mathbb D}_\tau$.

\begin{lemma}
\label{lem:pseudospectrum-comparison}
Suppose that Assumption~\ref{asm:moment} hold.
There exist constants $c,\tau_0>0$, depending only on $C_*$ and $C^*$, such that, for every $\tau\in(0,\tau_0)$,
\[
        \mathbb D_{c\tau^{3/2}}
        \subseteq
        \widetilde{\mathbb D}_\tau
        \subseteq
        \mathbb D_{\tau^{1/2}}.
\]
\end{lemma}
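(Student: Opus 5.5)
The plan is to translate both pseudospectra into statements about the measures $\nu_{z,j}$ whose Stieltjes transforms are the diagonal entries $m_{z,j}$, and then compare them.

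\textbf{Reduction.} By Lemma~\ref{lem:block-structure} and the general theory of \citet{AjankiErdosKruger2019}, each $m_{z,j}$ is the Stieltjes transform of a probability measure $\nu_{z,j}$ on $\mathbb R$. Moreover, $\nu_z=\frac1n\sum_j\nu_{z,j}$ by \eqref{eq:self consistent-density} and the fourfold repetition of $M_{11}$. The spectrum of $H_z$ is symmetric, since $H_z$ anticommutes with $\diag(I_{2n},-I_{2n})$, and the MDE is covariant under the same conjugation. Hence each $\nu_{z,j}$ is symmetric about $0$ and $m_{z,j}(\ii\eta)\in\ii\mathbb R_+$. Then
\[
\frac{\Im m_{z,j}(\ii\eta)}{\eta}=\int\frac{\dif\nu_{z,j}(x)}{x^2+\eta^2},
\]
which increases as $\eta\downarrow0$. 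By monotone convergence,
\[
\ell(z)=\max_{j}\int x^{-2}\,\dif\nu_{z,j}(x)\in(0,\infty].
\]
Since every $\nu_{z,j}$ is a positive summand of $n\nu_z$, we also have $\supp\nu_{z,j}\subseteq\supp\nu_z$.

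\textbf{Second inclusion $\widetilde{\mathbb D}_\tau\subseteq\mathbb D_{\tau^{1/2}}$.} I will argue by contraposition. If $\dist(0,\supp\nu_z)>\tau^{1/2}$, then $x^{-2}<\tau^{-1}$ on $\supp\nu_{z,j}$ for every $j$. Each $\nu_{z,j}$ is a probability measure, so $\int x^{-2}\dif\nu_{z,j}<1/\tau$ for all $j$. The maximum is over finitely many $j$, so $\ell(z)<1/\tau$, that is, $z\notin\widetilde{\mathbb D}_\tau$. No regularity input is needed here.

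\textbf{First inclusion $\mathbb D_{c\tau^{3/2}}\subseteq\widetilde{\mathbb D}_\tau$.} Suppose $x_0\in\supp\nu_z$ with $|x_0|\le s\deq c\tau^{3/2}$. Then
\[
\ell(z)\ge\frac1n\sum_j\int x^{-2}\,\dif\nu_{z,j}=\int x^{-2}\,\dif\nu_z\ge\frac{\nu_z([-2s,2s])}{4s^2}.
\]
It therefore suffices to have a lower mass bound $\nu_z([x_0-s,x_0+s])\ge c_1 s^{4/3}$, uniformly in $z$ and $n$. With that bound, $\ell(z)\ge (c_1/4)\,s^{-2/3}=(c_1/4)c^{-2/3}\tau^{-1}\ge\tau^{-1}$ once $c$ is chosen small, depending only on $C_*,C^*$ through $c_1$.

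For the mass bound I will use the shape analysis of \citet{AjankiErdosKruger2019}. The flatness $C_*/n\le t_{ij}\le C^*/n$ carries over to the self-energy $\mathcal S$, because each block of $\mathcal S$ involves $\mathbb T=T/2$, and it gives $\|M_z\|\lesssim1$ uniformly for $|z|\le C$. Under this boundedness, the density of $\nu_z$ is $1/3$-Hölder and, near any point of its support, has only three local behaviours: bulk (density $\gtrsim1$), square-root edge, or cubic cusp/minimum. This yields a lower bound of the form $\nu_z(I)\gtrsim|I|^{\gamma}$ for intervals $I$ centred at support points, and the exponent $\gamma$ fixes the power $3/2$. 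The symmetry of $\nu_z$ helps here: support points near $0$ come in pairs $\pm x_0$, so near-zero edges are paired.

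\textbf{Main obstacle.} I expect the uniform lower mass bound to be the hard step. A generic square-root edge only gives mass $\asymp s^{3/2}$, hence $\gamma=3/2$ and $\ell\gtrsim s^{-1/2}$. That would force $s\asymp\tau^2$, not $\tau^{3/2}$. To reach the claimed exponent I will first try to exploit the symmetry of $\nu_z$: a gap $(-x_0,x_0)$ around $0$ with $x_0$ small is close to closing, so it sits near a cusp transition. The AEK normal form for small gaps, in which the density near an edge with gap length $\Delta$ behaves like $\sqrt{x}/\Delta^{1/6}$, should give mass $\gtrsim s^{4/3}$ in the relevant window. If that fails, the same argument still gives the inclusion with $c\tau^2$ in place of $c\tau^{3/2}$. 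That weaker version suffices for the qualitative use in Lemma~\ref{lem:support}.
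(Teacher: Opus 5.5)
Your second inclusion is correct and is exactly the paper's argument ($\ell(z)\le\dist(0,\supp\nu_z)^{-2}$, from the Stieltjes representation of each $m_{z,j}$ and $\supp\nu_{z,j}\subseteq\supp\nu_z$). The first inclusion, however, is not proved. Your reduction to the mass bound $\nu_z([x_0-s,x_0+s])\ge c_1s^{4/3}$ is fine. But that bound is only conjectured from the bulk/edge/cusp shape theory and the small-gap normal form, and that theory is established for \emph{flat} self-energies. Your claim that flatness carries over to $\mathcal S$ is false. $\mathcal S$ couples only the $(1,1)$ and $(4,4)$ blocks (through $\mathsf A$) and the $(2,2)$ and $(3,3)$ blocks (through $\mathsf B$); for instance $\mathcal S[\diag(I_n,0,0,0)]=\diag(0,0,0,\diag(\mathbb T\one_n))$. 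So none of the H\"older-$1/3$ regularity, the three-regime classification, or the $\Delta^{-1/6}$ prefactor near small gaps is available off the shelf. You would need a new local shape analysis of $\nu_z$ near $0$, uniform in $n$ and $z$, on both sides of $|z|=b_{\star,n}$. The fallback does not close the gap either: it proves a different statement (with $c\tau^2$ in place of $c\tau^{3/2}$), and it still rests on an unproven uniform $s^{3/2}$ mass bound at every support point near $0$.

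The missing idea is that no information on the shape of $\nu_z$ is needed. Compare both $\ell(z)$ and $g(z)\deq\dist(0,\supp\nu_z)$ with the explicit parameter $\delta\deq|z|^2-b_{\star,n}^2$.

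First, write $m_z(\ii\eta)=\ii u(\eta)$ and $x_\eta\deq u(\eta)/\eta$; then equation \eqref{202604211349} becomes $|z|^2x_\eta+\eta^2x_\eta\circ(\one_n+\mathbb Tx_\eta)^{\circ2}=\one_n+\mathbb Tx_\eta$. If $\delta\le0$ and $\ell(z)<\infty$, a subsequential limit $x\ge0$ would solve $(|z|^2I_n-\mathbb T)x=\one_n$. Pairing with the positive Perron--Frobenius vector shows this is impossible, so $\ell(z)=\infty$. If $\delta>0$, the entrywise bound $0\le x_\eta\le(|z|^2I_n-\mathbb T)^{-1}\one_n$ gives $\ell(z)=\|(|z|^2I_n-\mathbb T)^{-1}\one_n\|_\infty\ge\delta^{-1}$.

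Second, for $\delta>0$ the vector $q=\zeta\one_n+\mathbb Tm_z(\zeta)$ solves $q=(I_n-\mathbb T/|z|^2)^{-1}\{\zeta\one_n+\mathbb T\Psi_z(q)\}$ with a cubic remainder $\Psi_z$. In the $v_{\mathrm{PF}}$-weighted sup norm, $(I_n-\mathbb T/|z|^2)^{-1}$ has norm at most $|z|^2/\delta$. Hence this map is a contraction on a ball of radius $\asymp\sqrt\delta$ whenever $|\zeta|<\kappa\min\{\delta^{3/2},1\}$, and its fixed point is real for real $\zeta$. Identifying the fixed point with $m_z$ along $\zeta=\ii\eta\downarrow0$ shows that $m_z$ continues analytically and real-valued across $(-\kappa\min\{\delta^{3/2},1\},\kappa\min\{\delta^{3/2},1\})$. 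Therefore $g(z)\ge\kappa\min\{\delta^{3/2},1\}$.

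Finally, if $g(z)\le c\tau^{3/2}$ with $c\le\kappa$ and $\tau<1$, then $\delta\le0$ or $\delta\le\tau$, and in both cases $\ell(z)\ge\tau^{-1}$. The exponent $3/2$ comes from the scaling $g\gtrsim\delta^{3/2}$ against $\ell\ge\delta^{-1}$, not from any local mass estimate.
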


The proof of Lemma~\ref{lem:pseudospectrum-comparison} is given in Section~\ref{sec:proof-lem-pseudospectrum-comparison}.

Recall the definitions of $m_{z,i}(\zeta)$ and $a_{z,i}(\zeta)$ in \eqref{eq:MDE_m_a}.
Set $\zeta=\ii\eta$ with $\eta>0$.
For each $i\in\intset{n}$, define
\[
        m^{\sharp}_{z,i}(\ii\eta)
        \deq
        -\overline{m_{z,i}(\ii\eta)},
        \qquad
        a^{\sharp}_{z,i}(\ii\eta)
        \deq
        -\overline{a_{z,i}(\ii\eta)}.
\]
Since the entries of $\mathbb T$ are real,
\[
        a^{\sharp}_{z,i}(\ii\eta)
        =
        \sum_{k=1}^n
        \mathbb T_{ik}m^{\sharp}_{z,k}(\ii\eta).
\]
Taking the negative complex conjugate of \eqref{eq:MDE_m_a} at $\zeta=\ii\eta$ gives
\[
        m^{\sharp}_{z,i}(\ii\eta)
        =
        -\frac{\ii\eta+a^{\sharp}_{z,i}(\ii\eta)}
        {\{\ii\eta+a^{\sharp}_{z,i}(\ii\eta)\}^2-|z|^2}.
\]
Thus $m^{\sharp}$ satisfies the same vector MDE as $m$.
Moreover, $\Im m^{\sharp}_{z,i}(\ii\eta)=\Im m_{z,i}(\ii\eta)>0$.
Uniqueness of the MDE solution therefore gives
\[
        m^{\sharp}_{z,i}(\ii\eta)
        =
        m_{z,i}(\ii\eta).
\]
Consequently,
\[
        m_{z,i}(\ii\eta)
        =
        -\overline{m_{z,i}(\ii\eta)},
\]
and $m_{z,i}(\ii\eta)$ is purely imaginary for every $i\in\intset{n}$.
Define
\[
        u
        \deq
        \bigl(
        \Im m_{z,1}(\ii\eta),
        \ldots,
        \Im m_{z,n}(\ii\eta)
        \bigr)^\top.
\]
Substituting $m_z(\ii\eta)=\ii u$ and $a_z(\ii\eta)=\ii\mathbb T u$ into \eqref{eq:MDE_m_a} yields
\begin{equation}
\label{202604211349}
        \frac1u
        =
        \eta+\mathbb T u
        +
        \frac{|z|^2}{\eta+\mathbb T u},
\end{equation}
where division is interpreted entrywise.
This equation is scaling invariant under $\mathbb T\mapsto c\mathbb T$ for $c>0$.
Indeed, if $\mathbb T$ is replaced by $c\mathbb T$, then
\[
        \widetilde\eta
        =
        c^{1/2}\eta,
        \qquad
        \widetilde z
        =
        c^{1/2}z,
        \qquad
        \widetilde u
        =
        c^{-1/2}u
\]
satisfy the corresponding rescaled equation
\begin{equation*}
        \frac{1}{\widetilde u}
        =
        \widetilde\eta+c\mathbb T\widetilde u
        +
        \frac{|\widetilde z|^2}{\widetilde\eta+c\mathbb T\widetilde u}.
\end{equation*}
Since $\mathbb T$ is symmetric and nonnegative, $\varrho(\mathbb T)=b_{\star,n}^2$.
By this rescaling, it suffices below to work under the normalization $\varrho(\mathbb T)=1$, for which $b_{\star,n}=1$.

The lemma below provides an estimate of the solution to \eqref{202604211349}.

\begin{lemma}\label{202604211403}
        For some $\varepsilon>0$, uniformly for $0<\eta\leq 1$ and $|z|>b_{\star,n}+\varepsilon$, the solution of \eqref{202604211349} satisfies
        \begin{equation}\label{202604221448}
                u(\eta)\asymp \langle u(\eta)\rangle \asymp \frac{\eta}{|z|^2-1+\eta^{2/3}}
        \end{equation}
        where $\langle u(\eta)\rangle \deq  \frac{1}{n}\sum_{i=1}^n u_i(\eta)$.
\end{lemma}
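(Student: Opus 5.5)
Throughout we work under the normalization $\varrho(\mathbb T)=b_{\star,n}^2=1$ introduced just above the statement, and we take the positive solution $u=u(\eta)$ of \eqref{202604211349} given by the MDE (Lemma~\ref{lem:block-structure}). The plan is to turn \eqref{202604211349} into two-sided componentwise inequalities and close the upper bound with a Perron--Frobenius pairing.

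\textbf{Step 1: rewrite the equation.} Set $v\deq\eta+\mathbb T u$, which has positive entries. Then \eqref{202604211349} reads $1/u_i=(v_i^2+|z|^2)/v_i$, that is,
\[
u_i=\frac{v_i}{v_i^2+|z|^2}.
\]
Two consequences follow at once:
\[
u_i\le \frac{1}{2|z|}\quad\text{(by AM--GM)},\qquad u_i\le \frac{v_i}{|z|^2}.
\]
Since $\mathbb T_{ik}\in[C_*/(2n),C^*/(2n)]$, we have $(\mathbb T u)_i\asymp\langle u\rangle$ uniformly in $i$. Hence $v_i\asymp \eta+\langle u\rangle$, and $v_i\le 1+C$ is bounded.

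\textbf{Step 2: lower bound.} Since $v_i$ is bounded,
\[
u_i\ge \frac{v_i}{C+|z|^2}\ge \frac{\eta}{C+|z|^2}.
\]
This already gives $u_i\gtrsim \eta/(|z|^2-1+\eta^{2/3})$. For bounded $|z|$ the right side is $\asymp\eta$ because $|z|^2-1\ge\varepsilon$. For large $|z|$ both sides are $\asymp\eta/|z|^2$.

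\textbf{Step 3: upper bound via the Perron vector (main step).} Let $p$ be the Perron eigenvector of the symmetric, entrywise positive matrix $\mathbb T$, so $\mathbb T p=p$. From $p_i=\sum_k\mathbb T_{ik}p_k$ and the two-sided entry bounds on $\mathbb T$, we get $p_i\asymp n^{-1}\sum_k p_k$, so all entries of $p$ are comparable. Pair the componentwise inequality $|z|^2u\le \eta\one+\mathbb T u$ with $p$ and use the symmetry of $\mathbb T$:
\[
|z|^2\langle p,u\rangle\le \eta\langle p,\one\rangle+\langle p,u\rangle,
\qquad\text{hence}\qquad
\langle p,u\rangle\le\frac{\eta\langle p,\one\rangle}{|z|^2-1}.
\]
Comparability of the entries of $p$ then gives $\langle u\rangle\lesssim \eta/(|z|^2-1)$. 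Feeding this into $u_i\le (\eta+C\langle u\rangle)/|z|^2$ yields
\[
u_i\lesssim \frac{\eta}{|z|^2}\Bigl(1+\frac{1}{|z|^2-1}\Bigr)=\frac{\eta}{|z|^2-1}.
\]
Finally, since $|z|^2-1\ge\varepsilon$ and $\eta\le1$, we have $|z|^2-1\asymp |z|^2-1+\eta^{2/3}$ with constants depending only on $\varepsilon$.

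\textbf{Conclusion and expected difficulty.} Combining Steps~2 and~3 gives $u_i\asymp\langle u\rangle\asymp \eta/(|z|^2-1+\eta^{2/3})$, uniformly in $i$, in $0<\eta\le1$, and in $|z|>1+\varepsilon$. The only delicate point is Step~3: a crude sup-norm bound would need $\|\mathbb T\|_{\infty\to\infty}<|z|^2$, which need not hold, so pairing with the Perron vector is essential. The $\eta^{2/3}$ term only matters near $|z|=1$, which is excluded here, so it can be kept as a harmless comparable term.
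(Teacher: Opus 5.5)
Your proof is correct. It shares the key device with the paper's proof, namely pairing with the Perron vector of $\mathbb T$ instead of using a sup-norm bound, but it gets there by a leaner route.

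\textbf{The paper's route.} The paper keeps the exact identity $\eta=u(\eta+\mathbb Tu)^2+|z|^2u-\mathbb Tu$ (entrywise). It first proves the a priori bounds $\eta\lesssim\langle u\rangle\lesssim 1$ (the upper one via $u(\mathbb Tu)\le 1$) and $u\asymp\langle u\rangle$. Pairing with the Perron vector then gives the two-sided scalar relation $\eta\asymp\langle u\rangle^3+(|z|^2-1)\langle u\rangle$, from which the $\eta^{2/3}$ form is read off.

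\textbf{Your route.} You drop the cubic term. You keep only the one-sided inequality $|z|^2u\le\eta\one+\mathbb Tu$ for the upper bound, and you use the crude bound $v_i\le C$ for the lower bound. This suffices because $|z|^2-1\ge 2\varepsilon$ makes $\eta^{2/3}$ a comparable term. The cost is that your two-sided comparison holds only with constants of order $\varepsilon^{-1}$. So it does not see the genuine cube-root behaviour near $|z|=1$, which the paper's Alt--Erd\H{o}s--Kr\"uger-type relation captures with constants uniform over bounded $|z|>1$. That uniformity is the real content of the $\eta^{2/3}$ in the statement.

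\textbf{What your route gains.} Your Step~3 bound $u\lesssim\eta/(|z|^2-1)$ has $\varepsilon$-independent constants. It is exactly what is used downstream, namely the bound $\ell(z)\le C/(\varepsilon^2+2\varepsilon)$ in the proof of Lemma~\ref{lem:support}. Your argument is also uniform for large $|z|$, where $u\asymp\eta/|z|^2$. By contrast, the paper's preliminary claim $\eta\lesssim\langle u\rangle$ tacitly needs $|z|$ bounded, since $\langle u\rangle\approx\eta/|z|^2$ as $|z|\to\infty$; this is harmless in the paper's applications, where $|z|\le C$.
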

The proof of Lemma~\ref{202604211403} is given in Section~\ref{sec:proof-202604211403}.

For every fixed $\varepsilon>0$, Lemma~\ref{202604211403} gives, uniformly for $|z|>1+\varepsilon$,
\[
        \ell(z)
        \leq
        \frac{C}{\varepsilon^2+2\varepsilon}
        \eqd C_\varepsilon.
\]
Let $c$ and $\tau_0$ be the constants in Lemma~\ref{lem:pseudospectrum-comparison}, and choose
\[
        \tau_\varepsilon
        \deq
        \min
        \left\{
        \frac{\tau_0}{2},
        \frac{1}{2C_\varepsilon}
        \right\}.
\]
Then $\ell(z)<\tau_\varepsilon^{-1}$, so $z\notin\widetilde{\mathbb D}_{\tau_\varepsilon}$.
Using Lemma~\ref{lem:pseudospectrum-comparison}, we have
$
        z\notin
        \mathbb D_{c\tau_\varepsilon^{3/2}}.
$
Therefore,
\[
        \dist(0,\supp\nu_z)
        >
        c\tau_\varepsilon^{3/2}.
\]
The claim follows with
\[
        \delta'(\varepsilon)
        \deq
        c\tau_\varepsilon^{3/2}.
\]

\subsection{Proof of Lemma~\ref{lem:pseudospectrum-comparison}}\label{sec:proof-lem-pseudospectrum-comparison}

Set
\[
        g(z)\deq\dist(0,\supp\nu_z),
        \qquad
        \delta\deq |z|^2-b_{\star,n}^2,
\]
and write $\langle x\rangle\deq n^{-1}\sum_i x_i$ for vectors
$x\in\mathbb C^n$.  We also use the vector notation
\[
        m_z(\zeta)
        \deq
        \bigl(m_{z,1}(\zeta),\ldots,m_{z,n}(\zeta)\bigr)^\top.
\]
All constants below depend only on $C_*$ and $C^*$.

We first prove the second inclusion.
For each $j\in\intset{n}$, there exists a unique probability measure $\mu_{z,j}$ such that
\[
        m_{z,j}(\zeta)
        =
        \int_{\mathbb R}\frac{1}{x-\zeta}\,\dif\mu_{z,j}(x).
\]
Lemma~\ref{lem:block-structure} and equation~\eqref{eq:self consistent-density}
imply that
\[
        \nu_z=\frac1n\sum_{j=1}^n\mu_{z,j}.
\]
Since all these measures are nonnegative,
$\supp\mu_{z,j}\subseteq\supp\nu_z$.  If $g(z)>0$, then
\[
        \frac{\Im m_{z,j}(\ii\eta)}{\eta}
        =
        \int_{\mathbb R}\frac{\dif\mu_{z,j}(x)}{x^2+\eta^2}
        \leq
        \frac{1}{g(z)^2}.
\]
The definition of $\ell(z)$ in \eqref{eq:ell-def} therefore gives $\ell(z)\leq g(z)^{-2}$.
Thus $\ell(z)\geq\tau^{-1}$ implies $g(z)\leq\tau^{1/2}$.
If $g(z)=0$, this inequality holds trivially.
Hence
\[
        \widetilde{\mathbb D}_\tau
        \subseteq
        \mathbb D_{\tau^{1/2}}.
\]

We next analyze $\ell(z)$ separately in the regimes $\delta \leq 0$ and $\delta > 0$.
By the derivation preceding \eqref{202604211349}, for $\eta>0$ we may write
$m_z(\ii\eta)=\ii u(\eta)$ for a vector $u(\eta)>0$.  Define
\[
        h(\eta)\deq\eta\one_n+\mathbb T u(\eta).
\]
Equation~\eqref{202604211349} is equivalent to
\begin{equation}
\label{eq:corrected-u-equation}
        u_i(\eta)
        =
        \frac{h_i(\eta)}{|z|^2+h_i(\eta)^2},
        \qquad i\in\intset{n}.
\end{equation}

Suppose first that $\delta\leq0$. We claim that $\ell(z)=\infty$ and prove it by contradiction.
Assume $\ell(z)<\infty$, then $x_\eta\deq u(\eta)/\eta$ is bounded as $\eta\downarrow0$.  
From \eqref{eq:corrected-u-equation}, we have
\begin{equation}\label{eq:corrected-x-equation}
        |z|^2x_\eta
        +
        \eta^2x_\eta\circ
        (\one_n+\mathbb T x_\eta)^{\circ 2}
        =
        \one_n+\mathbb T x_\eta.
\end{equation}
Along a convergent subsequence of $x_\eta$ as $\eta\downarrow0$, its limit $x\geq0$ would therefore satisfy
$
        (|z|^2I_n-\mathbb T)x=\one_n.
$
By the Perron--Frobenius theorem, there exists a positive eigenvector $v_{\mathrm{PF}}$ of $\mathbb T$ corresponding to the largest eigenvalue $b_{\star,n}^2$.
Since $\mathbb T$ is symmetric, multiplying the last equation by $v_{\mathrm{PF}}^\top$
gives
$
        \delta\,v_{\mathrm{PF}}^\top x
        =
        v_{\mathrm{PF}}^\top\one_n,
$
which is impossible for $\delta\leq0$.  Consequently,
\begin{equation}
\label{eq:ell-inside}
        \ell(z)=\infty,
        \qquad \delta\leq0.
\end{equation}

Suppose now that $\delta>0$ and set $\mathbb T_z\deq |z|^2I_n-\mathbb T$.  Since
$|z|^2>b_{\star,n}^2$,
\begin{equation*}
        \mathbb T_z^{-1}
        =
        \frac1{|z|^2}\sum_{k=0}^\infty
        \left(\frac{\mathbb T}{|z|^2}\right)^k
        \geq0
\end{equation*}
entrywise.  
This, together with \eqref{eq:corrected-x-equation}, gives
\begin{equation}
\label{eq:x-eta-bound}
        0\leq x_\eta\leq \mathbb T_z^{-1}\one_n.
\end{equation}
Every subsequential limit $x$ for $x_\eta$ as $\eta\downarrow0$ satisfies $\mathbb T_z x=\one_n$ by \eqref{eq:corrected-x-equation}.  
Uniqueness of this solution thus gives
\begin{equation*}
        \ell(z)
        =
        \left\|\mathbb T_z^{-1}\one_n\right\|_\infty.
\end{equation*}
Multiplying $\mathbb T_z(\mathbb T_z^{-1}\one_n)=\one_n$ by
$v_{\mathrm{PF}}^\top$ gives
$
        \delta\,v_{\mathrm{PF}}^\top (\mathbb T_z^{-1}\one_n)
        =
        v_{\mathrm{PF}}^\top\one_n.
$
Since $v_{\mathrm{PF}}>0$, it follows that
\begin{equation}
\label{eq:ell-lower}
        \ell(z)\geq\frac1\delta, \qquad \delta>0.
\end{equation}

It remains to relate $\delta$ to the central gap of $\nu_z$.  We claim that
there is a constant $\kappa>0$ such that
\begin{equation}
\label{eq:gap-lower}
        g(z)
        \geq
        \kappa\min\{\delta^{3/2},1\},
        \qquad \delta>0.
\end{equation}
We prove this claim by constructing a local MDE solution near zero and then
identifying it with the MDE solution $m_z$ associated with the self consistent measure $\nu_z$.
Normalize $v_{\mathrm{PF}}$ by $\langle v_{\mathrm{PF}}^{\circ 2}\rangle=1$.  
Since $C_*/(2n)\leq(\mathbb T)_{ij}\leq C^*/(2n)$ and $\mathbb T v_{\mathrm{PF}}=b_{\star,n}^2v_{\mathrm{PF}}$, we have
\[
        \frac{C_*}{2}\langle v_{\mathrm{PF}}\rangle
        \leq
        b_{\star,n}^2 (v_{\mathrm{PF}})_i
        \leq
        \frac{C^*}{2}\langle v_{\mathrm{PF}}\rangle.
\]
This, together with the chosen normalization and $C_*/2\leq b_{\star,n}^2\leq C^*/2$, gives
\[
        b_{\star,n}^2\asymp1,
        \qquad
        (v_{\mathrm{PF}})_i\asymp1,
        \qquad i\in\intset{n}.
\]
Equip $\mathbb C^n$ with the weighted norm
\[ 
        \|q\|_v
        \deq
        \max_{i\in\intset{n}}
        \frac{|q_i|}{(v_{\mathrm{PF}})_i}.
\]
Positivity of $\mathbb T$ and
$\mathbb T v_{\mathrm{PF}}=b_{\star,n}^2 v_{\mathrm{PF}}$ imply
\begin{equation}
\label{eq:weighted-resolvent-bound}
        \|\mathbb T\|_{v\to v}\leq b_{\star,n}^2,
        \qquad
        \left\|
        \left(I_n-\frac{\mathbb T}{|z|^2}\right)^{-1}
        \right\|_{v\to v}
        \leq
        \frac{|z|^2}{\delta}.
\end{equation}
For a complex spectral parameter $\zeta$ near zero, set
\[
        q(\zeta,z)\deq\zeta\one_n+\mathbb T m_z(\zeta).
\]
Equation~\eqref{eq:MDE_m_a} then gives
\[
        m_z(\zeta)=\frac{q(\zeta,z)}{|z|^2-q(\zeta,z)^{\circ 2}}.
\]
For brevity, write $q=q(\zeta,z)$ below. 
The vector MDE then reduces to
the fixed-point equation $q=\Phi_\zeta(q)$, where
\begin{equation}
\label{eq:corrected-fixed-point}
        \Phi_\zeta(q)
        \deq
        \left(I_n-\frac{\mathbb T}{|z|^2}\right)^{-1}
        \left\{
        \zeta\one_n+\mathbb T\Psi_z(q)
        \right\},
        \qquad
        (\Psi_z(q))_i
        \deq
        \frac{q_i^3}{|z|^2(|z|^2-q_i^2)}.
\end{equation}
Since $\delta>0$, we have $|z|^2>b_{\star,n}^2\geq C_*/2$, while $(v_{\mathrm{PF}})_i\asymp1$.  
Hence there is a fixed $\varepsilon_0>0$ such that $\|q\|_v\vee\|p\|_v\leq\varepsilon_0$ implies
\[
        |q_i|\vee|p_i|\leq\frac{|z|}{2},
        \qquad
        \bigl||z|^2-q_i^2\bigr|
        \wedge
        \bigl||z|^2-p_i^2\bigr|
        \geq
        \frac{3|z|^2}{4}.
\]
Hence, 
\[
        |(\Psi_z(q))_i|\leq C|q_i|^3,
\qquad
\bigg|\frac{\partial (\Psi_z(q))_i}{\partial q_i}\bigg| = \bigg|\frac{q_i^2(3|z|^2-q_i^2)}{|z|^2(|z|^2-q_i^2)^2}\bigg| \leq C |q_i|^2.
\]
By the fundamental theorem of calculus, we have
\[
        |(\Psi_z(q)-\Psi_z(p))_i|
        \leq
        C(|q_i|^2+|p_i|^2)|q_i-p_i|.
\]
Using $|q_i|\leq(v_{\mathrm{PF}})_i\|q\|_v$ and the analogous bounds for $p_i$ and $q_i-p_i$, then taking the maximum over $i$, yields
\begin{equation}\label{eq:corrected-cubic-bounds}
        \|\Psi_z(q)\|_v
        \leq C\|q\|_v^3,
        \qquad
        \|\Psi_z(q)-\Psi_z(p)\|_v
        \leq
        C(\|q\|_v^2+\|p\|_v^2)\|q-p\|_v.
\end{equation}
Fix a sufficiently small $\delta_0>0$.  If $0<\delta\leq\delta_0$, then
\eqref{eq:weighted-resolvent-bound} and
\eqref{eq:corrected-cubic-bounds} give, whenever
$\|q\|_v\vee\|p\|_v\leq\varepsilon_0$,
\[
        \|\Phi_\zeta(q)\|_v
        \leq
        \frac{C}{\delta}
        \bigl(|\zeta|+\|q\|_v^3\bigr),
        \qquad
        \|\Phi_\zeta(q)-\Phi_\zeta(p)\|_v
        \leq
        \frac{C}{\delta}
        (\|q\|_v^2+\|p\|_v^2)\|q-p\|_v.
\]
Choose $\rho>0$ and then $\kappa_1>0$ sufficiently small so that
\[
        2C\rho^2<1,
        \qquad
        C(\kappa_1+\rho^3)\leq\rho,
        \qquad
        \rho\sqrt{\delta_0}\leq\varepsilon_0.
\]
If $q$ and $p$ belong to the ball
\[
        B_\delta\deq\{q:\|q\|_v\leq\rho\sqrt\delta\}
\]
and $|\zeta|<\kappa_1\delta^{3/2}$, then the preceding inequalities imply
\[
        \|\Phi_\zeta(q)\|_v
        \leq
        C(\kappa_1+\rho^3)\sqrt\delta
        \leq
        \rho\sqrt\delta,
        \qquad 
        \|\Phi_\zeta(q)-\Phi_\zeta(p)\|_v
        \leq
        2C\rho^2\|q-p\|_v.
\]
Thus $\Phi_\zeta$ maps $B_\delta$ into itself and is a contraction there.
If $\delta\geq\delta_0$, the second bound in \eqref{eq:weighted-resolvent-bound} is uniform, and the same argument on a fixed ball gives a contraction whenever $|\zeta|<\kappa_2$.  
After decreasing $\kappa>0$ if necessary, both regimes yield a unique fixed
point, denoted by $q_{\star}(\zeta,z)$, in the corresponding contraction ball for
\[
        |\zeta|<R_\delta
        \deq\kappa\min\{\delta^{3/2},1\}.
\]
Moreover, $\Psi_z(0)=0$, and hence $\Phi_0(0)=0$.  Since $q_{\star}(0,z)$ is
the unique fixed point of $\Phi_0$ in the $B_{\delta}$, it follows that $q_{\star}(0,z)=0$.  Let $E\in\mathbb R$ satisfy
$|E|<R_\delta$.  By construction,
$
        q_{\star}(E,z)=\Phi_E(q_{\star}(E,z)).
$
This, together with \eqref{eq:corrected-fixed-point}, implies that
\[
        \Phi_E(\overline{q_{\star}(E,z)})
        =
        \overline{\Phi_E(q_{\star}(E,z))}
        =
        \overline{q_{\star}(E,z)}.
\]
Uniqueness of the fixed point therefore gives
$\overline{q_{\star}(E,z)}=q_{\star}(E,z)$, and hence
$q_{\star}(E,z)\in\mathbb R^n$.

We finally identify $q_{\star}(\zeta,z)$ with the function $q(\zeta,z)$ obtained from the MDE solution $m_z$.  
For $\zeta=\ii\eta$, the bound \eqref{eq:x-eta-bound} gives $0\leq u(\eta)\leq\eta \mathbb T_z^{-1}\one_n.$
Therefore, the associated function $q$ satisfies
\[
        q(\ii\eta,z)
        =
        \ii\{\eta\one_n+\mathbb T u(\eta)\}
        \to 0
        \qquad (\eta\downarrow0).
\]
For all sufficiently small $\eta$, this solution lies in $B_{\delta}$ and hence equals $q_{\star}(\ii\eta,z)$.
The identity theorem shows that the identity $q(\zeta,z)=q_{\star}(\zeta,z)$ holds in $\{\zeta \in \mathbb C : |\zeta|<R_\delta, \Im \zeta >0 \}$. 
Consequently, every component $m_{z,j}$ extends holomorphically through $(-R_\delta,R_\delta)$ via $m_z(\zeta)=q_{\star}(\zeta,z)/ (|z|^2-q_{\star}(\zeta,z)^{\circ 2})$, and is real on this interval.
The Stieltjes inversion formula then gives
\[
        (-R_\delta,R_\delta)\cap\supp\nu_z=\varnothing,
\]
which proves \eqref{eq:gap-lower}.

We now prove the first inclusion.  Choose $c\leq\kappa$ and
$0<\tau_0<1$.  Let $0<\tau<\tau_0$ and suppose that
$z\in\mathbb D_{c\tau^{3/2}}$, so that $g(z)\leq c\tau^{3/2}$.
If $\delta\leq0$, then \eqref{eq:ell-inside} gives $\ell(z)=\infty$.
If $\delta>0$, then \eqref{eq:gap-lower} gives
\[
        \kappa\min\{\delta^{3/2},1\}
        \leq
        c\tau^{3/2}.
\]
The case $\delta\geq1$ is impossible because $c\leq\kappa$ and $\tau<1$.
Thus
\[
        \delta
        \leq
        \left(\frac{c}{\kappa}\right)^{2/3}\tau
        \leq
        \tau.
\]
By \eqref{eq:ell-lower}, $\ell(z)\geq\delta^{-1}\geq\tau^{-1}$.
In both cases $z\in\widetilde{\mathbb D}_\tau$, and therefore
\[
        \mathbb D_{c\tau^{3/2}}
        \subseteq
        \widetilde{\mathbb D}_\tau.
\]
This completes the proof.

\subsection{Proof of Lemma~\ref{202604211403}}\label{sec:proof-202604211403}

Here we follow the proof strategy of Proposition 3.2 in \cite{AltErdosKruger2018}. The algebraic operations and comparisons are to be understood entrywise.

We first need to show an auxiliary bound,
\begin{equation}\label{202604221205}
        \eta \lesssim \langle u\rangle \lesssim 1.
\end{equation}
From \eqref{202604211349} and Assumption \ref{asm:moment}, we have
\begin{equation}\label{202604221153}
        u=\frac{\eta+\mathbb T u}{(\eta+\mathbb T u)(\eta+\mathbb T u)+|z|^2}\asymp \frac{\eta+\langle u\rangle}{(\eta+\langle u\rangle)(\eta+\langle u\rangle)+|z|^2}.
\end{equation}
Since $|z|>b_{\star,n} + \varepsilon \gtrsim 1$, then the lower bound follows immediately from \eqref{202604221153}.

To show the upper bound, we have from \eqref{202604211349} that
\begin{equation*}
        1=\eta u+u(\mathbb T u)+\frac{|z|^2u}{\eta+\mathbb T u}\geq u(\mathbb T u).
\end{equation*}
Taking the average gives
\begin{equation*}
        1\geq \langle u(\mathbb T u)\rangle\gtrsim \langle u\rangle ^2.
\end{equation*}
Thus the upper bound in \eqref{202604221205} follows. Now \eqref{202604221153} implies 
\begin{equation}\label{202604221450}
        u\asymp \frac{1}{\eta +\langle u \rangle+\frac{|z|^2}{\eta+\langle u\rangle}}\asymp \langle u\rangle,
\end{equation}
and
\begin{equation}\label{202604221434}
        \eta=u(\eta+\mathbb T u)(\eta+\mathbb T u)+|z|^2u-\mathbb T u.
\end{equation}
Since we assume $\varrho(\mathbb T)=1$, by the Perron-Frobenius theorem, there exists a vector $v \in \mathbb{R}^{n}_+$  such that
\begin{equation*}
        \mathbb T v = v , \quad \langle v  \rangle=1, \quad v \asymp 1.
\end{equation*} 
Taking the scalar product of \eqref{202604221434} with $v$, we get
\begin{equation*}
        \eta=\langle (u(\eta+\mathbb T u)(\eta+\mathbb T u))v \rangle+(|z|^2-1)\langle v u\rangle \asymp \langle u\rangle^3+(|z|^2-1)\langle u\rangle.
\end{equation*}
Now we can conclude the estimate \eqref{202604221448} for $\langle u\rangle$, and thus for $u$ by \eqref{202604221450}.

\subsection{Proof of Lemma~\ref{lem:local-law}}\label{sec:proof-local-law}

\subsubsection{Cumulant norms and correlation assumptions}\label{sec:EKS-cumulant-assumptions}

In this section, we recall the cumulant norms and correlation assumptions from \cite{Erdos2019random}, which are used to establish the local law for $H_z$ in Lemma~\ref{lem:local-law}.
We state the real-symmetric version, which is the version needed for the random part of the linearization considered in this paper.  Let $\mathcal J$ be the row--column index set of the matrix under consideration, let $N\deq |\mathcal J|$, and set $\mathcal I\deq \mathcal J\times\mathcal J$.  For $a,b\in\mathcal J$ we abbreviate the ordered pair $(a,b)\in\mathcal I$ by $ab$.  If $\mathcal W=(\mathfrak w_\alpha)_{\alpha\in\mathcal I}$ denotes the centered, $N^{1/2}$-rescaled random part of the matrix, then for $k\geq 2$ we write
\[
  \kappa(\alpha_1,\ldots,\alpha_k)
  \deq
  \kappa(\mathfrak w_{\alpha_1},\ldots,\mathfrak w_{\alpha_k}),
  \qquad \alpha_1,\ldots,\alpha_k\in\mathcal I,
\]
for the joint cumulant. 
We use the same notation $\kappa(\xi_1,\ldots,\xi_k)$ for the joint cumulant of arbitrary random variables. 
All sums over variables of the form $a_i,b_i$ are over $\mathcal J$, while all sums over variables of the form $\alpha_i$ are over $\mathcal I$.

\begin{definition}[Cumulant norms]\label{def:EKS-cumulant-norms}
For each $k\geq2$, define
\[
  |\!|\!|\kappa|\!|\!|_k
  \deq
  |\!|\!|\kappa|\!|\!|^{\mathrm{av}}_k
  +
  |\!|\!|\kappa|\!|\!|^{\mathrm{iso}}_k,
  \qquad
  |\!|\!|\kappa|\!|\!|_{\le R}^{\mathrm{av}}
  \deq
  \max_{2\leq k\leq R}|\!|\!|\kappa|\!|\!|_k^{\mathrm{av}},
  \qquad
  |\!|\!|\kappa|\!|\!|_{\le R}^{\mathrm{iso}}
  \deq
  \max_{2\leq k\leq R}|\!|\!|\kappa|\!|\!|_k^{\mathrm{iso}}.
\]
The averaged norms are
\begin{align}
  |\!|\!|\kappa|\!|\!|_2^{\mathrm{av}}
  &\deq
  \left\|
    \bigl(|\kappa(\alpha,\beta)|\bigr)_{\alpha,\beta\in\mathcal I}
  \right\|,
  \label{eq:EKS-av2}
  \\
  |\!|\!|\kappa|\!|\!|_k^{\mathrm{av}}
  &\deq
  N^{-2}\sum_{\alpha_1,\ldots,\alpha_k\in\mathcal I}
  |\kappa(\alpha_1,\ldots,\alpha_k)|,
  \qquad k\geq4,
  \label{eq:EKS-av-high}
\end{align}
and, for third cumulants,
\begin{align}
  |\!|\!|\kappa|\!|\!|_3^{\mathrm{av}}
  \deq{}&
  \left\|
    \left(
      \sum_{\alpha_1\in\mathcal I}|\kappa(\alpha_1,\beta,\gamma)|
    \right)_{\beta,\gamma\in\mathcal I}
  \right\|
  \notag\\
  &\quad+
  \inf_{\kappa=\kappa_{dd}+\kappa_{dc}+\kappa_{cd}+\kappa_{cc}}
  \Bigl(
    |\!|\!|\kappa_{dd}|\!|\!|_{dd}
    +|\!|\!|\kappa_{dc}|\!|\!|_{dc}
    +|\!|\!|\kappa_{cd}|\!|\!|_{cd}
    +|\!|\!|\kappa_{cc}|\!|\!|_{cc}
  \Bigr),
  \label{eq:EKS-av3}
\end{align}
where the infimum is over all decompositions of the third cumulant into four symmetric functions $\kappa_{dd}$, $\kappa_{dc}$, etc. 
The letters $d$ and $c$ in the subscripts refer to ``direct'' and ``cross''.
The corresponding norms appearing in \eqref{eq:EKS-av3} are defined by
\begin{align}
  |\!|\!|\kappa|\!|\!|_{cc}
  =|\!|\!|\kappa|\!|\!|_{dd}
  &\deq
  N^{-1}
  \left[
    \sum_{b_2,a_3}
    \left(
      \sum_{a_2,b_3}\sum_{\alpha_1}
      |\kappa(\alpha_1,a_2b_2,a_3b_3)|
    \right)^2
  \right]^{1/2},
  \label{eq:EKS-dd-cc}
  \\
  |\!|\!|\kappa|\!|\!|_{cd}
  &\deq
  N^{-1}
  \left[
    \sum_{b_3,a_1}
    \left(
      \sum_{a_3,b_1}\sum_{\alpha_2}
      |\kappa(a_1b_1,\alpha_2,a_3b_3)|
    \right)^2
  \right]^{1/2},\notag
  \\
  |\!|\!|\kappa|\!|\!|_{dc}
  &\deq
  N^{-1}
  \left[
    \sum_{b_1,a_2}
    \left(
      \sum_{a_1,b_2}\sum_{\alpha_3}
      |\kappa(a_1b_1,a_2b_2,\alpha_3)|
    \right)^2
  \right]^{1/2}.\notag
\end{align}
For $x=(x_a)_{a\in\mathcal J}\in\ell^2(\mathcal J)$ define two nonnegative $\mathcal J\times\mathcal J$ matrices
\begin{align*}
  \bigl(A_d^{\kappa,x}\bigr)_{ij}
  \deq
  \left[
    \sum_{\ell\in\mathcal J}
    \left|
      \sum_{a\in\mathcal J}x_a\kappa(ai,\ell j)
    \right|^2
  \right]^{1/2},
	\qquad
  \bigl(A_c^{\kappa,x}\bigr)_{ij}
  \deq
  \left[
    \sum_{\ell\in\mathcal J}
    \left|
      \sum_{a\in\mathcal J}x_a\kappa(ai,j\ell)
    \right|^2
  \right]^{1/2}.
\end{align*}
Then
\[
  |\!|\!|\kappa|\!|\!|_d
  \deq
  \sup_{\lVert x\rVert_2\leq1}
  \lVert A_d^{\kappa,x}\rVert,
  \qquad
  |\!|\!|\kappa|\!|\!|_c
  \deq
  \sup_{\lVert x\rVert_2\leq1}
  \lVert A_c^{\kappa,x}\rVert.
\]
The second cumulant isotropic norm is
\begin{equation}
  |\!|\!|\kappa|\!|\!|_2^{\mathrm{iso}}
  \deq
  \inf_{\kappa=\kappa_d+\kappa_c}
  \Bigl(
    |\!|\!|\kappa_d|\!|\!|_d
    +|\!|\!|\kappa_c|\!|\!|_c
  \Bigr),
  \label{eq:EKS-iso2}
\end{equation}
where the infimum is over all decompositions of $\kappa$ into the sum of symmetric $\kappa_d$ and $\kappa_c$.  
Finally, for $k\geq3$,
\begin{equation}
  |\!|\!|\kappa|\!|\!|_k^{\mathrm{iso}}
  \deq
  \left\|
    \left(
      \sum_{\alpha_1,\ldots,\alpha_{k-2}\in\mathcal I}
      |\kappa(\alpha_1,\ldots,\alpha_{k-2},\beta,\gamma)|
    \right)_{\beta,\gamma\in\mathcal I}
  \right\|.
  \label{eq:EKS-isok}
\end{equation}
\end{definition}

\begin{assumption}[Assumption (C) in \cite{Erdos2019random}, $\kappa$--correlation decay]\label{asm:EKS2019-C}
There exist constants $C, C_{\varepsilon,R}<\infty$, independent of $N$, such that for every $R\in\mathbb N$ and every $\varepsilon>0$,
\[
  |\!|\!|\kappa|\!|\!|_2^{\mathrm{iso}}\leq C,
  \qquad
  \max_{2\leq k\leq R}|\!|\!|\kappa|\!|\!|_k
  \leq C_{\varepsilon,R}N^{\varepsilon}.
\]
\end{assumption}

\begin{assumption}[Assumption (D) in \cite{Erdos2019random}, higher-order correlation decay]\label{asm:EKS2019-D}
There exists $\mu>0$ such that the following holds: For every $\alpha\in\mathcal I$ and every $q,R\in\mathbb N$, there is a sequence of nested sets
\[
  \alpha\in\mathcal N_1(\alpha)
  \subset \mathcal N_2(\alpha)
  \subset \cdots
  \subset \mathcal N_R(\alpha)
  =:\mathcal N(\alpha)
  \subset\mathcal I,
  \qquad
  |\mathcal N(\alpha)|\leq N^{1/2-\mu}.
\]
For any $n_1,\ldots,n_q<R$, any $\alpha_1,\ldots,\alpha_q\in\mathcal I$, and any functions $f,g_1,\ldots,g_q$ of the indicated variables,
\begin{align*}
&\left|
\kappa\Bigl(
  f\bigl(\mathcal W_{\mathcal I\setminus\bigcup_{j=1}^q\mathcal N_{n_j+1}(\alpha_j)}\bigr),
  g_1\bigl(\mathcal W_{\mathcal N_{n_1}(\alpha_1)\setminus\bigcup_{j\neq1}\mathcal N(\alpha_j)}\bigr),
  \ldots,
  g_q\bigl(\mathcal W_{\mathcal N_{n_q}(\alpha_q)\setminus\bigcup_{j\neq q}\mathcal N(\alpha_j)}\bigr)
\Bigr)
\right|
\notag\\
&\hspace{5cm}
\leq
C_{R,q,\mu}N^{-3q}
\lVert f\rVert_{q+1}
\prod_{j=1}^q\lVert g_j\rVert_{q+1}.
\end{align*}
Here, for $\mathcal A\subset\mathcal I$, $\mathcal W_{\mathcal A}$ denotes the family $(\mathfrak w_\alpha)_{\alpha\in\mathcal A}$, and $\lVert\cdot\rVert_p$ denotes the $L^p$ norm with respect to the underlying randomness. 
\end{assumption}

\subsubsection{Proof of Lemma~\ref{lem:local-law}}

We verify the assumptions of the outside-spectrum local law of
\cite[Theorem~2.1]{Erdos2019random} for the Hermitian matrix $H_z$.

Since $|z|\le C_z$, we have $\|\E H_z\|=|z|\le C_z$, which verifies the bounded expectation assumption (see \cite[Assumption (A)]{Erdos2019random}).  
Moreover, we define $\mathcal W\deq \sqrt N\,\mathcal H$ and write
\[
    H_z=\E H_z+N^{-1/2}\mathcal W.
\]

Let
\[
    \mathcal J\deq \intset{4}\times\intset{n},
    \qquad
    \mathcal I\deq \mathcal J\times\mathcal J.
\]
The entries of $\mathcal W$ are
\[
    \mathfrak w_{(r,i),(s,j)}
    =
    \begin{cases}
        \sqrt N\,P_{ij}X_{ij}, & (r,s)\in\{(1,4),(4,1)\},\\[2mm]
        \sqrt N\,(1-P_{ij})X_{ij}, & (r,s)\in\{(2,3),(3,2)\},\\[2mm]
        0, & \text{otherwise}.
    \end{cases}
\]
Since $\sqrt N=2\sqrt n$ and $0\le P_{ij}\le1$, Assumption~\ref{asm:moment} implies that, for every fixed $q\in\mathbb N$,
\[
    \sup_{\alpha\in\mathcal I}\mathbb E|\mathfrak w_\alpha|^q\le C_q.
\]
This verifies the finite moment assumption (see \cite[Assumption (B)]{Erdos2019random}).

We now verify the cumulant assumptions.  For each unordered pair
\[
    e=\{i,j\},\qquad i,j\in\intset{n},\quad i\leq j,
\]
define its companion class by
\[
    \mathscr C_e
    \deq 
    \left\{
      ((r,k),(s,\ell))\in\mathcal I:
      (r,s)\in\{(1,4),(4,1),(2,3),(3,2)\},
      \ \{k,\ell\}=e
    \right\}.
\]
Thus $|\mathscr C_e|\le8$, and $|\mathscr C_e|\le4$ when $i=j$.  
We call a label $\alpha=((r,i),(s,j))\in\mathcal I$ \emph{nonzero} if
$(r,s)\in\{(1,4),(4,1),(2,3),(3,2)\}$, so that $\mathfrak w_\alpha$ is not identically zero.  We call it \emph{zero} if
$\mathfrak w_\alpha\equiv0$.  
If $\alpha$ is a nonzero label, let $\mathscr C(\alpha)$ denote the unique
class $\mathscr C_e$ containing $\alpha$.  If $\alpha$ is a zero label,
we set $\mathscr C(\alpha)\deq \{\alpha\}$.

The random variables attached to different nonzero companion classes are
independent, because they depend on disjoint families
\[
    (X_{ij},P_{ij}),\qquad i,j\in\intset{n},\quad i\leq j.
\]
Therefore, by the basic vanishing property of joint cumulants,
\[
    \kappa(\alpha_1,\ldots,\alpha_k)
    \deq 
    \kappa(\mathfrak w_{\alpha_1},\ldots,\mathfrak w_{\alpha_k})
    =0
\]
unless all nonzero labels $\alpha_1,\ldots,\alpha_k$ belong to the same
companion class.  

For every fixed $q\in\mathbb N$, the rescaled entries
$\mathfrak w_\alpha$ have uniformly bounded $q$-th moments.  Indeed,
if $\mathfrak w_\alpha$ is not identically zero, then it is either
$\sqrt N P_{ij}X_{ij}$ or $\sqrt N(1-P_{ij})X_{ij}$.  Since $N=4n$
and $0\le P_{ij}\le1$,
\[
    \mathbb E|\mathfrak w_\alpha|^q
    \le
    2^q\mathbb E|\sqrt n X_{ij}|^q
    \le C_q'.
\]
If $\mathfrak w_\alpha\equiv0$, the same bound is trivial.  Hence
\[
    \sup_{\alpha\in\mathcal I}\mathbb E|\mathfrak w_\alpha|^q
    \le C_q'.
\]
By the moment--cumulant formula,
\[
    \kappa(\mathfrak w_{\alpha_1},\ldots,\mathfrak w_{\alpha_k})
    =
    \sum_{\pi\in\mathcal P_k}
    (|\pi|-1)!(-1)^{|\pi|-1}
    \prod_{B\in\pi}
    \mathbb E\prod_{r\in B}\mathfrak w_{\alpha_r},
\]
where $\mathcal P_k$ is the set of partitions of $\{1,\ldots,k\}$, and $|\pi|$ is the number of blocks in the partition $\pi$.
Applying H{\" o}lder's inequality to
each block $B\in\pi$, we get
\[
    \left|
    \mathbb E\prod_{r\in B}\mathfrak w_{\alpha_r}
    \right|
    \le
    \prod_{r\in B}
    \left(\mathbb E|\mathfrak w_{\alpha_r}|^{|B|}\right)^{1/|B|}
    \le (C_k')^{|B|/k}.
\]
Therefore
\[
    \prod_{B\in\pi}
    \left|
    \mathbb E\prod_{r\in B}\mathfrak w_{\alpha_r}
    \right|
    \le (C_k')^{\frac{1}{k}\sum_{B\in\pi}|B|} = C_k'.
\]
Since the number of partitions of $\{1,\ldots,k\}$ depends only on $k$,
we conclude that
\[
    \bigl|\kappa(\alpha_1,\ldots,\alpha_k)\bigr|
    =
    \bigl|\kappa(\mathfrak w_{\alpha_1},\ldots,\mathfrak w_{\alpha_k})\bigr|
    \le C_k' \sum_{\pi\in\mathcal P_k}(|\pi|-1)! =: C_k
\]
uniformly in $\alpha_1,\ldots,\alpha_k\in\mathcal I$.

\medskip
\noindent\emph{Verification of Assumption~\ref{asm:EKS2019-C}.}

First consider $k\ge4$.  Since a nonzero cumulant forces all labels to
lie in one companion class, we have
\[
    |\!|\!|\kappa|\!|\!|_k^{\mathrm{av}}
    \overset{\eqref{eq:EKS-av-high}}{=}
    N^{-2}
    \sum_{\alpha_1,\ldots,\alpha_k\in\mathcal I}
    |\kappa(\alpha_1,\ldots,\alpha_k)|  
    \le
    N^{-2}
    \sum_{\alpha_1\in\mathcal I}
    \sum_{\alpha_2,\ldots,\alpha_k\in\mathscr C(\alpha_1)}
    C_k
    \le
    C_k\,8^{k-1},
\]
because $|\mathcal I|=N^2$.

For $k\ge3$, define the nonnegative matrix
\[
    M^{(k)}_{\beta\gamma}
    \deq 
    \sum_{\alpha_1,\ldots,\alpha_{k-2}\in\mathcal I}
    |\kappa(\alpha_1,\ldots,\alpha_{k-2},\beta,\gamma)|,
    \qquad
    \beta,\gamma\in\mathcal I.
\]
Then $M^{(k)}_{\beta\gamma}=0$ unless $\gamma\in\mathscr C(\beta)$, and
in the latter case
$
    M^{(k)}_{\beta\gamma}\le C_k\,8^{k-2}.
$
Each row and each column of $M^{(k)}$ has at most $8$ nonzero entries.  Thus, \label{errpage:s4-cumulant-matrix}
\[
	\max\big\{ \|M^{(k)}\|_1, \; \|M^{(k)}\|_{\infty} \big\} \le C_k\,8^{k-1}.
\]
Therefore,
\[
    |\!|\!|\kappa|\!|\!|_k^{\mathrm{iso}}
    \overset{\eqref{eq:EKS-isok}}{=}
    \|M^{(k)}\|
    \le
	\sqrt{\|M^{(k)}\|_1 \cdot \|M^{(k)}\|_{\infty}}
	\leq
    C_k\,8^{k-1},
    \qquad k\ge3.
\]

For $k=2$, the averaged norm satisfies the same estimate:
\[
    |\!|\!|\kappa|\!|\!|_2^{\mathrm{av}}
    \overset{\eqref{eq:EKS-av2}}{=}
    \left\|
        \bigl(|\kappa(\alpha,\beta)|\bigr)_{\alpha,\beta\in\mathcal I}
    \right\|
    \le C.
\]

It remains to bound the special second-order isotropic norm.  
For a nonzero label $\alpha=((r,i),(s,j))\in\mathcal I$, we write
\[
    \varpi(\alpha)\deq (i,j),
    \qquad
    \varpi(\alpha)^t\deq (j,i).
\]
Define relations $D(\alpha,\beta)$ and $C(\alpha,\beta)$ as follows:
\[
    D(\alpha,\beta)
    \quad\Longleftrightarrow\quad
    \varpi(\beta)=\varpi(\alpha),
\]
and
\[
    C(\alpha,\beta)
    \quad\Longleftrightarrow\quad
    \varpi(\beta)=\varpi(\alpha)^t
    \ \text{and the two coordinates of }\varpi(\alpha)\text{ are distinct}.
\]
Set
\[
    \kappa_d(\alpha,\beta)
    \deq 
    \kappa(\alpha,\beta)\indicator_{D(\alpha,\beta)},
    \qquad
    \kappa_c(\alpha,\beta)
    \deq 
    \kappa(\alpha,\beta)\indicator_{C(\alpha,\beta)}.
\]
Then $\kappa=\kappa_d+\kappa_c$, and both $\kappa_d,\kappa_c$ are
symmetric functions on $\mathcal I^2$.

We prove $|\!|\!|\kappa_d|\!|\!|_d=\bigo(1)$.  Let
$u=(s,j), v=(t,\ell) \in \mathcal J$.\label{errpage:s4-cumulant-direct}
By definition,
\[
    \bigl(A_d^{\kappa_d,x}\bigr)_{uv}^2
    =
    \sum_{m\in\mathcal J}
    \left|
        \sum_{a\in\mathcal J}
        x_a\,\kappa_d(au,mv)
    \right|^2.
\]
Write $a=(r,i)$ and $m=(p,k)$.  If
$\kappa_d(au,mv)\neq0$, then the direct relation forces
\[
    (k,\ell)=\varpi(mv)=\varpi(au)=(i,j).
\]
Therefore the entry $\bigl(A_d^{\kappa_d,x}\bigr)_{uv}$ vanishes unless
$\ell=j$.  In each nonzero summand, the direct relation also forces $k=i$.
Under these conditions, we have
\[
    \bigl(A_d^{\kappa_d,x}\bigr)_{uv}^2
    \le
    C
    \sum_{p=1}^4\sum_{i=1}^n
    \left(
        \sum_{r=1}^4 |x_{(r,i)}|
    \right)^2                                      
    \le
    C\sum_{r=1}^4\sum_{i=1}^n |x_{(r,i)}|^2
    =
    C\|x\|_2^2.
\]
Thus, for $\|x\|_2\le1$, all entries of $A_d^{\kappa_d,x}$ are bounded
by a constant, and each row and each column has at most $4$ nonzero entries.  Hence
\[
    \|A_d^{\kappa_d,x}\|\le C,
    \qquad
    |\!|\!|\kappa_d|\!|\!|_d\le C.
\]
Similarly, we have
$
    |\!|\!|\kappa_c|\!|\!|_c\le C.
$
Hence, we have
\[
    |\!|\!|\kappa|\!|\!|_2^{\mathrm{iso}}
    \overset{\eqref{eq:EKS-iso2}}{\le}
    |\!|\!|\kappa_d|\!|\!|_d
    +
    |\!|\!|\kappa_c|\!|\!|_c
    \le C.
\]

Finally we treat the averaged third-order norm $|\!|\!|\kappa|\!|\!|_3^{\mathrm{av}}$.  The operator-norm term
in $|\!|\!|\kappa|\!|\!|_3^{\mathrm{av}}$ was already bounded above as the case $k=3$ of $|\!|\!|\kappa|\!|\!|_k^{\mathrm{iso}}$.  For the special direct-cross
part, use the trivial decomposition
\[
    \kappa_{dd}\deq \kappa,
    \qquad
    \kappa_{dc}\deq \kappa_{cd}\deq \kappa_{cc}\deq 0.
\]
This is admissible because $\kappa$ is symmetric in its three label arguments and the zero functions are symmetric.
We claim that $|\!|\!|\kappa|\!|\!|_{dd}=\bigo(1)$.  
Recall the definition of $|\!|\!|\kappa|\!|\!|_{dd}$ from \eqref{eq:EKS-dd-cc}.
Fix $b_2,a_3\in\mathcal J$.  If
\[
    \kappa(\alpha_1,a_2b_2,a_3b_3)\neq0,
\]
then $\varpi(a_2b_2)=\varpi(a_3b_3)$.
Write $a_2b_2=((r_2,i_2),(s_2,j_2))$ and $a_3b_3=((r_3,i_3),(s_3,j_3))$.
For fixed $b_2$ and $a_3$, the coordinates $j_2$ and $i_3$ are fixed.
The direct relation forces $i_2=i_3$ and $j_3=j_2$.
Thus, $a_2$ has only $4$ possible block choices, $b_3$ has only
$4$ possible block choices, and $\alpha_1$ must belong to the same
companion class associated with the unordered pair $\{i_3,j_2\}$, which
has size at most $8$.  Therefore
\[
	    \sum_{a_2,b_3}\sum_{\alpha_1}
	    |\kappa(\alpha_1,a_2b_2,a_3b_3)|
	    \le 128C_3 = \bigo(1)
	\]
	\label{errpage:s4-third-cumulant-bound}
	uniformly in $b_2,a_3$.
Therefore, using the definition \eqref{eq:EKS-dd-cc}, we have $|\!|\!|\kappa|\!|\!|_{dd} \le C$, and thus
\[
    |\!|\!|\kappa|\!|\!|_3^{\mathrm{av}}\le C.
\]

Combining the estimates above, for every fixed $R\in\mathbb N$,
\[
    |\!|\!|\kappa|\!|\!|_2^{\mathrm{iso}}\le C,
    \qquad
    \max_{2\le k\le R}|\!|\!|\kappa|\!|\!|_k\le C_R.
\]
This is stronger than Assumption~\ref{asm:EKS2019-C}.

\medskip
\noindent\emph{Verification of Assumption~\ref{asm:EKS2019-D}.}

For a nonzero label $\alpha$, set
\[
    \mathcal N_1(\alpha)=\cdots=\mathcal N_R(\alpha)\deq \mathscr C(\alpha).
\]
For a zero label, set
\[
    \mathcal N_1(\alpha)=\cdots=\mathcal N_R(\alpha)\deq \{\alpha\}.
\]
Then $\alpha\in\mathcal N_1(\alpha)$, the sets are nested, and
\[
    |\mathcal N_R(\alpha)|\le8\le N^{1/2-\mu}
\]
for any fixed $\mu\in(0,1/2)$ and all sufficiently large $N$.

Consider the cumulant in Assumption~\ref{asm:EKS2019-D}.  If the companion classes
$\mathscr C(\alpha_1),\ldots,\mathscr C(\alpha_q)$ are pairwise distinct, then the variables entering
$
    g_j\bigl(
        \mathcal W_{\mathcal N_{n_j}(\alpha_j)
        \setminus\cup_{\ell\neq j}\mathcal N(\alpha_\ell)}
    \bigr)
$
depend on mutually independent companion classes, while the variable
$
    f\bigl(
        \mathcal W_{\mathcal I
        \setminus\cup_{j=1}^q \mathcal N_{n_j+1}(\alpha_j)}
    \bigr)
$
depends only on the remaining classes.  Therefore the collection of arguments of the cumulant can be split into independent subfamilies, and the cumulant is zero.
If two companion classes coincide, say
$\mathscr C(\alpha_i)=\mathscr C(\alpha_j)$ with $i\neq j$, then
\[
    \mathcal N_{n_i}(\alpha_i)
    \setminus
    \bigcup_{\ell\neq i}\mathcal N(\alpha_\ell)
    =
    \varnothing.
\]
Thus the corresponding argument $g_i(\mathcal W_\varnothing)$ is
constant.  A joint cumulant of order at least two vanishes whenever one
of its arguments is constant.  Hence the cumulant in
Assumption~\ref{asm:EKS2019-D} is again zero.
Thus the left-hand side of Assumption~\ref{asm:EKS2019-D} is identically
zero for the above choice of neighbourhoods, and Assumption~(D) follows.

We have verified \citet{Erdos2019random}'s assumptions.  
Applying \cite[Theorem~2.1]{Erdos2019random} gives the stated outside-spectrum
isotropic and averaged local laws.

\subsection{Proof of Lemma~\ref{lem:uniform-exterior-singular-gap}}
Before proving Lemma~\ref{lem:uniform-exterior-singular-gap}, we first establish a lemma which shows that, with high probability, there are no eigenvalues of $H_z$ outside the support of the self-consistent density  $\supp\nu_z$.

\begin{lemma}[No eigenvalues outside $\supp\nu_z$]\label{lem:no-eigs-gap}
	Fix $\varepsilon,\delta'>0$ and $C_z<\infty$.
	Let $z\in\mathbb C$ satisfy
	\[
		b_{\star,n}+\varepsilon<|z|\leq C_z,
	\]
	and let $I\subset\R$ be a fixed compact interval such that
	\[
		\dist(I,\supp\nu_z)\geq\delta'.
	\]
	Under Assumption~\ref{asm:moment},
	\[
		\Pr\bigl(\spec(H_z)\cap I=\varnothing\bigr)\to1.
	\]
\end{lemma}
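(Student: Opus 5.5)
The plan is the standard ``no eigenvalues outside the spectrum'' argument: combine the averaged local law of Lemma~\ref{lem:local-law}, which holds uniformly down to scale $\Im\zeta\sim N^{-1+\epsilon}$, with the fact that $\nu_z$ puts no mass near $I$. Without loss of generality enlarge $I$ slightly: take $I'\supset I$ with $\dist(I',\supp\nu_z)\geq\delta'/2$. The counting function $\#\{\lambda\in\spec(H_z)\cap I\}$ is then bounded by a smooth-test-function count, which is controlled through the Stieltjes transform of the empirical spectral measure.

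\textbf{Step 1 (quantitative counting).} For $E\in I$ and $\eta=N^{-1+\epsilon}$ with $\zeta=E+\ii\eta$, we have $\zeta\in\mathbb D_{\mathrm{out}}^\tau$ for any $\tau>0$, since $\dist(\zeta,\supp\nu_z)\geq\delta'$. The averaged law with $B=I$ gives
\[
\Big|\frac1N\Tr\mathfrak G_z(\zeta)-\frac1N\Tr M_z(\zeta)\Big|\prec N^{-1}.
\]
Taking imaginary parts, $\Im\frac1N\Tr M_z(E+\ii\eta)=\int\frac{\eta\,\dif\nu_z(x)}{(x-E)^2+\eta^2}\leq\eta/\delta'^2$. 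On the other hand, if $H_z$ has an eigenvalue $\lambda$ with $|\lambda-E|\leq\eta$, then $\Im\frac1N\Tr\mathfrak G_z(\zeta)\geq\frac{1}{2N\eta}=\frac12N^{-\epsilon}$. This contradicts $\Im\frac1N\Tr\mathfrak G_z\leq C\eta+N^{-1+\epsilon/2}$, which holds with overwhelming probability.

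\textbf{Step 2 (union bound).} Apply Step~1 on a grid of $E\in I$ with mesh $\eta$. This uses $O(N)$ points, each failing with probability $N^{-D}$, and every eigenvalue in $I$ lies within $\eta$ of some grid point. Hence $\spec(H_z)\cap I=\varnothing$ with probability $1-N^{-D+1}$. The local law is uniform in $\zeta\in\mathbb D_{\mathrm{out}}^\tau$, so the grid union bound is harmless. Alternatively, the stated $\sup_\zeta$ form of Lemma~\ref{lem:local-law} gives the bound directly.

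\textbf{Main obstacle.} The main point of care is that Lemma~\ref{lem:local-law}, as stated, covers only $\zeta\in\mathbb C^+$ away from $\supp\nu_z$; one must check that it indeed reaches $\Im\zeta=N^{-1+\epsilon}$ on $I$. It does, since the domain imposes no lower bound on $\Im\zeta$ beyond $\zeta\in\mathbb C^+$, and \cite[Theorem~2.1]{Erdos2019random} is exactly an outside-spectrum law valid down to the real axis. If the uniformity up to the real axis were in doubt, I would instead use the isotropic law with $x=y=e_j$. Summing over $j$ gives the same trace bound, and a Helffer--Sjöstrand representation with a smooth cutoff $f$ supported in $I'$ and equal to $1$ on $I$ then yields $\Tr f(H_z)\prec N^{-1}\cdot N=O(N^{\epsilon})$. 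This is not yet zero, so the direct pointwise argument of Step~1 is the one I would rely on.
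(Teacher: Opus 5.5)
Your argument is correct and is essentially the paper's proof. The paper takes $\eta_N=N^{-1+\tau}$ with $\tau<1/2$; that is the condition you leave implicit, and it is what makes $C\eta\ll N^{-\epsilon}$. It then combines the $\sup_\zeta$ form of the averaged law in Lemma~\ref{lem:local-law} with $\Im\frac1N\Tr M_z(E+\ii\eta_N)\le\eta_N/\delta'^2$. Finally it evaluates directly at $\zeta=\lambda+\ii\eta_N$ for a putative eigenvalue $\lambda\in I$, where $\Im\frac1N\Tr\mathfrak G_z\ge (N\eta_N)^{-1}$ gives the contradiction. So the grid union bound, the enlargement to $I'$, and the Helffer--Sj\"ostrand fallback are unnecessary, though harmless.
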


Set
$
        \delta\deq \varepsilon/2.
$
For every $z\in\mathscr A_{n,\varepsilon,C}$,
$
        |z|>b_{\star,n}+\delta.
$
Lemma~\ref{lem:support}, applied with the separation parameter $\delta$
on the bounded range $|z|\leq C$, gives a deterministic constant
$d_{\varepsilon,C}>0$, independent of $n$ and $z$, such that
\[
        \dist(0,\supp\nu_z)>d_{\varepsilon,C}
        \qquad
        \text{for all }z\in\mathscr A_{n,\varepsilon,C}.
\]
Let
\[
        a_{\varepsilon,C}\deq\frac{d_{\varepsilon,C}}{2},
        \qquad
        I_{\varepsilon,C}\deq[-a_{\varepsilon,C},a_{\varepsilon,C}].
\]
If $x\in I_{\varepsilon,C}$ and $t\in\supp\nu_z$, then
\[
        |t-x|
        \geq \bigl| |t|-|x|\bigr|
        \geq \dist(0,\supp\nu_z)-a_{\varepsilon,C}
        >a_{\varepsilon,C}.
\]
Consequently,
\[
        \dist(I_{\varepsilon,C},\supp\nu_z)\geq a_{\varepsilon,C}
        \qquad
        \text{uniformly for }z\in\mathscr A_{n,\varepsilon,C}.
\]

% We next transfer the deterministic support gap to a samplewise spectral
% gap on the whole annulus.  
Set
\[
        h\deq\frac{a_{\varepsilon,C}}{4}
\]
and choose a $h$-net $\mathcal N_{n,\varepsilon,C}$ of $\mathscr A_{n,\varepsilon,C}$. 
The disks of radius $h/2$ centered at the points of
$\mathcal N_{n,\varepsilon,C}$ have pairwise disjoint interiors and are
contained in the disk $\{z\in\mathbb C:|z|\leq C+h/2\}$.  Comparing their
areas gives
\[
        |\mathcal N_{n,\varepsilon,C}|
        \pi\left(\frac h2\right)^2
        \leq
        \pi\left(C+\frac h2\right)^2.
\]
Hence
\[
        |\mathcal N_{n,\varepsilon,C}|
        \leq
        \left(1+\frac{2C}{h}\right)^2
        \leq M_{\varepsilon,C}
\]
for a deterministic integer $M_{\varepsilon,C}$ independent of $n$.

For each $w\in\mathcal N_{n,\varepsilon,C}$, we have
\[
        b_{\star,n}+\delta<|w|\leq C,
        \qquad
        \dist(I_{\varepsilon,C},\supp\rho_w)\geq a_{\varepsilon,C}.
\]
With parameters $(\varepsilon,\delta',C_z) =(\delta,a_{\varepsilon,C},C)$, Lemma~\ref{lem:no-eigs-gap} applies to every deterministic sequence $w_n\in\mathcal N_{n,\varepsilon,C}$ and gives
\[
        \Pr\bigl(\spec(H_{w_n})\cap I_{\varepsilon,C}\neq\varnothing\bigr)
        \to 0.
\]
The union-bound gives	\[
        \Pr\left(
                \bigcup_{w\in\mathcal N_{n,\varepsilon,C}}
                \{\spec(H_w)\cap I_{\varepsilon,C}\neq\varnothing\}
        \right)
        \leq
        \sum_{j=1}^{M_{\varepsilon,C}}
        \Pr\bigl(\spec(H_{w_{n,j}})\cap I_{\varepsilon,C}
        \neq\varnothing\bigr)
        \to 0.
\]
Therefore the event
\[
        \Omega_n
        \deq
        \bigcap_{w\in\mathcal N_{n,\varepsilon,C}}
        \{\spec(H_w)\cap I_{\varepsilon,C}=\varnothing\}
\]
satisfies $\Pr(\Omega_n)\to1$.  On $\Omega_n$,
\begin{equation}\label{eq:exterior-gap-on-net}
        \dist(0,\spec(H_w))\geq a_{\varepsilon,C}
        \qquad
        \text{for every }w\in\mathcal N_{n,\varepsilon,C}.
\end{equation}

Fix an arbitrary $z\in\mathscr A_{n,\varepsilon,C}$ and choose
$w\in\mathcal N_{n,\varepsilon,C}$ such that $|z-w|\leq h$.  Directly
from the definition of the Hermitian linearization, we have
\[
        H_z-H_w
        =
        \begin{pmatrix}
                0 & -(z-w)I_{2n}\\
                -(\overline z-\overline w)I_{2n} & 0
        \end{pmatrix},
\]
and hence
\begin{equation}\label{eq:linearization-Lipschitz}
        \|H_z-H_w\|_2=|z-w|\leq h.
\end{equation}
Let
$\lambda_1(H_z)\geq\cdots\geq\lambda_{4n}(H_z)$ and
$\lambda_1(H_w)\geq\cdots\geq\lambda_{4n}(H_w)$.
Weyl's perturbation inequality and
\eqref{eq:exterior-gap-on-net}--\eqref{eq:linearization-Lipschitz} imply,
for every $j\in\intset{4n}$,
\[
        |\lambda_j(H_z)|
        \geq
        |\lambda_j(H_w)|
        -|\lambda_j(H_z)-\lambda_j(H_w)|
        \geq
        a_{\varepsilon,C}-h
        =
        \frac{3a_{\varepsilon,C}}{4}.
\]
Taking the minimum over $j$ shows that, on $\Omega_n$,
\[
        \inf_{z\in\mathscr A_{n,\varepsilon,C}}
        \dist(0,\spec(H_z))
        \geq
        \frac{3a_{\varepsilon,C}}{4}.
\]
Combining this identity with
\[
        \dist(0,\spec(H_z))		
        =
        \sigma_{\min}(zI_{2n}-\mathcal X).
\]
proves \eqref{eq:uniform-exterior-gap-split-noise} with
\[
        \gamma_{\varepsilon,C}\deq\frac{3a_{\varepsilon,C}}{4}>0.
\]

Finally, suppose that $\|\mathcal E\|_2=\smallop(1)$.  Since
$\gamma_{\varepsilon,C}$ is deterministic and positive,
\[
        \Pr\left(
                \|\mathcal E\|_2\leq\frac{\gamma_{\varepsilon,C}}2
        \right)
        \to 1.
\]
No independence between $\mathcal E$ and $\mathcal X$ is needed: by the union
bound, the event
\[
        \widetilde\Omega_n
        \deq
        \Omega_n
        \cap
        \left\{
                \|\mathcal E\|_2\leq\frac{\gamma_{\varepsilon,C}}2
        \right\}
\]
satisfies
\[
        \Pr(\widetilde\Omega_n)
        \geq
        1-\Pr(\Omega_n^c)
        -\Pr\left(
                \|\mathcal E\|_2>\frac{\gamma_{\varepsilon,C}}2
        \right)
        \to 1.
\]
For any
$z\in\mathscr A_{n,\varepsilon,C}$, the variational characterization of
the least singular value and the triangle inequality give
\begin{align*}
        \sigma_{\min}(zI_{2n}-\mathcal X-\mathcal E)
        &=
        \inf_{\|v\|_2=1}
        \|(zI_{2n}-\mathcal X-\mathcal E)v\|_2\\
        &\geq
        \inf_{\|v\|_2=1}
        \bigl\{\|(zI_{2n}-\mathcal X)v\|_2-\|\mathcal E v\|_2\bigr\}\\
        &\geq
        \sigma_{\min}(zI_{2n}-\mathcal X)-\|\mathcal E\|_2.
\end{align*}
On $\widetilde\Omega_n$, we have
\[
        \inf_{z\in\mathscr A_{n,\varepsilon,C}}
        \sigma_{\min}(zI_{2n}-\mathcal X-\mathcal E)
        \geq
        \gamma_{\varepsilon,C}
        -\frac{\gamma_{\varepsilon,C}}2
        =
        \frac{\gamma_{\varepsilon,C}}2.
\]
This proves the second assertion.

\subsection{Proof of Lemma~\ref{lem:no-eigs-gap}}

Let 
\[
        m_N(\zeta)\deq \frac{1}{N}\Tr \mathfrak G_z(\zeta).
\]
Choose $0<\tau<\frac12$ and set
\[
        \eta_N\deq N^{-1+\tau}.
\]
Let $m$ be the Stieltjes transform of $\nu_z$.  For every $E\in I$, we have
\[
        \Im m(E+\ii\eta_N)
        =
        \int_{\mathbb R}
        \frac{\eta_N}{(x-E)^2+\eta_N^2}\,\dif\nu_z(x)
        \le
        \frac{\eta_N}{\delta'^2}.
\]
Hence
\[
        \sup_{E\in I}\Im m(E+\ii\eta_N)\le \frac{\eta_N}{\delta'^2}= \smallo(N^{-\tau}).
\]

From Lemma~\ref{lem:local-law}, we have
\[
        \sup_{E\in I}
        \left|
        m_N(E+\ii\eta_N)-m(E+\ii\eta_N)
        \right|
        \prec
        \frac{1}{N}.
\]
This implies that
\[
        \sup_{E\in I}
        \left|
        m_N(E+\ii\eta_N)-m(E+\ii\eta_N)
        \right|
        =\smallo(N^{-\tau})
\]
with probability tending to one for all sufficiently large $N$. Combining this with the previous deterministic bound gives
\[
        \sup_{E\in I}\Im m_N(E+\ii\eta_N)=\smallo(N^{-\tau})
\]
with probability tending to one.

Now suppose that $\spec(H_z)\cap I\neq\varnothing$, and choose $\lambda\in \spec(H_z)\cap I$. Then
\[
        \Im m_N(\lambda+\ii\eta_N)
        =
        \frac{1}{N}\sum_{k=1}^N
        \frac{\eta_N}{(\lambda_k(H_z)-\lambda)^2+\eta_N^2}
        \ge
        \frac{1}{N\eta_N}
        =
        N^{-\tau}.
\]
This contradicts the previous high-probability upper bound. Therefore,
\[
        \Pr \bigl(\spec(H_z)\cap I=\varnothing\bigr)\to  1.
\]
This completes the proof of Lemma~\ref{lem:no-eigs-gap}.

\subsection{Proof of Lemma~\ref{lem:quadratic form-convergence}}

Set $N=4n$.  For $z\in \mathscr K_n$, let
\[
        \mathcal X_z\deq \mathcal X-zI_{2n}
        =
        \begin{pmatrix}
                -zI_n & \mathsf A\\
                \mathsf B & -zI_n
        \end{pmatrix}.
\]
When $\mathcal X_z$ is invertible,
\[
        H_z^{-1}
        =
        \begin{pmatrix}
                0 & (\mathcal X_z^*)^{-1}\\
                \mathcal X_z^{-1} & 0
        \end{pmatrix},
        \qquad
        \mathcal X_z^{-1}
        =-
        \begin{pmatrix}
                zG(z) & \mathsf A\mathcal G(z)\\
                \mathsf B G(z) & z\mathcal G(z)
        \end{pmatrix}.
\]
Hence, in the $4\times4$ block decomposition of $H_z^{-1}$,
\begin{equation}\label{eq:Hz-inverse-blocks}
\begin{aligned}
        (H_z^{-1})_{31}=-zG(z),
        \qquad
        (H_z^{-1})_{41}=-\mathsf B G(z),\\
        (H_z^{-1})_{42}=-z\mathcal G(z),
        \qquad
        (H_z^{-1})_{32}=-\mathsf A\mathcal G(z).
\end{aligned}
\end{equation}

Since
$\mathscr K_n\subset\{z\in\mathbb C:|z|>b_{\star,n}+\varepsilon/2\}$,
Lemma~\ref{lem:support}, applied with $\varepsilon/2$, gives
$\gamma>0$, depending only on $\varepsilon$ and $C$, such that
\[
        (-\gamma,\gamma)\cap\supp\nu_z=\varnothing,
        \qquad z\in \mathscr K_n.
\]
Evaluating the MDE at $\zeta=0$ gives
\begin{equation}\label{eq:M-zero-limit}
        M_z(0)
        =
        \begin{pmatrix}
                0 & 0 & -\overline z^{-1}I_n & 0\\
                0 & 0 & 0 & -\overline z^{-1}I_n\\
                -z^{-1}I_n & 0 & 0 & 0\\
                0 & -z^{-1}I_n & 0 & 0
        \end{pmatrix}.
\end{equation}

We first upgrade the exterior singular gap to overwhelming probability.
Fix $\tau\in(0,1/2)$, set $\eta_0=N^{-1+\tau}$, and let
$I=[-\gamma/2,\gamma/2]$.  The support gap and the Stieltjes
representation of $M_z$ imply
\[
        \sup_{z\in \mathscr K_n,\; E\in I}
        \Im\frac1N\Tr M_z(E+\ii\eta_0)
        \leq C\eta_0.
\]
If $H_z$ had an eigenvalue $\lambda\in I$, then
\[
        \Im\frac1N\Tr\mathfrak G_z(\lambda+\ii\eta_0)
        \geq\frac1{N\eta_0}=N^{-\tau}.
\]
Since $\tau<1/2$, one has $\eta_0=\smallo(N^{-\tau})$.  The averaged estimate
in Lemma~\ref{lem:local-law}, which is
uniform for $z\in \mathscr K_n$ and $E\in I$, gives
\[
        \sup_{E\in I}
        \left|
        \frac1N\Tr\bigl(\mathfrak G_z(E+\ii\eta_0)
        -M_z(E+\ii\eta_0)\bigr)
        \right|\prec N^{-1}.
\]
It follows that, for every fixed $D>0$, the probability that
$\spec(H_z)\cap I\neq\varnothing$ is at most $N^{-D}$ for all sufficiently
large $N$.  A finite $\gamma/8$-net of $\mathscr K_n$, followed by a union bound and
Weyl's inequality using $\|H_z-H_w\|_2=|z-w|$, therefore yields a constant
$\gamma_0>0$ such that
\begin{equation}\label{eq:overwhelming-uniform-exterior-gap}
        \Pr\left(
        \inf_{z\in \mathscr K_n}\dist(0,\spec(H_z))\geq\gamma_0
        \right)
        \geq1-N^{-D}
\end{equation}
for every fixed $D>0$ and all sufficiently large $N$.

For $\alpha\in\{1,2,3,4\}$, let $\iota_\alpha(x)\in\mathbb C^{4n}$
denote the vector whose $\alpha$-th $n$-block equals $x$ and whose other
blocks are zero.  Set $\eta=N^{-1}$ and let $\mathcal N_N$ be an
$N^{-1}$-net of $\mathscr K_n$.  Since $\mathscr K_n\subset\{|z|\leq C\}$,
$|\mathcal N_N|=\bigo(N^2)$.  
On the event in \eqref{eq:overwhelming-uniform-exterior-gap}, the resolvent identity gives
\[
        \sup_{w\in\mathcal N_N}
        \|H_w^{-1}-\mathfrak G_w(\ii\eta)\|_2
        \leq C\eta.
\]
The matrix valued Stieltjes representation and the deterministic support
gap similarly give
\[
        \sup_{w\in\mathcal N_N}
        \|M_w(\ii\eta)-M_w(0)\|_2
        \leq C\eta.
\]
For fixed $\alpha,\beta\in\{1,2,3,4\}$, the $z$-uniform isotropic
estimate in Lemma~\ref{lem:local-law} and a union bound over
$\mathcal N_N$ yield
\begin{equation}\label{eq:quantitative-zero-energy-net}
        \sup_{w\in\mathcal N_N}
        \left|
        \iota_\alpha(x)^*\bigl(H_w^{-1}-M_w(0)\bigr)
        \iota_\beta(y)
        \right|
        \prec N^{-1/2}.
\end{equation}

On the event in \eqref{eq:overwhelming-uniform-exterior-gap}, the resolvent
identity also gives
\[
        \|H_z^{-1}-H_w^{-1}\|_2\leq C|z-w|.
\]
The explicit formula \eqref{eq:M-zero-limit} and
$\inf_{z\in \mathscr K_n}|z|\geq\varepsilon$ imply
\[
        \|M_z(0)-M_w(0)\|_2\leq C|z-w|.
\]
Thus \eqref{eq:quantitative-zero-energy-net} extends from the net to all of
$\mathscr K_n$:
\begin{equation}\label{eq:quantitative-zero-energy-uniform}
        \sup_{z\in \mathscr K_n}
        \left|
        \iota_\alpha(x)^*\bigl(H_z^{-1}-M_z(0)\bigr)
        \iota_\beta(y)
        \right|
        \prec N^{-1/2}.
\end{equation}

Taking $(\alpha,\beta)=(3,1),(4,2),(4,1),(3,2)$ in
\eqref{eq:quantitative-zero-energy-uniform}, and using
\eqref{eq:Hz-inverse-blocks}--\eqref{eq:M-zero-limit}, proves respectively
the four scalar estimates in
\eqref{eq:quantitative-G-calG-forms}--\eqref{eq:quantitative-BG-AcalG-forms},
because $N=4n$ and $\varepsilon\leq |z|\leq C$ on $\mathscr K_n$.

\subsection{Proof of Lemma~\ref{lem:resolvent-est}}

We first establish the resolvent estimates for $R_{\mathcal X}(z)$, and then pass from $R_{\mathcal X}(z)$ to $\widetilde R_{\mathcal X}(z)$ using the resolvent identity and Lemma~\ref{lem:uniform-exterior-singular-gap}.

Recall the definitions of $G(z)$ and $\mathcal G(z)$ in \eqref{eq:G-calG-def}.
From Lemma~\ref{lem:quadratic form-convergence}, for deterministic vectors $x,y\in\mathbb C^n$ with bounded Euclidean norm, we have the following estimates uniformly for $z\in K$:
\begin{align}
        x^*G(z)y-\frac{x^*y}{z^2} \prec n^{-1/2}, 
        \qquad
        x^*\mathcal G(z)y-\frac{x^*y}{z^2} \prec n^{-1/2}, \label{eq:xGy-est}\\
        x^*\mathsf B G(z)y \prec n^{-1/2},
        \qquad
        x^*\mathsf A\mathcal G(z)y \prec n^{-1/2}. \label{eq:xABGy-est}
\end{align}
Write $a=(\begin{smallmatrix}a_1\\a_2\end{smallmatrix})$, with $a_1,a_2\in\mathbb C^n$.  For $\nu=(k,\mathsf s)$, using \eqref{eq:block-inverse} and \eqref{eq:normalized-spike-basis},
\[
        a^*R_{\mathcal X}(z)\mathfrak u_\nu
        =
        \frac{1}{\sqrt{2}}
        \Bigl[
                z\,a_1^*G(z)u_k
                +
                \mathsf s\,a_1^*\mathsf A\mathcal G(z)u_k
                +
                a_2^*\mathsf B G(z)u_k
                +
                \mathsf s z\,a_2^*\mathcal G(z)u_k
        \Bigr].
\]
Applying \eqref{eq:xGy-est} and \eqref{eq:xABGy-est} gives
\[
        a^*R_{\mathcal X}(z)\mathfrak u_\nu
        =
        \frac1z\,a^*\mathfrak u_\nu+\bigo_\prec(n^{-1/2}),
        \qquad z\in \mathscr K_n.
\]
Since $r$ is fixed, this implies
\begin{equation}\label{eq:split-noise-resolvent-signal-basis}
        \sup_{z\in K}
        \left\|a^*R_{\mathcal X}(z)\mathcal U-\frac1z a^*\mathcal U\right\|_2
        \prec n^{-1/2}.
\end{equation}

It remains to pass from $R_{\mathcal X}$ to $\widetilde R_{\mathcal X}$.
By Lemma~\ref{lem:uniform-exterior-singular-gap},
\[
        \sup_{z\in \mathscr K_n}\|R_{\mathcal X}(z)\|_2+\sup_{z\in \mathscr K_n}\|\widetilde R_{\mathcal X}(z)\|_2=\bigop(1).
\]
This, together with the resolvent identity $\widetilde R_{\mathcal X}(z)-R_{\mathcal X}(z) = R_{\mathcal X}(z)\mathcal E\widetilde R_{\mathcal X}(z)$ and Lemma~\ref{lem:mask-remainder-bound}, implies that
\begin{align}
\label{eq:perturbed-resolvent-difference-signal-basis}
\sup_{z\in \mathscr K_n}
\left\|a^*\bigl(\widetilde R_{\mathcal X}(z)-R_{\mathcal X}(z)\bigr)\mathcal U\right\|_2
=
\bigop(\mu_n).
\end{align}
By \eqref{eq:perturbed-resolvent-difference-signal-basis} and \eqref{eq:split-noise-resolvent-signal-basis}, we complete the proof.

\subsection{Proof of Lemma~\ref{lem:samplewise-projector-approximation}}

Recall the event $\mathcal G_{a,n}$ defined above and the bound
\eqref{eq:samplewise-good-event}.
On $\mathcal G_{a,n}$, $\widetilde{\mathcal P}_{a,i,\mathsf s}$ is the Riesz projection appearing in Proposition~\ref{prop:split-evec-projection}.
Hence, for any deterministic sequences $x_n,y_n\in\mathbb C^{2n}$ with $\|x_n\|_2\vee\|y_n\|_2\leq1$, that theorem and \eqref{eq:samplewise-outlier-separation} give, for every fixed $\eta>0$,
\begin{equation}\label{eq:samplewise-proof-out-isotropic}
	\Pr\!\left(
	\left|
	x_n^*\mathscr E^{\mathrm{out}}_{a,i,\mathsf s}y_n
	\right|>\eta,\,
	\mathcal G_{a,n}
	\right)
	=
	\smallo(1).
\end{equation}
If \eqref{eq:samplewise-direction-isotropic} failed for some fixed $\eta>0$, then, in view of \eqref{eq:samplewise-good-event}, there would exist $c>0$, a subsequence $n_\ell$, and deterministic vectors $x_{n_\ell},y_{n_\ell}\in\mathbb C^{2n_\ell}$ with norms at most one such that
\[
	\Pr\!\left(
	\left|
	x_{n_\ell}^*
	\mathscr E^{\mathrm{out}}_{a,i,\mathsf s}
	y_{n_\ell}
	\right|>\eta,\,
	\mathcal G_{a,n_\ell}
	\right)
	\geq c
\]
for every $\ell$.
Extend these vectors arbitrarily to deterministic sequences along all $n$.  Equation~\eqref{eq:samplewise-proof-out-isotropic} then makes the displayed probability tend to zero along $n_\ell$, a
contradiction.
This proves \eqref{eq:samplewise-direction-isotropic}.

For the corresponding assertion about the upper-left block, define
\[
        J_1\deq
        \begin{pmatrix}I_n\\0\end{pmatrix}
        \in\mathbb R^{2n\times n}.
\]
Since $[A]_{11}=J_1^{\top}AJ_1$ and $J_1^{\top}J_1=I_n$, we have
\[
        |x^*\mathscr E_{a,i}y|
        \leq
        |(J_1x)^*\mathscr E^{\mathrm{out}}_{a,i,+}(J_1y)|
        +
        |(J_1x)^*\mathscr E^{\mathrm{out}}_{a,i,-}(J_1y)|.
\]
Applying \eqref{eq:samplewise-direction-isotropic} to the two terms proves the
claimed deterministic-vector bound for $\mathscr E_{a,i}$.

We next prove the operator-norm bounds.  Let
\[
        \Gamma_{a,i,\mathsf s}
        \deq
        \left\{
        z\in\mathbb C:
        |z-\alpha_{a,i,\mathsf s}|=\rho_{a,i,\mathsf s}
        \right\},
        \qquad
        \rho_{a,i,\mathsf s}
        \deq\frac{\Delta_{a,i,\mathsf s}}{4}.
\]
We orient $\Gamma_{a,i,\mathsf s}$ counterclockwise.
Define $\widetilde R_{\mathcal X_a}(z)$, $\mathcal U_a$, $\mathcal D_a$ for the samplewise analogues of the quantities in \eqref{eq:evec-proj-woodbury}, and let
\[
        L_a(z)
        \deq
        \mathcal D_a^{-1}
        -\mathcal U_a^*\widetilde R_{\mathcal X_a}(z)\mathcal U_a.
\]
On $\mathcal G_{a,n}$, the contour formula and
the Woodbury identity give
\begin{equation}\label{eq:samplewise-proof-contour}
        \widetilde{\mathcal P}_{a,i,\mathsf s}
        =
        \frac{1}{2\pi\ii}
        \oint_{\Gamma_{a,i,\mathsf s}}
        \widetilde R_{\mathcal X_a}(z)\mathcal U_a
        L_a(z)^{-1}
        \mathcal U_a^*\widetilde R_{\mathcal X_a}(z)\,\dif z.
\end{equation}
Lemma~\ref{lem:uniform-exterior-singular-gap} and
\eqref{eq:L-inverse-bound} imply
\begin{equation}\label{eq:samplewise-proof-resolvent-bounds}
        \sup_{z\in\Gamma_{a,i,\mathsf s}}
        \|\widetilde R_{\mathcal X_a}(z)\|_2=\bigop(1),
        \qquad
        \sup_{z\in\Gamma_{a,i,\mathsf s}}
        \|L_a(z)^{-1}\|_2
        =\bigop(\Delta_{a,i,\mathsf s}^{-1}).
\end{equation}
Since $\|\mathcal U_a\|_2=1$ and
$|\Gamma_{a,i,\mathsf s}|=2\pi\rho_{a,i,\mathsf s}$,
\eqref{eq:samplewise-proof-contour}--\eqref{eq:samplewise-proof-resolvent-bounds}
give
\begin{align}
        \|\widetilde{\mathcal P}_{a,i,\mathsf s}\|_2
        &\leq
        \rho_{a,i,\mathsf s}
        \bigg(
        \sup_{z\in\Gamma_{a,i,\mathsf s}}
        \|\widetilde R_{\mathcal X_a}(z)\|_2
        \bigg)^2
        \sup_{z\in\Gamma_{a,i,\mathsf s}}
        \|L_a(z)^{-1}\|_2        
        =
        \bigop\!\left(
        \rho_{a,i,\mathsf s}\Delta_{a,i,\mathsf s}^{-1}
        \right)
        =\bigop(1).
        \label{eq:samplewise-proof-riesz-norm}
\end{align}
For every $M>0$,
\[
        \Pr\!\left(
        \|\widetilde{\mathcal P}_{a,i,\mathsf s}\|_2>M
        \right)
        \leq
        \Pr(\mathcal G_{a,n}^{\mathsf c})
        +
        \Pr\!\left(
        \|\widetilde{\mathcal P}_{a,i,\mathsf s}\|_2>M,\,
        \mathcal G_{a,n}
        \right).
\]
It follows from \eqref{eq:samplewise-good-event} and
\eqref{eq:samplewise-proof-riesz-norm} that the same norm bound holds without
restricting to $\mathcal G_{a,n}$.  Thus
\begin{equation}\label{eq:samplewise-proof-error-norm}
        \|\mathscr E^{\mathrm{out}}_{a,i,\mathsf s}\|_2
        \leq
        \|\widetilde{\mathcal P}_{a,i,\mathsf s}\|_2
        +\|\mathcal P_{a,i,\mathsf s}\|_2
        =\bigop(1).
\end{equation}
By construction,
$\operatorname{rank}(\widetilde{\mathcal P}_{a,i,\mathsf s})\leq1$, while
$\operatorname{rank}(\mathcal P_{a,i,\mathsf s})=1$.  Hence
\begin{equation}\label{eq:samplewise-proof-out-rank}
        \operatorname{rank}
        (\mathscr E^{\mathrm{out}}_{a,i,\mathsf s})
        \leq
        \operatorname{rank}
        (\widetilde{\mathcal P}_{a,i,\mathsf s})
        +
        \operatorname{rank}(\mathcal P_{a,i,\mathsf s})
        \leq2.
\end{equation}
Finally,
\begin{align}
        \|\mathscr E_{a,i}\|_2
        &\leq
        \|\widehat\Pi_{a,i}\|_2+\|\Pi_{a,i}\|_2
        \leq
        \sum_{\mathsf s\in\{+,-\}}
        \|\widetilde{\mathcal P}_{a,i,\mathsf s}\|_2+1
        =\bigop(1),
        \label{eq:samplewise-proof-top-norm}\\
        \operatorname{rank}(\mathscr E_{a,i})
        &\leq
        \operatorname{rank}(\widehat\Pi_{a,i})
        +\operatorname{rank}(\Pi_{a,i})
        \leq 3.
        \label{eq:samplewise-proof-top-rank}
\end{align}
Equations~\eqref{eq:samplewise-proof-error-norm},
\eqref{eq:samplewise-proof-out-rank},
\eqref{eq:samplewise-proof-top-norm}, and
\eqref{eq:samplewise-proof-top-rank} prove
\eqref{eq:samplewise-direction-rank-norm}.

It remains to treat random test vectors.  Let $Z_{a,n}$ denote either
$\mathscr E^{\mathrm{out}}_{a,i,\mathsf s}$ or $\mathscr E_{a,i}$, with the
corresponding dimension, and
\[
        \mathcal B_{M,n}
        \deq
        \left\{
        \|x_n\|_2\vee\|y_n\|_2\leq M
        \right\},
\]
where $M$ is a fixed constant. 
For any deterministic vectors $x,y$ and constant $\eta>0$, define 
\[
p_n(x,y) \deq \Pr(|x^*Z_{a,n}y|>\eta ).
\]
Since $(x_n,y_n)$ is independent of $Z_{a,n}$, we have
\[
\Pr(|x_n^*Z_{a,n}y_n|>\eta, \mathcal B_{M,n}) 
= \mathbb E \big\{\indicator{(\mathcal B_{M,n})} p_n(x_n,y_n) \big\}
\leq \sup_{\substack{x,y\ \mathrm{deterministic}\\\|x\|_2\vee\|y\|_2\leq M}} p_n(x,y).
\]
Hence, for every fixed $M<\infty$ and $\eta>0$,
\begin{align}
        \Pr(|x_n^*Z_{a,n}y_n|>\eta)
        &\leq
        \Pr(\mathcal B_{M,n}^{\mathsf c})
        +
        \sup_{\substack{x,y\ \mathrm{deterministic}\\
                         \|x\|_2\vee\|y\|_2\leq M}}
        \Pr(|x^*Z_{a,n}y|>\eta).                                   
        \label{eq:samplewise-proof-random-tests}
\end{align}
For fixed $M$, write $x=M\widetilde x$ and $y=M\widetilde y$.  The
deterministic-vector bound applied to
$\|\widetilde x\|_2\vee\|\widetilde y\|_2\leq1$ with threshold
$\eta/M^2$ shows that the last supremum is $\smallo(1)$.  The assumption
$\|x_n\|_2\vee\|y_n\|_2=\bigop(1)$ gives
\[
        \lim_{M\to\infty}\limsup_{n\to\infty}
        \Pr(\mathcal B_{M,n}^{\mathsf c})=0.
\]
Taking first $n\to\infty$ and then $M\to\infty$ in \eqref{eq:samplewise-proof-random-tests} yields $x_n^*Z_{a,n}y_n=\smallop(1)$.
This proves \eqref{eq:samplewise-direction-isotropic-random} and completes the proof.

\bibliographystyle{abbrvnat}
\bibliography{ref}

\end{document}